\newcommand{\sM}{\mathcal{M}}
\newcommand{\sP}{\mathcal{P}}
\newcommand{\sX}{\mathcal{X}}
\newcommand{\sY}{\mathcal{Y}}
\newcommand{\sZ}{\mathcal{Z}}
\newcommand{\Z}{\mathbb{Z}}
\newcommand{\R}{\mathbb{R}}
\newcommand{\C}{\mathbb{C}}
\newcommand{\D}{\mathbb{D}}
\renewcommand{\H}{\mathbb{H}}
\DeclareMathOperator{\Forall}{\forall}
\DeclareMathOperator{\arcsinh}{arcsinh}
\DeclareMathOperator{\arccosh}{arccosh}
\DeclareMathOperator{\arctanh}{arctanh}
\DeclareMathOperator{\dist}{dist}
\DeclareMathOperator{\length}{\ell} 
\DeclareMathOperator{\area}{Area}
\DeclareMathOperator{\sys}{sys}
\DeclareMathOperator{\jac}{Jac}
\DeclareMathOperator{\im}{Im}
\DeclareMathOperator{\collar}{\mathcal{C}} 
\DeclareMathOperator{\cusp}{Cusp} 
\DeclareMathOperator{\horo}{horo} 
\DeclareMathOperator{\prob}{Prob}
\DeclareMathOperator{\pent}{Pent}
\DeclareMathOperator{\hexa}{Hexa}
\newcommand{\dif}{\mathop{}\!\mathrm{d}}
\newcommand{\dvol}{\mathop{}\!\mathrm{dvol}}
\newcommand{\spec}{\mathrm{Spec}}
\newcommand{\Hom}{\mathrm{Hom}}
\newcommand{\RQ}{\mathrm{RayQ}}
\newcommand{\pa}{\partial}
\newcommand{\df}{\colonequals}
\newcommand{\limg}{\lim_{g\rightarrow\infty}}
\newcommand{\limsupg}{\limsup_{g\rightarrow\infty}}
\newcommand{\liminfg}{\liminf_{g\rightarrow\infty}}
\newcommand{\eg}{\textit{e.g.,\@ }}
\newcommand{\cf}{\textit{c.f.\@ }}
\newcommand{\ie}{\textit{i.e.\@ }}
\theoremstyle{plain}
\newtheorem{theorem}{Theorem}[section] 
\newtheorem{lemma}[theorem]{Lemma} %
\newtheorem{proposition}[theorem]{Proposition} %
\newtheorem{corollary}[theorem]{Corollary} %
\newtheorem*{construction*}{Construction}
\newtheorem*{theorem*}{\bf Theorem}
\newtheorem*{corollary*}{Corollary}
\newtheorem*{assumption*}{Assumption $(\star)$}
\newtheorem*{observation*}{Observation}
\newtheorem*{claim*}{Claim}
\theoremstyle{remark}
\newtheorem*{remark*}{Remark}
\newtheorem*{question*}{Question}
\theoremstyle{definition}
\newtheorem*{definition*}{Definition}
\begin{document}

\title[Spectral gaps]{Spectral gaps on thick part of moduli spaces}

\author{Yunhui Wu }
\address[Y. ~W. ]{Department of Mathematical Sciences and Yau Mathematical Sciences Center, Tsinghua University, Haidian District, Beijing 100084, China}
\email{yunhui\_wu@mail.tsinghua.edu.cn}

\author{Haohao Zhang}
\address[H. ~Z. ]{Department of Mathematical Sciences and Yau Mathematical Sciences Center, Tsinghua University, Haidian District, Beijing 100084, China}
\email{zhh21@mails.tsinghua.edu.cn}

\begin{abstract}
    In this paper, we study spectral gaps of closed hyperbolic surfaces for large genus. We show that for any fixed $k\geq 1$, as the genus goes to infinity, the maximum of $\lambda_k-\lambda_{k-1}$ over any thick part of the moduli space of closed Riemann surfaces approaches the limit $\frac 14$. 
\end{abstract}

\maketitle

\section{Introduction} 

Let $X_g$ be a closed hyperbolic surface of genus $g\geq2$. The spectrum of the Laplacian on $X_g$ consists of discrete eigenvalues counted with multiplicity, which can be listed in the following increasing order: 
\[ 0 = \lambda_0(X_g) < \lambda_1(X_g) \leq \lambda_2(X_g) \leq\cdots\to\infty. \] 
For any $\epsilon>0$, recall that the $\epsilon$-thick part $\sM_g^{\geq\epsilon}$ of the moduli space $\sM_g$ of closed Riemann surfaces of genus $g$ consists of hyperbolic surfaces with systole at least $\epsilon$. In this paper, we are interested in studying the spectral gaps of closed hyperbolic surfaces in $\sM_g^{\geq\epsilon}$ for large $g$. We prove 
\begin{theorem}\label{main}
    For any fixed positive integer $k\geq1$ and any fixed $\epsilon>0$, we have \[ \limg \max_{X_g \in \sM_g^{\geq\epsilon}} \big( \lambda_k(X_g)- \lambda_{k-1}(X_g) \big) = \frac 14. \]  
\end{theorem}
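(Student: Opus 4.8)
The plan is to establish the two directions separately. For the upper bound $\limsup_g \max_{X_g} (\lambda_k - \lambda_{k-1}) \le \tfrac14$, I would appeal to the fact that on any closed hyperbolic surface the spectrum below $\tfrac14$ is, loosely speaking, ``spread out'' in a way controlled by geometric data, whereas $\tfrac14$ is the bottom of the spectrum of the universal cover $\H$. More precisely, I expect to use a counting/trace-formula or test-function argument: if $\lambda_{k-1}(X_g)$ were bounded away from $\tfrac14$ while $\lambda_k(X_g)$ jumped well above $\tfrac14 + \delta$, one derives a contradiction with an upper estimate on $\lambda_{k-1}$ valid on the $\epsilon$-thick part (for instance, a Cheeger- or test-function-type bound showing that a genus-$g$ surface with systole $\ge \epsilon$ must have at least $k$ eigenvalues below $\tfrac14 + o(1)$, because one can embed many nearly-disjoint regions each supporting a test function with Rayleigh quotient close to $\tfrac14$). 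The point is that $\lambda_{k-1}(X_g) \le \tfrac14 + o(1)$ forces the gap $\lambda_k - \lambda_{k-1}$ to be at most $\lambda_k - \tfrac14 + o(1)$, and a separate universal upper bound $\lambda_k(X_g) \le \tfrac14 + C_k/g$-type estimate or Cheeger inequality on the thick part then pins $\lambda_k$ near $\tfrac14$ as well — actually the cleaner route is: since there are at least $k$ eigenvalues in $[0, \tfrac14 + o(1)]$, both $\lambda_{k-1}$ and $\lambda_k$ lie there, forcing the gap $\le \tfrac14 + o(1)$ only if we also know $\lambda_{k-1} \ge \tfrac14 - o(1)$ is impossible to avoid... so more carefully, the upper bound needs: $\lambda_k(X_g) - \lambda_{k-1}(X_g) \le \lambda_k(X_g) \le \tfrac14 + o(1)$ fails in general, so instead one shows $\lambda_{k-1}(X_g) \ge \tfrac14 - o(1)$ is forced whenever $\lambda_k - \lambda_{k-1}$ is near its max, via a rigidity argument. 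I would structure it as: show directly that any gap exceeding $\tfrac14 + \delta$ in $[\lambda_{k-1}, \lambda_k]$ with $\lambda_{k-1} < \tfrac14 - \delta'$ contradicts the existence of test functions of Rayleigh quotient $< \tfrac14$, of which there are genuinely many on the thick part, hence $k$ of them below any fixed threshold above $\tfrac14$.

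For the lower bound $\liminf_g \max_{X_g} (\lambda_k - \lambda_{k-1}) \ge \tfrac14$, the strategy is explicit construction: I would build, for each large $g$, a surface $X_g \in \sM_g^{\geq\epsilon}$ whose first $k$ eigenvalues are all small (tending to $0$, or at least staying below some fixed $\eta \to 0$) while $\lambda_k(X_g) \to \tfrac14$. Producing surfaces with $\lambda_{k-1}$ small is standard — for example, by taking a surface with $k$ long thin ``necks'' (Cheeger-type bottlenecks), each of which is still permitted in the $\epsilon$-thick part provided the necks are long rather than short (systole stays $\ge \epsilon$; it is the length of the separating geodesics that can grow, not shrink). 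Such a surface has $\lambda_{k-1}$ controlled by the Cheeger constant, which $\to 0$. The serious input is forcing $\lambda_k(X_g) \to \tfrac14$ from above: here I expect to invoke the recent large-genus results showing that for typical (Weil–Petersson random) surfaces $\lambda_1 \to \tfrac14$, or a deterministic construction (e.g.\ random covers, or the Magee–Naud–Puder / Hide–Thomas-type constructions) giving a genuine sequence with $\lambda_1(X_g) \to \tfrac14$; then one performs surgery to glue on $k$ necks without destroying this spectral property — this requires a gluing/interpolation lemma controlling how eigenvalues above the $k$-th move under attaching a thin handle, which is the technical heart of the lower bound.

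The main obstacle I anticipate is the lower bound's surgery step: one must glue $k$ long thin pieces onto a surface with $\lambda_1 \approx \tfrac14$ while simultaneously (i) keeping the systole $\ge \epsilon$, (ii) keeping the genus correct, (iii) driving $\lambda_1, \dots, \lambda_{k-1}$ to $0$, and (iv) — the hard part — preventing $\lambda_k$ from dropping below $\tfrac14 - \delta$. Controlling (iv) requires an eigenvalue-comparison argument (Dirichlet–Neumann bracketing across the necks, or a Mayer–Vietoris / effective Buser-type estimate) showing that, after removing the $k$ low-lying ``bottleneck'' modes, the remaining spectrum of the glued surface is close to that of the original high-spectral-gap surface. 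I would also need the matching upper-bound rigidity statement to be genuinely uniform over the whole $\epsilon$-thick part, not just generic surfaces, which likely forces the test-function count in the upper-bound argument to be made quantitative in $\epsilon$ and $g$. Combining the matching inequalities then yields the stated limit $\tfrac14$.
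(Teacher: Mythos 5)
Your lower-bound strategy is in the right spirit --- build a closed surface by gluing $k$ pieces each carrying a first Neumann eigenvalue close to $1/4$, then use a mini-max (Dirichlet--Neumann) argument to bound $\lambda_k$ from below and a test-function/Cheeger argument to drive $\lambda_{k-1}$ to zero --- and this is indeed the skeleton of the paper's proof via Proposition~\ref{prop:piece}, Corollary~\ref{cor:eigenvalue_bound}, and Theorem~\ref{thm:mini-max}. But you have not identified where the real difficulty sits. You propose to start from a closed surface with $\lambda_1 \approx 1/4$ (Hide--Magee or a random cover) and graft on $k$ long necks. The obstruction is that the closed surfaces with $\lambda_1 \to 1/4$ currently available are produced by the Buser--Burger--Dodziuk compactification of thin cusps, which \emph{necessarily} introduces arbitrarily short geodesics and therefore places those surfaces near $\partial\mathcal{M}_g$, outside every fixed $\epsilon$-thick part. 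No surgery of the type you sketch can remove those short geodesics; they are in the starting surface, not created by your necks. The paper's technical core is a \emph{different} compactification: delete large cusp neighborhoods from a finite-area non-compact cover $X$ with $\bar\lambda_1 \approx 1/4$ and long systole (Proposition~\ref{prop:surfaces}), uniformize the remaining open surface to an infinite-area hyperbolic surface with funnel ends, and pass to its compact convex core $Y$. Theorem~\ref{thm:comparison_eigenvalue} shows $\sigma_1(Y) \geq (1-4\delta)\bar\lambda_1(X)-\delta$ when $X$ has large cusps, and crucially the boundary geodesics of $Y$ have length $\approx \pi\epsilon_0$, bounded away from zero and tunable. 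Theorem~\ref{thm:stability} then normalizes all boundary lengths to exactly $\epsilon$ with uniform control on $\sigma_1$, yielding the thick building blocks. Without these two theorems, or a genuine substitute, your lower bound does not go through; the ``gluing/interpolation lemma'' you gesture at is not the missing piece.

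Your upper-bound discussion contains an outright error. You assert that ``$\lambda_k(X_g)-\lambda_{k-1}(X_g) \leq \lambda_k(X_g) \leq 1/4 + o(1)$ fails in general'' and therefore steer toward a rigidity argument. In fact for any \emph{fixed} $k$ the simple route works: by Cheng's comparison (the paper cites \cite[Corollary 2.3]{Cheng1975}, and proves the quantitative version as Proposition~\ref{t-up} via a maximal-net packing argument), one has
\[
\sup_{X_g\in\mathcal{M}_g}\lambda_k(X_g) \leq \frac14 + \left(\frac{4\pi}{\cosh^{-1}\bigl(1+\tfrac{2(g-1)}{k+1}\bigr)}\right)^{2} \longrightarrow \frac14
\]
as $g\to\infty$, because a genus-$g$ surface admits $k+1$ pairwise disjoint geodesic balls of radius tending to infinity. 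Since $\lambda_{k-1}\geq 0$, this immediately yields $\limsup_g \max_{X_g}(\lambda_k-\lambda_{k-1}) \leq 1/4$, with no rigidity, no case analysis, and no appeal to the thick-part hypothesis. The more elaborate argument you sketch is both unnecessary and, as written, not a proof.
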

\noindent If $\epsilon=0$, Theorem \ref{main} was obtained by the joint work of the authors of this paper with Zhu in \cite{WZZ2024} where the maximum is replaced by the supremum. If $k=1$, Theorem \ref{main} tells that 
\[ \limg \max_{X_g \in \sM_g^{\geq\epsilon}} \lambda_1(X_g) = \frac 14. \] 
Indeed, if $\epsilon=0$, this was a breakthrough due to Hide-Magee \cite{HM2023}. In the same spirit of \cite{HM2023}, they \cite[Appendix A]{LM2022} had also first shown that there exists a sequence of finite covers of a fixed closed arithmetic hyperbolic surface such that $\lambda_1\to \frac{1}{4}$. We remark here that the constant $\epsilon>0$ in Theorem \ref{main} can be arbitrarily large, and our methods are quite different from \cite{LM2022}. Meanwhile, it is \emph{unknown} whether a surface realizing $\max\limits_{X_g \in \sM_g^{\geq\epsilon}} \lambda_1(X_g)$ is always arithmetic.

\subsection{History and related works}
The study of spectral gaps of closed hyperbolic surfaces for large genus has a long history. Buser \cite{Buser1978} incorrectly conjectured $\sup\limits_{X_g\in\sM_g}\lambda_1(X_g)\to0$ as $g\to\infty$. However, he observed in \cite{Buser1984} that Selberg's $\frac{3}{16}$ theorem \cite{Selberg1965} and the Jacquet-Langlands theory \cite{JL1970} imply that there exist closed hyperbolic surfaces with $\lambda_1\geq\frac{3}{16}$ for arbitrarily large genus, and in the same paper Buser conjectured that $\frac{3}{16}$ can possibly be replaced by $\frac{1}{4}$. Since Huber \cite{Huber1974} and Cheng \cite{Cheng1975} had proved that 
\[ \limsupg \sup_{X_g\in\sM_g} \lambda_1(X_g) \leq \frac 14, \] 
the number $\frac14$ is the best possible lower bound of $\sup\limits_{X_g\in \sM_g}\lambda_1(X_g)$ for large $g$. 
Again, based on Selberg's $\frac{3}{16}$ theorem, Buser-Burger-Dodziuk \cite{BBD1988} gave an explicit geometric construction of closed hyperbolic surfaces with genus $\to\infty$ and $\lambda_1\to\frac{3}{16}$, by compactifying the congruence covers $\H / \Gamma(N)$ of the modular surface $\H / \mathrm{SL}(2,\Z)$. 
Brooks-Makover \cite{BM2001} proved that 
\[ \liminfg \sup_{X_g\in\sM_g} \lambda_1(X_g) \geq \inf_{N\geq1} \lambda_1( \H / \Gamma(N) ), \] 
by using a different method of compactification and the ``handle lemma" \cite[Lemma 1.1]{BM2001} extracted from \cite{BBD1988}. In particular, Selberg's famous $\frac{1}{4}$-eigenvalue conjecture \cite{Selberg1965, LRS95, Kim03} implies Buser's conjecture. Recently, Hide-Magee confirmed Buser's conjecture above.
\begin{theorem*}[{\cite[Corollary 1.3]{HM2023}}]
    \[ \limg \sup_{X_g\in\sM_g} \lambda_{1}(X_g) = \frac 14. \] 
\end{theorem*}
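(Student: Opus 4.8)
The plan is as follows. The inequality $\limsup_{g\to\infty}\sup_{X_g\in\sM_g}\lambda_1(X_g)\le\frac14$ is precisely the Huber--Cheng estimate recalled above, so everything reduces to producing, for each $\epsilon>0$, a sequence of closed hyperbolic surfaces of genus tending to infinity whose first eigenvalue is at least $\frac14-\epsilon$. To this end I would fix a non-compact, finite-area hyperbolic surface $X_0=\Gamma_0\backslash\H$ with an even number of cusps, chosen so that the Laplacian on $X_0$ has no eigenvalue in $(0,\frac14)$ and so that its cusps can be organized into pairs of equal width; for each $n$ I form the degree-$n$ cover $X_n\to X_0$ attached to a uniformly random homomorphism $\phi_n\colon\pi_1(X_0)\to S_n$, observing that $X_n$ is connected with probability $\to1$ and that its genus grows linearly in $n$.

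The core step is to show that, with probability tending to $1$, the Laplacian on $X_n$ has no eigenvalue in $(0,\frac14-\epsilon)$. The $L^2$-spectrum of the cover splits into the part pulled back from $X_0$ --- which, by the choice of $X_0$, misses $(0,\frac14)$ entirely --- and a ``new'' part governed by the action of $\pi_1(X_0)$ on $\ell^2\{1,\dots,n\}$ modulo the constants, so it suffices to rule out new eigenvalues below $\frac14-\epsilon$. I would express the cover's resolvent through the automorphic Green's function on $\H$ together with an Eisenstein/parametrix correction controlling the cusps and the continuous spectrum (in the spirit of Phillips--Sarnak), reducing the claim to an operator-norm bound for a random polynomial in the permutation matrices $\phi_n(\gamma)$ composed with the geometric series built from the base resolvent. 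Such a bound should follow from a strong-convergence statement for uniformly random permutation representations: an asymptotic expansion of $\mathbb{E}\operatorname{Tr}(\cdots)$ in powers of $1/n$ with an effective remainder, in the lineage of Friedman's theorem for random regular graphs and its operator-algebraic refinements, promoted from an in-expectation statement to a high-probability one via a net over the spectral parameter together with an a priori polynomial-in-$n$ bound on eigenvalue multiplicities.

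Finally I would compactify. Exploiting that the cusps of $X_n$ pair up with equal widths, I deform the complete structure so that each such cusp is opened to a geodesic boundary of a fixed moderate length --- bounded away from $0$ and from $\infty$, so that no thin collar, and hence no Buser-type small eigenvalue, is introduced --- and glue the paired boundaries, obtaining a closed hyperbolic surface $\overline{X_n}$ of genus of the same order. Since $\overline{X_n}$ is, outside a compact region of bounded geometry, isometric to a piece of $X_n$, a test-function/Cheeger comparison --- an $L^2$ eigenfunction on $\overline{X_n}$ with eigenvalue below $\frac14-2\epsilon$, cut off away from the gluing locus, would violate the gap just established on $X_n$ --- shows $\lambda_1(\overline{X_n})\ge\frac14-2\epsilon$ with probability $\to1$; letting $n\to\infty$ and then $\epsilon\to0$ gives the theorem.

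I expect the main obstacle to be the core step: controlling the entire resolvent of the random cover --- including the continuous spectrum and the contribution of the cusps --- uniformly in the spectral parameter, and obtaining the strong-convergence expansion with a remainder strong enough to survive the union bound; essentially all of the genuinely new analysis lives there. By comparison the compactification step, while delicate in that one must verify the gluing creates no small eigenvalue, is of a soft, geometric nature.
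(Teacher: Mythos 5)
Your high-level route --- uniformly random degree-$n$ covers of a fixed non-compact finite-area base surface, a strong-convergence statement for random permutation representations (Bordenave--Collins/Friedman-type) to show the covers inherit the spectral gap $(0,\frac14)$ with high probability, then a compactification --- is the route Hide--Magee take, and you are right that the strong-convergence step is the technical heart. Your compactification step, however, both deviates from Hide--Magee's and contains gaps that are not ``soft.''

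Hide--Magee close the cusps with the Buser--Burger--Dodziuk / Brooks--Makover \emph{handle lemma}, which produces \emph{short} geodesic boundaries. The resulting thin collars are harmless for the unrestricted statement: short geodesics do not force small eigenvalues here because the Cheeger constant stays bounded below. Avoiding thin collars is needed only for the thick-part refinement, and that refinement is precisely what requires the machinery of Sections 4--8 of the present paper. Your variant, opening cusps into ``moderate length'' boundaries, is therefore harder than what the stated theorem needs, and your sketch of why it preserves the gap has three gaps. (i) You never require $X_n$ to have \emph{large cusps} (disjoint embedded horocycles of prescribed large length); this is a separate probabilistic input --- Klukowski--Markovi\'c/Magee for random covers --- and without it the metric deformation near the cusps is uncontrolled, both in the handle lemma and in the moderate-boundary variant. (ii) The claim that $\overline{X_n}$ is ``isometric to a piece of $X_n$ outside a compact region'' is false: opening a cusp to a geodesic boundary changes the hyperbolic metric globally. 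What holds once the cusps are large is that the two metrics are $(1+\delta)$-close outside the cusps; that is Brooks' comparison theorem (Theorem~\ref{thm:comparison_geometry} here), a genuine analytic step, not a tautology. (iii) The test-function transfer is incomplete: cutting off an eigenfunction of $\overline{X_n}$ near the gluing locus introduces gradient error in the Rayleigh quotient, controlled only after a mass-distribution estimate showing that a low eigenfunction cannot concentrate on the collars (cf.\ Proposition~\ref{prop:distribution}); one must also handle the mean, since the variational bound for $\lambda_1$ needs a function orthogonal to constants while the volume forms of $X_n$ and $\overline{X_n}$ differ. A smaller point: the cusps of $X_n$ cover those of $X_0$ with multiplicities equal to the cycle lengths of $\phi_n$, so their widths differ and their number need not be even; matching boundaries before gluing requires an extra device (a boundary-length perturbation as in Theorem~\ref{thm:stability}, or the flexibility already built into the handle lemma).
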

\noindent They used deep theories of random surfaces in covering model and random matrix to construct finite-area non-compact hyperbolic surfaces with large spectral gap, then applied the ``handle lemma" to these surfaces to obtain closed surfaces with $\lambda_1\to \frac{1}{4}$. 

The higher spectral gaps were studied by the authors of this paper and Zhu in \cite{WZZ2024}, where they generalized the ``handle lemma" to a degenerating mini-max principle. They proved the following. 
\begin{theorem*}[{\cite[Theorem 4.1]{WZZ2024}}]
    If $\eta(g) \in [1,2g-2]$, then
\[ \liminfg \sup_{X_g\in\sM_g}\left( \lambda_{\eta(g)}(X_g) - \lambda_{\eta(g)-1}(X_g) \right) \geq \frac 14. \] 
\end{theorem*}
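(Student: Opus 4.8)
\emph{Proof proposal.} The plan is to build, for every $\delta>0$ and every sufficiently large $g$, a closed hyperbolic surface $X_g\in\sM_g$ with $\lambda_{\eta(g)}(X_g)-\lambda_{\eta(g)-1}(X_g)\ge\tfrac14-\delta$; letting $\delta\to0$ then gives $\liminfg\sup_{X_g\in\sM_g}\big(\lambda_{\eta(g)}(X_g)-\lambda_{\eta(g)-1}(X_g)\big)\ge\tfrac14$. The surface $X_g$ will be obtained by starting from a disjoint union $Y_g$ of finite-area cusped hyperbolic surfaces, each of which has an optimal (or near-optimal) spectral gap, and grafting long thin hyperbolic handles onto their cusps so as to produce a connected closed surface of the correct genus. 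The low-lying spectrum of $X_g$ is then extracted from the much simpler spectrum of $Y_g$ through a degenerating form of the mini-max principle that generalizes the handle lemma of \cite{BBD1988, BM2001}; this last point is the crux.

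Write $\eta=\eta(g)$. First I would choose the limit $Y_g$. Express $2g-2$ as a sum $m_1+\dots+m_\eta$ of positive integers — possible exactly because $1\le\eta\le2g-2$ — where each $m_i$ is the Euler complexity $-\chi$ of a complete finite-area hyperbolic surface $\Omega_i$ whose $L^2$-spectrum contains nothing in $(0,\tfrac14-\delta)$. For $m_i=1$ one takes $\Omega_i=\H/\Gamma(2)$, the thrice-punctured sphere, whose spectrum is exactly $\{0\}\cup[\tfrac14,\infty)$; to realize a large $m_i$ one takes $\Omega_i$ to be one of the random covering surfaces produced by Hide--Magee, which for every sufficiently large complexity may be chosen with $\lambda_1(\Omega_i)\ge\tfrac14-\delta$ — this existence statement is the substance of \cite{HM2023}, their quoted Corollary~1.3 being precisely what results on closing up such a surface, i.e.\ the case $\eta=1$ of the present theorem. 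A routine bookkeeping argument (which I omit), using thrice-punctured spheres, a fixed small congruence surface of complexity $2$ with spectrum $\{0\}\cup[\tfrac14,\infty)$, and one Hide--Magee piece, shows that every admissible pair $(g,\eta)$ is realizable this way; when $\eta$ is close to $2g-2$ one uses only the explicit arithmetic pieces and needs no slack. Put $Y_g\df\bigsqcup_{i=1}^{\eta}\Omega_i$. Then $\chi(Y_g)=2-2g$, and since each $\Omega_i$ contributes exactly one $L^2$-eigenvalue below $\tfrac14-\delta$ (the constant), the spectrum of $Y_g$ below $\tfrac14-\delta$ consists of $0$ with multiplicity $\eta$.

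Next I would realize $Y_g$ as a geometric limit of closed surfaces. The total number of cusps of the $\Omega_i$ is even; pair them up — choosing the pairing so that all the pieces become connected — and graft onto each pair a long thin hyperbolic handle (equivalently: fix a genus-$g$ surface carrying a pants decomposition adapted to this combinatorics and let the relevant pants-curve lengths tend to $0$). An Euler-characteristic count shows the resulting closed surface has genus $g$, and as the grafted curves shrink one obtains a sequence $X_g^{(n)}\to Y_g$. I then need two-sided control of the bottom of the spectrum. The upper bound is easy: $X_g^{(n)}$ consists of $\eta$ pieces joined through very long collars, so cutting off functions that are locally constant on the pieces logarithmically over the collars gives $\eta$ almost-orthonormal functions of Rayleigh quotient $o_n(1)$, whence $\lambda_{\eta-1}(X_g^{(n)})\to0$. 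The lower bound $\liminf_{n}\lambda_\eta(X_g^{(n)})\ge\tfrac14-\delta$ is the delicate part: if it failed, then along a subsequence one would have $L^2$-orthonormal eigenfunctions $\varphi_0,\dots,\varphi_\eta$ of $X_g^{(n)}$ with eigenvalues bounded away from and below $\tfrac14-\delta$; because a low-energy function cannot concentrate on the thin parts of the collars, multiplying the $\varphi_j$ by a logarithmic cutoff supported near the short grafted geodesics changes their Dirichlet energies and $L^2$-norms by only $o_n(1)$, producing for large $n$ an $(\eta+1)$-dimensional space of functions on $Y_g$ with Rayleigh quotient $<\tfrac14-\delta$ — which is impossible, since the spectrum of $Y_g$ below $\tfrac14-\delta$ is $\eta$-fold. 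Making this cutoff estimate precise for the fixed finite collection of collars present on $X_g^{(n)}$ is exactly the generalized handle lemma, and is where the real work lies.

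Combining the two estimates, $\lambda_{\eta(g)}(X_g^{(n)})-\lambda_{\eta(g)-1}(X_g^{(n)})\ge\tfrac14-\delta-o_n(1)$, so fixing $n=n(g)$ sufficiently large gives $X_g\df X_g^{(n(g))}\in\sM_g$ with gap at position $\eta(g)$ at least $\tfrac14-\delta$ for all large $g$, and the theorem follows on letting $\delta\to0$. I expect two inputs to carry all the weight: the Hide--Magee construction of cusped surfaces of arbitrarily large complexity with near-optimal spectral gap (used for the single large piece $\Omega_i$), imported as a black box; and the handle-lemma cutoff estimate used in the lower bound above, which is the genuinely technical heart of the argument. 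Everything else — the Euler-characteristic bookkeeping, the choice of grafted handles, and the $o_n(1)$ test-function estimates — is routine once these are in hand.
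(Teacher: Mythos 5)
Your proposal follows the same strategy that the paper attributes to \cite{WZZ2024}: decompose $2g-2$ over $\eta$ cusped pieces each with $\bar\lambda_1 \geq \tfrac14 - \delta$ (Hide--Magee covers for the one large piece, $\H/\Gamma(2)$ and $\H/\Gamma(3)$ for the small ones), close up by grafting thin handles via the Buser--Burger--Dodziuk procedure, and transfer the bottom of the spectrum across the degeneration by a generalized handle lemma / degenerating mini-max principle. The omitted bookkeeping is indeed routine --- any finite-area surface with $-\chi = m$ has $m \bmod 2$ cusps, so $\sum m_i = 2g-2$ forces an even total and a connected pairing always exists because every piece has at least three cusps --- and you correctly identify the degenerating mini-max estimate as the technical crux; since that estimate is precisely the content of \cite{WZZ2024}, importing it as a black box is appropriate when reproving their quoted theorem.
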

\noindent Note that the index $\eta(g)$ can depend on $g$. When $\eta(g)\equiv 1$, this theorem is due to Hide-Magee. With the help of an upper bound for general $\lambda_{\eta(g)}$, we will see in Section \ref{sect-last} that if $\eta(g)=o(g)$, then
\[ \limg \sup_{X_g\in\sM_g}\left( \lambda_{\eta(g)}(X_g) - \lambda_{\eta(g)-1}(X_g) \right)=\frac 14. \] 

However, the closed surfaces constructed in \cite{HM2023,WZZ2024} using the Buser-Burger-Dodziuk compactification procedure have very short closed geodesics, which means that they are located near the boundary $\pa \sM_g$ of the moduli spaces $\sM_g$. For any $\epsilon>0$, the $\epsilon$-thick part $\sM_g^{\geq\epsilon}$ of the moduli space is a compact subset by Mumford \cite{Mumford1971}. It is natural to study the spectral gaps on thick parts of the moduli spaces. As introduced above, Hide-Magee \cite[Appendix A]{LM2022} had first constructed a sequence of closed arithmetic hyperbolic surfaces with $\lambda_1\to\frac14$ and systoles uniformly bounded from below by a positive constant; one may also see quite recent work \cite{HP24} of Hide-Petri for a different approach. 

Our proof of Theorem \ref{main} could be regarded as another method to compactify a finite-area non-compact hyperbolic surface, avoiding the appearance of long thin collars in the compactification procedure of Buser-Burger-Dodziuk. 

\subsection{Proof sketch of Theorem \ref{main}}
The proof of Theorem \ref{main} mainly consists of the following three parts, each of which is of independent interest.
\subsubsection{Comparison between two spectra}
We first introduce a procedure which is the key ingredient in our construction of surfaces with long systoles. Recall (see \eg \cite[Definition 2.1]{Brooks1999}) that for any $l>0$,  a finite-area non-compact hyperbolic surface is said to have \emph{large cusps of length $l$} if the horocycles of length $l$ around each cusp are embedded and pairwise disjoint.
Let $X$ be a finite-area non-compact hyperbolic surface with $n$ large cusps, and let $\epsilon_0$ $(\geq \frac{2\epsilon}{\pi})$ be some positive constant. Consider the open subsurface 
\[ X^{\mathrm{fu}} \df X\setminus \bigsqcup_{i=1}^n\cusp_i(\epsilon_0) \] 
of $X$, where $\bigsqcup_{i=1}^n\cusp_i(\epsilon_0)$ denotes the disjoint union of embedded cusps bounded by horocycles of length $\epsilon_0$. 
By the uniformization theorem and the Ahlfors-Schwarz lemma, $X^{\mathrm{fu}}$ admits a unique Poincar\'e metric associated with the complex structure induced from $X$, which makes $X^{\mathrm{fu}}$ into an infinite-area hyperbolic surface having only funnel ends (see Lemma \ref{lem:infinite_area}). The compact convex core of $X^{\mathrm{fu}}$, \ie the subsurface of $X^{\mathrm{fu}}$ with all funnel ends removed, is a compact hyperbolic surface of geodesic boundary, and we denote it by $Y$. See Figure \ref{compactification} for an illustration. 

\begin{figure} 
    \centering 
    \includegraphics[scale = 0.4]{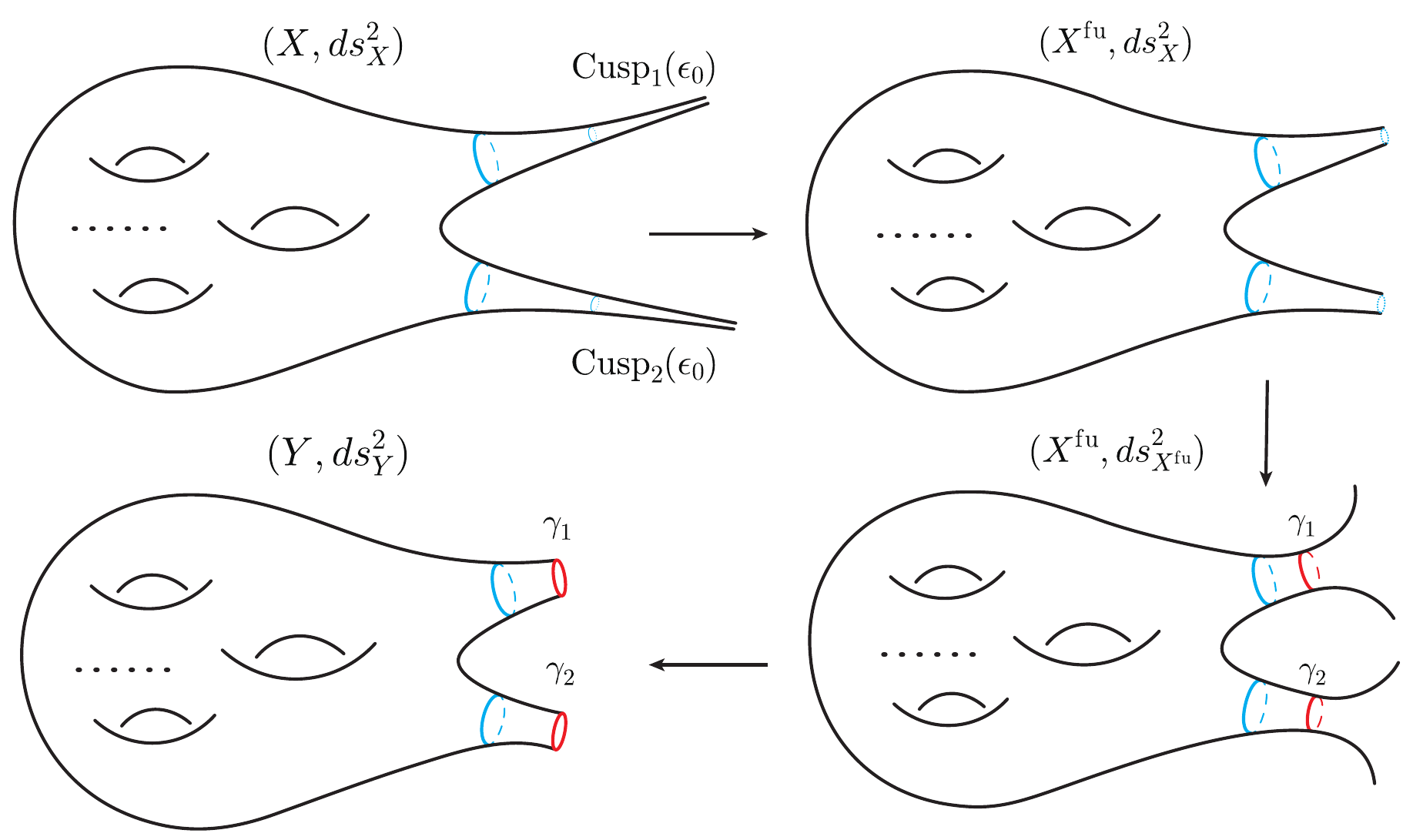}
    \caption{An illustration from $X$ to $Y$.}
    \label{compactification}
\end{figure}

Since $X$ is non-compact of finite area, the spectrum of the Laplacian operator of $X$ consists of absolutely continuous spectrum $\left[\frac{1}{4},\infty\right)$ and possible discrete eigenvalues in $[0,\infty)$. Denote by $\bar\lambda_1(X)$ the minimal non-zero spectrum of $X$, \ie 
\[ \bar\lambda_1(X)=\min\left\{\frac{1}{4},\ \lambda_1(X) \right\} \] 
where we set $\lambda_1(X)=\infty$ if $X$ has no positive discrete eigenvalue. Then we compare the first non-zero Neumann eigenvalue $\sigma_1(Y)$ of $Y$ with the minimal non-zero spectrum $\bar\lambda_1(X)$ of $X$. The below result is the first main result of this paper, and its proof is inspired by the work of Brooks \cite{Brooks1999} and Brooks-Makover \cite{BM2001}. 
\begin{theorem}\label{thm:comparison_eigenvalue}
    Let $X$ be a finite-area non-compact hyperbolic surface, and let $Y$ be constructed as above. For any $0<\delta<1/3$, there exists an $l_{\delta,\epsilon_0}>0$ only depending on $\delta$ and $\epsilon_0$ such that if $X$ has large cusps of length $l\geq l_{\delta,\epsilon_0}$, then 
 \[ \sigma_1(Y) \geq (1-4\delta)\cdot\bar\lambda_1(X) - \delta. \] 
\end{theorem}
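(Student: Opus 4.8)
The plan is to deduce the inequality from the variational characterisation of $\bar\lambda_1(X)$ by transplanting a first Neumann eigenfunction of $Y$ to a test function on $X$. First observe that we may assume $\sigma_1(Y)<\tfrac14$: otherwise $\sigma_1(Y)\ge\tfrac14\ge\bar\lambda_1(X)\ge(1-4\delta)\bar\lambda_1(X)-\delta$ and there is nothing to prove. Fix a first Neumann eigenfunction $f\in C^\infty(Y)$, normalised so that $\int_Y f^2\dvol_{g_Y}=1$; then $\int_Y f\dvol_{g_Y}=0$ and $\int_Y|\nabla f|^2\dvol_{g_Y}=\sigma_1(Y)$. The one clean structural ingredient, in the spirit of Brooks \cite{Brooks1999} and Brooks--Makover \cite{BM2001}, is conformal invariance of the Dirichlet energy in dimension two: since $g_Y$ (the restriction to $Y$ of the Poincar\'e metric $g_{X^{\mathrm{fu}}}$ of $X^{\mathrm{fu}}$) is conformal to $g_X|_Y$, we get $\int_Y|\nabla f|^2\dvol_{g_X}=\int_Y|\nabla f|^2\dvol_{g_Y}=\sigma_1(Y)$, so no energy is lost in passing from $g_Y$ to $g_X$ on $Y$.

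Next I would extend $f$ to a function $\widetilde f\in H^1(X)$. The set $X\setminus Y$ is a disjoint union of annular pieces $Z_1,\dots,Z_n$, where $Z_i$ runs from the boundary geodesic $\partial_i Y$ of $Y$, out through the funnel of $X^{\mathrm{fu}}$ attached there, and into the cusp $\cusp_i(\epsilon_0)$ of $X$. Set $\widetilde f=f$ on $Y$; on each $Z_i$, interpolate (in the natural collar/cusp parameter) from $f|_{\partial_i Y}$ to the constant $\bar f_i$ equal to its mean over $\partial_i Y$, and let $\widetilde f\equiv\bar f_i$ on the remainder of $Z_i$, in particular throughout $\cusp_i(\epsilon_0)$; since that cusp has finite $g_X$-area $\epsilon_0$ and $\widetilde f$ is constant there, $\widetilde f\in H^1(X)$. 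Then put $u=\widetilde f-\area(X)^{-1}\int_X\widetilde f\dvol_{g_X}$, so that $u\in H^1(X)$, $\int_X u\,\dvol_{g_X}=0$, and $\nabla u=\nabla\widetilde f$. Because $\bar\lambda_1(X)$ is precisely the bottom of the spectrum of the Laplacian of $X$ on the orthogonal complement of the constants (the absolutely continuous part starts at $\tfrac14$, and $\bar\lambda_1(X)=\min\{\tfrac14,\lambda_1(X)\}$), it bounds the Rayleigh quotient of $u$ from below:
\[
\bar\lambda_1(X)\ \le\ \frac{\int_X|\nabla\widetilde f|^2\dvol_{g_X}}{\int_X\widetilde f^2\dvol_{g_X}-\area(X)^{-1}\big(\int_X\widetilde f\dvol_{g_X}\big)^2}\,.
\]

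It remains to estimate the two sides, and this is the technical heart of the proof, where the largeness of the cusps is used. For the numerator, $\int_X|\nabla\widetilde f|^2\dvol_{g_X}=\sigma_1(Y)+\sum_i\int_{Z_i}|\nabla\widetilde f|^2\dvol_{g_X}$ by the first paragraph, and one must show that the interpolation cost on each $Z_i$ — and hence its sum over $i$ — is $O(\delta)$ when $l$ is large; this is where one needs that, inside $X$, each $Z_i$ is a long thin region attached to $\partial_i Y$ whose shape is governed by the model cusp/funnel geometry. For the denominator, one must show $\int_X\widetilde f^2\dvol_{g_X}\ge\int_Y f^2\dvol_{g_X}\ge1-O(\delta)$ and $\big|\int_X\widetilde f\dvol_{g_X}\big|=O(\delta)$ (the term $\int_Y f\dvol_{g_X}=-\int_Y(1-g_X/g_Y)f\dvol_{g_Y}$ being small because $\int_Y f\dvol_{g_Y}=0$, the rest controlled by the $g_X$-areas of the $Z_i$); by the Schwarz lemma $g_X\le g_Y$ on $Y$, so the loss here is $\int_Y(1-g_X/g_Y)f^2\dvol_{g_Y}$ plus the mean-zero correction, and what is needed is that the conformal factor $g_X/g_Y$ is uniformly close to $1$ on the part of $Y$ carrying almost all the mass of $f$. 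Granting these estimates, the displayed inequality becomes $\bar\lambda_1(X)\le\big(\sigma_1(Y)+O(\delta)\big)/\big(1-O(\delta)\big)$; rearranging and using $\bar\lambda_1(X)\le\tfrac14$ to absorb the $O(\delta)$ terms into the stated shape yields $\sigma_1(Y)\ge(1-4\delta)\bar\lambda_1(X)-\delta$. The main obstacle is exactly the two uniform geometric facts just isolated — the near-triviality of the conformal factor $g_X/g_{X^{\mathrm{fu}}}$ on the bulk of $Y$, and the cheapness of the extension across the transition regions $Z_i$ — both uniform in $\delta$ and $\epsilon_0$ once $l\ge l_{\delta,\epsilon_0}$; this is the point at which "large cusps" (rather than an arbitrary horocyclic truncation) is essential, and it is handled by explicit estimates in the cusp and funnel models following Brooks and Brooks--Makover. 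Everything else is soft variational packaging.
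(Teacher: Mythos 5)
Your outline is in the same variational spirit as the paper's proof, but it diverges on the key structural choice, and it leaves the actual quantitative bottleneck unresolved. The paper does not extend the Neumann eigenfunction $f$ outward into $X\setminus Y$; instead it multiplies $f$ by a cut-off $\chi$ that vanishes in a strip of width $\sqrt{w}$ around $\partial Y$ (where $w=w(l)$ is the half-collar width supplied by Theorem~\ref{thm:comparison_geometry}) and then extends $\chi f$ by zero. The test function $h=\chi f$ is therefore supported entirely inside $Y$, so the whole transition-region analysis on your $Z_i$ — controlling the boundary trace $f|_{\partial_i Y}$, its tangential oscillation, the interpolation gradient, and the contribution of the constants $\bar f_i$ over the cusps — simply never arises. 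The gradient cost of the cutoff is dispatched by the crude bound $|\nabla\chi|\le 4/\sqrt w$ together with $\int_Y|\nabla f|^2=\sigma_1(Y)\le 1/4$ and Cauchy--Schwarz (Lemma~\ref{lem:comparison_eigenvalue_lem1}); conformal invariance of the Dirichlet energy is then used exactly as you observe (Lemma~\ref{lem:comparison_eigenvalue_lem2}).

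The genuine gap in your write-up is the step you label ``what is needed is that the conformal factor $g_X/g_Y$ is uniformly close to $1$ on the part of $Y$ carrying almost all the mass of $f$.'' This is precisely the point, and you have not given an argument for why the $L^2$-mass of $f$ concentrates away from $\partial Y$. The paper supplies this via a separate section, Proposition~\ref{prop:distribution}: for a Laplacian eigenfunction on a collar with eigenvalue $\le 1/4$ satisfying Dirichlet or Neumann conditions, the ratio $\|f\|^2_{L^2(\collar(\gamma,w_1))}/\|f\|^2_{L^2(\collar(\gamma,w_2))}$ is at most $w_1/w_2$; this is proved by Fourier-decomposing in the collar, reducing to an ODE $u''=(1/4-\lambda+c/\cosh^2\rho)u$, and applying a monotonicity lemma (Lemma~\ref{lem:integral}). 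Applied with $w_1=\sqrt w$ and $w_2=w$, it gives $\int_{\collar(\sqrt w)}f^2\le 1/\sqrt w$, which is then fed into the metric comparison (Corollary~\ref{cor:comparison_volume}) to prove $\int_X h^2\dvol_X\ge 1-2\delta$ and $\area(X)^{-1}(\int_X h\,\dvol_X)^2\le 2\delta$ (Lemmas~\ref{lem:comparison_eigenvalue_lem3} and~\ref{lem:comparison_eigenvalue_lem4}). Without this mass-distribution input, there is no reason the eigenfunction could not localize in the collars near $\partial Y$, and the argument collapses; identifying and proving a statement of this type is the technical heart of the theorem, not soft packaging. If you wish to salvage your extension-by-interpolation route, you would additionally need a trace/energy estimate on $\partial_i Y$ to control the interpolation cost, which again comes down to the same mass-distribution statement — so the paper's inward cutoff is both cleaner and forces fewer auxiliary estimates.
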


\subsubsection{Uniform stability of first Neumann eigenvalues} We will also see that all components of the boundary $\pa Y$ of $Y$ have almost the same length (see Theorem \ref{thm:comparison_geometry}), provided that $X$ has large cusps. Next we make a precise small perturbation of $Y$ into a bordered compact hyperbolic surface $Y'$ with all boundary geodesics having the same length such that we can glue certain copies of $Y'$ into a closed hyperbolic surface. Here a bordered compact hyperbolic surface means that its boundary consists of simple closed geodesics. The following theorem is the second main result of this paper, which roughly says that the first Neumann eigenvalue is uniformly stable under small perturbations of the boundary of a bordered compact hyperbolic surface.
\begin{theorem}\label{thm:stability}
    Let $Y_{g,n}$ be a bordered compact hyperbolic surface with genus $g\geq1$, and let $\{\gamma_i\}_{1\leq i\leq n}$ be its boundary components. Suppose there exists a constant $\ell>0$ such that 
    \[ \Forall 1\leq i\leq n,\ \length(\gamma_i) \leq \ell, \] 
    then for any $(\delta_1, \delta_2,\cdots, \delta_n)\in \R^n$, with 
    \[ \delta \df \max_{1\leq i\leq n}\left\{ \abs{\delta_i} \right\} \] 
    sufficiently small, there exists a bordered compact hyperbolic surface $Y'_{g,n}$ with boundary components $\{\gamma'_i\}_{1\leq i\leq n}$, such that the followings hold: 
    \begin{enumerate}
        \item $\Forall 1\leq i\leq n$, $\ell(\gamma'_i) = (1 + \delta_i)\cdot\ell(\gamma_i)$. 
        \item There exists a bi-Lipschitz map $\mu$ from $Y_{g,n}$ to $Y'_{g,n}$ with Lipschitz constant $K_\mu$ satisfying \[ \abs{K_\mu-1} = O(\sqrt{\delta}), \] where the implied constant depends only on $\ell$.
        \item $\abs{ \sigma_1(Y'_{g,n}) / \sigma_1(Y_{g,n}) - 1 } = O(\sqrt{\delta})$, where the implied constant depends only on $\ell$. 
    \end{enumerate}
\end{theorem}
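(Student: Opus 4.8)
The plan is to build $Y'_{g,n}$ by a controlled move in Fenchel--Nielsen coordinates, which gives property (1) essentially by definition; to construct the bi-Lipschitz map of property (2) by an explicit piecewise construction supported near $\pa Y_{g,n}$; and then to deduce property (3) from (2) by the standard bi-Lipschitz comparison of Rayleigh quotients. Concretely, fix once and for all a pants decomposition $\sP$ of the topological surface $S_{g,n}$ underlying $Y_{g,n}$, together with the associated Fenchel--Nielsen coordinates $(\ell_j,\tau_j)_{1\le j\le 3g-3+n}$ for the interior curves and $\ell(\gamma_i)_{1\le i\le n}$ for the boundary curves; these give global coordinates on the Teichm\"uller space of hyperbolic structures with geodesic boundary on $S_{g,n}$. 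Let $Y'_{g,n}$ be the hyperbolic surface with the \emph{same} interior data $(\ell_j,\tau_j)$ as $Y_{g,n}$ and with boundary lengths $\ell(\gamma'_i)=(1+\delta_i)\ell(\gamma_i)$. This exists and is unique whenever $1+\delta_i>0$, so (1) holds by construction; the smallness of $\delta$ will only be used for (2) and (3).

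For (2): among the $2g-2+n$ pairs of pants of $\sP$, only those with at least one cuff equal to some $\gamma_i$ change when we pass from $Y_{g,n}$ to $Y'_{g,n}$; call these the boundary pants. Off the boundary pants we let $\mu$ be the identity, which is an isometry onto the corresponding region of $Y'_{g,n}$ since the interior lengths and twists are unchanged there. On a boundary pants $P(a,b,c)$ with $a=\ell(\gamma_i)\le\ell$ and $a'=(1+\delta_i)a$, we decompose $P$ and its counterpart $P'$ into two isometric right-angled hexagons glued along the three seams, and build a bi-Lipschitz map $P\to P'$ that is an \emph{isometry} on each interior cuff of $P$ (mapping seam-feet to seam-feet and preserving arclength, which is possible as those cuff lengths do not change), that is constant-speed from the $\gamma_i$-cuff onto the $\gamma'_i$-cuff, and that is constant-speed on each seam onto the corresponding seam of $P'$; one then interpolates across the six right-angle corners of each hexagon by a model map in geodesic polar coordinates around the vertex. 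Because $\mu$ restricted to each interior cuff is an isometry respecting the seam-feet and the interior twists are preserved, these pieces glue to a well-defined homeomorphism $\mu\colon Y_{g,n}\to Y'_{g,n}$. Its Lipschitz constant is governed by the relative changes of the seam lengths and by the distortion of the corner models; using the pair-of-pants seam-length formula
\[ \cosh d_{xy} = \frac{\cosh(\ell_z/2)+\cosh(\ell_x/2)\cosh(\ell_y/2)}{\sinh(\ell_x/2)\sinh(\ell_y/2)} \]
(with $\{x,y,z\}=\{a,b,c\}$), one checks that passing from $a$ to $a'$ changes every seam length of a boundary pants by a relative factor $1+O(\delta)$, with implied constant depending only on $\ell$ and \emph{not} on the (possibly very small or very large) interior cuff lengths; likewise the corner models contribute only $1+O(\delta)$. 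Hence $\mu$ is $K_\mu$-bi-Lipschitz with $|K_\mu-1|=O(\delta)$, which in particular yields the bound $O(\sqrt\delta)$ in the statement.

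For (3): recall that $\sigma_1(Y)$ is the infimum of the Rayleigh quotient $\int_Y|\nabla f|^2/\int_Y f^2$ over $f\in H^1(Y)$ with $\int_Y f=0$, and that a $K$-bi-Lipschitz homeomorphism of compact surfaces with boundary carries $H^1$ to $H^1$, multiplies Dirichlet energies by at most $K^{4}$ and $L^2$-norms by at least $K^{-2}$, and sends boundary to boundary. Given the Neumann eigenfunction $f'$ on $Y'_{g,n}$, the competitor $f'\circ\mu - c$ on $Y_{g,n}$, where $c$ is the mean of $f'\circ\mu$ so that the integral constraint holds, has Rayleigh quotient at most $(1+O(\sqrt\delta))\,\sigma_1(Y'_{g,n})$; here the contribution of subtracting $c$ is negligible because $\int_{Y'}f'=0$ and $|\det d\mu^{-1}|=1+O(\sqrt\delta)$, so $|c|=O(\sqrt\delta)\,\|f'\|_{L^2}/\mathrm{vol}(Y)^{1/2}$. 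This gives $\sigma_1(Y_{g,n})\le (1+O(\sqrt\delta))\,\sigma_1(Y'_{g,n})$, and the symmetric argument gives the reverse inequality; hence $|\sigma_1(Y'_{g,n})/\sigma_1(Y_{g,n})-1|=O(\sqrt\delta)$ with implied constant depending only on $\ell$.

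The main obstacle is the uniformity of the estimates in (2): the interior cuffs of $Y_{g,n}$ are entirely unconstrained, so a boundary pants can be arbitrarily thin (long seams) or fat (short seams), and one must check that perturbing the single short cuff $\gamma_i$ still moves the hexagon shape by a controlled multiple of $\delta$ with a constant depending on $\ell$ alone; this is exactly the role of the explicit seam-length formula above. Once this is in hand, the corner interpolations and the verification that the pieces glue along interior cuffs are routine, and (3) follows formally from (2). (Alternatively, one may bypass the explicit hexagon surgery: the same bi-Lipschitz bound follows from Thurston's Lipschitz metric once one has the uniform estimate $\ell_{Y'}(\alpha)/\ell_{Y}(\alpha)=1+O(\delta)$ for all closed geodesics $\alpha$, which is proved by the same seam-length analysis.)
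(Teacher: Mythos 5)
Your plan has the same skeleton as the paper's proof: alter only the Fenchel--Nielsen boundary lengths, build a bi-Lipschitz map supported on the boundary pairs of pants, then transfer the eigenvalue bound via the Rayleigh quotient. Steps (1) and (3) essentially coincide with the paper's argument (compare Lemmas \ref{lem:stab2}--\ref{lem:stab1}) and are fine. The crux is your step (2), and the specific claim that the bi-Lipschitz constants ``depend only on $\ell$ and not on the (possibly very small or very large) interior cuff lengths'' is exactly where the argument has a gap.

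That uniformity claim is the heart of the matter, and as stated it is not justified --- and in fact fails for the map you sketch. Your seam-length analysis controls only the ratio $\cosh d'_{bc}/\cosh d_{bc}$ (equivalently $\sinh d'_{bc}/\sinh d_{bc}$). When the two interior cuffs of a boundary pants are very long, the opposite seam $d_{bc}$ becomes arbitrarily short; then $d'_{bc}/d_{bc}$ is \emph{not} $1+O(\delta)$, since the relative change in the seam \emph{length} behaves like $\delta/d_{bc}^2$ near $d_{bc}\to 0$. A ``constant-speed'' map on such a seam is therefore not uniformly bi-Lipschitz. Dually, when an interior cuff $\alpha$ is very short the hexagon has a long thin spike, and the distortion estimate for the pentagon map carries the factor $e^{2\bar h}$ of the pentagon altitude (Lemma \ref{lem:mu_pf2}); Lemma \ref{lem:hexagon_altitude} bounds $\sinh h_\alpha \le \cosh^2\!\gamma/\tanh\alpha$, which blows up as $\alpha\to 0$. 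The paper forecloses both degeneracies with a purely topological step you omit: since $g\ge 1$, Lemma \ref{lem:largepants} produces a pants decomposition in which each boundary component $\gamma_i$ lies in a pair of pants whose other two cuffs have length at least $2\arcsinh 1$, and $Y'_{g,n}$ and the map $\mu$ are defined relative to \emph{that} decomposition. Proposition \ref{prop:mu_pants} is only stated and proved under the hypothesis $\length(\alpha),\length(\beta)\ge 2\arcsinh 1$; without the normalization supplied by Lemma \ref{lem:largepants}, the constants escape control.

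Two further issues. Your map on each pants is underdetermined: prescribing it on the cuffs and seams and ``interpolating across the six right-angle corners'' leaves the bulk of each hexagon undefined, and that interior is precisely where the work happens (the paper's Fermi-coordinate $\tanh$-ratio map and eqns \eqref{eqn:mu_pf2-1}--\eqref{eqn:mu_pf2-4}). You also assert $|K_\mu - 1| = O(\delta)$, stronger than the $O(\sqrt\delta)$ in the statement; the paper only obtains $O(\sqrt\delta)$ because it loses a square root when passing to sufficient conditions for positive-semidefiniteness of $(1\pm\delta_0)A - B$, and nothing in your sketch recovers the linear rate. Finally, the proposed shortcut via Thurston's Lipschitz metric does not help here: it is formulated for closed or finite-type punctured surfaces rather than surfaces with geodesic boundary, and in any case it produces a one-directional Lipschitz map, whereas your step (3) needs a single bi-Lipschitz map (you use both $|\!\det d\mu|$ bounds).
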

\noindent The proof of Theorem \ref{thm:stability} is technical. We will compare the geometry of two hyperbolic pairs of pants with two same boundary length (see Proposition \ref{prop:mu_pants}) that contains heavy computations in two-dimensional hyperbolic geometry (see Subsections \ref{subs-l1} and \ref{subs-l2}). We emphasize here that the bound $O(\sqrt{\delta})$ in Part (2) and (3) of Theorem \ref{thm:stability} is independent of $Y_{g,n}$, which is essential in the proof of Theorem \ref{main}.

\subsubsection{Endgame of the proof of Theorem \ref{main} based on random surfaces}

Now one can choose a sequence of finite-area non-compact hyperbolic surfaces $\{\sX_i\}$, all having large $\bar\lambda_1$, large cusps and long systoles, to serve as a starting point. The existence of such a sequence (see Proposition \ref{prop:surfaces}, which is essentially due to Hide-Magee \cite{HM2023}; Magee \cite{Magee2024} and Klukowski-Markovi\'c \cite{KM2024}; Nica \cite{Nica1994}; and Puder-Zimhoni \cite{PZ2024}) is highly non-trivial, and guaranteed by the very recent developments of random surfaces in covering model. Starting with $\{\sX_i\}$, by Theorem \ref{thm:comparison_eigenvalue} and \ref{thm:stability} we prove the following proposition.

\begin{proposition}\label{prop:piece}
    For any fixed $\epsilon>0$, any sufficiently small $\delta>0$, and each sufficiently large $g$, there exists a bordered compact hyperbolic surface $\sY_{g,2}(\epsilon,\epsilon)$ of genus $g$ with two boundary geodesics of same length $\epsilon$ such that 
    \begin{enumerate}
        \item all simple closed geodesics in $\sY_{g,2}$ have length $\geq \epsilon$. 
        \item $\pa\sY_{g,2}$ has long half-collars of width $\epsilon/2$.
        \item $\sigma_1(\sY_{g,2}) \geq \frac 14-O(\delta)$ where the implied constant depends only on $\epsilon$. 
    \end{enumerate} 
\end{proposition}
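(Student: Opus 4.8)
The plan is to assemble the three ingredients already stated into a single construction. First I would invoke the existence statement for the family $\{\sX_i\}$ (Proposition \ref{prop:surfaces}): fix $\epsilon>0$, and choose $\epsilon_0 \geq \frac{2\epsilon}{\pi}$; then among the $\sX_i$ pick one, call it $X$, that simultaneously has $\bar\lambda_1(X) \geq \frac14 - \delta$, has large cusps of length $l \geq l_{\delta,\epsilon_0}$ (the threshold from Theorem \ref{thm:comparison_eigenvalue}), has systole $\geq \epsilon$, and — crucially for counting genus — has at least two cusps and arbitrarily large genus as $i\to\infty$. The number of cusps $n$ and the genus of $X$ both grow with $i$, so for each sufficiently large target genus $g$ we can select the member of the family whose associated compact surface $Y$ (the convex core of $X^{\mathrm{fu}} = X \setminus \bigsqcup_i \cusp_i(\epsilon_0)$) has the right genus; here I would use that removing the embedded cusp neighborhoods and passing to the convex core does not change the genus.

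Next, apply Theorem \ref{thm:comparison_eigenvalue} to get $\sigma_1(Y) \geq (1-4\delta)\bar\lambda_1(X) - \delta \geq (1-4\delta)(\tfrac14-\delta) - \delta = \tfrac14 - O(\delta)$. By the geometric companion statement (Theorem \ref{thm:comparison_geometry}, referenced in the text), all boundary components of $Y$ have length close to $\epsilon_0$ — say within a controlled error that shrinks as $l\to\infty$ — and in particular all are bounded by some $\ell$ depending only on $\epsilon_0$. Now I would select exactly two of these boundary geodesics, discard (fill in, or simply restrict attention to a subsurface bounded by) the rest if necessary so that the resulting surface has exactly two boundary components — alternatively, and more cleanly, keep all boundaries of $Y$ but only two of them will be relevant for the later gluing, and the statement as written with ``$\sY_{g,2}$'' suggests we want a two-holed surface, which we can get by an appropriate choice of $X$ having a sub-collection of cusps, or by noting the construction produces a surface whose two distinguished boundaries we track. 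Then apply Theorem \ref{thm:stability} with the perturbation parameters $\delta_i$ chosen so that every boundary geodesic length is pushed to exactly $\epsilon$: since the lengths start within $O(\text{something}(l))$ of $\epsilon_0$ and we may first choose $\epsilon_0$ (and then $l$) so that $\epsilon_0$ is as close to $\epsilon$ as we like — or simply take $\epsilon_0 = \epsilon$ if the hypothesis $\epsilon_0 \geq \frac{2\epsilon}{\pi}$ permits, which it does not in general, so instead we choose $\epsilon_0$ slightly above $\frac{2\epsilon}{\pi}$ and then perturb down to $\epsilon$ — the perturbation size $\delta$ in Theorem \ref{thm:stability} is controlled. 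This yields $Y' = \sY_{g,2}$ with all boundary lengths equal to $\epsilon$ and, by Part (3) of Theorem \ref{thm:stability}, $\sigma_1(Y') \geq \sigma_1(Y)(1 - O(\sqrt{\delta})) \geq \tfrac14 - O(\delta)$ after renaming the error, giving conclusion (3).

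For conclusion (1), the bi-Lipschitz map $\mu$ of Theorem \ref{thm:stability}(2) with $K_\mu = 1 + O(\sqrt\delta)$ transports the systole bound: a simple closed geodesic in $Y'$ of length $<\epsilon$ would, via $\mu^{-1}$, give a closed curve in $Y$ of length $< \epsilon K_\mu$, and since $Y$ arises from $X$ which has systole $\geq \epsilon$ (this needs that the Poincaré metric on $X^{\mathrm{fu}}$ dominates the restriction of the complete hyperbolic metric on $X$, so curves in $Y$ are no shorter than in $X$ — this is exactly the Ahlfors–Schwarz input already used to build $Y$), we get a contradiction once $\delta$ is small enough that $\epsilon K_\mu$ is still below the next available length; more precisely one argues that $\sys(Y) \geq \epsilon$ and then $\sys(Y') \geq \epsilon/K_\mu = \epsilon - O(\sqrt\delta)$, and absorbing constants (or choosing $\epsilon_0$ and the family member with a bit of room, i.e. systole $\geq \epsilon + \text{margin}$) gives $\sys(Y') \geq \epsilon$ on the nose. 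Conclusion (2), the existence of half-collars of width $\epsilon/2$ around $\pa\sY_{g,2}$, follows from the standard collar lemma: a simple closed geodesic of length $\epsilon$ has an embedded half-collar of width $\operatorname{arcsinh}(1/\sinh(\epsilon/2))$ on each side inside the surface, which exceeds $\epsilon/2$; one must check the half-collar stays inside $Y'$, i.e. does not run into short geodesics or other boundary components, but this is guaranteed by conclusion (1) (systole $\geq \epsilon$) together with the collar lemma's disjointness clause. The main obstacle I anticipate is bookkeeping the error terms coherently across the three theorems — in particular making sure the $O(\delta)$ in Theorem \ref{thm:comparison_eigenvalue}, the $O(\sqrt\delta)$ in Theorem \ref{thm:stability}, and the geometric error in Theorem \ref{thm:comparison_geometry} are all simultaneously driven to zero by a single choice of parameters (first $\delta$ small, then $\epsilon_0$ close to its lower bound, then $l$ large, then $i$ large), and making sure that the surface we extract genuinely has exactly two boundary components of the prescribed length $\epsilon$ rather than $n$ of them — this last point may require either a careful choice within the random family or an extra small argument to reduce the number of boundary components without destroying the spectral gap.
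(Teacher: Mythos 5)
Your proposal identifies the correct three ingredients (the random-surface input, the eigenvalue comparison, and the stability theorem) and the top-level strategy is right, but there are two genuine gaps and one parameter mistake that together make the argument fail.

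\textbf{The reduction to exactly two boundary components is missing.} You write that one should ``select exactly two of these boundary geodesics, discard (fill in, or simply restrict attention to a subsurface bounded by) the rest if necessary.'' None of these operations works: discarding or filling in boundary components changes the surface and destroys the control on $\sigma_1$, and there is no obvious subsurface to restrict to. The paper's actual device, which you do not mention, is to \emph{glue pairs of boundary geodesics together}. The perturbed surface $\sY'_{2i}$ has all boundary geodesics of the same length $\epsilon$, and its Euler characteristic is even, so the number of boundary components is even, say $2n$; gluing $\gamma'_{2j-1}$ to $\gamma'_{2j}$ for $2\le j\le n$ (with arbitrary twists) produces a two-holed surface. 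This gluing is harmless for $\sigma_1$ because the first Neumann eigenvalue is non-decreasing under boundary identification (Lemma~\ref{lem:eigenvalue_monotonicity}), and the systole is controlled because any new simple closed geodesic either lies in $\sY'_{2i}$, or is one of the glued $\gamma'_j$, or crosses one of them, in which case \eqref{eqn:cross_collar} bounds its length from below. Without the gluing idea, there is no way to land on a genus $g$ surface with exactly two boundary components.

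\textbf{Your argument for conclusion (2) fails for $\epsilon$ not small.} You invoke the collar lemma to claim a boundary geodesic of length $\epsilon$ has an embedded half-collar of width $\arcsinh(1/\sinh(\epsilon/2))$, and assert this exceeds $\epsilon/2$. That inequality holds iff $\sinh(\epsilon/2)<1$, i.e.\ $\epsilon<2\arcsinh 1\approx1.76$. The theorem (and the whole point of the paper, as the introduction emphasizes) allows $\epsilon$ arbitrarily large, so this argument collapses for $\epsilon\gtrsim1.76$. The paper instead gets wide half-collars from Part~(2) of Theorem~\ref{thm:comparison_geometry}: the boundary of $\sY_{2i}$ has \emph{disjoint embedded} half-collars of width $w(l)=(1-\delta)\log l - c_{\delta,\epsilon_0}$, and one simply takes $l$ large enough that $w(l)\ge\epsilon$; the bi-Lipschitz map of Theorem~\ref{thm:stability} (with constant $\le2$ once $\delta$ is small) then gives half-collars of width $\ge\epsilon/2$ in $\sY'_{2i}$. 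This is not an application of the collar lemma at all but of the large-cusps hypothesis.

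\textbf{The parameter $\epsilon_0$ is chosen incorrectly, and the systole margin is the wrong size.} You take $\epsilon_0\ge 2\epsilon/\pi$, which makes the boundary lengths of $Y$ come out near $\pi\epsilon_0\ge2\epsilon$. Perturbing them down to $\epsilon$ would need $\delta_j\approx -1/2$, which is not ``sufficiently small'' for Theorem~\ref{thm:stability}. The paper takes $\epsilon_0=\epsilon/\pi$, so by Theorem~\ref{thm:comparison_geometry}(1) the boundary lengths of $Y$ lie in $[(1-\delta)\epsilon,(1+\delta)\epsilon]$ and the perturbation parameters $\delta_j=\epsilon/\ell(\gamma_j)-1$ are genuinely $O(\delta)$. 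Relatedly, you initially assume $\sX$ has systole $\ge\epsilon$ and then notice this only gives $\sys(\sY')\ge\epsilon-O(\sqrt\delta)$; the fix is not a vague ``margin'' but the specific requirement $\sys(\sX)\ge 2\epsilon$: then the interior geodesics of $Y$ have length $>2\epsilon$ (the Poincar\'e metric of $X^{\mathrm{fu}}$ dominates that of $X$), and a bi-Lipschitz map with constant $\le2$ keeps them $\ge\epsilon$, while the boundary lengths are fixed to be exactly $\epsilon$ by construction.
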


\noindent The rest of the proof follows the line of the proof of \cite[Theorem 4.1]{WZZ2024}. For each $k\geq1$, and for each large enough genus $g$, we can glue $k$ pieces of surfaces given by Proposition \ref{prop:piece} to obtain a closed hyperbolic surface $\sZ_g\in\sM_g^{\geq\epsilon}$ (see Figure \ref{fig:gluing} for illustration). Finally, by the Mini-max principle, we will see that as $g\to \infty$, it is not hard to see that $\lambda_{k-1}(\sZ_g)\to 0$ (see Corollary \ref{cor:eigenvalue_bound}); and it follows from Part (3) of Proposition \ref{prop:piece} that $\lambda_{k}(\sZ_g)\to \frac{1}{4}$. This tells that $\sZ_g$ has large $k$-th spectral gap, finishing the proof of Theorem \ref{main}. 
\begin{figure}
    \centering
    \includegraphics[scale = 0.30]{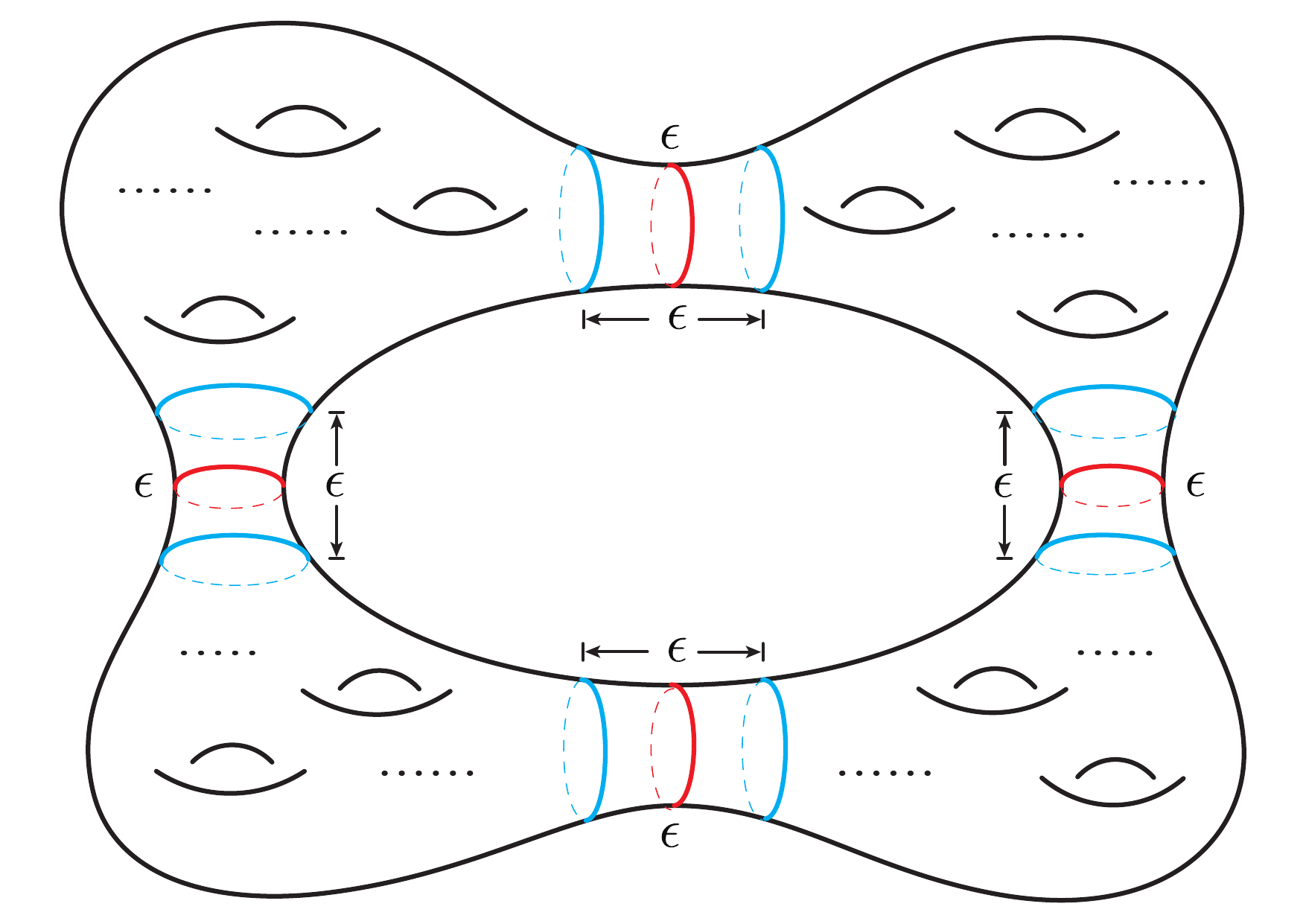}
    \caption{The construction of desired closed hyperbolic surfaces of large genus in Theorem \ref{main} when $k=4$.}
    \label{fig:gluing}
\end{figure}

\subsection*{Plan of the paper}
In Section \ref{section2} we recall and prove some necessary properties on two-dimensional hyperbolic geometry and spectral geometry of hyperbolic surfaces which will be applied later. In Section \ref{section3} we use results from the theory of random surfaces in the covering model to find finite-area non-compact hyperbolic surfaces with large $\bar\lambda_1$, large cusps and long systoles. We prove a comparison theorem between the Poincar\'e metrics of $X^{\mathrm{fu}}$ and $X$ in Section \ref{section4}, and a proposition on mass distribution of eigenfunctions on collars in Section \ref{section5}. In Section \ref{section6} we prove Theorem \ref{thm:comparison_eigenvalue}. In Section \ref{section7} we prove Theorem \ref{main} assuming Theorem \ref{thm:stability}, and in Section \ref{section8} we prove Theorem \ref{thm:stability}. We will introduce several further remarks in Section \ref{sect-last}.

\subsection*{Acknowledgements} We would like to thank Michael Magee and Doron Puder for their interest and comments. The first named author is particularly grateful to Bram Petri for many insightful discussions and sharing his ideas on the possible extension of Theorem \ref{main} to the case of arithmetic hyperbolic surfaces. 
The authors thank Xuwen Zhu for the aforementioned joint work \cite{WZZ2024}. We also thank all the participants in our seminar on Teichm\"uller theory for helpful discussions on this project. The first named author is partially supported by NSFC grants No. 12171263, 12361141813, and 12425107.

\tableofcontents

\section{Preliminaries}\label{section2}
\subsection{Poincar\'e metrics on some domains}
A complete Riemannian metric on a Riemann surface is called a Poincar\'e metric if it has constant Gaussian curvature $-1$. Denote by $\mathbb D$ the unit disc in the complex plane, by $\mathbb D^*$ the punctured unit disc and by $A_{R_1,R_2}$ the annulus 
\[ \{ z\in\C: \ 0 < R_1 < \abs{z} < R_2 < +\infty \}. \] 
By the uniformization theorem and the Ahlfors-Schwarz lemma, each of them admits a unique Poincar\'e metric in its conformal class. 
These metrics are listed below.
\begin{lemma}\label{lem:metrics_r} 
    Let $r = \abs{z}$ and $\theta\in {\R} / {(2\pi\Z)} $. 
    \begin{enumerate}
        \item The Poincar\'e metric $ds_O^2$ on $\mathbb D^*$ is \[ \left( \frac{1}{r \log\frac1r} \right)^2 \abs{\dif z}^2. \] 
        \item The Poincar\'e metric $ds_{R}^2$ on $A_{R_1,R_2}$ is \[ \left( \frac{\pi}{\log(R_2/R_1)} \cdot \frac{1}{r \sin\frac{\pi(\log R_2 - \log r)}{\log (R_2/R_1)}} \right)^2 \abs{\dif z}^2. \] 
    \end{enumerate}
\end{lemma}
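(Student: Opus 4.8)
The plan is to realize each of $\mathbb{D}^*$ and $A_{R_1,R_2}$ as a quotient of the upper half-plane $\H=\{w:\im w>0\}$, equipped with its curvature $-1$ metric $ds_\H^2=(\im w)^{-2}\,\abs{\dif w}^2$, by an explicit holomorphic covering map, and then to push $ds_\H^2$ forward. Since the pull-back of a complete conformal metric of curvature $-1$ along a holomorphic covering is again complete, conformal, and of curvature $-1$, and since such a metric is unique in a given conformal class (this is exactly the input from the uniformization theorem and the Ahlfors--Schwarz lemma recalled above), it suffices in each case to write down one covering map from $\H$ (or from a quotient of $\H$ whose Poincar\'e metric is already known) and carry out the change of variables.

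For part (1), I would use the universal covering $w\mapsto z=e^{iw}$ of $\mathbb{D}^*$ by $\H$, whose deck group is generated by $w\mapsto w+2\pi$. Writing $r=\abs{z}$, one has $\abs{\dif z}=r\,\abs{\dif w}$ and $\im w=-\log r=\log\tfrac1r$, so the pull-back of $ds_\H^2$ equals $\bigl(r\log\tfrac1r\bigr)^{-2}\abs{\dif z}^2$, which is the asserted $ds_O^2$.

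For part (2), set $m=\log(R_2/R_1)$. First I would record that the strip $\Sigma=\{w:0<\im w<\pi\}$ carries Poincar\'e metric $\frac{\abs{\dif w}^2}{\sin^2(\im w)}$: indeed $w\mapsto e^w$ is a biholomorphism $\Sigma\to\H$ with $\im(e^w)=e^{\re w}\sin(\im w)$, and pulling back $ds_\H^2$ along it gives exactly $\frac{\abs{\dif w}^2}{\sin^2(\im w)}$. Then the map $w\mapsto z=R_1\exp(-\tfrac{im}{\pi}w)$ is a holomorphic universal covering $\Sigma\to A_{R_1,R_2}$: it carries the boundary lines $\im w=0$ and $\im w=\pi$ onto the circles $\abs{z}=R_1$ and $\abs{z}=R_2$, it is surjective, and its deck group is generated by $w\mapsto w-\tfrac{2\pi^2}{m}$. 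Along this map $\abs{\dif w}=\tfrac{\pi}{m}\cdot\tfrac{\abs{\dif z}}{r}$ and $\im w=\tfrac{\pi}{m}(\log r-\log R_1)$, and since $\im w+\tfrac{\pi}{m}(\log R_2-\log r)=\pi$ we may rewrite $\sin(\im w)=\sin\tfrac{\pi(\log R_2-\log r)}{m}$. Substituting into $\frac{\abs{\dif w}^2}{\sin^2(\im w)}$ produces the claimed formula for $ds_R^2$.

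I do not expect a real obstacle here; the content is bookkeeping with conformal changes of variable. The only steps that deserve a line of care are the surjectivity of the two covering maps together with the identification of their deck groups (so that the quotients are exactly $\mathbb{D}^*$ and $A_{R_1,R_2}$), and the trigonometric identity $\sin\tfrac{\pi(\log r-\log R_1)}{m}=\sin\tfrac{\pi(\log R_2-\log r)}{m}$ used to match our normalization with the one in the statement. As a consistency check, part (1) can also be recovered from part (2) by letting $R_1\to0^+$ with $R_2=1$, since then $\tfrac{\pi}{m}\cdot\tfrac{1}{\sin(\tfrac{\pi}{m}\log\tfrac1r)}\to\tfrac{1}{\log\frac1r}$, but the direct argument above is cleaner.
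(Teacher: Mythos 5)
Your proposal is correct and takes essentially the same approach as the paper: both realize the annulus as a quotient of a horizontal strip whose Poincar\'e metric is obtained by pulling back $ds_\H^2$ along an exponential biholomorphism, then transport the metric along the covering map to the annulus. The only difference is cosmetic — you normalize the strip to $\{0<\im w<\pi\}$ and absorb the scaling into your covering map $w\mapsto R_1\exp(-\tfrac{im}{\pi}w)$, whereas the paper uses the strip $\{\log R_1<y<\log R_2\}$ and the unscaled covering $z\mapsto e^{-iz}$ — and you additionally spell out a derivation for part (1), which the paper dismisses as well-known. All your computations (the change of variables, the identity $\sin(\im w)=\sin\tfrac{\pi(\log R_2-\log r)}{m}$, the deck group, and the $R_1\to 0^+$ consistency check) are correct.
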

\begin{proof}
    The formula of $ds_{O}^2$ is well-known. We sketch a proof of the formula of $ds_{R}^2$ here for completeness (because we cannot find a precise reference).
   The map defined by 
    \[ z\mapsto f(z)\df\exp\frac{\pi(i\log R_2-z)}{\log (R_2/R_1)} \] 
    is a biholomorphic map from the horizontal strip 
    \[ \left\{z\in\C:\ \log R_1 < y < \log R_2 \right\} \] 
    to $\H$, thus the Poincar\'e metric on the horizontal strip above is 
    \begin{align*}
        & \frac{1}{(\im f(z))^2} \abs{\dif f(z)}^2\\ 
        = & \frac{1}{\left( \im \exp\frac{\pi(i\log R_2-z)}{\log (R_2/R_1)} \right)^2} \abs{\frac{\pi}{\log (R_2/R_1)} \exp\frac{\pi(i\log R_2-z)}{\log (R_2/R_1)}}^2 \abs{\dif z}^2\\
        = & \left( \frac{\pi}{\log (R_2/R_1)} \frac{1}{\sin \frac{\pi(\log R_2-y)}{\log(R_2/R_1)}} \right)^2 \abs{\dif z}^2. 
    \end{align*}
    Moreover, since the map $z\mapsto e^{-iz}$ is a holomorphic covering map from this horizontal strip to the annulus $A_{R_1,R_2}$, the metric density function $h_{R}(z)$ of $ds_{R}^2$ satisfies 
    \[ h_{R}(e^{-iz})\cdot\abs{-ie^{-iz}} = \frac{\pi}{\log (R_2/R_1)} \frac{1}{\sin \frac{\pi(\log R_2-y)}{\log(R_2/R_1)}}. \] 
    Since $y=\log\abs{e^{-iz}}$, one has 
    \[ h_{R}(z) = \frac{\pi}{\log (R_2/R_1)}\frac{1}{\abs{z}\sin\frac{\pi(\log R_2-\log\abs{z})}{\log(R_2/R_1)}} = \frac{\pi}{\log (R_2/R_1)}\frac{1}{r\sin\frac{\pi(\log R_2-\log r)}{\log(R_2/R_1)}}. \] 
    The proof is complete. 
\end{proof}

Let $(\rho,\theta)$ be the pair of hyperbolic polar coordinates of $\mathbb D$ centered at the origin, where $\rho = 2\arctanh |z|$. The hyperbolic metrics on $\mathbb D^*$ and $A_{R_1,R_2}$ in hyperbolic polar coordinates can be given as follows.
\begin{lemma}\label{lem:metrics_rho}
    Let $\rho = 2\arctanh \abs{z}$ and $\theta\in {\R} / {(2\pi\Z)} $. 
    \begin{enumerate}
        \item The Poincar\'e metric $ds_O^2$ on $\mathbb D^*$ is \[ \left( \frac{1}{\sinh\rho\cdot\log\coth(\rho/2)} \right)^2 \left( \dif\rho^2 + \sinh^2\!\rho \dif\theta^2 \right). \] 
        \item The Poincar\'e metric $ds_{R}^2$ on $A_{R_1,R_2}$ is \[ \left( \frac{\pi}{\log(R_2/R_1)} \cdot \frac{1}{\sinh \rho\sin\frac{\pi(\log R_2 - \log \tanh(\rho/2))}{\log (R_2/R_1)}} \right)^2 \left( \dif\rho^2 + \sinh^2\!\rho \dif\theta^2 \right). \] 
    \end{enumerate}
\end{lemma}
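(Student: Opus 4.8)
The plan is to perform a direct change of variables from the Euclidean polar coordinates $(r,\theta)$, $r=\abs{z}$, to the hyperbolic polar coordinates $(\rho,\theta)$ with $\rho=2\arctanh r$, equivalently $r=\tanh(\rho/2)$, and substitute into the formulas of Lemma \ref{lem:metrics_r}. The one geometric input I would use is the relation between $\abs{\dif z}^2$ and the hyperbolic line element $\dif\rho^2+\sinh^2\!\rho\,\dif\theta^2$ in these coordinates.

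First I would record this relation. From $r=\tanh(\rho/2)$ one gets $1-r^2=\cosh^{-2}(\rho/2)$ and $\dif r=\tfrac12\cosh^{-2}(\rho/2)\,\dif\rho=\tfrac{1-r^2}{2}\,\dif\rho$, hence
\[ \abs{\dif z}^2=\dif r^2+r^2\dif\theta^2=\frac{(1-r^2)^2}{4}\,\dif\rho^2+r^2\,\dif\theta^2. \]
Using the half-angle identity $\sinh\rho=\dfrac{2\tanh(\rho/2)}{1-\tanh^2(\rho/2)}=\dfrac{2r}{1-r^2}$, i.e. $r^2=\dfrac{(1-r^2)^2}{4}\sinh^2\!\rho$, this becomes
\[ \abs{\dif z}^2=\frac{(1-r^2)^2}{4}\left(\dif\rho^2+\sinh^2\!\rho\,\dif\theta^2\right). \]
(Equivalently, one may just invoke that the standard metric $\dfrac{4\abs{\dif z}^2}{(1-\abs{z}^2)^2}$ on $\mathbb D$ equals $\dif\rho^2+\sinh^2\!\rho\,\dif\theta^2$ in hyperbolic polar coordinates centered at the origin.) The only algebraic fact I would isolate is the consequence
\[ \frac{(1-r^2)^2}{4r^2}=\frac{1}{\sinh^2\!\rho},\qquad r=\tanh(\rho/2). \]

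With this in hand both parts follow by substitution. For part (1), multiplying the density $\bigl(\tfrac{1}{r\log(1/r)}\bigr)^2$ of Lemma \ref{lem:metrics_r}(1) by $\tfrac{(1-r^2)^2}{4}$ and using $\log(1/r)=\log\coth(\rho/2)$ yields exactly $\bigl(\tfrac{1}{\sinh\rho\,\log\coth(\rho/2)}\bigr)^2\!\left(\dif\rho^2+\sinh^2\!\rho\,\dif\theta^2\right)$. For part (2), the same substitution into the density of Lemma \ref{lem:metrics_r}(2), together with $\log r=\log\tanh(\rho/2)$, gives the stated expression. There is no real obstacle here: the computation is routine, and the only points needing a little care are the correct conformal normalization of the hyperbolic metric on $\mathbb D$ (the factor $4$) and consistent use of the half-angle identities for $\tanh$ and $\sinh$.
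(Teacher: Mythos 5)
Your proof is correct and uses exactly the approach the paper indicates: substitute $r=\tanh(\rho/2)$ and $\dif r=\dif\rho/(2\cosh^2(\rho/2))$ into Lemma~\ref{lem:metrics_r}. The paper omits the details; you have simply filled them in, and the key identity $\frac{(1-r^2)^2}{4r^2}=\frac{1}{\sinh^2\!\rho}$ is the same routine half-angle computation one would perform.
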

\begin{proof}
The conclusion follows from a direct computation by taking substitutions of $r=\tanh\frac{\rho}{2}$ and $dr=\frac{d\rho}{2\cosh^2\frac{\rho}{2}}$ into Lemma \ref{lem:metrics_r}. We omit the details here. 
\end{proof}

\begin{remark*}\label{rmk:cylinder_type}
    The hyperbolic punctured disk $(\D^*,ds_O^2)$ is also called a parabolic cylinder. 
    And $(A_{R_1,R_2},ds_{R}^2)$ is called a hyperbolic cylinder. 
    They are known as elementary hyperbolic surfaces. 
\end{remark*}

Denote by $h_{O}(\rho)$ and $h_{R_1,R_2}(\rho)$ the metric density functions of $ds_{O}^2$ and $ds_{R}^2$, respectively. 
Note that these hyperbolic metrics are pairwise conformal to each other (on the annulus $A_{R_1,R_2}$). Since $A_{R_1,R_2}\subsetneq\mathbb{D}^*\subsetneq\mathbb{D}$, by the domain monotonicity of Poincar\'e metrics, one has 
\[ h_{R_1,R_2}(\rho) > h_O(\rho) > 1. \] 
Moreover, we have the following lemmas, which will be used to prove Theorem \ref{thm:comparison_geometry}.

\begin{lemma}\label{lem:calculus1}
    The function $h_O$ satisfies the following. 
    \begin{align*}
        \lim_{\rho\to\infty} &h_{O}(\rho) = 1,\\
        \lim_{\rho\to\infty} &\frac{\dif h_{O}}{\dif\rho}(\rho) = 0,\\
        \textrm{and\quad} \lim_{\rho\to\infty} &\frac{\dif^2 h_{O}}{\dif\rho^2}(\rho) = 0.
    \end{align*}
\end{lemma}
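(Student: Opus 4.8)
The plan is to work directly with the explicit formula from Lemma~\ref{lem:metrics_rho}(1), namely
\[ h_O(\rho) = \frac{1}{\sinh\rho\cdot\log\coth(\rho/2)}, \]
and to reduce all three limits to the behaviour of the single auxiliary function $g(\rho)\df\sinh\rho\cdot\log\coth(\rho/2)$, so that $h_O = 1/g$. Since $\coth(\rho/2)>1$ for every $\rho>0$, one has $g(\rho)>0$ on $(0,\infty)$; hence once I show $g(\rho)\to 1$ as $\rho\to\infty$ it follows that $g$ is bounded away from $0$ near infinity, and the statements for $h_O$, $h_O'$, $h_O''$ will follow from the chain-rule identities $h_O'=-g'/g^2$ and $h_O''=2(g')^2/g^3-g''/g^2$ together with the corresponding limits for $g$, $g'$, $g''$.

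The computational heart is the identity
\[ \frac{d}{d\rho}\log\coth(\rho/2) = -\frac{1}{\sinh\rho}, \]
which I would verify by writing $\log\coth(\rho/2)=\log\cosh(\rho/2)-\log\sinh(\rho/2)$ and simplifying $\tfrac12\tanh(\rho/2)-\tfrac12\coth(\rho/2)$. Differentiating $g$ twice with the help of this identity yields the simple closed expressions
\[ g'(\rho)=\cosh\rho\cdot\log\coth(\rho/2)-1, \qquad g''(\rho)=\sinh\rho\cdot\log\coth(\rho/2)-\coth\rho = g(\rho)-\coth\rho. \]

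To pass to the limit I would use the elementary identity $\log\coth(\rho/2)=2\arctanh(e^{-\rho})$ (equivalently, substitute $s=e^{-\rho}\to 0^+$), which turns the two quantities above into
\[ g(\rho)=(e^\rho-e^{-\rho})\arctanh(e^{-\rho})=\frac{1-s^2}{s}\arctanh(s), \qquad \cosh\rho\cdot\log\coth(\rho/2)=\frac{1+s^2}{s}\arctanh(s). \]
Since $\arctanh(s)/s\to 1$ as $s\to 0^+$, both quantities tend to $1$; hence $g\to 1$ and $g'\to 0$. Finally $\coth\rho\to 1$, so $g''=g-\coth\rho\to 0$. Plugging $g\to 1$, $g'\to 0$, $g''\to 0$ into the identities for $h_O$, $h_O'$, $h_O''$ above gives exactly the three claimed limits.

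I do not expect a genuine obstacle: the statement is a routine asymptotic computation. The only points requiring care are organisational — spotting the identity $\tfrac{d}{d\rho}\log\coth(\rho/2)=-1/\sinh\rho$ so that the first and second derivatives of $g$ stay elementary, and working throughout with $g$ (so that each derivative limit comes from an exact closed formula) rather than formally differentiating an asymptotic expansion of $h_O$ term by term.
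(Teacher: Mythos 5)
Your proposal is correct and is essentially the paper's own proof: both work with the reciprocal $g=1/h_O$ (the paper calls it $H_O$), use the chain-rule identities $h_O'=-g'/g^2$ and $h_O''=2(g')^2/g^3-g''/g^2$, and compute the same closed-form expressions $g'=\cosh\rho\cdot\log\coth(\rho/2)-1$ and $g''=g-\coth\rho$. The only cosmetic difference is how the limits of $g$ and $\cosh\rho\cdot\log\coth(\rho/2)$ are established — you substitute $s=e^{-\rho}$ and use $\log\coth(\rho/2)=2\arctanh(e^{-\rho})$, whereas the paper invokes the asymptotic chain $\log\coth(\rho/2)\sim 1-\tanh(\rho/2)\sim 1/\sinh\rho$.
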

\begin{proof}
    The results are implicitly contained in Brooks \cite[Lemma 1.2 \& Lemma 1.3]{Brooks1999}. A short proof is given here for completeness. Let 
    \[ H_O \df \frac{1}{h_O} = \sinh\rho \cdot \log\coth\frac{\rho}{2}, \] 
    then 
    \begin{equation*}
        \frac{\dif h_{O}}{\dif\rho} = -\frac{\dif H_{O}/\!\dif\rho}{H_{O}^2}\ \textrm{ \ and\ \ }
        \frac{\dif^2 h_{O}}{\dif\rho^2} = \frac{2(\dif H_{O}/\!\dif\rho)^2 - H_{O}\cdot (\dif^2 H_{O}/\!\dif\rho^2)}{H_{O}^3}. 
    \end{equation*}
    Therefore it suffices to prove that as $\rho$ tends to infinity, one has 
    \[ H_O(\rho)\to1, \ \frac{\dif H_O}{\dif\rho}(\rho)\to0, \ \textrm{and\ } \frac{\dif^2 H_O}{\dif\rho^2}(\rho)\to0. \] 
    Since as $\rho$ tends to infinity, one has 
    \begin{equation*}
        \log\coth\frac{\rho}{2} \sim 1-\tanh\frac{\rho}{2} \sim \frac{1}{\sinh\rho}, 
    \end{equation*}
    thus 
    \begin{equation*}
        \begin{aligned}
            H_{O}(\rho) 
            & = \sinh\rho \cdot \log\coth\frac{\rho}{2} \to 1,\\ 
            \frac{\dif H_O}{\dif\rho}(\rho)
            & = \cosh\rho \cdot \log\coth\frac{\rho}{2} - 1 \to 0,\\ 
            \textrm{and\quad} \frac{\dif^2 H_O}{\dif\rho^2}(\rho) 
            & = \sinh\rho \cdot\log\coth\frac{\rho}{2} - \coth\rho \to 0. 
        \end{aligned}
    \end{equation*}
    Then the lemma follows. 
\end{proof}

\begin{lemma}\label{lem:calculus2}
    Let $R_1\in(0,\, 1)$ and $\delta>0$. Then for each sufficiently large $\rho_0$, there exists an $R(\rho_0)$ satisfying 
\[ \tanh\frac{\rho_0+1}{2} < R(\rho_0), \] 
    such that for any $R_2\in[R(\rho_0),\,1]$, 
    the following inequalities hold: 
    \begin{align*}
        \max_{\rho_0 \leq \rho \leq \rho_0+1} &\abs{h_{R_1,R_2}(\rho)-1} \leq \delta,\\
        \max_{\rho_0 \leq \rho \leq \rho_0+1} &\abs{\frac{\dif h_{R_1,R_2}}{\dif\rho}(\rho)} \leq \delta,\\
        \textrm{and\quad} \max_{\rho_0 \leq \rho \leq \rho_0+1} &\abs{\frac{\dif^2 h_{R_1,R_2}}{\dif\rho^2}(\rho)} \leq \delta. 
    \end{align*}
\end{lemma}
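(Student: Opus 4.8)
The plan is to reduce the statement about the hyperbolic cylinder density $h_{R_1,R_2}$ to the already-established asymptotics of the parabolic cylinder density $h_O$ from Lemma \ref{lem:calculus1}, by exploiting the fact that $A_{R_1,R_2}\to\mathbb{D}^*$ as $R_2\to 1$ in a controlled way. First I would write out, from Lemma \ref{lem:metrics_rho}, the explicit formula
\[ H_{R_1,R_2}(\rho) \df \frac{1}{h_{R_1,R_2}(\rho)} = \frac{\log(R_2/R_1)}{\pi}\cdot\sinh\rho\cdot\sin\frac{\pi(\log R_2 - \log\tanh(\rho/2))}{\log(R_2/R_1)}, \]
and note the analogous relations for $\dif h_{R_1,R_2}/\dif\rho$ and $\dif^2 h_{R_1,R_2}/\dif\rho^2$ in terms of $H_{R_1,R_2}$ and its first two $\rho$-derivatives, exactly as in the proof of Lemma \ref{lem:calculus1}. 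So it suffices to show that on the fixed-width window $[\rho_0,\rho_0+1]$, the quantities $H_{R_1,R_2}$, $\dif H_{R_1,R_2}/\dif\rho$, $\dif^2 H_{R_1,R_2}/\dif\rho^2$ can be made uniformly close to $1$, $0$, $0$ respectively, by first taking $\rho_0$ large and then taking $R_2$ close enough to $1$ (the threshold $R(\rho_0)$).

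Next I would carry out the two-step comparison. Step one: fix $\rho_0$ large. On $[\rho_0,\rho_0+1]$ the argument $\log\tanh(\rho/2)$ lies in a compact interval bounded away from $0$, so as $R_2\to 1$ we have $\log R_2\to 0$, $\log(R_2/R_1)\to \log(1/R_1)$, and the quantity
\[ \frac{\pi(\log R_2 - \log\tanh(\rho/2))}{\log(R_2/R_1)} \longrightarrow \frac{\pi(-\log\tanh(\rho/2))}{\log(1/R_1)} \]
uniformly in $\rho$ on the window, together with its $\rho$-derivatives (which involve $\coth\rho$, bounded on the window). Hence $H_{R_1,R_2}(\rho)\to H_{R_1}(\rho)$ and similarly for the first two derivatives, uniformly on $[\rho_0,\rho_0+1]$, where $H_{R_1}$ is the corresponding density reciprocal for the punctured-disc-type metric $ds^2$ on $\mathbb{D}^*_{R_1}\df\{0<|z|<R_1\}$ — equivalently, by the scaling $z\mapsto z/R_1$, a rescaled copy of $H_O$. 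Step two: by Lemma \ref{lem:calculus1} (applied to this rescaled parabolic cylinder, or directly noting the limiting expression is $\sinh\rho\cdot\log\coth(\rho/2)$ up to the harmless scaling, whose large-$\rho$ behavior is $\to 1$ with derivatives $\to 0$), for $\rho_0$ large enough we get $|H_{R_1}(\rho)-1|$, $|\dif H_{R_1}/\dif\rho|$, $|\dif^2 H_{R_1}/\dif\rho^2|$ all $\leq \delta/10$ on the window. Combining, choose $\rho_0$ first so the limiting quantities are within $\delta/10$ of their targets, then choose $R(\rho_0)>\tanh\tfrac{\rho_0+1}{2}$ so that for all $R_2\in[R(\rho_0),1]$ the actual quantities are within $\delta/10$ of the limiting ones; a triangle inequality gives the claim. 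Translating back from $H_{R_1,R_2}$ to $h_{R_1,R_2}$ via the quotient formulas (the denominators $H_{R_1,R_2}^2$, $H_{R_1,R_2}^3$ are bounded below near $1$) yields the three stated inequalities, after possibly shrinking the $\delta$-budget by a universal constant factor.

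The condition $\tanh\tfrac{\rho_0+1}{2}<R(\rho_0)$ is exactly what is needed so that the window $[\rho_0,\rho_0+1]$ in the $\rho$-coordinate lies inside the annulus $A_{R_1,R_2}$: recall $\rho=2\arctanh|z|$, so $\rho\le\rho_0+1$ corresponds to $|z|\le\tanh\tfrac{\rho_0+1}{2}<R_2$, and we should also check $\rho_0$ is large enough that $\tanh(\rho_0/2)>R_1$, i.e.\ the window sits in the part of the annulus near the outer boundary $|z|=R_2$ where the metric looks parabolic-cusp-like rather than like the other end. This is a mild additional constraint on how large $\rho_0$ must be, independent of everything else.

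The main obstacle is purely bookkeeping: making the ``uniform in $\rho$ on $[\rho_0,\rho_0+1]$'' convergence in step one genuinely uniform for the second derivative, since differentiating twice the composition $\sin\big(\text{(affine in }\log\tanh(\rho/2))\big)$ produces several terms, some with factors of $1/\log(R_2/R_1)$ and $1/\log^2(R_2/R_1)$ that do \emph{not} blow up (because $\log(R_2/R_1)\to\log(1/R_1)$ is a fixed positive constant as $R_2\to1$, not $0$) but must be tracked. None of it is deep; one just writes $H_{R_1,R_2}$ and its derivatives as explicit finite sums of products of $\sinh\rho,\cosh\rho,\coth\rho,\coth(\rho/2)$ and sines/cosines of the affine argument, observes each factor is continuous and bounded on the compact set $[\rho_0,\rho_0+1]\times[R(\rho_0),1]$, and applies uniform continuity. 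I would present the argument by first recording these explicit expressions, then stating the two limits as sub-claims, then assembling via the triangle inequality, deferring the longer derivative computations to the reader as "a direct computation" in the spirit of the proof of Lemma \ref{lem:metrics_rho}.
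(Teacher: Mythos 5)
Your two-step strategy (first fix $\rho_0$ large, then take $R_2\to 1$ for that fixed $\rho_0$) is genuinely different from the paper's proof, which couples the two limits by choosing $R(\rho_0)=1-t_0^3$ with $t_0=1-\tanh(\rho_0/2)$ and then runs a single simultaneous asymptotic expansion in $t$. Your decoupling is cleaner in spirit and would simplify the bookkeeping. However, there is a genuine error in Step two that needs to be repaired before the argument is complete.

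The problem is the identification of the $R_2\to 1$ limit. The annulus $A_{R_1,R_2}=\{R_1<|z|<R_2\}$ converges to $A_{R_1,1}=\{R_1<|z|<1\}$, a finite-modulus annulus, not to the punctured disc $\{0<|z|<R_1\}$ as you write (you have the roles of the inner and outer radii reversed; the punctured disc would arise from $R_1\to 0$, not $R_2\to 1$). Consequently the limiting reciprocal density is, with $L=-\log R_1$ and $s=\log\coth(\rho/2)$,
\[
H_{R_1,1}(\rho)\;=\;\frac{L}{\pi}\,\sinh\rho\,\sin\!\frac{\pi s}{L},
\]
which is the reciprocal density of a hyperbolic cylinder, \emph{not} $H_O(\rho)=\sinh\rho\cdot s$ and \emph{not} a rescaled copy of $H_O$. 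Your parenthetical alternative (``directly noting the limiting expression is $\sinh\rho\log\coth(\rho/2)$ up to the harmless scaling'') is therefore also false; these two functions agree only asymptotically as $\rho\to\infty$. In particular Lemma~\ref{lem:calculus1} does not apply to $H_{R_1,1}$ as stated.

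The repair is short but necessary: you need to verify separately that $H_{R_1,1}(\rho)\to 1$ and $\dif H_{R_1,1}/\dif\rho, \dif^2 H_{R_1,1}/\dif\rho^2\to 0$ as $\rho\to\infty$. This follows from expanding $\sin(\pi s/L)=\tfrac{\pi s}{L}\bigl(1+O(s^2)\bigr)$ (so $H_{R_1,1}=H_O\cdot(1+O(s^2))$ with $s=\log\coth(\rho/2)\sim 2e^{-\rho}$) and invoking Lemma~\ref{lem:calculus1} for the $H_O$ factor, together with the exponential decay of $s$, $s'=-1/\sinh\rho$, $s''=\coth\rho/\sinh\rho$. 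With that fix, your Step one (uniform convergence of $H_{R_1,R_2}$ and its first two $\rho$-derivatives to $H_{R_1,1}$ on the fixed compact window $[\rho_0,\rho_0+1]$, by joint continuity) plus the corrected Step two plus the triangle inequality gives the lemma as you intend, and the translation from $H$ to $h=1/H$ is handled exactly as in the proof of Lemma~\ref{lem:calculus1}.
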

\begin{proof}
    Let $t_0 \df 1-\tanh(\rho_0/2)$ and let $R(\rho_0) \df 1-t_0^3$. We are going to prove that, when $t_0$ is sufficiently small (hence $\rho_0$ will be sufficiently large), this $R(\rho_0)$ will satisfy our desired properties. By the Mean Value Theorem, 
    \begin{equation}\label{eqn:calculus1}
        \begin{aligned}
            \tanh\frac{\rho_0+1}{2} 
            &  =  \tanh\frac{\rho_0+1}{2}-\tanh\frac{\rho_0}{2}+\tanh\frac{\rho_0}{2}\\ 
            &\leq \frac{1}{2\cosh^2(\rho_0/2)}+\tanh\frac{\rho_0}{2}\\ 
            &  =  \frac{1-(1-t_0)^2}{2} + 1-t_0 = 1-\frac{t_0^2}{2}, 
        \end{aligned}
    \end{equation}
    thus, for $t_0$ sufficiently small, we have $\tanh((\rho_0+1)/2) < 1-t_0^3 = R(\rho_0)$. 
    
    Now let $t \df 1-\tanh(\rho/2)$. If $\rho\in[\rho_0,\, \rho_0+1]$, then by \eqref{eqn:calculus1}, $t\in[t_0^2/2,\, t_0]$. Hence $t_0$ tends to zero if and only if $t$ tends to zero. For any $R_2\in[R(\rho_0),\,1]$, similar to the proof of Lemma \ref{lem:calculus1}, let 
    \begin{equation}\label{eqn:calculus2}
        H_{R}\df \frac{1}{h_{R_1,R_2}} = \frac{\log(R_2/R_1)}{\pi}\sinh \rho\sin\frac{\pi(\log R_2 - \log \tanh(\rho/2))}{\log (R_2/R_1)}. 
    \end{equation}
    then it suffices to prove that as $t_0$ tends to zero, we have 
    \[ \max_{\rho_0 \leq \rho \leq \rho_0+1}\abs{H_{R}(\rho)-1}\to 0,\, \max_{\rho_0 \leq \rho \leq \rho_0+1}\abs{\frac{\dif H_{R}}{\dif\rho}(\rho)}\to 0,\, \max_{\rho_0 \leq \rho \leq \rho_0+1}\abs{\frac{\dif^2 H_{R}}{\dif\rho^2}(\rho)}\to 0. \] 
    Since $0\leq1-R_2\leq1-R(\rho_0)= t_0^3=o(t)$ as $t\to0$, we have 
    \begin{equation}\label{eqn:calculus3}
        \begin{aligned}
            \log R_2 - \log \tanh\frac{\rho}{2} 
            & = R_2 - 1 + o(1-R_2) + 1-\tanh\frac{\rho}{2} + o(1-\tanh\frac{\rho}{2})\\
            & = -t_0^3 + o(t_0^3) + t + o(t)\\
            & = t+o(t). 
        \end{aligned}
    \end{equation}
    Moreover, we also have that as $t\to 0$, 
     \begin{equation}\label{eqn:calculus4}
       \begin{aligned}
          \sinh \rho 
            &= \frac{2\tanh(\rho/2)}{1-\tanh^2(\rho/2)}
            = \frac{1}{t} + O(1),\\ 
            \cosh \rho &\sim \sinh \rho=\frac{1}{t} + O(1).
        \end{aligned}
    \end{equation}
    From \eqref{eqn:calculus2}, \eqref{eqn:calculus3} and \eqref{eqn:calculus4}, we deduce that as $t_0$ tends to zero, for $H_{R}$ we have 
    \begin{align*}
        \max_{\rho_0 \leq \rho \leq \rho_0+1}\abs{H_{R}(\rho)-1} 
        & = \max_{t_0^2/2\leq t\leq t_0}\abs{\left(\frac1t+O(1)\right)\left(t+o(t)\right) - 1}\\ 
        & = \max_{t_0^2/2\leq t\leq t_0}\abs{1+o(1)-1}\\ 
        & = o(1); 
    \end{align*}
    since the derivative of $\left( \log\tanh(\rho/2) \right)$ is $1/\sinh\rho$, for $\dif H_{R}/\!\dif\rho$ we have 
    \begin{align*}
        \max_{\rho_0 \leq \rho \leq \rho_0+1}\abs{\frac{\dif H_{R}}{\dif\rho}(\rho)} 
        &= \max_{t_0^2/2\leq t\leq t_0}\abs{\cosh \rho\cdot\left(t+o(t)\right) - \cos\frac{\pi(\log R_2 - \log \tanh(\rho/2))}{\log (R_2 / R_1)}}\\
        &= \max_{t_0^2/2\leq t\leq t_0}\abs{1+o(1)-1+o(1)}\\
        &= o(1); 
    \end{align*}
    and for $\dif^2 H_{R}/\!\dif\rho^2$ we have 
    \begin{align*}
        \max_{\rho_0 \leq \rho \leq \rho_0+1}\abs{\frac{\dif^2 H_{R}}{\dif\rho^2}(\rho)} 
        &= \max_{t_0^2/2\leq t\leq t_0}\abs{\sinh \rho \cdot (t+o(t)) - \frac{1+o(1)}{\tanh \rho}+\frac{t+o(t)}{\sinh \rho}}\\
        &= \max_{t_0^2/2\leq t\leq t_0}\abs{1+o(1)-1+o(1)}\\
        &= o(1). 
    \end{align*}
    Then the lemma follows. 
\end{proof}

\subsection{Geometry of hyperbolic surfaces}
Let $S$ be a complete hyperbolic surface with possibly non-empty geodesic boundaries and cusps. For each homotopy class of a closed curve in $S$, if it is not homotopic to a point or a puncture, there exists a unique closed geodesic representative $\gamma$, having the minimal length among all of its homotopy classes. In particular, $\gamma$ is smooth, hence 
\begin{equation}\label{eqn:geodesic_smooth}
    \length_{S}(\gamma) = \inf\left\{ \length_{S}(\gamma'):\ \gamma'\simeq\gamma, \textrm{\,and $\gamma'$ is smooth} \right\}, 
\end{equation}
where $\length_S$ is the length function with respect to the hyperbolic metric on $S$. 

A closed geodesic is called simple if it has no self-intersection points. 
Let $\gamma$ be a simple closed geodesic in $S$, then the collar $\collar(\gamma, w)$ of $\gamma$ with width $w$ is defined by 
\begin{equation*}
    \collar(\gamma, w) \df \{ p\in S:\ \dist(p,\gamma) \leq w \}. 
\end{equation*}
Recall that the collar lemma asserts that 
\begin{theorem}[{Collar lemma, see \eg \cite[Theorem 4.4.6]{Buser1992}}]\label{thm:collar_lemma}
    Let $S$ be a hyperbolic surface and let $\gamma_1,\gamma_2,\cdots,\gamma_m$ be disjoint simple closed geodesics in the interior of $S$. Then the followings hold: 
    \begin{enumerate}
        \item the collars $\collar(\gamma_1,w(\gamma_1)),\cdots,\collar(\gamma_m,w(\gamma_m))$ are embedded and pairwise disjoint, where 
        \begin{equation}\label{eqn:collar_width}
            w(\gamma) \df \arcsinh\frac{1}{\sinh\frac{\length_S(\gamma)}{2}}.
        \end{equation}
        \item Each $\collar(\gamma_i, w(\gamma_i))$ is isometric to the cylinder 
        \[ \left\{ (\rho,t):\ -w(\gamma_i)\leq \rho\leq w(\gamma_i),\, t\in\R/\Z \right\} \] 
        with the Riemannian metric 
        \begin{equation}\label{eqn:collar_metric}
            \dif\rho^2+\length_S(\gamma_i)^2\cosh^2\!\rho \dif t^2. 
        \end{equation}
        Here, for each point $(\rho,t)$, its projection to the geodesic $\gamma_i$ is $(0,t)$, and the distance between them is $\abs{\rho}$. In fact, any collar $\collar(\gamma, w)$ will isometric to such a cylinder with the metric \eqref{eqn:collar_metric} as long as it is embedded in $S$. 
    \end{enumerate}
\end{theorem}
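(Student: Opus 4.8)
The plan is to work in the universal cover. Replacing $S$ by its orientation double cover if necessary, and then by its double along $\pa S$ when $\pa S\neq\varnothing$ (operations under which the $\gamma_i$ and, as one checks, the collars $\collar(\gamma_i,w(\gamma_i))$ are unaffected, the latter being too thin to reach $\pa S$), I would reduce to the case $S=\H/\Gamma$ with $S$ boundaryless and $\Gamma<\mathrm{PSL}(2,\R)$ torsion-free. For each $i$, the stabilizer in $\Gamma$ of any lift $A_i\subset\H$ of $\gamma_i$ is then $\langle h_i\rangle$, where $h_i$ is the primitive hyperbolic element with axis $A_i$ and translation length $\ell_i\df\length_S(\gamma_i)$ (using torsion-freeness and primitivity of $\gamma_i$). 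Normalizing $A_i$ to be the imaginary axis and $h_i$ to be $z\mapsto e^{\ell_i}z$, Fermi coordinates $(\rho,t)$ along $A_i$ put the metric in the form $\dif\rho^2+\cosh^2\!\rho\,\dif t^2$ with $h_i$ acting by $t\mapsto t+\ell_i$; hence the metric $w$-neighbourhood $N_w(A_i)\df\{z\in\H:\dist(z,A_i)\le w\}$, on which the covering $\pi\colon\H\to S$ factors through $N_w(A_i)/\langle h_i\rangle$, descends — after rescaling $t$ — to exactly the cylinder with metric \eqref{eqn:collar_metric}. This proves Part~(2), and its last sentence, as soon as Part~(1) is known.

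For Part~(1), note that the cylinder $N_w(A_i)/\langle h_i\rangle$ fails to embed in $S$, or $\collar(\gamma_i,w)$ meets $\collar(\gamma_j,w')$, only if there are distinct lifts $A$ of $\gamma_i$ and $A'$ of $\gamma_j$ with $N_w(A)\cap N_{w'}(A')\neq\varnothing$, hence with $\dist(A,A')\le w+w'$. Since the $\gamma_i$ are simple and pairwise disjoint, no two distinct lifts among them cross, so such $A,A'$ are \emph{disjoint} geodesics. Thus Part~(1) reduces to proving
\[ \dist(A,A')\ \geq\ w(\gamma_i)+w(\gamma_j) \]
for all distinct disjoint lifts $A$ of $\gamma_i$ and $A'$ of $\gamma_j$ (allowing $i=j$, in which case this reads $\dist(A,A')\ge 2w(\gamma_i)$ and yields the embeddedness of a single collar).

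To establish this inequality I would normalize $A$ to the imaginary axis with $h_i(z)=e^{\ell_i}z$, write the endpoints of $A'$ as $0<a<b$ and set $\delta\df\dist(A,A')$, so that $\cosh\delta=\tfrac{a+b}{b-a}$, equivalently $b/a=\coth^2(\delta/2)$. I would then combine two inputs. First, a soft packing estimate: the translates $h_i^{\,n}A'$ $(n\in\Z)$ are pairwise disjoint lifts of $\gamma_j$, each disjoint from $A$, so disjointness of $A'$ from $h_iA'$ forces $b/a\le e^{\ell_i}$, i.e. $\cosh\delta\ge\coth(\ell_i/2)=\cosh w(\gamma_i)$, whence $\delta\ge w(\gamma_i)$; symmetrically $\delta\ge w(\gamma_j)$, so $\delta\ge\max\{w(\gamma_i),w(\gamma_j)\}$. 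Second, a trace computation: writing $h_j'$ for the primitive generator of $\mathrm{Stab}_\Gamma(A')$, an explicit $2\times 2$ calculation shows that one of $h_ih_j'$, $h_ih_j'^{\,-1}$ has trace
\[ 2\Bigl(\cosh\tfrac{\ell_i}{2}\cosh\tfrac{\ell_j}{2}-\sinh\tfrac{\ell_i}{2}\sinh\tfrac{\ell_j}{2}\cosh\delta\Bigr), \]
and this element, being non-trivial (otherwise $A=A'$) and lying in the torsion-free group $\Gamma$, is hyperbolic or parabolic, so the quantity above has absolute value $\ge 2$. One checks that the bracket is $\ge 1$ precisely when $\delta\le\abs{w(\gamma_i)-w(\gamma_j)}$ and $\le-1$ precisely when $\delta\ge w(\gamma_i)+w(\gamma_j)$; the first alternative contradicts $\delta\ge\max\{w(\gamma_i),w(\gamma_j)\}>\abs{w(\gamma_i)-w(\gamma_j)}$, so $\delta\ge w(\gamma_i)+w(\gamma_j)$, as required.

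I expect the main obstacle to be this last step: recognizing that the geometric constraint is encoded by the non-ellipticity of the single group element $h_ih_j'^{\,\pm1}$, proving the trace identity, and observing that the ``spurious'' branch $\delta\le\abs{w(\gamma_i)-w(\gamma_j)}$ is removed exactly by the elementary packing estimate. The reductions in the first two paragraphs are routine bookkeeping with the covering map and Fermi coordinates. This is the classical collar lemma; a detailed account is given in \cite[\S4.1 and \S4.4]{Buser1992}.
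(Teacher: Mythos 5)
The paper gives no proof of this theorem---it is quoted verbatim from \cite[Theorem 4.4.6]{Buser1992}, where the argument runs through a pants decomposition and right-angled hexagon trigonometry. Your proposal is correct and takes a genuinely different (also classical) route, working entirely in $\H$ through the Fuchsian group and a Jorgensen-type discreteness constraint. I have verified the key steps: the formula $\cosh\delta=\frac{a+b}{b-a}$; the trace identity---one of $h_ih_j'$, $h_ih_j'^{\,-1}$ does indeed have trace $2\big(\cosh\frac{\ell_i}{2}\cosh\frac{\ell_j}{2}-\sinh\frac{\ell_i}{2}\sinh\frac{\ell_j}{2}\cosh\delta\big)$; the equivalences between the two branches of $\abs{\mathrm{tr}}\ge 2$ and $\delta\le\abs{w(\gamma_i)-w(\gamma_j)}$, resp.\ $\delta\ge w(\gamma_i)+w(\gamma_j)$, which rest on $\cosh\big(w(\gamma_i)\pm w(\gamma_j)\big)=\frac{\cosh\frac{\ell_i}{2}\cosh\frac{\ell_j}{2}\pm 1}{\sinh\frac{\ell_i}{2}\sinh\frac{\ell_j}{2}}$; and the nesting bound $b/a\le e^{\ell_i}$ that kills the spurious branch. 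The trace route is more self-contained---no pants decomposition, no hexagon identities, and single-collar embeddedness ($i=j$) comes for free from the same inequality---while Buser's route meshes better with Fenchel--Nielsen coordinates and produces the finer half-collar-in-each-pair-of-pants picture that, incidentally, the paper does exploit elsewhere (Section \ref{section8}). One remark on the step you flag as ``as one checks'': that $\collar(\gamma_i,w(\gamma_i))$ avoids $\pa S$ follows from your own distance inequality applied in the double $DS$ to $\gamma_i$ and its mirror image $\hat\gamma_i$, since any $p\in\pa S$ within distance $w(\gamma_i)$ of $\gamma_i$ would, by the reflection symmetry of $DS$, be equally close to $\hat\gamma_i$, contradicting the disjointness of those two collars in $DS$.
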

In particular, the area of a collar $\collar(\gamma, w(\gamma))$ satisfies
\begin{equation}\label{eqn:collar_area}
    \begin{aligned}
        \area(\collar(\gamma, w(\gamma))) 
    & = \int_0^1\int_{-w(\gamma)}^{w(\gamma)} \length(\gamma)\cosh \rho \dif \rho\dif t\\
    & = 2\length(\gamma)\sinh(w(\gamma))\\
    & = \frac{2\length(\gamma)}{\sinh\frac{\length(\gamma)}{2}} \leq 4. 
    \end{aligned}
\end{equation}
Let $\gamma$ and $\gamma'$ be two different closed geodesics having non-empty intersections in $S$, then it follows from the collar lemma that 
\begin{equation}\label{eqn:cross_collar}
    \length_S(\gamma')\geq 2w(\gamma). 
\end{equation}
If $\gamma$ is a component of $\pa S$, then $\collar(\gamma, w(\gamma))$ is only a half-collar isometric to 
\[ \left\{ (\rho,t):\ 0\leq\rho\leq w(\gamma),\, t\in\R / \Z \right\} \] 
with the same metric \eqref{eqn:collar_metric}. 

If $S$ is non-compact, then it has ends. There are only two kinds of ends: a finite-area cusp end or an infinite-area funnel end. (see Figure \ref{fig:CollarCuspFunnel}). 
Denote by $\cusp(l)$ an embedded cusp bounded by a horocycle of length $l$, then it is biholomorphic to a punctured disc 
\[ \cusp(l) \simeq \left\{ z\in\D: \ 0 < \abs{z} \leq r(l) \right\} \] 
with the metric $ds_O^2$ (the Poincar\'e metric on $\D^*$). The function $r(\cdot)$ is given by 
\begin{equation}\label{eqn:cusp_disc}
    r(l) \df \exp(-2\pi/l). 
\end{equation}
The distance between two horocycles of length $l_1$ and $l_2$ is 
\begin{equation}\label{eqn:dist_horo}
    \abs{\log(l_1/l_2)}. 
\end{equation}
\begin{figure}
    \centering
    \includegraphics[scale=0.38]{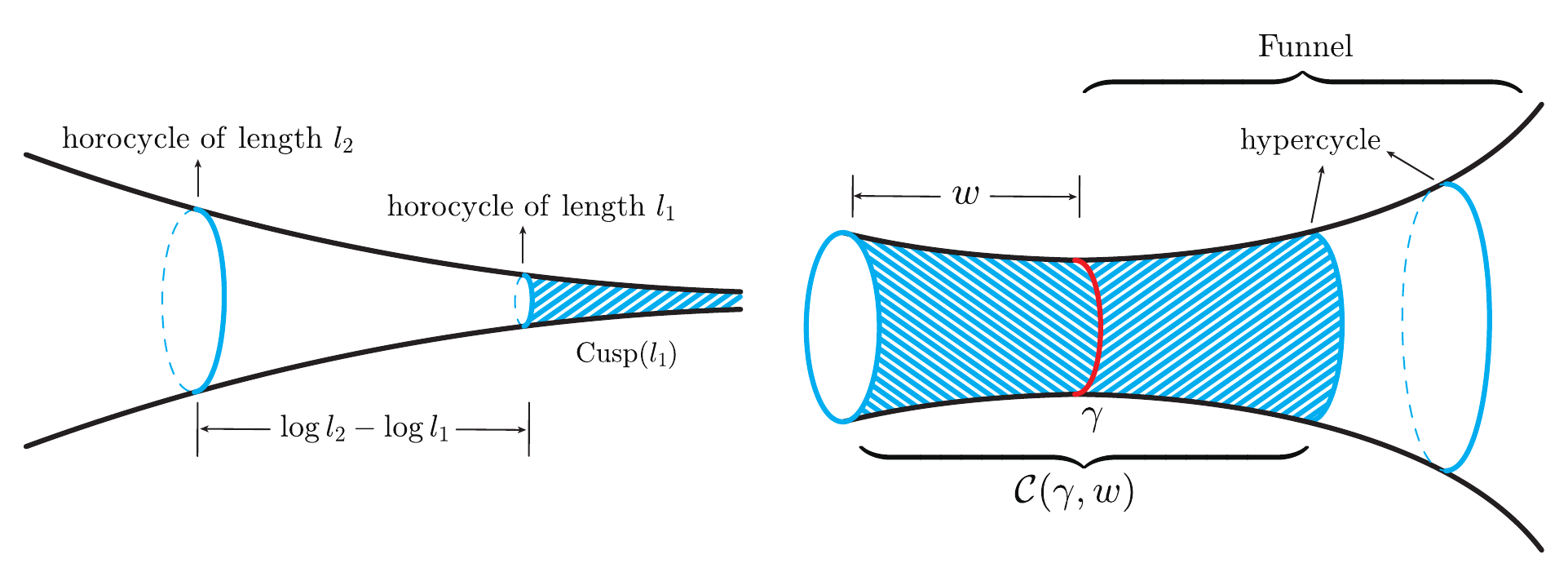}
    \caption{Cusp, collar and funnel}
    \label{fig:CollarCuspFunnel}
\end{figure}

The following lemma will be used later: 
\begin{lemma}\label{lem:horocycle}
    Let $\cusp(l_0)$ be an embedded cusp in $S$ bounded by a horocycle $\beta_0$. For any smooth simple closed curve $\beta_1\subset S\setminus\cusp(l_0)$, if $\beta_1$ is homotopic to $\beta_0$, then $\length_S(\beta_1)\geq\length_S(\beta_0)$. 
\end{lemma}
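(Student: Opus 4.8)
The plan is to lift the picture to the universal cover $\pi\colon\H\to S=\H/\Gamma$ and reduce the statement to an elementary one–variable length estimate in the upper half-plane. After conjugating, I will arrange that the parabolic fixed point of the cusp sitting inside $\cusp(l_0)$ is $\infty$ and that the parabolic subgroup of $\Gamma$ fixing it is generated by the unit translation $T\colon z\mapsto z+1$. With this normalization the horocycle $\{\im z=c\}/\langle T\rangle$ has hyperbolic length $1/c$, so the embedded cusp region $\cusp(l_0)$ — whose boundary horocycle $\beta_0$ has length $l_0$ — is the quotient of the horoball $\mathcal H\df\{z\in\H\colon\im z>1/l_0\}$. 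Because $\cusp(l_0)$ is embedded, $\mathcal H$ is genuinely one connected component of $\pi^{-1}(\cusp(l_0))$ (a disjoint union of horoballs), so that
\[ \pi^{-1}\bigl(S\setminus\cusp(l_0)\bigr)\ \subseteq\ \{z\in\H\colon\im z\leq 1/l_0\}. \]

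The next step is to lift $\beta_1$. Since $\beta_1$ is freely homotopic to $\beta_0$ in $S$, its free homotopy class is the conjugacy class of $T$ in $\Gamma$; choosing the lift of the loop $\beta_1$ adapted to the representative $T$ of this class, I get a path $\tilde\beta_1$ in $\H$ joining a point $p$ to $T(p)=p+1$. As a lift of a curve contained in $S\setminus\cusp(l_0)$, this path stays in $\{\im z\leq 1/l_0\}$; and since $\pi$ is a local isometry, $\length_S(\beta_1)=\length_{\H}(\tilde\beta_1)$. Writing $\tilde\beta_1(s)=x(s)+iy(s)$ with $0<y(s)\leq 1/l_0$, I then estimate
\[ \length_S(\beta_1)=\int\frac{\sqrt{x'(s)^2+y'(s)^2}}{y(s)}\dif s\ \geq\ \int\frac{\abs{x'(s)}}{y(s)}\dif s\ \geq\ l_0\int\abs{x'(s)}\dif s\ \geq\ l_0\,\abs{x(1)-x(0)}=l_0, \]
the last equality because the endpoints $p$ and $p+1$ of $\tilde\beta_1$ differ by exactly the unit translation. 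Since $\length_S(\beta_0)=l_0$, this proves the lemma. (The same computation shows more generally $\length_S(\beta_1)\geq\abs{m}\,l_0$ if $\beta_1$ is homotopic to $\beta_0^{\,m}$.)

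I do not expect a genuine obstacle here; the argument is short, and the only points that require care are bookkeeping. First, one must correctly tie the horocycle length $l_0$ to the cutoff height $1/l_0$ and invoke embeddedness to be sure that $\mathcal H$ is disjoint from all its $\Gamma$-translates, so that every lift of $\beta_1$ really does remain below height $1/l_0$. Second, one must choose the lift of the free loop $\beta_1$ so that its two endpoints differ by the period-one translation $T$ itself rather than by some conjugate — this is precisely what the hypothesis $\beta_1\simeq\beta_0$ supplies, and it is what forces the horizontal displacement in the final estimate to equal $1$ (giving exactly $\length_S(\beta_0)$).
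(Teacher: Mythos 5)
Your proof is correct and takes essentially the same approach as the paper's: lift the curve to a covering space, drop the transverse component, and use monotonicity of the metric density to reduce to a one-variable length estimate. The only cosmetic difference is that you lift all the way to the universal cover $\H$ with the cusp normalized at $\infty$ and estimate directly with $ds_{\H}=\abs{\dif z}/\im z$, whereas the paper lifts only to the intermediate $\Z$-cover --- the parabolic cylinder $(\D^*,ds_O^2)$ of Lemma~\ref{lem:metrics_rho} --- and estimates using the explicit density $h_O(\rho)\sinh\rho$; these are two pictures of the same computation.
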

\begin{proof}
    Consider the covering surface $\widetilde{S}$ with respect to the fundamental group of $\cusp(l_0)$. Since $\pi_1(\cusp(l_0))\simeq \Z$ is generated by the parabolic element $\langle \beta_0 \rangle = \langle \beta_1 \rangle$, $\widetilde{S}$ is isometric to the parabolic cylinder $(\D^*,ds_O^2)$. Denote by $\tilde{\beta}_0$ and $\tilde{\beta}_1$ the isometrical lifts of $\beta_0$ and $\beta_1$, respectively. Suppose that they are parametrized by 
    \[ \tilde{\beta}_0: \theta\mapsto(\rho_0(\theta), \theta) \textrm{\quad and\quad } \tilde{\beta}_1: \theta\mapsto(\rho_1(\theta), \theta) \] 
    respectively in the pair of coordinates $(\rho,\theta)$, then $\rho_0(\theta)$ is a constant function, and $\rho_1(\theta)\geq \rho_0(\theta)$. Hence by Lemma \ref{lem:metrics_rho} one has 
    \begin{align*}
        \length_S(\beta_1) = \length_{\widetilde{S}}(\tilde{\beta}_1)
        & = \int_0^{2\pi} h_O(\rho_1)\sqrt{ \sinh^2\! \rho_1 + \Big(\frac{\dif\rho_1}{\dif\theta}\Big)^2 } \dif\theta\\
        &\geq \int_0^{2\pi} h_O(\rho_1)\sinh\rho_1 \dif\theta\\
        &\geq \int_0^{2\pi} h_O(\rho_0)\sinh\rho_0 \dif\theta = \length_{\widetilde{S}}(\tilde{\beta}_0) = \length_S(\beta_0) 
    \end{align*}
    as desired. 
\end{proof}
The proof of Lemma \ref{lem:horocycle} also leads to the following.
\begin{lemma}\label{lem:hypercycle}
    Let $\gamma$ be a simple closed geodesic in $S$, and let $\beta\subset\pa\collar(\gamma, w)$ be a hypercycle (equidistant curve) with respect to $\gamma$. For each smooth simple closed curve $\beta'\subset S\setminus\collar(\gamma, w)$, if $\beta'$ is homotopic to $\beta$, then we have 
    \[ \length_{S}(\beta') \geq \length_{S}(\beta). \] 
\end{lemma}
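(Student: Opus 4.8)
The plan is to run the argument of Lemma~\ref{lem:horocycle} with the parabolic cylinder replaced by a hyperbolic one. Since $\beta$ is an equidistant curve of the simple closed geodesic $\gamma$, it is freely homotopic to $\gamma$, and hence so is $\beta'$; in particular the element of $\pi_1(S)$ represented by $\beta'$ (or by $\beta$) generates the same maximal cyclic subgroup $\langle\gamma\rangle\simeq\Z$ as $\gamma$. Let $\widetilde S$ be the covering surface of $S$ associated with $\langle\gamma\rangle$, with covering map $\pi\colon\widetilde S\to S$. Then $\beta$ and $\beta'$ lift isometrically to closed curves $\tilde\beta,\tilde\beta'\subset\widetilde S$, each traversed once around, and $\tilde\beta'$ is freely homotopic to $\tilde\beta$ in $\widetilde S$.

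Next I would describe $\widetilde S$ explicitly. It embeds isometrically into the complete hyperbolic cylinder $\H/\langle\gamma\rangle$, which in Fermi coordinates $(\rho,t)$, $\rho\in\R$, $t\in\R/\Z$, about its core geodesic (the image of the axis of $\gamma$) carries the metric
\[ \dif\rho^2+\length_S(\gamma)^2\cosh^2\!\rho\dif t^2, \]
i.e.\ the collar metric \eqref{eqn:collar_metric} of Theorem~\ref{thm:collar_lemma}, now valid for all $\rho\in\R$. Under this identification the core geodesic is $\{\rho=0\}$ and the equidistant curve at distance $w$ is $\{\abs{\rho}=w\}$, so (relabelling the two sides if necessary) $\tilde\beta=\{\rho=w\}$ and $\length_S(\beta)=\length_{\widetilde S}(\tilde\beta)=\length_S(\gamma)\cosh w$. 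Moreover, since $\dist_S(\cdot,\gamma)>w$ along $\beta'$, and for every $\tilde p\in\widetilde S$ one has $\dist_{\widetilde S}(\tilde p,\{\rho=0\})\geq\dist_{\widetilde S}(\tilde p,\pi^{-1}(\gamma))=\dist_S(\pi(\tilde p),\gamma)$, every point of $\tilde\beta'$ lies at distance $>w$ from $\{\rho=0\}$, i.e.\ $\tilde\beta'\subset\{\abs{\rho}>w\}$.

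Finally I would estimate lengths in the cylinder. Writing $\dif t$ for the closed (non-exact) coordinate $1$-form and using that $\cosh\rho$ is increasing in $\abs{\rho}$,
\[ \length_S(\beta')=\length_{\widetilde S}(\tilde\beta')=\int_{\tilde\beta'}\sqrt{\dif\rho^2+\length_S(\gamma)^2\cosh^2\!\rho\dif t^2}\geq\length_S(\gamma)\cosh w\int_{\tilde\beta'}\abs{\dif t}\geq\length_S(\gamma)\cosh w, \]
where the last step holds because $\int_{\tilde\beta'}\dif t$ equals the winding number of $\tilde\beta'$ around the cylinder, which is $\pm1$ since $\tilde\beta'\simeq\tilde\beta$, whence $\int_{\tilde\beta'}\abs{\dif t}\geq\abs{\int_{\tilde\beta'}\dif t}=1$. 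Comparing with $\length_S(\beta)=\length_S(\gamma)\cosh w$ finishes the proof.

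I do not expect a serious obstacle here; the one point that needs a little care—and the reason I would phrase the last estimate via the $1$-form $\dif t$ rather than via an explicit graph parametrization $t\mapsto(\rho'(t),t)$ as in Lemma~\ref{lem:horocycle}—is that the competitor $\beta'$ need not be monotone in the $t$-coordinate, so one really wants the total-variation bound $\int_{\tilde\beta'}\abs{\dif t}\geq1$ rather than a pointwise one. The only other thing to verify is that the distance-$w$ equidistant curve of $\gamma$ in $S$ pulls back to the round circle $\{\rho=w\}$ in $\widetilde S$ with the same length, which is immediate from the coordinate description above.
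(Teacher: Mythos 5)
Your proof is correct and follows essentially the same route the paper intends: the paper simply points to the proof of Lemma~\ref{lem:horocycle} (pass to the annular cover associated to $\langle\gamma\rangle$, lift, and compare lengths via the explicit cylinder metric), and you do exactly that, working in Fermi coordinates rather than the conformal polar coordinates of $A_{R_1,R_2}$. Your replacement of the graph parametrization $\theta\mapsto(\rho_1(\theta),\theta)$ by the total-variation bound $\int_{\tilde\beta'}\abs{\dif t}\geq 1$ is a small but genuine tightening, since a general competitor need not be a graph over the angular coordinate.
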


Now we can prove the following fact stated earlier in the introduction: 
\begin{lemma}\label{lem:infinite_area}
    Let $X$ be a finite-area non-compact hyperbolic surface with $n$ large cusps, then the surface 
    \[ X^{\mathrm{fu}} \df X\setminus \bigsqcup_{i=1}^n\cusp_i(\epsilon_0) \] 
    with its Poincar\'e metric has only funnel ends. 
\end{lemma}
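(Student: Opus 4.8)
The claim is that after excising the open horocyclic cusp neighborhoods $\cusp_i(\epsilon_0)$ and re-uniformizing the complement with its own Poincar\'e metric, the resulting infinite-area surface $X^{\mathrm{fu}}$ has only funnel ends — no cusps, no collars around short geodesics at the ends. The strategy is to identify, topologically, what the ends of $X^{\mathrm{fu}}$ are, and then rule out the cusp case by a comparison/length argument. Removing $\cusp_i(\epsilon_0)$ creates, near the $i$-th former cusp, a new boundary circle $\beta_i$ which is the horocycle of length $\epsilon_0$; topologically $X^{\mathrm{fu}}$ is a surface with $n$ boundary circles (plus whatever genus and other boundary $X$ already had, which is none since $X$ is a finite-area surface with cusps only). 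So each end of $X^{\mathrm{fu}}$ corresponds to one of the $\beta_i$, and in the complete Poincar\'e metric each such end is conformally a punctured disk or an annulus; by the classification of ends of complete hyperbolic surfaces of finite topological type it is either a cusp or a funnel. The whole content is: it is a funnel.

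\textbf{Key steps.} First I would fix attention on one former cusp and pass to a covering picture, exactly as in the proofs of Lemma~\ref{lem:horocycle} and Lemma~\ref{lem:hypercycle}: let $\widetilde{X^{\mathrm{fu}}}$ be the cyclic cover of $X^{\mathrm{fu}}$ corresponding to $\pi_1$ of the end, generated by the free homotopy class of $\beta_i$. If the end were a cusp, the generator would be parabolic and $\widetilde{X^{\mathrm{fu}}}$ would be isometric to the parabolic cylinder $(\D^*, ds_O^2)$; if it is a funnel, the generator is hyperbolic and $\widetilde{X^{\mathrm{fu}}}$ is a hyperbolic cylinder $(A_{R_1,R_2}, ds_R^2)$ (cf.\ the Remark after Lemma~\ref{lem:metrics_rho}). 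Second, I would compare with the original metric: by the uniformization/Ahlfors-Schwarz setup described before Theorem~\ref{thm:comparison_eigenvalue}, the Poincar\'e metric of $X^{\mathrm{fu}}$ dominates (is $\geq$, pointwise in the common conformal class, on the part of $X^{\mathrm{fu}}$ near $\beta_i$) the restriction of the original hyperbolic metric of $X$, since $X^{\mathrm{fu}} \subset X$ as conformal surfaces and Poincar\'e metrics decrease under conformal inclusion — this is the same domain-monotonicity already invoked to get $h_{R_1,R_2}(\rho) > h_O(\rho) > 1$. In the original metric of $X$, the curve $\beta_i$ (the horocycle of length $\epsilon_0 \geq 2\epsilon/\pi$) has \emph{finite, positive} length and is not freely homotopic to a point or a cusp — it encircles the removed cusp, which is no longer part of $X^{\mathrm{fu}}$, so relative to $X^{\mathrm{fu}}$ the homotopy class of $\beta_i$ is essential and not peripheral-to-a-puncture. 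Third, and this is the crux: a parabolic end in the new metric would force $\length_{X^{\mathrm{fu}}}$ of the core geodesic/horocycle class to be realized by arbitrarily short curves escaping down the end, i.e.\ $\inf$ of lengths of curves homotopic to $\beta_i$ going to $0$; but by domain monotonicity every such curve has $X^{\mathrm{fu}}$-length at least its $X$-length, and using that in the original metric of $X$ the region swept out (an embedded annulus between the horocycle $\beta_i$ and the boundary of a slightly larger cusp) forces a uniform positive lower bound on lengths of curves in that class — concretely $\length_X \geq \length_X(\beta_i) = \epsilon_0 > 0$ by Lemma~\ref{lem:horocycle} type reasoning, since $\beta_i$ itself is a horocycle and any homotopic curve outside the cusp has length $\geq \length$ of the horocycle. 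Hence the infimum is positive, the generator of the end cannot be parabolic, so the end is a funnel. Running this for each $i$ gives that all $n$ ends are funnels, and $X^{\mathrm{fu}}$ has no other ends, completing the proof.

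\textbf{Main obstacle.} The delicate point is the passage ``parabolic end $\Rightarrow$ core curve class has $\inf$ length $=0$ in $X^{\mathrm{fu}}$'' combined with the domination ``$ds^2_{X^{\mathrm{fu}}} \geq ds^2_X$ near $\beta_i$'' — one must be careful that the comparison of metrics holds on a neighborhood of the end large enough to contain curves realizing the infimum, and that the relevant homotopy class in $X^{\mathrm{fu}}$ really does pull back to the horocycle class in $X$ (it does, since $X^{\mathrm{fu}} \hookrightarrow X$ induces the obvious map on $\pi_1$ of the end). I expect to handle this by noting that a genuine cusp in $X^{\mathrm{fu}}$ would be isometric to $(\D^*, ds_O^2)$ with horocycles of length $\to 0$ deep in the puncture, each of these horocycles is freely homotopic in $X^{\mathrm{fu}}$ (hence in $X$) to $\beta_i$, and each lies in $X^{\mathrm{fu}} \subset X$ where — being outside the removed cusp $\cusp_i(\epsilon_0)$ of $X$ — it has $X$-length $\geq \epsilon_0$ by Lemma~\ref{lem:horocycle}; combined with $ds^2_{X^{\mathrm{fu}}} \geq ds^2_X$ this gives $X^{\mathrm{fu}}$-length $\geq \epsilon_0 > 0$ for every such horocycle, contradicting that their $X^{\mathrm{fu}}$-lengths tend to $0$. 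This contradiction is the heart of the argument; the rest is bookkeeping about the topology of $X^{\mathrm{fu}}$.
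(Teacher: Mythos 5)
Your proof is correct and is essentially the paper's argument: assume a cusp end, pick horocycles deep in it whose $X^{\mathrm{fu}}$-length tends to zero, and contradict this by combining domain monotonicity of Poincar\'e metrics ($ds_{X^{\mathrm{fu}}}^2 \geq ds_X^2$ since $X^{\mathrm{fu}} \subset X$) with Lemma~\ref{lem:horocycle}, which bounds the $X$-length of any such curve below by $\epsilon_0$. The paper states exactly this in five lines; the extra scaffolding in your proposal (passing to the cyclic cover to classify the deck transformation as parabolic versus hyperbolic, and the topological bookkeeping about ends and boundary circles) is not needed and is not used in the paper, though it does no harm.
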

\begin{proof}
    Suppose $X^{\mathrm{fu}}$ has a cusp end. Let $\{\beta_i\}$ be an arbitrary sequence of horocycles in this cusp end with length going to zero. For all $i$, by the domain monotonicity of Poincar\'e metrics, we have 
    \[ \length_{X^{\mathrm{fu}}}(\beta_i) \geq \length_{X}(\beta_i); \] 
    while by Lemma \ref{lem:horocycle}, we have 
    \[ \length_{X}(\beta_i) \geq \epsilon_0 > 0. \] 
    Let $i\to\infty$, we have $0\geq\epsilon_0>0$, leading to a contradiction. Thus $X^{\mathrm{fu}}$ has no cusp ends. In particular, $X^{\mathrm{fu}}$ has only funnel ends. 
\end{proof}

A funnel end is a half-collar of infinite width $\collar(\infty)$. In particular, the hyperbolic cylinder $(A_{R_1,R_2},ds_{R}^2)$ is the union of its two funnel ends 
\[ \left\{ z\in\D:\ R_1 < \abs{z} \leq \sqrt{R_1R_2} \right\} \textrm{\ and\ } \left\{ z\in\D:\ \sqrt{R_1R_2} < \abs{z} \leq R_2 \right\}. \] 
There is a unique (primitive) simple closed geodesic in $A_{R_1,R_2}$, which is given by 
\begin{equation*}
    \gamma_{R} \df \left\{ \abs{z} = \sqrt{R_1R_2} \right\}, 
\end{equation*}
and the length of $\gamma_{R}$ in the metric $ds_{R}^2$ is 
\begin{equation}\label{eqn:core_length}
    \frac{2\pi^2}{\log R_2 - \log R_1}. 
\end{equation}

\subsection{Eigenvalues of hyperbolic surfaces}
In this subsection we review some basic spectral theories of hyperbolic surfaces; one may refer to \cite{Chavel1984,Bergeron2016,Borthwick2016} for more details. Let $S$ be a complete finite-area hyperbolic surface and let $\Delta$ be its geometric Laplacian operator. If $S$ has no boundaries, then $\Delta$ is a self-adjoint operator on the Sobolev space $H^{1,2}(S)$. A real number $\lambda\geq0$ is called an eigenvalue of $\Delta$ if there exists a function $f\neq0\in H^{1,2}(S)$ such that 
\begin{equation*}
    \Delta f=\lambda\cdot f. 
\end{equation*}
Then $f$ is called an eigenfunction of $\Delta$ with eigenvalue $\lambda$. 

If $S$ is closed, then the spectrum of $\Delta$ consists only of discrete eigenvalues: 
\[ 0=\lambda_0(S)<\lambda_1(S)\leq\lambda_2(S)\leq\cdots\to\infty. \] 
Let $\{\phi_k\}\subset C^\infty(S)$ be a sequence of orthonormal eigenfunctions of $S$ with respect to these eigenvalues, then one has 
\begin{equation*}
    \lambda_k(S) = \inf \left\{\frac{\int_S \abs{\nabla f}^2}{\int_S f^2}: \ \int_S f\phi_i=0,\,\Forall i<k \right\}, 
\end{equation*}
where $f\neq0$ runs over $H^{1,2}(S)$. In fact, this infimum can be realized by $\phi_k$, hence it is enough to let $f$ run only over the space of smooth functions. Note that in particular $\phi_0 = \frac{1}{\sqrt{\area(S)}}$ is a constant function, hence one has 
\begin{equation}\label{eqn:RQ1}
    \begin{aligned}
        \lambda_1(S) = \RQ(S) 
        & \df \inf \left\{\frac{\int_S\abs{\nabla f}^2}{\int_S f^2 - \frac{1}{\area(S)}(\int_S f)^2}:\ f\neq0\in H^{1,2}(S) \right\}. 
    \end{aligned}
\end{equation}

If $S$ is not closed, then its spectrum is no longer discrete. The spectrum of $\Delta$ consists of a continuous spectrum $\left[\frac14,\infty\right)$ with multiplicity the number of cusps of $S$, a trivial simple eigenvalue $0$ and possibly discrete eigenvalues in $(0,\infty)$. Hence the minimal non-zero spectrum $\bar\lambda_1(S)$ of $S$ is always less than or equal to $\frac14$. In this case, one still has 
\begin{lemma}[{see \eg \cite[Theorem XIII.1]{RS1978}}]\label{lem:RQ2}
    If $S$ is non-compact and has finite area, then 
    \[ \bar\lambda_1(S) \leq \RQ(S). \] 
\end{lemma}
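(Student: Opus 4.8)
The plan is to recognize $\RQ(S)$ as the bottom of the spectrum of the Laplacian acting on the mean-zero functions, and then to identify that bottom with $\bar\lambda_1(S)$ using the known shape of the spectrum recalled just above the statement.

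\emph{Rewriting the Rayleigh quotient.} Since $S$ has finite area, $H^{1,2}(S)\subset L^2(S)\subset L^1(S)$, so for any $f\in H^{1,2}(S)$ the mean $\bar f\df\frac{1}{\area(S)}\int_S f$ is well defined; constant functions also lie in $H^{1,2}(S)$, hence $g\df f-\bar f\in H^{1,2}(S)$ with $\int_S g=0$. As $\nabla g=\nabla f$ and $\int_S f^2-\frac{1}{\area(S)}\left(\int_S f\right)^2=\int_S g^2$, one obtains
\[ \RQ(S)=\inf\left\{\frac{\int_S\abs{\nabla g}^2}{\int_S g^2}:\ 0\neq g\in H^{1,2}(S),\ \int_S g=0\right\}. \]

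\emph{The spectral setup.} Because $S$ is a complete hyperbolic surface, $C_c^\infty(S)$ is dense in $H^{1,2}(S)$, $\Delta$ is essentially self-adjoint on $C_c^\infty(S)$, and the associated quadratic form is $Q(g)=\int_S\abs{\nabla g}^2$ with form domain $H^{1,2}(S)$. The constants span the one-dimensional (as $S$ is connected) eigenspace for the eigenvalue $0$, so the closed subspace $L^2_0(S)\df\{g\in L^2(S):\int_S g=0\}$ is $\Delta$-invariant and $\spec\big(\Delta|_{L^2_0(S)}\big)=\spec(\Delta)\setminus\{0\}$. By the description of the spectrum of a finite-area non-compact hyperbolic surface recalled above (continuous spectrum $\left[\frac14,\infty\right)$ together with at most finitely many eigenvalues in $(0,\infty)$), the infimum of this restricted spectrum is $\min\{\frac14,\lambda_1(S)\}=\bar\lambda_1(S)$, with the convention $\lambda_1(S)=\infty$ when there is no positive discrete eigenvalue.

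\emph{Conclusion via min-max.} Applying the variational principle for the bottom of the spectrum of a self-adjoint operator bounded below (\eg \cite[Theorem XIII.1]{RS1978}) to $\Delta|_{L^2_0(S)}$, whose form domain is $H^{1,2}(S)\cap L^2_0(S)$, gives
\[ \bar\lambda_1(S)=\inf\spec\big(\Delta|_{L^2_0(S)}\big)=\inf\left\{\frac{Q(g)}{\int_S g^2}:\ 0\neq g\in H^{1,2}(S),\ \int_S g=0\right\}, \]
which equals $\RQ(S)$ by the first step; in particular $\bar\lambda_1(S)\leq\RQ(S)$. For the inequality alone only the trivial half of min-max is needed: for any $g$ in the form domain, the spectral theorem yields $Q(g)=\int\mu\,d\langle E_\mu g,g\rangle\geq\bar\lambda_1(S)\int_S g^2$.

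The only points deserving care are that finite area is precisely what legitimizes subtracting the mean while staying inside the form domain, and that completeness of $S$ is what identifies the form domain of $\Delta$ with all of $H^{1,2}(S)$; neither is a real obstacle, and the rest is bookkeeping with the already-recalled structure of the spectrum.
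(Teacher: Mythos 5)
Your argument is correct and is essentially the one the paper has in mind: the paper gives no proof, it simply cites the variational characterization of the bottom of the spectrum (Reed--Simon, Theorem XIII.1), and your write-up spells out exactly that application---identifying $\RQ(S)$ with the Rayleigh quotient restricted to $H^{1,2}(S)\cap L^2_0(S)$, observing that the Friedrichs form domain on a complete surface is all of $H^{1,2}(S)$, and then reading off $\inf\spec(\Delta|_{L^2_0(S)})=\bar\lambda_1(S)$. One small factual slip worth correcting: you describe the spectrum as ``continuous spectrum $[\tfrac14,\infty)$ together with at most finitely many eigenvalues in $(0,\infty)$,'' but a finite-area non-compact hyperbolic surface can have \emph{infinitely} many cuspidal eigenvalues embedded in $[\tfrac14,\infty)$ (as the paper itself notes when discussing arithmetic surfaces); what is true and what your proof actually uses is that the discrete spectrum in $[0,\tfrac14)$ is finite, so $0$ is isolated, $L^2(S)=\C\cdot 1\oplus L^2_0(S)$ reduces $\Delta$, and $\inf\spec(\Delta|_{L^2_0(S)})=\min\{\tfrac14,\lambda_1(S)\}=\bar\lambda_1(S)$. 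With that phrasing repaired, the argument is complete.
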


It is still \emph{unknown} whether every hyperbolic surface in $\sM_{g,n}$, the moduli space of Riemann surfaces of genus $g$ with $n$ punctures, always has infinitely many eigenvalues. 
Selberg had shown that certain arithmetic hyperbolic surfaces have infinitely many cuspidal eigenvalues by using his celebrated Selberg's trace formula. 
Nevertheless, Phillips-Sarnak \cite{PS1985} conjectured that a ``generic" surface in $\sM_{g,n}$ (other than $(g,n) = (1,1)$) has only finitely many eigenvalues. We refer the readers to \cite[Section 2]{Sarnak2003} for more discussions on this topic.

If $S$ is compact and has non-empty boundaries, then the Neumann eigenvalue problem asks the existence of eigenfunctions $f$ satisfying the \emph{Neumann boundary condition}, that is, the partial derivative of $f$ in the normal direction vanishes on $\pa S$. The corresponding eigenvalues are called Neumann eigenvalues, which can also be listed as follows: 
\[ 0 = \sigma_0(S) < \sigma_1(S) \leq \sigma_2(S) \leq\cdots\to\infty. \] 
Similarly one still has 
\begin{equation}\label{eqn:RQ3}
    \sigma_1(S) = \RQ(S). 
\end{equation}

The following mini-max principle is a useful tool to prove a hyperbolic surface has large spectral gaps. One may see \eg \cite[Theorem 3.34]{Bergeron2016} for more details. 
\begin{theorem}[Mini-max Principle]\label{thm:mini-max}
    Let $S$ be a closed hyperbolic surface. 
    \begin{enumerate}
        \item Let $f_1,f_2,\cdots,f_k$ be $k$ $(k\geq 2)$ smooth functions on $S$ with pairwise disjoint supports, then 
            \begin{equation}\label{eqn:minimax1}
                \lambda_{k-1}(S) \leq \max_{1\leq i \leq k} \frac{\int_S \abs{\nabla f_i}^2}{\int_S f_i^2}. 
            \end{equation}
        \item Let $S = N_1 \cup N_2 \cup\cdots\cup N_k$ $(k\geq 1)$ be a partition of $S$ into relatively compact subsets such that for any $i\neq j$, 
            \[ \area(N_i)>0 \textrm{ \ and \ }\area(N_i\cap N_j)=0, \] 
        then 
            \begin{equation}\label{eqn:minimax2}
                \lambda_k(S) \geq \min_{1\leq i\leq k} \sigma_1(N_i). 
            \end{equation}
    \end{enumerate}
\end{theorem}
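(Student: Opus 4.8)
The statement to prove is the Mini-max Principle (Theorem~\ref{thm:mini-max}), which has two parts. Let me sketch proofs for both.

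\medskip

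\noindent\textbf{Proof proposal for Theorem~\ref{thm:mini-max}.}

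The plan is to treat the two parts separately, both via the variational characterization of eigenvalues together with the Courant--Fischer min-max theorem; the first part is an upper bound obtained by exhibiting an explicit $k$-dimensional test space, and the second is a lower bound obtained by a partition-of-unity / localization argument.

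For Part (1), I would use the fact that for a closed surface $S$ one has the Courant--Fischer formula
\[
\lambda_{k-1}(S) = \inf_{\substack{V\subset H^{1,2}(S)\\ \dim V = k}} \ \max_{0\neq f\in V} \frac{\int_S \abs{\nabla f}^2}{\int_S f^2}.
\]
Given the $k$ functions $f_1,\dots,f_k$ with pairwise disjoint supports, let $V=\operatorname{span}\{f_1,\dots,f_k\}$. Disjointness of supports forces linear independence (so $\dim V = k$) and, more importantly, makes the two quadratic forms \emph{simultaneously diagonal} on $V$: for $f=\sum_i c_i f_i$ we get $\int_S \abs{\nabla f}^2 = \sum_i c_i^2 \int_S \abs{\nabla f_i}^2$ and $\int_S f^2 = \sum_i c_i^2 \int_S f_i^2$, with no cross terms. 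Hence the Rayleigh quotient of any $f\in V$ is a weighted average of the numbers $R_i \df \int_S \abs{\nabla f_i}^2 / \int_S f_i^2$, so $\max_{0\neq f\in V} R(f) = \max_i R_i$. Plugging this $V$ into Courant--Fischer gives \eqref{eqn:minimax1}.

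For Part (2), let $\sigma \df \min_{1\le i\le k}\sigma_1(N_i)$; I may assume $\sigma>0$. The goal is $\lambda_k(S)\ge \sigma$, equivalently that no $(k+1)$-dimensional subspace of $H^{1,2}(S)$ has all Rayleigh quotients $<\sigma$. Suppose for contradiction $\lambda_k(S)<\sigma$; then there is a $(k+1)$-dimensional space $W$ spanned by the first $k+1$ eigenfunctions $\phi_0,\dots,\phi_k$ with $\int_S\abs{\nabla f}^2 < \sigma \int_S f^2$ for every nonzero $f\in W$ (using $\lambda_k<\sigma$ and that eigenfunctions realize the Rayleigh quotient; one gets strict inequality after noting $\phi_0$ is constant so its gradient energy is $0<\sigma\int\phi_0^2$, and combining). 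Now I would restrict functions to the pieces: for $f\in W$, since $\{N_i\}$ covers $S$ up to measure zero and overlaps have measure zero, $\int_S f^2 = \sum_{i=1}^k \int_{N_i} f^2$ and $\int_S\abs{\nabla f}^2 = \sum_{i=1}^k \int_{N_i}\abs{\nabla f}^2$. The key step is to find, inside the $(k+1)$-dimensional $W$, a nonzero $f$ whose restriction to \emph{every} $N_i$ is orthogonal to the constants on $N_i$, i.e. $\int_{N_i} f = 0$ for all $i=1,\dots,k$. These are $k$ linear conditions on the $(k+1)$-dimensional space $W$, so a nonzero such $f$ exists. For that $f$, the Neumann variational characterization \eqref{eqn:RQ3} on each piece gives $\int_{N_i}\abs{\nabla f}^2 \ge \sigma_1(N_i)\int_{N_i} f^2 \ge \sigma \int_{N_i} f^2$ (valid since $f|_{N_i}\in H^{1,2}(N_i)$ and has zero mean there, with no boundary condition needed for the Neumann Rayleigh quotient). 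Summing over $i$ yields $\int_S\abs{\nabla f}^2 \ge \sigma\int_S f^2$, contradicting the strict inequality above. Hence $\lambda_k(S)\ge\sigma$.

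The routine points I am glossing over: checking that restriction $W\to H^{1,2}(N_i)$ is well-defined and that the Dirichlet/$L^2$ integrals split additively over a measurable partition with null overlaps (fine because $H^{1,2}$ functions restrict to $H^{1,2}$ on open pieces and the $N_i$ are relatively compact); and the bookkeeping that turns $\lambda_k(S)<\sigma$ into a strict Rayleigh inequality on all of $W$ rather than just $\le$. The main obstacle, such as it is, is Part (2): one must be careful that the ``zero mean on each piece'' linear conditions are genuinely only $k$ constraints on a $(k+1)$-dimensional space (so exactly one more dimension than constraints — this is why the statement pairs $N_1,\dots,N_k$ with $\lambda_k$), and that the Neumann Rayleigh quotient bound really does apply to an arbitrary $H^{1,2}(N_i)$ function of zero mean without any imposed boundary behavior. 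Both are standard, so I expect the proof to be short.
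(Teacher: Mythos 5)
The paper does not prove Theorem~\ref{thm:mini-max}; it simply refers the reader to \cite[Theorem 3.34]{Bergeron2016}, so there is no in-paper argument to compare yours against. Your proof is a correct instance of the standard argument: Part (1) via Courant--Fischer with the $k$-dimensional test space spanned by the $f_i$'s, where disjoint supports simultaneously diagonalize both quadratic forms so the maximal Rayleigh quotient on the span is $\max_i R_i$; Part (2) by imposing the $k$ mean-zero conditions $\int_{N_i} f = 0$ on the $(k+1)$-dimensional span of $\phi_0,\dots,\phi_k$ to produce a nonzero $f$ on which the Neumann Rayleigh quotient bound applies piecewise, contradicting $\lambda_k(S) < \sigma$. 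One small remark: in Part (2) the strict inequality $\int_S \abs{\nabla f}^2 < \sigma \int_S f^2$ for all nonzero $f \in W$ follows directly from the eigenfunction expansion, $\sum_j c_j^2 \lambda_j \leq \lambda_k \sum_j c_j^2 < \sigma \sum_j c_j^2$, so the side discussion about $\phi_0$ being constant is unnecessary (though harmless).
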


Now we introduce two consequences of Theorem \ref{thm:mini-max} that will be applied later. 
\begin{lemma}\label{lem:eigenvalue_monotonicity}
    Let $S$ be bordered compact hyperbolic surface. Suppose we can find a certain pair of components of $\pa S$ and glue them together (with any twist parameter) to obtain another bordered hyperbolic surface $\widetilde{S}$, then we have 
    \[ \sigma_1(\widetilde{S}) \geq \sigma_1(S); \] 
    In particular if $\widetilde{S}$ is closed, then 
    \[ \lambda_1(\widetilde{S}) \geq \sigma_1(S). \] 
\end{lemma}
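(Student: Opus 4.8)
The plan is to realize both $\widetilde{S}$ and $S$ as quotients or subsurfaces in a way that lets us transplant Neumann test functions, and then invoke the variational characterization \eqref{eqn:RQ3}. First I would set up the basic topological picture: let $\gamma_1,\gamma_2$ be the two boundary components of $\pa S$ that get identified, via some isometry $\tau\colon\gamma_1\to\gamma_2$ (possibly with a twist), to form a single simple closed geodesic $\gamma\subset\widetilde{S}$. Then $S$ is obtained from $\widetilde{S}$ by cutting along $\gamma$, so there is a natural ``gluing'' quotient map $q\colon S\to\widetilde{S}$ which is a local isometry, injective on the interior of $S$, and two-to-one over $\gamma$. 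The key observation is that $q$ is surjective and area-preserving, so that pulling back functions along $q$ is an isometry of the relevant $L^2$ and Dirichlet-energy functionals \emph{except} that pulled-back functions need not satisfy any boundary condition on $\gamma_1\cup\gamma_2$ — which is fine, because the Neumann variational problem on $S$ imposes no essential boundary condition anyway.

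The main step is then the following test-function argument. Let $f$ be a first Neumann eigenfunction on $\widetilde{S}$ if it were bordered, or more to the point, let $f$ be \emph{any} nonzero function in $H^{1,2}(\widetilde{S})$ realizing (or nearly realizing) $\sigma_1(\widetilde{S})$ via \eqref{eqn:RQ3} (resp. $\lambda_1(\widetilde{S})$ via \eqref{eqn:RQ1} in the closed case). Set $\tilde f \df f\circ q \in H^{1,2}(S)$. Since $q$ is a local isometry and surjective with the doubling only over a measure-zero set, $\int_S \tilde f^2 = \int_{\widetilde S} f^2$, $\int_S |\nabla \tilde f|^2 = \int_{\widetilde S}|\nabla f|^2$, $\int_S \tilde f = \int_{\widetilde S} f$, and $\area(S) = \area(\widetilde S)$. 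Hence the Rayleigh quotient of $\tilde f$ on $S$ (in the form appearing in the definition of $\RQ(S)$, i.e. with the mean subtracted) equals that of $f$ on $\widetilde S$. Taking the infimum over $f$, and using $\sigma_1(S) = \RQ(S)$ from \eqref{eqn:RQ3} together with $\sigma_1(\widetilde S) = \RQ(\widetilde S)$ (or $\lambda_1(\widetilde S)=\RQ(\widetilde S)$ from \eqref{eqn:RQ1} when $\widetilde S$ is closed), we get $\sigma_1(S) \leq \sigma_1(\widetilde S)$ (resp. $\sigma_1(S)\leq \lambda_1(\widetilde S)$), which is exactly the claim. Note the direction: $S$ has ``more competitors'' is the wrong slogan; rather, every competitor on $\widetilde S$ lifts to a competitor on $S$ with the same Rayleigh quotient, so the infimum over $S$ can only be smaller or equal.

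I would present this cleanly by working with the mean-zero Rayleigh quotient $\RQ(\cdot)$ uniformly, so that both the bordered case and the closed case are handled by the single identity $\RQ(S)\le \RQ(\widetilde S)$ coming from the pullback $f\mapsto f\circ q$; the only case distinction is which of \eqref{eqn:RQ1} or \eqref{eqn:RQ3} we cite on the $\widetilde S$ side. The one point demanding a little care — and the main (minor) obstacle — is verifying that $f\circ q$ genuinely lies in $H^{1,2}(S)$ and that no boundary contribution to the Dirichlet energy is lost or doubled: this is where one uses that $q$ identifies $\gamma_1$ with $\gamma_2$ along a geodesic, that $f$ is (at worst) $H^{1,2}$ across the smooth curve $\gamma\subset\widetilde S$, and that $\gamma_1\cup\gamma_2$ has zero area so contributes nothing to either integral. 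Since $H^{1,2}$ functions restrict to $H^{1/2}$ on the separating curve and the two one-sided traces of $f\circ q$ on $\gamma_1$ and $\gamma_2$ agree after applying $\tau$ (they are both the trace of $f$ on $\gamma$), the function $f\circ q$ is a well-defined element of $H^{1,2}(S)$, and the energy and $L^2$ identities are immediate. This completes the argument.
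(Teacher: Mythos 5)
Your argument is correct and is essentially the paper's intended approach: the authors reduce the claim to the Mini-max principle (Theorem \ref{thm:mini-max}(2) with $k=1$) and leave the Rayleigh-quotient pullback argument you write out as an exercise; your pullback $f\mapsto f\circ q$ along the gluing map, together with the equalities of $L^2$-norm, Dirichlet energy, mean, and area, is exactly the content of that exercise. One small remark: the trace-matching discussion at the end is superfluous in the direction you are going — $f\circ q\in H^{1,2}(S)$ follows simply because $q$ restricts to a measure-preserving isometry $S^{\circ}\to\widetilde S\setminus\gamma$ and $S$ has smooth boundary, so $H^{1,2}(S^{\circ})=H^{1,2}(S)$; the agreement of one-sided traces would only be needed for the reverse (pushforward) direction, which this lemma does not use.
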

\begin{proof}
    The second part is the case of $k=1$ of \eqref{eqn:minimax2}. The proof of the first part is exactly the same as the proof of Part (2) of Theorem \ref{thm:mini-max}. We leave it as an exercise to interested readers. 
\end{proof}

\begin{corollary}\label{cor:eigenvalue_bound}
    Let $S=S_g$ be a closed hyperbolic surface. Suppose there exists $k$ $(k\geq 2)$ disjoint simple closed geodesics $\{\gamma_1,\cdots,\gamma_k\}$ in $S$ such that 
    \begin{equation*}
        S\setminus\bigsqcup_{i=1}^k \gamma_i = \bigsqcup_{i=1}^k S_{g_i,2}(\gamma_i,\gamma_{i+1}), 
    \end{equation*}
    where $\gamma_{k+1}\df\gamma_1$, $g_i\geq1$, and $S_{g_i,2}(\gamma_i,\gamma_{i+1})$ is a compact hyperbolic surface of genus $g_i$ with $2$ geodesic boundaries $\gamma_i$ and $\gamma_{i+1}$. If for any $i\in[1,\,k]$, we have $\length_{S}(\gamma_i) \leq C$, then 
    \begin{equation*}
        \lambda_{k-1}(S) = O\left(\frac{1}{\min\{g_i\}}\right), 
    \end{equation*}
    where the implied constant depends only on $C$. 
\end{corollary}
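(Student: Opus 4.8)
The plan is to obtain the upper bound from Part (1) of the Mini-max Principle (Theorem~\ref{thm:mini-max}, inequality~\eqref{eqn:minimax1}): I will exhibit $k$ smooth test functions $f_1,\dots,f_k$ on $S$ with pairwise disjoint supports, each having Rayleigh quotient $\int_S\abs{\nabla f_i}^2\big/\int_S f_i^2$ of order $1/\min_j g_j$. The natural choice is to make $f_i$ ``live on'' the piece $N_i\df S_{g_i,2}(\gamma_i,\gamma_{i+1})$. Since $N_1,\dots,N_k$ have pairwise disjoint interiors and meet only along the cutting geodesics, it will suffice to arrange that $\supp f_i$ avoids $\pa N_i$; then the disjoint-support hypothesis of \eqref{eqn:minimax1} is automatic.

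Concretely, I would set $w_0\df\arcsinh\frac{1}{\sinh(C/2)}$, which by \eqref{eqn:collar_width} and the hypothesis $\length_S(\gamma_j)\leq C$ satisfies $w_0\leq w(\gamma_j)$ for every $j$, so by the Collar Lemma (Theorem~\ref{thm:collar_lemma}) the width-$w_0$ half-collars of $\gamma_i$ and of $\gamma_{i+1}$ lying inside $N_i$ are embedded and disjoint (they are distinct geodesics since $k\geq2$). Fixing a smooth $\chi\colon[0,w_0]\to[0,1]$ with $\chi\equiv0$ on a neighborhood of $0$, $\chi\equiv1$ near $w_0$, and $\sup\abs{\chi'}\leq c_0/w_0$ for an absolute constant $c_0$, I define $f_i$ to equal $\chi(\rho)$ on each of those two half-collars, where $\rho$ is the distance to the corresponding boundary geodesic, to equal $1$ on the rest of $N_i$, and to equal $0$ on $S\setminus N_i$. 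Because $\chi$ vanishes near $\rho=0$, the pieces glue to a genuinely smooth function on $S$ whose support lies in the interior of $N_i$.

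For the estimate, on each half-collar the metric \eqref{eqn:collar_metric} makes $\pa_\rho$ a unit direction, so $\abs{\nabla f_i}=\abs{\chi'(\rho)}\leq c_0/w_0$ there, while by the computation in \eqref{eqn:collar_area} each half-collar has area at most $\length_S(\gamma)/\sinh(\length_S(\gamma)/2)\leq2$; hence $\int_S\abs{\nabla f_i}^2\leq 4c_0^2/w_0^2$. Meanwhile $f_i\equiv1$ off the two half-collars and $\area(N_i)=4\pi g_i$ by Gauss--Bonnet, so $\int_S f_i^2\geq \area(N_i)-4 = 4\pi g_i-4>0$ since $g_i\geq1$. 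Plugging into \eqref{eqn:minimax1} and using $4\pi m-4\geq (4\pi-4)m$ for $m\geq1$,
\[ \lambda_{k-1}(S)\leq\max_{1\leq i\leq k}\frac{4c_0^2/w_0^2}{4\pi g_i-4}=\frac{4c_0^2/w_0^2}{4\pi\min_j g_j-4}=O\!\left(\frac{1}{\min_j g_j}\right), \]
with implied constant depending only on $C$ through $w_0$. All of this is routine; the only point requiring a little care is the bookkeeping that guarantees the $f_i$ are smooth on all of $S$ with pairwise disjoint supports, and that is exactly what the condition ``$\chi\equiv0$ near $0$'' arranges. I do not anticipate any substantive obstacle.
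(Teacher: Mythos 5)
Your proof is correct and takes essentially the same approach as the paper: construct a test function supported on each piece $S_{g_i,2}$ that is constant away from the boundary half-collars and tapers to zero near the boundary geodesics, then invoke Part (1) of the Mini-max principle together with the collar-area bound \eqref{eqn:collar_area} and Gauss--Bonnet. Your use of a smooth cutoff $\chi$ vanishing near $\rho=0$ (versus the paper's piecewise-linear $\dist(p,\gamma_i)/w_i$ taper) is in fact a slight tidying, since it makes the $f_i$ genuinely smooth with genuinely disjoint supports, matching the literal hypotheses of Theorem \ref{thm:mini-max}.
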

\begin{proof}
    For simplicity of notation, we write $\ell_i\df\length_{S}(\gamma_i)$, $S_i\df S_{g_i,2}$, and 
    \[ \collar_{i1}\df \collar(\gamma_{i}, w_{i})\cap S_i,\quad \collar_{i2}\df \collar(\gamma_{i+1}, w_{i+1})\cap S_i, \] 
    where $w_{i}=w(\gamma_i)$ is the width of the standard collar of $\gamma_{i}$ defined as in \eqref{eqn:collar_width}. Then by the collar lemma, the two half-collars $\collar_{i1}$ and $\collar_{i2}$ are disjoint. For each $i\in[1,\,k]$, we define a function $f_i$ on $S_i$ by 
    \begin{equation*}
        f_i(p) \df 
        \begin{cases}
            \frac{1}{\sqrt{\area(S_{i})}}, & p\notin \collar_{i1} \cup \collar_{i2},\\
            \frac{1}{w_{i}\sqrt{\area(S_{i})}}\dist(p,\gamma_{i}), & p\in \collar_{i1},\\
            \frac{1}{w_{i+1}\sqrt{\area(S_{i})}}\dist(p,\gamma_{i+1}), & p\in \collar_{i2}.
        \end{cases}
    \end{equation*}
    By \eqref{eqn:collar_area}, we have $\area(\collar_{i1})\leq 2$ and $\area(\collar_{i2}) \leq 2$ (since they are only half-collars), thus 
    \begin{equation}\label{eqn:eigenvalue_bound1}
        \begin{aligned}
            \int_{S_i} f_i^2 \dvol 
            \geq \int_{S_i\setminus(\collar_{i1}\cup\collar_{i2})} f_i^2 \dvol 
            &  =  \frac{\area(S_i) - \area(\collar_{i1}) - \area(\collar_{i2})}{\area(S_i)}\\ 
            &\geq \frac{\area(S_i)-4}{\area(S_i)}. 
        \end{aligned}
    \end{equation}
    Since in the pair of coordinates $(\rho,t)$ one has $ \dist(p,\gamma_i)=\abs{\rho}$, it follows that 
    \begin{equation*}
        \abs{\nabla\dist(p,\gamma_i)} = \abs{\frac{\pa}{\pa \rho}\abs{\rho}} = 1. 
    \end{equation*}
    By definition \eqref{eqn:collar_width}, $w_{i} \geq \arcsinh(e^{-C})$, thus 
    \begin{equation}\label{eqn:eigenvalue_bound2}
        \begin{aligned}
            \int_{S_i} \abs{\nabla f_i}^2 \dvol 
             = \int_{\collar_{i1}\cup\collar_{i2}} \abs{\nabla f_i}^2 \dvol
            & = \frac{\area(\collar_{i1})}{w_{i}^2\area(S_i)} + \frac{\area(\collar_{i2})}{w_{i+1}^2\area(S_i)}\\
            & = O\left(\frac{1}{\area(S_i)}\right). 
        \end{aligned}
    \end{equation}
    From \eqref{eqn:eigenvalue_bound1}, \eqref{eqn:eigenvalue_bound2} and the Gauss-Bonnet formula, we deduce that 
    \begin{equation}\label{eqn:eigenvalue_bound3}
        \frac{\int_{S_i} \abs{\nabla f_i}^2 \dvol}{\int_{S_i} f_i^2 \dvol} = O\left(\frac{1}{\area(S_i)-4}\right) = O\left(\frac{1}{g_i}\right), 
    \end{equation}
    thus the conclusion follows from \eqref{eqn:eigenvalue_bound3} and the Mini-max principle \eqref{eqn:minimax1}. 
\end{proof}

\section{Some results from random surfaces in covering model}\label{section3}
In this section we show that one can use recent developments of random surfaces in covering model to find finite-area non-compact hyperbolic surfaces with large $\bar\lambda_1$, large cusps and long systoles. We note here that the proposition below is essentially due to Hide-Magee \cite{HM2023}; Magee \cite{Magee2024} and Klukowski-Markovi\'c \cite{KM2024}; Nica \cite{Nica1994}; and Puder-Zimhoni \cite{PZ2024}. 
\begin{proposition}\label{prop:surfaces}
    For any $\epsilon>0$, $l>0$, $\delta>0$ and each large enough integer $i$, there exists a finite-area non-compact hyperbolic surface $\sX_i$ such that 
    \begin{enumerate}
        \item the Euler characteristic satisfies $\abs{\chi(\sX_i)} = i$.
        \item Each simple closed geodesic in $\sX_i$ has length $\geq \epsilon$.
        \item $\sX_i$ has large cusps of length $l$.
        \item The minimal non-zero spectrum of $\sX_i$ satisfies $\bar{\lambda}_1(\sX_i)>\frac 14-\delta$. 
    \end{enumerate} 
\end{proposition}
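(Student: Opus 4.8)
The plan is to realize each $\sX_i$ as a carefully chosen finite cover of a fixed base surface and to extract property (4) from the recent spectral-gap theorems for random covers. Take as base $X_0$ the thrice-punctured sphere $\Gamma(2)\backslash\H$: it has $\chi(X_0)=-1$, fundamental group free of rank $2$ on generators $a,b$ (the loops around the three cusps being represented by $a$, $b$, $ab$), minimal non-zero spectrum $\bar\lambda_1(X_0)=\tfrac14$ (no exceptional eigenvalues), and large cusps of some fixed length $l_0>0$ (one may take $l_0=2$), the three maximal embedded cusp neighbourhoods having pairwise disjoint interiors. A connected degree-$n$ cover $p\colon X\to X_0$ is the same datum as a transitive action of $\pi_1(X_0)$ on $\{1,\dots,n\}$, \ie a pair $(\sigma_a,\sigma_b)\in S_n^2$; it satisfies $\chi(X)=-n$, so we will take $n=i$ and only need to choose the monodromy.

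Next I would translate requirements (2)--(4) into conditions on $(\sigma_a,\sigma_b)$. For the cusps: a cusp of $X$ lying over a cusp $c_0$ of $X_0$ and corresponding to a cycle of length $d$ of the monodromy around $c_0$ is, near the puncture, the pullback of $(\D^*,ds_O^2)$ under $z\mapsto z^d$, which is again a punctured disc carrying \emph{its own} Poincar\'e metric; hence it contains an embedded $\cusp(d\,l_0)$, and since preimages of disjoint sets are disjoint these are pairwise disjoint, so $X$ has large cusps of length $l$ provided every cycle of $\sigma_a$, $\sigma_b$ and $\sigma_a\sigma_b$ has length $\geq l/l_0$. For the systole: a primitive closed geodesic $\tilde\gamma$ of $X$ covers a primitive closed geodesic $\gamma_0$ of $X_0$ some $k$ times, where $k$ is a multiple of the length of the cycle of the monodromy around $\gamma_0$ through the relevant sheet, so $\length(\tilde\gamma)=k\,\length(\gamma_0)$; as $X_0$ has only finitely many primitive closed geodesics of length $<\epsilon$, say $\gamma_1,\dots,\gamma_r$ (fixed words in $a,b$), one gets $\sys(X)\geq\epsilon$ provided every cycle of the monodromy around each $\gamma_j$ has length $\geq\epsilon/\length(\gamma_j)$. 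Finally, since $L^2(X)$ splits $\Delta$-invariantly as $p^*L^2(X_0)$ together with its orthogonal complement, and $X_0$ has no spectrum in $(0,\tfrac14)$, one has $\bar\lambda_1(X)\geq\min\{\tfrac14,\ \mu\}$, where $\mu$ denotes the infimum of the ``new'' $L^2$-spectrum of the cover.

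It therefore suffices to produce, for every large $i$, a connected degree-$i$ cover whose monodromy satisfies the finitely many cycle-length lower bounds above and for which $\mu>\tfrac14-\delta$. Here I would invoke the random cover model: equip the set of pairs $(\sigma_a,\sigma_b)\in S_i^2$ with the uniform measure conditioned on transitivity and on $\sigma_a$, $\sigma_b$, $\sigma_a\sigma_b$ and the monodromies around $\gamma_1,\dots,\gamma_r$ all having minimal cycle length at least $D\df\max\bigl\{\lceil l/l_0\rceil,\ \max_{1\leq j\leq r}\lceil\epsilon/\length(\gamma_j)\rceil\bigr\}$. By the combinatorics of random permutations with constrained cycle type and of word measures on symmetric groups (Nica \cite{Nica1994}, Puder--Zimhoni \cite{PZ2024}), this model is non-empty for all large $i$ and the conditioning perturbs the relevant expected numbers of fixed points of words by only $o(1)$; feeding this into the trace-method proof of the near-optimal spectral gap for random covers (Hide--Magee \cite{HM2023}), in the robust forms supplied by Magee \cite{Magee2024} and Klukowski--Markovi\'c \cite{KM2024}, yields $\mu>\tfrac14-\delta$ with probability tending to $1$ as $i\to\infty$. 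For $i$ large this probability is positive, so a cover $\sX_i$ with properties (1)--(4) exists.

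I expect the main obstacle to be precisely this last point: verifying that imposing the cycle-length constraints on the monodromy of the three cusp loops and of the short geodesics does not destroy the near-optimal spectral gap that a generic random cover enjoys. This is exactly why both the spectral-gap theorems (Hide--Magee, Magee, Klukowski--Markovi\'c) and the fine control of word measures under cycle-type conditioning (Nica, Puder--Zimhoni) are needed together; by contrast, the covering-space computations translating (2) and (3) into combinatorics, and the non-emptiness of the conditioned model, are elementary.
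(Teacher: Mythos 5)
Your translation of properties (2) and (3) into cycle-length constraints on the monodromy is correct, and you have identified exactly the right ingredients (the thrice-punctured sphere as base, Hide--Magee for the spectral gap, Klukowski--Markovi\'c/Magee for large cusps, Nica and Puder--Zimhoni for the systole). But the overall architecture of your argument, conditioning the uniform measure on the cycle-length constraints and then arguing that the conditioned model still enjoys a near-optimal spectral gap, contains a genuine gap precisely where you flag it. The theorem of Hide--Magee is proved for the \emph{uniform} probability measure on $\Hom(F_2,S_n)$; nothing in that proof or in Magee's or Klukowski--Markovi\'c's refinements gives a spectral gap after conditioning on an event of probability bounded away from $1$ (and the systole event in Corollary \ref{cor:systole} does have probability bounded away from $1$). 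Your claim that ``the conditioning perturbs the relevant expected numbers of fixed points of words by only $o(1)$'' is a substantive assertion about word measures under cycle-type conditioning that you neither prove nor reduce to a citation, and ``feeding this into the trace-method proof'' would amount to redoing the entire Hide--Magee analysis under the conditioned measure. As written, this is a missing step, not a detail.

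The paper's proof avoids the issue entirely by never conditioning. It simply observes that the spectral gap event, the large-cusps event and connectedness (Dixon) each hold a.a.s., while the systole event has $\liminf$ of the probability bounded below by a positive constant depending only on $\epsilon$ (Corollary \ref{cor:systole}, deduced from Theorems \ref{thm:Nica} and \ref{thm:PZ}); by inclusion--exclusion the intersection of finitely many a.a.s. events with a positive-probability event has positive probability for all large $n$, so a cover with all four properties exists. Your route could be repaired with the elementary measure-theoretic remark that conditioning on an event of probability bounded below by $c>0$ sends any a.a.s. event to a conditionally a.a.s. event (since $P(A\mid B)\geq 1-(1-P(A))/c$), but this is just the intersection bound in disguise; re-running the trace method is not needed and you should not expect the cited references to supply it. One further, smaller difference: for property (3) the paper invokes Theorem \ref{thm:KM_Magee} directly (large cusps hold a.a.s.) rather than your combinatorial translation of large cusps into cycle lengths of $\sigma_a,\sigma_b,\sigma_a\sigma_b$; your translation is correct but adds an extra event you must then control, whereas the paper keeps (3) as a single a.a.s. black box.
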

 
Random surfaces in covering model have been systematically studied by Magee-Naud \cite{MN2020}, Magee-Puder \cite{MP2023}, and Magee-Naud-Puder \cite{MNP2022}. We will briefly introduce the basic notions of this theory below. Readers may refer to the papers mentioned above for a comprehensive introduction, and see \eg \cite{LM2022,Naud2022, Naud2023, HM2023} for further developments in this area. 

Let $X = \H / \Gamma$ be the ``base" hyperbolic surface, where $\Gamma$ is a torsion-free discrete subgroup of $\mathrm{PSL}(2,\R)$. For the purpose of this paper, we only consider the case of $X$ being finite-area and non-compact. For any homomorphism $\phi\in\Hom(\Gamma, S_n)$, where $S_n$ is the permutation group of $[n] \df \{1,\cdots,n\}$, let $\Gamma$ act on $\H\times [n]$ by 
\[ \gamma:(z,i) \longmapsto (\gamma z, \phi(\gamma)i), \] 
then the quotient space 
\[ X_\phi \df (\H\times [n]) \Big/ \Gamma, \] 
is a degree $n$ covering surface (possibly not connected) of $X$. Since $\Gamma$ is a finitely generated free group, $\Hom(\Gamma, S_n)$ is a finite set. If the homomorphism $\phi$ is chosen uniformly at random from $\Hom(\Gamma, S_n)$, then the surface $X_\phi$ is called a uniformly random degree $n$ covering surface of $X$. 
A property of hyperbolic surfaces is said to hold \emph{asymptotically almost surely (\textit{a.a.s.})}, if the probability that $X_\phi$ satisfies this property tends to one as $n\to\infty$. 

The proof of Proposition \ref{prop:surfaces} is just a combination of several theorems. The first theorem we need is due to Hide-Magee \cite{HM2023}: 
\begin{theorem}[{\cite[Theorem 1.1]{HM2023}}]\label{thm:HM}
    For any $\delta>0$, a.a.s. 
    \[ \spec (X_\phi)\cap \Big[0, \frac 14 - \delta \Big) = \spec (X)\cap \Big[ 0, \frac 14 - \delta \Big) \] 
    and the multiplicities on both sides are the same. 
\end{theorem}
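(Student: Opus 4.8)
\emph{The plan} is to follow the resolvent strategy of Hide--Magee. \emph{Reduction to the new part.} Regard $X_\phi$ as a degree-$n$ cover of $X=\H/\Gamma$ with $\Gamma$ free. Pull-back of functions is a $\Delta$-equivariant isometric embedding $L^2(X)\hookrightarrow L^2(X_\phi)$ whose orthogonal complement is $L^2(X;\mathcal{E}_\phi)$, the space of $L^2$-sections of the flat bundle attached to the $(n-1)$-dimensional complement $W_\phi$ of the constants inside the permutation representation $\rho_\phi\colon\Gamma\to U(\mathbb{C}^n)$; the Laplacian respects this splitting. (A.a.s.\ $X_\phi$ is connected, so $W_\phi$ carries no trivial subrepresentation.) Hence $\spec(X_\phi)$ is, counted with multiplicity, the union of $\spec(X)$ and $\spec(\Delta_{W_\phi})$, and the theorem reduces to the single assertion that a.a.s.\ the twisted Laplacian $\Delta_{W_\phi}$ has \emph{no} spectrum in $[0,\tfrac14-\delta)$; this simultaneously rules out new eigenvalues and any increase of old multiplicities there.

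\emph{A parametrix.} Spectrum of $\Delta_{W_\phi}$ below $\tfrac14$ is detected by poles of its $L^2$-resolvent at $s(1-s)$ with $s\in(\tfrac12,1]$, so I would build a parametrix for $\Delta_{W_\phi}-s(1-s)$ as $s$ ranges over the compact interval corresponding to $\lambda\in[0,\tfrac14-\delta]$. Decompose $X$ into a compact core and finitely many embedded cusp neighborhoods. On a cusp neighborhood the restricted twisted operator is, after expansion in horocyclic Fourier modes, a direct sum of one-dimensional operators with spectral bottom $\tfrac14$, so the model cusp resolvent is holomorphic and norm-bounded on this range of $s$, \emph{uniformly in $\phi$}; on the core one uses the resolvent of an auxiliary closed surface. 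Patching with a smooth partition of unity yields $(\Delta_{W_\phi}-s(1-s))M(s)=\mathrm{Id}+E(s)$ with $E(s)$ supported in the core, and unfolding the construction over $\H$ shows $E(s)$ is a \emph{fixed} noncommutative polynomial $P_s\bigl(\rho_\phi(\gamma_1),\dots,\rho_\phi(\gamma_r)\bigr)$ in finitely many operators $\rho_\phi(\gamma_j)$, with the $\gamma_j\in\Gamma$ and the operator-valued coefficients of $P_s$ depending only on $X$ and holomorphically on $s$. Whenever $\lVert E(s)\rVert<1$, the factor $\mathrm{Id}+E(s)$ is invertible, the resolvent continues holomorphically through $s$, and $\Delta_{W_\phi}$ has no spectrum there.

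\emph{The random input, and the main obstacle.} The heart of the matter is to show a.a.s.\ $\lVert E(s)\rVert<1$ for all $s$ in that compact interval. Since $E(s)=P_s\bigl(\rho_\phi(\gamma_1),\dots,\rho_\phi(\gamma_r)\bigr)$, this is a statement about operator norms of fixed noncommutative polynomials evaluated on a uniformly random permutation representation of the free group $\Gamma$. One invokes \emph{strong convergence} of such representations to the left-regular representation $\lambda_\Gamma$, i.e.\ $\lVert P(\rho_\phi(\gamma_1),\dots,\rho_\phi(\gamma_r))\rVert\to\lVert P(\lambda_\Gamma(\gamma_1),\dots,\lambda_\Gamma(\gamma_r))\rVert$ a.a.s.\ for every fixed $P$ --- which itself rests on word-measure and trace asymptotics for random permutations (Nica; Puder and collaborators; Magee--Puder) fed into a Bordenave--Collins-type linearization/polynomial method. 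Running the same parametrix with $\rho_\phi$ replaced by $\lambda_\Gamma$ describes the Laplacian on $L^2$ of the universal cover $\H$, whose spectrum is exactly $[\tfrac14,\infty)$, so the limiting error operator $E_\infty(s)=P_s(\lambda_\Gamma(\gamma_1),\dots,\lambda_\Gamma(\gamma_r))$ satisfies $\lVert E_\infty(s)\rVert<1$ on the whole range; strong convergence, joint continuity of $\lVert E(s)\rVert$ in $s$, and a finite $s$-net then give $\lVert E(s)\rVert<1$ a.a.s.\ for all relevant $s$, which finishes the proof. The delicate point is exactly this upgrade: passing from the soft Benjamini--Schramm (weak) convergence of random covers --- which only controls integrated densities of states and cannot exclude eigenvalues --- to a \emph{quantitative, uniform-in-$s$} operator-norm bound robust enough to survive the continuous spectrum coming from the cusps. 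Isolating the clean ``polynomial-in-$\rho_\phi(\gamma_j)$'' form of the error operator and then proving the strong-convergence estimate for random permutation representations is the technical core of Hide--Magee (building on Magee--Naud--Puder); the remainder is standard parametrix bookkeeping together with the structure theory of cusps.
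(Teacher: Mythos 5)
This statement is imported verbatim from \cite[Theorem 1.1]{HM2023}; the present paper does not reprove it, and uses it as an external black box in the proof of Proposition~\ref{prop:surfaces}. There is therefore no internal proof in the paper against which to compare your attempt.

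What you have written is a high-level but essentially faithful outline of the Hide--Magee argument itself: split off the new spectrum onto the twisted Laplacian $\Delta_{W_\phi}$ for the standard $(n-1)$-dimensional subrepresentation of $\mathbb{C}^n$, build a parametrix whose cusp part is exact (spectral bottom $\tfrac14$ uniformly in $\phi$) and whose interior error is an operator-coefficient noncommutative expression in the $\rho_\phi(\gamma_j)$, and drive the operator norm of that error below $1$ by strong convergence of random permutation representations to the regular representation (due chiefly to Bordenave--Collins, with the word-measure asymptotics of Nica and Magee--Puder as input). Two caveats if you were to flesh this out: (i) the error is a \emph{finite} polynomial in the $\rho_\phi(\gamma_j)$ only if the interior parametrix kernel is arranged to have compactly supported lift to $\H$; otherwise it is a convergent infinite series whose tail must be controlled uniformly in $s$ before strong convergence for fixed polynomials can be invoked, and that truncation-plus-tail estimate is a genuine part of the technical work; (ii) the resolvent of an auxiliary closed surface is not the only choice for the interior piece, and one must check that whichever interior parametrix is chosen matches the cusp model smoothly enough that the error has the required form. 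Neither caveat undermines the correctness of your outline; but evaluating the Hide--Magee argument in detail is outside the scope of reviewing this paper, which treats the result as known.
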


Recall that the notion of large cusps is about the existence of disjoint embedded large cusps. The second theorem we need is due to Klukowski-Markovi\'c \cite{KM2024} and independently Magee \cite{Magee2024}: 
    
\begin{theorem}[{\cite[Theorem 3.5]{KM2024} or \cite[Section 5]{Magee2024}}]\label{thm:KM_Magee}
    For any fixed positive constant $l$, a.a.s. $X_\phi$ has large cusps of length $l$. 
\end{theorem}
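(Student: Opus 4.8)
\emph{Setup.} The plan is to track the cusps of the fixed base surface through a random cover. Write $X=\H/\Gamma$ with $\Gamma$ free of finite rank, let $c_1,\dots,c_m$ be the cusps of $X$, and let $\gamma_j\in\Gamma$ be a parabolic generating the cusp subgroup of $c_j$. We may assume $l>l_0$ for a fixed $l_0>0$ chosen so small that $X$ has pairwise disjoint embedded cusp neighbourhoods $U_1,\dots,U_m$, with $U_j$ bounded by a horocycle of length $l_0$ around $c_j$ (if $l\leq l_0$ the assertion for $l$ follows from that for $l_0$, since the length-$l$ horoballs lie inside the length-$l_0$ ones). For $\phi\in\Hom(\Gamma,S_n)$ and the covering $p\colon X_\phi\to X$, the cusps of $X_\phi$ lying over $c_j$ are in canonical bijection with the cycles of $\phi(\gamma_j)$: a cycle of length $k$ gives a cusp $\tilde c$ which is a $k$-fold cyclic cover of $c_j$, and the component of $p^{-1}(U_j)$ containing $\tilde c$ is an embedded cusp neighbourhood of $\tilde c$ bounded by a horocycle of length $kl_0$. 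Over all $j$ and all cycles these components are pairwise disjoint.

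\emph{The deep cusps.} Fix $K_0=K_0(l,l_0)$ much larger than $l/l_0$ and call $\tilde c$ \emph{deep} if its cycle has length $k\geq K_0$. If $\tilde c$ is deep, its length-$l$ horocycle lies inside the embedded component of $p^{-1}(U_j)$ containing it, at depth $\log(kl_0/l)$ below the boundary, so it is embedded. Any cusp of $X_\phi$ has its length-$l$ horoball extending out of its own component of $p^{-1}(U_\bullet)$ by at most $\log(l/l_0)$, so choosing $K_0>l^2/l_0^2$ guarantees that the length-$l$ horoball of a deep cusp is disjoint from that of every other cusp (and two deep ones lie in disjoint components). Thus, for every $\phi$, the length-$l$ horocycles of the deep cusps are embedded, pairwise disjoint, and disjoint from those of all other cusps. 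Since for each fixed $k$ the expected number of $k$-cycles of a uniform element of $S_n$ tends to $1/k$, the number $N$ of \emph{shallow} cusps (cycle length $<K_0$) has bounded expectation, so $N\leq\log n$ a.a.s.; it remains to show that a.a.s.\ the length-$l$ horocycle of every shallow cusp is embedded and the length-$l$ horoballs of the shallow cusps are mutually disjoint.

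\emph{The shallow cusps.} By the standard dictionary in two-dimensional hyperbolic geometry (\cf Theorem \ref{thm:collar_lemma} and Lemmas \ref{lem:metrics_rho}--\ref{lem:horocycle}), a failure of either property produces a geodesic return arc at a shallow cusp, or a geodesic arc between two shallow cusps, of length $<d_0(l)$ for an explicit $d_0(l)$. Such an arc projects under $p$ to a geodesic arc in the \emph{fixed} surface $X$, of length $<d_0(l)$, joining two of the $c_j$ with bounded winding; there are only finitely many homotopy classes $A_1,\dots,A_M$ of such arcs, and each is represented by a non-peripheral element of $\Gamma$ once the trivial return arcs (powers of the $\gamma_j$) are discarded. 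The lift of a given $A_\nu$ issuing from the fibre point naming a given shallow cusp $\tilde c$ closes up at, or returns to, a shallow cusp precisely when $\phi$ satisfies a coincidence of the form ``a fixed non-peripheral word carries a prescribed point of $[n]$ into a prescribed cycle of length $<K_0$''. The probabilistic heart, established in \cite{KM2024, Magee2024} --- and resting on the word-measure estimates of Nica \cite{Nica1994}, Magee--Puder \cite{MP2023} and Puder--Zimhoni \cite{PZ2024}, together with the method of Hide--Magee \cite{HM2023} --- is that such a coincidence has probability $O(1/n)$, uniformly, and \emph{conditionally on the cycle types} of $\phi(\gamma_1),\dots,\phi(\gamma_m)$, which is the conditioning that defines the shallow cusps. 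A union bound over the at most $\log n$ shallow cusps and the $M=O(1)$ classes then bounds the failure probability by $O(\log n/n)=o(1)$, which together with the deterministic treatment of the deep cusps shows that a.a.s.\ $X_\phi$ has large cusps of length $l$.

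\emph{Main obstacle.} Everything above except the word-measure input is elementary hyperbolic geometry and a first-moment estimate. The crux is the conditional word-coincidence bound: controlling, uniformly over the finitely many relevant words and conditionally on the peripheral cycle types, the probability that the image of a fixed non-peripheral element of $\Gamma$ under a uniformly random $\phi\in\Hom(\Gamma,S_n)$ sends a prescribed point of the fibre into a short cycle --- equivalently, a quantitative statement that the random cover resembles $\H$ in a neighbourhood of the bounded set of shallow cusps. This is exactly the input furnished by the recent work on the covering model, which we quote from \cite{KM2024, Magee2024}.
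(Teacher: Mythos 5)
This theorem is not proved in the paper: it is quoted as a black box from \cite[Theorem 3.5]{KM2024} and \cite[Section 5]{Magee2024}, and the only additional content the paper supplies is the remark that Magee actually establishes the stronger range $l\asymp\log n$. There is therefore no paper proof to compare your sketch against.

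Judged on its own terms, your outline reduces the problem along plausible lines: the deep/shallow dichotomy, the first-moment Markov bound $N\le\log n$ on shallow cusps (correct, since each peripheral $\gamma_j$ is primitive in the free group $\Gamma$, so $\phi(\gamma_j)$ is uniform on $S_n$ and its expected number of $k$-cycles is $1/k$), and the translation of a failure at a shallow cusp into a bounded-length geodesic arc on the fixed base surface $X$, hence into one of finitely many word-coincidence events, with a final union bound $O(\log n/n)$. Two points deserve more care than you give them: the statement that a length-$l$ horoball ``extends out by at most $\log(l/l_0)$'' must be phrased in terms of the preimage region without presupposing embeddedness; and, more substantively, the conditional word-measure estimate you need---uniform over the finitely many non-peripheral words and, crucially, conditional on the cycle types of $\phi(\gamma_1),\dots,\phi(\gamma_m)$---is not a direct corollary of the unconditional counting results of Nica and Puder--Zimhoni and is in fact the essential content of \cite{KM2024} and \cite{Magee2024}. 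You flag this honestly as the ``probabilistic heart'' and quote it; so your proposal is a plausible reduction with the hard part deferred, which is consistent with how the paper itself treats the statement.
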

\begin{remark*}
    Magee had actually proved a stronger result which said that $l$ can be chosen to be $K\log n$ for some uniform positive constant $K$. But for the current purpose of this paper, we do not need this stronger result. 
\end{remark*}
The last result we need is a direct corollary of two theorems due to Nica \cite{Nica1994} and Puder-Zimhoni \cite{PZ2024}. Let $X=S_{0,3}$ be the hyperbolic three-punctured sphere. Recall that $\pi_1(X)$ is a free group on two generators, and we fix an isomorphism $\Gamma\simeq\pi_1(X)\simeq F_2$. For any $\phi\in\Hom(\Gamma, S_n)$, let $\pi:X_\phi\to X$ be the covering map. For any primitive closed geodesic $\gamma$ in $X$, let $\widetilde{\gamma}\in\pi_1(X)$ be one of its representative, and let $\pi^{-1}(\gamma)$ be the set of primitive closed geodesics in $X_\phi$ that covers $\gamma$ (that is, for any $\eta\in\pi^{-1}(\gamma)$, its image under $\pi$ coincides with $\gamma$ as a subset of $X$). Then we have 
\begin{equation*}
    \abs{\pi^{-1}(\gamma)} = \abs{\left\{ \textrm{cycles in the cycle decomposition of $\phi(\widetilde{\gamma})$} \right\}}.
\end{equation*}
Note that fixed points of $\phi(\widetilde{\gamma})$ are viewed as $1-$cycles. Moreover, $\eta\in\pi^{-1}(\gamma)$ is a degree $k$ cover of $\gamma$ if and only if it corresponds to a cycle of length $k$ in the cycle decomposition of $\phi(\widetilde{\gamma})$. Since any primitive closed geodesic in $X_\phi$ covers a primitive closed geodesic in $X$, we deduce that a shortest closed geodesic of $X_\phi$ has length $>\epsilon$, if and only if for any primitive closed geodesic $\gamma$ in $X$ with $\length_X(\gamma) \leq \epsilon$, the cycle decomposition of $\phi(\widetilde{\gamma})$ has no cycles of length $<\epsilon / \ell_X(\gamma)$. 
    
Let $\sP(X)$ be the set of primitive closed geodesics in $X$. For any $\epsilon>0$, let 
\[ \{\gamma_1,\cdots,\gamma_{k}\}\subset \sP(X) \] 
be the set of all primitive closed geodesics in $X$ with length $<\epsilon$, and be listed such that $\ell_X(\gamma_1)\leq\cdots\leq\ell_X(\gamma_k) < \epsilon$. Denote by $\sys(X_\phi)$ the length of a shortest closed geodesic in $X_\phi$, then 
\begin{equation}\label{eqn:systole1}
    \begin{aligned}
        &\prob_n\left(X_\phi: \ \sys(X_\phi) \geq \epsilon \right)\\
        =  &\prob_n\left(X_\phi: \ \textrm{$\Forall 1\leq i\leq k$, $\phi(\widetilde{\gamma}_i)$ has no cycles of length $< \epsilon / \ell_X(\gamma_i)$} \right)\\
        \geq &\prob_n\left(X_\phi: \ \textrm{$\Forall 1\leq i\leq k$, $\phi(\widetilde{\gamma}_i)$ has no cycles of length $< \epsilon / \ell_X(\gamma_1)$} \right)\\
        \geq &\prob_n\left(X_\phi: \ \textrm{$\Forall 1\leq i\leq k$, $(\phi(\widetilde{\gamma}_i))^{d}$ has no fixed points} \right)\\
    \end{aligned}
\end{equation}
where $d$ is the factorial of $\lfloor\epsilon / \ell_X(\gamma_1)\rfloor$. Since $\pi_1(X)\simeq F_2$ and $(\phi(\widetilde{\gamma}_i))^{d} = \phi((\widetilde{\gamma}_i)^{d})$, the problem is reduced to studying the distribution of the number of fixed points of a random homomorphism in $\Hom(F_2,S_n)$, endowed with the uniform probability measure. Denote by $\mathrm{Fix}_n(\phi)$ the number of fixed points of a permutation $\phi$ in $S_n$. Nica had proved that (see also \cite{LP2010} for a new proof) 
\begin{theorem}[{\cite[Corollary 1.2]{Nica1994}}]\label{thm:Nica}
    Let $F$ be a finitely generated free group, and let $w=v^d\in F$ with $v$ non-power and $d\geq 1$. Then 
    \[ \lim_{n\to\infty} \prob_n\left(\phi\in\Hom(F,S_n): \ \mathrm{Fix}_n(\phi(w))=0 \right) = e^{-\sum_{h|d}h/d} > 0, \] 
    where $\Hom(F,S_n)$ is endowed with the uniform probability measure. 
\end{theorem}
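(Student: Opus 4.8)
The plan is to reduce the statement to a limit theorem for the cycle structure of the single random permutation $\sigma\df\phi(v)$, and then to read off the probability of having no fixed point from a Poisson approximation. First I would fix a free basis $x_1,\dots,x_r$ of $F$ and note that a uniformly random $\phi\in\Hom(F,S_n)$ is the same data as $r$ independent uniformly random permutations $\phi(x_1),\dots,\phi(x_r)\in S_n$, so $v$ determines an (in general non-uniform) random permutation $\sigma\df\phi(v)$ with $\phi(w)=\sigma^d$. An index $i\in[n]$ then satisfies $\phi(w)(i)=i$ exactly when the $\sigma$-cycle through $i$ has length dividing $d$, so, writing $C_h=C_h(\sigma)$ for the number of length-$h$ cycles of $\sigma$,
\[ \mathrm{Fix}_n(\phi(w))=\sum_{h\mid d}h\,C_h,\qquad\text{hence}\qquad\{\mathrm{Fix}_n(\phi(w))=0\}=\bigcap_{h\mid d}\{C_h=0\}. \]
Everything therefore comes down to the joint law of the finitely many variables $(C_h)_{h\mid d}$.

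The core of the argument is to show that, as $n\to\infty$, the sequence $(C_h)_{h\geq1}$ converges jointly in distribution to a sequence $(Z_h)_{h\geq1}$ of independent random variables with $Z_h\sim\mathrm{Poisson}(1/h)$ — that is, to the \emph{same} limit as for a uniformly random permutation (the classical limit of Goncharov). By the method of moments this reduces to checking that for any distinct $h_1,\dots,h_m$ and $a_1,\dots,a_m\geq1$ the mixed falling-factorial moments converge,
\[ \mathbb{E}_n\!\left[\,\prod_{j=1}^m (C_{h_j})_{a_j}\,\right]\longrightarrow\prod_{j=1}^m (1/h_j)^{a_j},\qquad (x)_a\df x(x-1)\cdots(x-a+1). \]
Via the identities $\mathrm{Fix}_n(\phi(v^k))=\sum_{h\mid k}hC_h$ and their Möbius inversions $kC_k=\sum_{h\mid k}\mu(k/h)\,\mathrm{Fix}_n(\phi(v^h))$, each such moment is a finite linear combination, with coefficients independent of $n$, of expectations of the form $\mathbb{E}_n\!\left[\prod_j\mathrm{Fix}_n(\phi(v^{k_j}))\right]$. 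Expanding the products and using the independence and uniformity of $\phi(x_1),\dots,\phi(x_r)$, each such expectation is a word-counting sum: one follows trajectories in $[n]$ under the reduced words $v^{k_j}$, revealing permutation values one step at a time, and counts the patterns in which the trajectories close up. I would then show that these sums have the same leading-order asymptotics in $n$ as the corresponding quantities for a genuinely uniform permutation — which, by the classical cycle-count result, are exactly the Poisson values above. The step I expect to be the main obstacle is precisely this last asymptotic analysis, and it is here that the hypothesis that $v$ is \emph{not} a proper power is used: it rules out the degenerate closure patterns that would otherwise contribute, the same phenomenon that makes, e.g., a square of a uniform permutation carry atypically many short cycles. (An alternative is to run this through the theory of word measures and Stallings core graphs, where $\mathbb{E}_n[\mathrm{Fix}_n(\phi(u))]$ is a rational function of $n$ whose $n\to\infty$ limit counts certain ``algebraic'' subgroups of $F$ containing $u$; for $u=v^k$ with $v$ non-power these are exactly the $\langle v^{k/e}\rangle$, $e\mid k$, yielding the limit $\#\{e:e\mid k\}$.)

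Finally, granting the Poisson limit, only the finitely many variables $\{C_h:h\mid d\}$ intervene, so
\[ \lim_{n\to\infty}\prob_n\bigl(\phi\in\Hom(F,S_n):\ \mathrm{Fix}_n(\phi(w))=0\bigr)=\prod_{h\mid d}\prob(Z_h=0)=\prod_{h\mid d}e^{-1/h}=e^{-\sum_{h\mid d}1/h}. \]
Reindexing the divisor sum by $h\mapsto d/h$ gives $\sum_{h\mid d}1/h=\tfrac1d\sum_{h\mid d}h=\sum_{h\mid d}h/d$, so the limit is $e^{-\sum_{h\mid d}h/d}>0$, which is the claimed formula.
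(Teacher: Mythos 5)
The paper invokes this as Nica's result [Corollary 1.2 of Nica1994] without reproducing a proof, so the natural benchmark is Nica's own argument, and your outline faithfully matches it. The reduction is exactly right: with $\sigma\df\phi(v)$ you have $\phi(w)=\sigma^d$, the identity $\mathrm{Fix}_n(\phi(w))=\sum_{h\mid d}hC_h(\sigma)$ turns the zero--fixed-point event into $\bigcap_{h\mid d}\{C_h(\sigma)=0\}$, and once the joint $\mathrm{Poisson}(1/h)$ limit for the cycle counts of $\phi(v)$ is available the probability becomes $\prod_{h\mid d}e^{-1/h}=e^{-\sum_{h\mid d}h/d}$ via the divisor bijection $h\mapsto d/h$. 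All of this is correct, and it is precisely how Nica's Corollary~1.2 is derived from his Theorem~1.1.

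The substantive gap is that the joint Poisson limit you invoke \emph{is} Nica's Theorem~1.1, and that is where all the work lies. You correctly identify it as the main obstacle, correctly reduce it by Möbius inversion ($kC_k=\sum_{h\mid k}\mu(k/h)\,\mathrm{Fix}_n(\phi(v^h))$) to the $n\to\infty$ asymptotics of expectations $\mathbb{E}_n\bigl[\prod_j\mathrm{Fix}_n(\phi(v^{k_j}))\bigr]$, and correctly locate where the non-power hypothesis enters (ruling out degenerate closure patterns in the trajectory count), but the asymptotic analysis itself is only sketched, not carried out. Either route you mention would close it --- Nica's original word-trajectory counting, or the word-measure/Stallings core graph approach of Puder and Parzanchevski, the latter being closer in spirit to the covering-model machinery used elsewhere in Section~3 of the paper. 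As written, your proposal is a correct skeleton of the known proof with its technical core left unproven.
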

Applying Theorem \ref{thm:Nica} to $w=(\widetilde{\gamma}_i)^d$, it follows that the probability that $(\widetilde{\gamma}_i)^d$ has no fixed points is positive as $n\to\infty$. It is also necessary to consider the limit of the joint distribution of the number of fixed points of finitely many $\phi(\widetilde{\gamma})$'s. Puder-Zimhoni had proved that 
\begin{theorem}[{\cite[Theorem 1.14]{PZ2024}}]\label{thm:PZ}
    Let $F$ be a finitely generated free group, and let $\{w_i=v_i^{d_i}\}$ be a finite set of words with $v_i$ non-power and $d_i\geq 1$. If for any $i\neq j$, $v_i$ is not conjugate to $v_j$ and $v_j^{-1}$, then $\mathrm{Fix}_n(\phi(w_1)),\mathrm{Fix}_n(\phi(w_2)),\cdots$ are asymptotically independent, \ie they have a joint limit distribution of independent random variables. 
\end{theorem}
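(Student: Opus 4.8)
The plan is to prove joint convergence of all factorial moments and to show they factorize in the limit. Since, by Nica's Theorem~\ref{thm:Nica} (more precisely, its proof), each marginal $\mathrm{Fix}_n(\phi(w_i))$ converges in distribution to an explicit finite $\mathbb{Z}_{\geq 0}$-linear combination of independent Poisson variables --- hence to a limit that is determined by its moments --- this will suffice to deduce asymptotic independence by the method of moments for integer-valued random variables. Concretely, fixing a free basis of $F$ so that a uniform $\phi\in\Hom(F,S_n)$ is a tuple of independent uniform permutations, and writing $(m)_a \df m(m-1)\cdots(m-a+1)$ for the falling factorial, the observation $\big(\mathrm{Fix}_n(\phi(w))\big)_a = \#\{\text{ordered }a\text{-tuples of distinct points of }[n]\text{ fixed by }\phi(w)\}$ gives, for any $a_1,\dots,a_k\geq 0$,
\[
    \mathbb{E}\Big[\prod_{i=1}^k\big(\mathrm{Fix}_n(\phi(w_i))\big)_{a_i}\Big]
    =\sum \prob_n\!\big(\phi:\ \phi(w_i)\cdot p^{(i)}_s=p^{(i)}_s\ \text{for all}\ i,s\big),
\]
the sum running over all families $\{p^{(i)}_s\}$ with $p^{(i)}_1,\dots,p^{(i)}_{a_i}$ distinct for each fixed $i$.

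The next step is to organize this sum combinatorially. Group the families $\{p^{(i)}_s\}$ according to the pattern they produce in the Schreier graph of $\phi$: which of the chosen points coincide across the blocks $1,\dots,k$, and the isomorphism type of the labeled graph obtained by attaching, at each chosen point, the $\langle w_i\rangle$-orbit through it. This rewrites the joint moment as a \emph{finite} sum of expected counts of labeled-graph morphisms into the Schreier graph of $\phi$, and the order of magnitude in $n$ of each such count is, by the core-graph / word-measure formalism underlying Theorem~\ref{thm:Nica} (Nica; Linial--Puder; Puder--Parzanchevski), governed by the Euler characteristic of the subgroup of $F$ generated by the words appearing in the pattern. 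In particular the product $\prod_i \lim_n \mathbb{E}\big[(\mathrm{Fix}_n(\phi(w_i)))_{a_i}\big]$ is exactly the total limiting contribution of the patterns in which no coincidence connects a fixed point of $\phi(w_i)$ to one of $\phi(w_j)$ for $i\neq j$; such patterns decompose as a product over $i$, each factor reproducing the marginal moment.

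It therefore remains to prove the decoupling estimate: every pattern that \emph{does} link two distinct blocks contributes $o(1)$. Such a pattern forces the appearance of a subgroup $H\leq F$ containing a conjugate of a power of $v_i$ and a conjugate of a power of $v_j$ with $i\neq j$; the hypothesis that $v_i$ is not conjugate to $v_j$ or to $v_j^{-1}$ guarantees that these two elements cannot lie in a common cyclic subgroup, so $H$ has rank $\geq 2$, whereas $\langle v_i\rangle$ and $\langle v_j\rangle$ already account for rank $2$ separately. Tracking Euler characteristics through the core graph then shows that merging the two blocks strictly decreases the exponent of $n$, i.e. produces a factor $O(n^{-1})$. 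Summing over the finitely many patterns yields
\[
    \lim_{n\to\infty}\mathbb{E}\Big[\prod_{i=1}^k\big(\mathrm{Fix}_n(\phi(w_i))\big)_{a_i}\Big]
    =\prod_{i=1}^k\lim_{n\to\infty}\mathbb{E}\big[\big(\mathrm{Fix}_n(\phi(w_i))\big)_{a_i}\big],
\]
which, combined with Theorem~\ref{thm:Nica} and the moment-determinacy of the marginal limits, gives the asserted asymptotic independence.

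The heart of the argument --- and the main obstacle --- is the decoupling estimate: making precise and uniform the assertion that any cross-block coincidence costs a negative power of $n$. This requires the structural theory of subgroups of finitely generated free groups (Stallings/core graphs, algebraic extensions, the primitivity rank of a word) together with careful control of the subleading terms in the word-measure expansion; the non-conjugacy hypothesis on the $v_i$ is precisely what rules out a ``cheap'' common cyclic overgroup that would otherwise contribute an $O(1)$ cross term. The presence of proper powers ($d_i>1$), which is what makes each marginal limit a combination of several Poisson variables rather than a single one, adds an extra layer of bookkeeping over the divisors of the $d_i$ but introduces no new conceptual difficulty.
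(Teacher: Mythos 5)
This statement is a black-box citation: the paper invokes Theorem~1.14 of Puder--Zimhoni \cite{PZ2024} without reproving it, so there is no internal proof to compare against. Your sketch is therefore judged against what a correct proof would have to contain, and on that score it is a reasonable reconstruction of the Nica/Linial--Puder/Puder--Zimhoni methodology: the method of factorial moments (valid here because each marginal limit, being a finite $\mathbb{Z}_{\geq 0}$-linear combination of independent Poissons, is moment-determined), the rewriting of joint factorial moments as sums of probabilities of graph morphisms into the Schreier graph of $\phi$, and the reduction to showing that ``linked'' configurations are $o(1)$.

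However, the decoupling step you flag is genuinely where the theorem lives, and your justification of it is too thin to stand on its own. The assertion ``merging the two blocks strictly decreases the exponent of $n$'' does not follow just from $\mathrm{rk}\,H\geq 2$; the word-measure expansion runs over \emph{all} quotients of the relevant core graph (equivalently, all algebraic extensions of the subgroup generated by the chosen conjugates of the $w_i$'s in $F$), and one needs a uniform Euler-characteristic estimate across this whole poset, not just for the ``obvious'' subgroup $H$. Your rank-$\geq 2$ claim itself is correct --- in a free group $x^a=y^b$ with $x,y$ non-powers forces $x$ conjugate to $y^{\pm1}$, so the non-conjugacy hypothesis rules out a common cyclic overgroup, and a cyclic overgroup would be the only way to have $\chi\geq 0$ --- but you state it without this argument. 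Filling the gap requires (i) showing every $K$ with $H\subseteq K\subseteq F$ also has $\chi(K)\leq -1$ (true since cyclic groups have only cyclic subgroups, but must be said), (ii) controlling how the count of morphism-types grows as $n\to\infty$ so that the $O(n^{-1})$ per pattern survives the sum, and (iii) handling the $d_i>1$ bookkeeping you dismiss in the last paragraph, which is not quite ``no new conceptual difficulty'' because the orbit patterns of $\phi(w_i)$ around each chosen fixed point now have variable length divisors of $d_i$, changing which core graph arises. These are precisely the parts where \cite{PZ2024} does the real work; as written your proposal correctly identifies the strategy and the role of the hypotheses but does not close the argument.
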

\noindent Since different primitive closed geodesics belong to different conjugacy classes in $\pi_1(X)\simeq F_2$, it follows from Theorem \ref{thm:PZ} that 
\begin{equation}\label{eqn:systole2}
    \begin{aligned}
        &\lim_{n\to\infty}\prob_n\left(X_\phi: \ \Forall 1\leq i\leq k,\ \mathrm{Fix}_n((\phi(\widetilde{\gamma}_i))^{d}) = 0 \right)\\
        = &\prod_{1\leq i\leq k}\lim_{n\to\infty}\prob_n\left(X_\phi: \ \mathrm{Fix}_n(\phi((\widetilde{\gamma}_i)^{d})) = 0 \right).
    \end{aligned}
\end{equation}
By Theorem \ref{thm:Nica}, the RHS of the equation above depends only on $d$ and $k$, hence depends only on $\epsilon$. Therefore, one may conclude from \eqref{eqn:systole1} and \eqref{eqn:systole2} that 
\begin{corollary}\label{cor:systole}
    For any fixed $\epsilon>0$, $\liminf\limits_{n\to\infty} \prob_n\left(X_\phi: \ \sys(X_\phi) \geq \epsilon\right) > 0$. 
\end{corollary}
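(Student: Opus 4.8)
\textit{Proof proposal.} The statement is an immediate consequence of the three ingredients assembled just above, so the plan is to chain together the inequality \eqref{eqn:systole1}, the asymptotic independence \eqref{eqn:systole2}, and Nica's formula. First I would fix $\epsilon>0$, let $\{\gamma_1,\dots,\gamma_k\}\subset\sP(X)$ be the finitely many primitive closed geodesics of $X=S_{0,3}$ of length $<\epsilon$, listed so that $\ell_X(\gamma_1)\leq\cdots\leq\ell_X(\gamma_k)<\epsilon$, and set $d\df\bigl(\lfloor\epsilon/\ell_X(\gamma_1)\rfloor\bigr)!$ as in the discussion preceding the statement. Then \eqref{eqn:systole1} already gives, for every $n$,
\[
\prob_n\bigl(X_\phi:\ \sys(X_\phi)\geq\epsilon\bigr)\ \geq\ \prob_n\bigl(X_\phi:\ \Forall\,1\leq i\leq k,\ \mathrm{Fix}_n\bigl((\phi(\widetilde\gamma_i))^{d}\bigr)=0\bigr),
\]
so it suffices to show that the $\liminf$ (in fact the limit) of the right-hand side is strictly positive.

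Next I would pass to the limit. Because the $\gamma_i$ are pairwise distinct primitive closed geodesics, their chosen representatives $\widetilde\gamma_i\in\pi_1(X)\simeq F_2$ are non-powers, and for $i\neq j$ the element $\widetilde\gamma_i$ is conjugate to neither $\widetilde\gamma_j$ nor $\widetilde\gamma_j^{-1}$; hence Theorem \ref{thm:PZ} applies to the family $w_i=(\widetilde\gamma_i)^d$ and yields the factorization \eqref{eqn:systole2}. Each factor is then evaluated by Theorem \ref{thm:Nica} with $v=\widetilde\gamma_i$ and exponent $d$, giving
\[
\lim_{n\to\infty}\prob_n\bigl(X_\phi:\ \mathrm{Fix}_n(\phi((\widetilde\gamma_i)^{d}))=0\bigr)\ =\ e^{-\sum_{h|d}h/d}\ >\ 0,
\]
a quantity depending only on $d$, hence only on $\epsilon$. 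Combining this with \eqref{eqn:systole1} and \eqref{eqn:systole2} gives
\[
\liminf_{n\to\infty}\prob_n\bigl(X_\phi:\ \sys(X_\phi)\geq\epsilon\bigr)\ \geq\ \Bigl(e^{-\sum_{h|d}h/d}\Bigr)^{k}\ >\ 0,
\]
which is the claim.

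The only genuinely delicate point is checking the hypotheses of Theorem \ref{thm:PZ}: one must select the representatives $\widetilde\gamma_i$ so that no two of them are conjugate up to inversion, which is possible exactly because the $\gamma_i$ are honestly distinct as unoriented primitive closed curves, and one must observe that primitivity of $\gamma_i$ forces $\widetilde\gamma_i$ to be a non-power in the free group. Everything else is a substitution, and no quantitative estimates are required.
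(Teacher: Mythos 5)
Your proposal is correct and follows essentially the same route as the paper: chain \eqref{eqn:systole1}, invoke Theorem \ref{thm:PZ} to obtain the factorization \eqref{eqn:systole2}, then evaluate each factor via Theorem \ref{thm:Nica} and note the result depends only on $d$ and $k$, hence only on $\epsilon$. Your explicit computation of the limit as $\bigl(e^{-\sum_{h|d}h/d}\bigr)^{k}$ and your remark on verifying the conjugacy-class hypothesis of Theorem \ref{thm:PZ} (distinct unoriented primitive geodesics give non-conjugate, non-inverse-conjugate representatives) are the same observations the paper makes, just slightly more spelled out.
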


\begin{proof}[Proof of Proposition \ref{prop:surfaces}]
    Let $X=S_{0,3}$ be the hyperbolic three-punctured sphere, then a degree $i$ covering surface of $X$ has Euler characteristic exactly $-i$. $X_\phi$ is connected \textit{a.a.s.} due to a theorem of Dixon \cite{Dixon1969}. It is known that $\lambda_1(X)>\frac{1}{4}$. Thus, by Theorem \ref{thm:HM}, Theorem \ref{thm:KM_Magee} and Corollary \ref{cor:systole}, for $i$ sufficiently large, the set of degree $i$ covering surfaces of $X$ satisfying our desired properties has positive probability, hence is non-empty, from which the proposition follows. 
\end{proof}

\section{A comparison theorem}\label{section4}
Let $\epsilon_0>0$ be a fixed constant. Let $X$ be a finite-area non-compact hyperbolic surface with $n$ cusps, and suppose that $X$ has large cusps of length $l\geq\epsilon_0$. Let 
\[ X^{\mathrm{fu}} \df X\setminus\bigsqcup_{i=1}^n\cusp_i(\epsilon_0) \] 
be the subsurface of $X$ equipped with its own complete Poincar\'e metric. Denote the complete Poincar\'e metrics on $X$ and $X^{\mathrm{fu}}$ by $ds_X^2$ and $ds_{X^{\mathrm{fu}}}^2$ respectively, then there is a natural conformal inclusion: 
\[ (X^{\mathrm{fu}},ds_{X^{\mathrm{fu}}}^2) \hookrightarrow (X,ds_X^2). \] 
Inspired by the work of Brooks \cite{Brooks1999}, in this section we will prove that if $l$ is sufficiently large, then these two hyperbolic metrics $ds_X^2$ and $ds_{X^{\mathrm{fu}}}^2$ are close to each other outside those large cusps of $X$. 

\subsection{Construction of the intermediate metric}
In this subsection we prove the following lemma which is inspired by \cite[Lemma 1.1]{Brooks1999}.
\begin{lemma}\label{lem:intermediate}
    Let $R_1\in(0,\, 1)$ and $\delta\in(0,\,1)$. Then for each sufficiently large $\rho_0$, there exists an $R(\rho_0)$ satisfying
    \[ \tanh\frac{\rho_0+1}{2} < R(\rho_0), \] 
    such that for any $R_2\in[R(\rho_0),\,1]$, 
    there exists a smooth Riemannian metric $ds_{\delta}^2$ on $A_{R_1,R_2}$ such that the followings hold: 
    \begin{enumerate}
        \item $ds_{\delta}^2$ is conformal to $ds_{R}^2$ on $A_{R_1,R_2}$, and $ds_{\delta}^2\geq ds_O^2$. 
        \item $ds_{\delta}^2 = ds_{R}^2$ on the annulus $\left\{ z\in\D:\ R_1 < \abs{z} \leq \tanh(\rho_0/2) \right\}$.
        \item $ds_{\delta}^2 = ds_O^2$ on the annulus $\left\{ z\in\D:\ \tanh((\rho_0+1)/2) \leq \abs{z}<R_2 \right\}$.
        \item The Gaussian curvature of $ds_{\delta}^2$ is everywhere between $-1-\delta$ and $-1+\delta$. 
    \end{enumerate}
\end{lemma}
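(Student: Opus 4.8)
The plan is to build $ds_\delta^2$ by interpolating, in the conformal factor, between the hyperbolic-cylinder density $h_{R_1,R_2}$ and the parabolic-cylinder density $h_O$ on the annular collar $\{\rho_0 \le \rho \le \rho_0+1\}$, and then to verify the curvature bound using the smallness estimates of Lemma~\ref{lem:calculus1} and Lemma~\ref{lem:calculus2}. First I would fix a smooth nondecreasing cutoff function $\chi:\R\to[0,1]$ with $\chi\equiv 0$ on $(-\infty,\rho_0]$ and $\chi\equiv 1$ on $[\rho_0+1,\infty)$, together with uniform bounds on $|\chi'|$ and $|\chi''|$ independent of $\rho_0$. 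Using the hyperbolic polar coordinate $\rho = 2\arctanh|z|$ on $A_{R_1,R_2}$ (valid since $A_{R_1,R_2}\subset\D$), I would define the density function
\[
    h_\delta(\rho) \df \bigl(1-\chi(\rho)\bigr)\, h_{R_1,R_2}(\rho) + \chi(\rho)\, h_O(\rho),
\]
and set $ds_\delta^2 \df h_\delta(\rho)^2\,(\dif\rho^2 + \sinh^2\!\rho\,\dif\theta^2)$. By construction this is conformal to $ds_R^2$ (indeed to all the metrics in Lemma~\ref{lem:metrics_rho}), it equals $ds_R^2$ where $\chi\equiv 0$ and equals $ds_O^2$ where $\chi\equiv 1$, giving parts (2) and (3); and since $h_{R_1,R_2} > h_O > 1$ pointwise (domain monotonicity, as recorded after Lemma~\ref{lem:metrics_rho}) and $h_\delta$ is a convex combination of $h_{R_1,R_2}$ and $h_O$, we get $h_\delta \ge h_O$, i.e. $ds_\delta^2 \ge ds_O^2$, which is the remaining half of part (1). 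The choice of $R(\rho_0)$ is dictated entirely by Lemma~\ref{lem:calculus2}: take the same $R(\rho_0)$ produced there (for the given $R_1$ and a value of the Lemma~\ref{lem:calculus2}-parameter to be fixed below), so that $\tanh\frac{\rho_0+1}{2} < R(\rho_0)$ automatically and $h_{R_1,R_2}$ and its first two $\rho$-derivatives are within that parameter of those of the constant $1$, uniformly on $[\rho_0,\rho_0+1]$, for every $R_2\in[R(\rho_0),1]$.

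The substance is part (4), the curvature bound. For a conformal metric $e^{2u}(\dif\rho^2+\sinh^2\!\rho\,\dif\theta^2)$ the Gaussian curvature is $K = -e^{-2u}\bigl(\Delta_{g_0} u - K_{g_0}\bigr)$ where $g_0 = \dif\rho^2 + \sinh^2\!\rho\,\dif\theta^2$ is the hyperbolic metric (curvature $-1$), so with $u = \log h_\delta$ and using $\Delta_{g_0}$ applied to a function of $\rho$ alone, $K_{\delta} = -\,h_\delta^{-2}\bigl(\tfrac{1}{\sinh\rho}\tfrac{\dif}{\dif\rho}(\sinh\rho\,\tfrac{\dif}{\dif\rho}\log h_\delta) + 1\bigr)$. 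The point is that $K_\delta \equiv -1$ wherever $ds_\delta^2$ agrees with $ds_R^2$ or $ds_O^2$ (both genuine hyperbolic metrics), so we only need to control $K_\delta + 1$ on the transition band $[\rho_0,\rho_0+1]$. There, $h_\delta$, $h_\delta'$, $h_\delta''$ are each a $\chi$-weighted combination of the corresponding quantities for $h_{R_1,R_2}$ and for $h_O$, plus terms involving $\chi'$ and $\chi''$ multiplied by $(h_{R_1,R_2}-h_O)$ and $(h_{R_1,R_2}'-h_O')$. By Lemma~\ref{lem:calculus1}, for $\rho_0$ large $h_O\to 1$, $h_O'\to 0$, $h_O''\to 0$ uniformly on $[\rho_0,\rho_0+1]$; by Lemma~\ref{lem:calculus2}, for the appropriate choice of its parameter and then $\rho_0$ large, $|h_{R_1,R_2}-1|$, $|h_{R_1,R_2}'|$, $|h_{R_1,R_2}''|$ are all small uniformly on $[\rho_0,\rho_0+1]$ and all $R_2 \in [R(\rho_0),1]$. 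Consequently every ingredient of $K_\delta$ on the band is close to its value for the model flat-density $1$: $h_\delta\to 1$, $h_\delta'\to 0$, $h_\delta''\to 0$ (the $\chi'$- and $\chi''$-terms are harmless since they are bounded multiples of differences of quantities all tending to their common limits), and $\tfrac{1}{\sinh\rho}\tfrac{\dif}{\dif\rho}(\sinh\rho\,\tfrac{\dif}{\dif\rho}\log h_\delta)\to 0$, whence $K_\delta \to -1$ uniformly. So, given $\delta$, first choose the Lemma~\ref{lem:calculus2}-parameter (call it $\delta'$) and then $\rho_0$ large enough that the resulting $K_\delta$ satisfies $|K_\delta + 1| \le \delta$ on $[\rho_0,\rho_0+1]$; off the band $K_\delta=-1$, so (4) holds everywhere.

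The main obstacle I anticipate is purely bookkeeping: expanding $K_\delta + 1$ on the transition band into finitely many terms, each of the schematic form (bounded function of $\rho$) $\times$ (a product of the small quantities $h_{R_1,R_2}-1$, $h_{R_1,R_2}^{(j)}$, $h_O-1$, $h_O^{(j)}$, $\chi^{(j)}$) divided by $h_\delta^2 \ge 1$, and checking that each such term is $o(1)$ as $\rho_0\to\infty$ uniformly in $R_2\in[R(\rho_0),1]$. One should be a little careful that the factors $\sinh\rho$, $\cosh\rho$ appearing through $\Delta_{g_0}$ and through $g_0$ itself do not spoil uniformity — but $\rho$ ranges only over the length-one interval $[\rho_0,\rho_0+1]$ and these factors enter only through ratios like $\coth\rho$, $\cosh\rho/\sinh\rho\to 1$, so no blow-up occurs; this is exactly the phenomenon already handled in the proof of Lemma~\ref{lem:calculus2}. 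No new geometric input is needed beyond Lemmas~\ref{lem:calculus1} and \ref{lem:calculus2} and the domain-monotonicity inequality $h_{R_1,R_2} > h_O > 1$.
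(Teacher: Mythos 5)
Your construction is exactly the paper's: interpolate the conformal densities $h_{R_1,R_2}$ and $h_O$ via a smooth cutoff on the band $[\rho_0,\rho_0+1]$, pick $R(\rho_0)$ from Lemma~\ref{lem:calculus2}, and control the curvature of the glued metric on the transition band by combining the smallness estimates of Lemmas~\ref{lem:calculus1} and~\ref{lem:calculus2} with the conformal curvature formula (your $\tfrac{1}{\sinh\rho}\tfrac{\dif}{\dif\rho}(\sinh\rho\,\tfrac{\dif}{\dif\rho}\log h_\delta)$ is the same as the paper's $\tfrac{\dif^2(\log h)}{\dif\rho^2} + \coth\rho\,\tfrac{\dif(\log h)}{\dif\rho}$). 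The proof is correct and takes the same route.
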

\begin{proof}
    Let $0\leq\chi_0(\rho)\leq1$ be a universal smooth cut-off function such that $\chi_0(\rho) \equiv 0$ when $\rho\leq 0$, and $\chi_0(\rho) \equiv 1$ when $\rho\geq 1$. By Lemma \ref{lem:calculus2}, for each sufficiently large $\rho_0$, there exists an $R(\rho_0)$ satisfying
    \[ \tanh\frac{\rho_0+1}{2} < R(\rho_0). \] 
    For any $R_2\in[R(\rho_0),\, 1]$, let 
    \begin{equation}\label{eqn:function_intermediate}
        h_\delta(\rho) \df 
        \begin{cases}
            h_{R_1,R_2}(\rho), & \rho\leq \rho_0,\\
            (1-\chi_0(\rho-\rho_0))h_{R_1,R_2}(\rho) + \chi_0(\rho-\rho_0)h_O(\rho), & \rho_0\leq \rho\leq \rho_0+1,\\
            h_O(\rho), & \rho_0+1\leq \rho, 
        \end{cases}
    \end{equation}
    where the functions $h_{R_1,R_2}(\rho)$, $h_O(\rho)$ are given in Lemma \ref{lem:metrics_rho}. Thus $h_\delta$ is a function smoothly connecting $h_{R_1,R_2}$ and $h_{O}$ such that 
    \begin{equation*}
        h_{R_1,R_2} \geq h_{\delta} \geq h_{O}. 
    \end{equation*}

    \noindent Define the Riemannian metric $ds_{\delta}^2$ on $A_{R_1,R_2}$ by 
    \begin{equation}\label{eqn:metric_intermediate}
        ds_{\delta}^2 \df h_\delta^2(\rho)\cdot\left( \dif\rho^2 + \sinh^2\!\rho \dif\theta^2 \right), 
    \end{equation} 
    where $\rho = 2\arctanh\abs{z}$, $\theta\in {\R} / {(2\pi\Z)} $. It follows from \eqref{eqn:function_intermediate} and \eqref{eqn:metric_intermediate} that $ds_{\delta}^2$ satisfies our desired properties (1), (2) and (3). Recall that the Gaussian curvature $K_h$ of a metric in the form 
\[ h^2(\rho)\cdot\left( \dif\rho^2 + \sinh^2\!\rho \dif\theta^2 \right) \] 
can be calculated by the formula (\cf \cite[Page 160]{Brooks1999}) 
\begin{equation}\label{eqn:curvature}
     K_h = -\frac{1}{h^2}\cdot \left( \frac{\dif^2(\log h)}{\dif\rho^2} + \frac{\dif(\log h)}{\dif\rho} \coth\rho + 1 \right). 
\end{equation}
\noindent Denote by $K_{\delta}$ the Gaussian curvature of $ds_{\delta}^2$. When $\rho\notin[\rho_0,\rho_0+1]$, $K_{\delta}$ is exactly $-1$; when $\rho\in[\rho_0,\rho_0+1]$, by the curvature formula \eqref{eqn:curvature} we have 
    \begin{align*}
        \abs{ K_{\delta}+1 } 
        &  =  \frac{1}{h_\delta^2}\cdot \abs{ h_\delta^2 - 1 - \frac{\dif^2(\log h_\delta)}{\dif\rho^2} - \frac{\dif(\log h_\delta)}{\dif\rho}\coth\rho }\\
        &\leq \abs{h_\delta^2 - 1} + \abs{\frac{(\dif^2 h_\delta/\!\dif\rho^2)h_\delta - (\dif h_\delta/\!\dif\rho)^2}{h_\delta^2}} + \abs{\frac{\dif h_\delta/\!\dif\rho}{h_\delta}\coth\rho}\\
        &\leq \abs{h_\delta^2 - 1} + \abs{\frac{\dif^2 h_\delta}{\dif\rho^2}} + \abs{\frac{\dif h_\delta}{\dif\rho}}^2\!\! + \abs{\frac{\dif h_\delta}{\dif\rho}\coth \rho}, 
    \end{align*}
    where we have used the fact that $h_\delta\geq h_O > 1$. Thus to prove $\abs{ K_{\delta}+1 }\leq\delta$ it suffices to verify that, for $\rho_0$ sufficiently large we have 
    \begin{equation}\label{eqn:h_epsilon}
        \begin{aligned}
            1 \leq h_\delta &\leq 1+\delta/6,\\
            \abs{\frac{\dif h_\delta}{\dif\rho}\coth \rho} &\leq \delta/6,\\
            \textrm{and\quad} \abs{\frac{\dif ^2 h_\delta}{\dif\rho^2}} &\leq \delta/6, 
        \end{aligned}
    \end{equation}
    when $\rho_0\leq\rho\leq \rho_0+1$. By \eqref{eqn:function_intermediate} we have 
    \begin{align*}
        \abs{\frac{\dif h_\delta}{\dif\rho}} &\leq c_1\cdot \left( \abs{h_{R_1,R_2}-h_O} + \abs{\frac{\dif h_{R_1,R_2}}{\dif\rho}} + \abs{\frac{\dif h_{O}}{\dif\rho}} \right),\\
        \abs{\frac{\dif^2 h_\delta}{\dif\rho^2}} &\leq c_2\cdot \left( \abs{h_{R_1,R_2}-h_O} + \abs{\frac{\dif h_{R_1,R_2}}{\dif\rho}} + \abs{\frac{\dif h_{O}}{\dif\rho}} + \abs{\frac{\dif^2 h_{R_1,R_2}}{\dif\rho^2}} + \abs{\frac{\dif^2 h_{O}}{\dif\rho^2}} \right),
    \end{align*}
    where $c_1,c_2>0$ are universal constants depending only on $\chi_0$. Therefore by Lemma \ref{lem:calculus1} and Lemma \ref{lem:calculus2}, when $\rho\in[\rho_0, \rho_0+1]$, the inequalities \eqref{eqn:h_epsilon} can be satisfied by choosing $\rho_0$ sufficiently large. The proof is complete.
\end{proof}

\subsection[short]{Comparison between hyperbolic metrics}
We introduce the following definition which is similar to the notion of large cusps: 
\begin{definition*}\label{def:longcollar}
    Let $S$ be a bordered compact hyperbolic surface. We say $\pa S$ has \emph{long half-collars of width $w$}, if the collars of width $w$ around each component of $\pa S$ are embedded and pairwise disjoint. 
\end{definition*}

Recall that $X^{\mathrm{fu}}$ has only funnel ends. Denote the central simple closed geodesics of these ends by $\gamma_1,\gamma_2,\cdots,\gamma_n$, and let $Y$ be the compact convex core of $X^{\mathrm{fu}}$. In this subsection, we prove the following theorem which is inspired by \cite[Theorem 2.1]{Brooks1999}.
\begin{theorem}\label{thm:comparison_geometry} 
    For any $0<\delta<1/3$, there exists an $l_{\delta,\epsilon_0}>0$ and a $w_{\delta,\epsilon_0}>0$ such that if $X$ has large cusps of length $l\geq l_{\delta,\epsilon_0}$, then the followings hold: 
    \begin{enumerate}
        \item $\Forall 1\leq i \leq n$, $(1-\delta)\pi\epsilon_0 \leq \length_{X^{\mathrm{fu}}}(\gamma_i) \leq (1+\delta)\pi\epsilon_0$. 
        \item $\pa Y$ has long half-collars of width 
              \[ w(l) \df (1-\delta) \log l - c_{\delta,\epsilon_0}, \] 
              where $c_{\delta,\epsilon_0}>0$ is a constant depending only on $\delta$ and $\epsilon_0$. 
        \item We have 
        \[ ds_X^2 < ds_{X^{\mathrm{fu}}}^2 \leq (1+\delta)ds_{X}^2 \] 
        on the subsurface 
        \[ Y\setminus\bigsqcup_{i=1}^n\collar(\gamma_i, w_{\delta,\epsilon_0}). \] 
    \end{enumerate}
\end{theorem}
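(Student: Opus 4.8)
The plan is to compare the intrinsic Poincaré metric $ds_{X^{\mathrm{fu}}}^2$ with $ds_X^2$ by inserting, near each funnel end, the intermediate metric $ds_\delta^2$ built in Lemma~\ref{lem:intermediate}, and then invoke the Ahlfors--Schwarz lemma in both directions. Concretely, near the $i$-th cusp of $X$, the cusp region $\cusp_i(l)$ is biholomorphic to a punctured disc, and removing $\cusp_i(\epsilon_0)$ corresponds, in a suitable coordinate, to passing to an annulus $A_{R_1,R_2}$ with $R_1 = r(l)$ and $R_2 = r(\epsilon_0)$ (up to normalization) via \eqref{eqn:cusp_disc}. Since $X$ has large cusps of length $l$, the inner radius $R_1 = e^{-2\pi/l}$ is very close to $1$ as $l\to\infty$, which is exactly the regime in which Lemma~\ref{lem:calculus2} and Lemma~\ref{lem:intermediate} apply (after a conformal change swapping the roles of $R_1$ and $R_2$, i.e.\ looking at $1/z$ so that the large-radius end corresponds to the short horocycle). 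I would set $\rho_0 = \rho_0(\delta)$ large enough, $R(\rho_0)$ as in Lemma~\ref{lem:intermediate}, and require $l \geq l_{\delta,\epsilon_0}$ so that $R_1 = e^{-2\pi/l} \geq R(\rho_0)$; this produces, on each annular collar neighborhood of $\gamma_i$, a metric $ds_\delta^2$ with curvature in $[-1-\delta, -1+\delta]$, agreeing with $ds_R^2$ (the hyperbolic cylinder metric, which is what $X^{\mathrm{fu}}$ looks like near its funnel) on the outer part and with $ds_O^2$ (the parabolic cusp metric, which is what $X$ looks like near its cusp) on the inner part.

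Next I would build a global comparison metric $ds_*^2$ on $X^{\mathrm{fu}}$ by gluing: use $ds_X^2$ on the ``core'' $Y \setminus \bigsqcup_i \collar(\gamma_i, \cdot)$, use $ds_\delta^2$ on each transition annulus, and use $ds_R^2 = ds_{X^{\mathrm{fu}}}^2$ on the funnel ends. The cut-off construction in Lemma~\ref{lem:intermediate} guarantees that the pieces match up $C^\infty$-smoothly, and on the transition region the curvature of $ds_*^2$ lies in $[-1-\delta, -1+\delta]$, while elsewhere it is exactly $-1$. Applying the Ahlfors--Schwarz(--Yau) lemma: the Poincaré metric $ds_{X^{\mathrm{fu}}}^2$ of curvature $-1$ dominates any conformal metric of curvature $\leq -1$ (so $ds_X^2 < ds_{X^{\mathrm{fu}}}^2$, which also follows directly from domain monotonicity since $X^{\mathrm{fu}} \subset X$), and conversely $ds_{X^{\mathrm{fu}}}^2 \leq \tfrac{1}{1-\delta} ds_*^2$ because $ds_*^2$ has curvature $\geq -1-\delta \geq -(1+\tfrac{\delta}{1-\delta})\cdot(\text{scaled version})$; choosing the constant appropriately yields $ds_{X^{\mathrm{fu}}}^2 \leq (1+\delta) ds_X^2$ on the part of $Y$ where $ds_*^2 = ds_X^2$, i.e.\ outside the half-collars of some width $w_{\delta,\epsilon_0}$. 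This gives Part~(3).

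For Part~(1), once the metric comparison is established, the central geodesic $\gamma_i$ of the $i$-th funnel of $X^{\mathrm{fu}}$ lies in a region where $ds_{X^{\mathrm{fu}}}^2$ is pinched between $ds_X^2$ and $(1+\delta)ds_X^2$; but near the cusp, $ds_X^2$ is the parabolic cylinder metric $ds_O^2$, whose geometry is explicit (Lemma~\ref{lem:metrics_rho}, Lemma~\ref{lem:calculus1}). Using \eqref{eqn:core_length}, the length of the core geodesic of the hyperbolic cylinder $A_{R_1,R_2}$ is $2\pi^2/\log(R_2/R_1) = 2\pi^2/(\log r(\epsilon_0) - \log r(l)) = 2\pi^2/(2\pi/\epsilon_0 - 2\pi/l)$, which tends to $\pi\epsilon_0$ as $l \to \infty$; making this quantitative, for $l \geq l_{\delta,\epsilon_0}$ one gets $(1-\delta)\pi\epsilon_0 \leq \length_{X^{\mathrm{fu}}}(\gamma_i) \leq (1+\delta)\pi\epsilon_0$. (Here one also uses that $\length_{X^{\mathrm{fu}}}$ of a curve differs from its $ds_R^2$-length negligibly, since $ds_{X^{\mathrm{fu}}}^2 = ds_R^2$ exactly on the funnels.) For Part~(2), the half-collar width around $\gamma_i$ is $w(\gamma_i) = \arcsinh(1/\sinh(\length(\gamma_i)/2))$ by \eqref{eqn:collar_width}, but since the funnel is genuinely a half-collar of infinite width, the relevant width is governed by how far into $Y$ the funnel structure persists, equivalently by the distance from $\gamma_i$ to the horocycle of length $\epsilon_0$, which by \eqref{eqn:dist_horo}-type computations in the cylinder coordinates is $\approx \log(R_2/\sqrt{R_1 R_2}) = \tfrac12 \log(R_2/R_1) \approx \log l - \text{const}$; tracking constants and the $(1-\delta)$ losses from the comparison gives $w(l) = (1-\delta)\log l - c_{\delta,\epsilon_0}$, and the collar lemma (Theorem~\ref{thm:collar_lemma}) together with Lemma~\ref{lem:hypercycle} ensures these half-collars are embedded and disjoint.

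\medskip

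\textbf{Main obstacle.} The delicate point is Part~(2) and the precise value $w_{\delta,\epsilon_0}$ in Part~(3): I need the comparison $ds_{X^{\mathrm{fu}}}^2 \leq (1+\delta) ds_X^2$ to hold on a region that reaches into $Y$ past a controlled width, and this forces me to control where $ds_*^2$ can be taken equal to $ds_X^2$ — i.e.\ how small I can make the transition region relative to $\rho_0$. This is exactly what Lemma~\ref{lem:intermediate}(2)--(3) is engineered for (the transition happens on $\rho \in [\rho_0, \rho_0+1]$, a unit-width band), but translating ``$\rho_0$ large'' back through the coordinate change $r = \tanh(\rho/2)$ and then through the biholomorphism identifying the collar with $A_{R_1,R_2}$ — keeping track of how $\rho_0$, $R_1 = e^{-2\pi/l}$, $\epsilon_0$, and $\delta$ interrelate — is where the bookkeeping is genuinely intricate. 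A secondary subtlety is making sure the conformal identification of the neighborhood of $\gamma_i$ in $X$ with a genuine round annulus $A_{R_1,R_2}$ (rather than some abstract annulus) is legitimate; this follows because both the cusp neighborhood in $X$ and its sub-cusp are standard, so the biholomorphism type is determined and the explicit metrics of Lemma~\ref{lem:metrics_rho} apply verbatim.
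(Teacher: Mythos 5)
Your overall strategy---construct an intermediate conformal metric near each end (via Lemma \ref{lem:intermediate}), obtain curvature pinched in $[-1-\delta,-1+\delta]$, and compare with the true hyperbolic metric $ds_{X^{\mathrm{fu}}}^2$ by Ahlfors--Schwarz in both directions---is precisely the paper's route to Part~(3). However, your deductions of Parts~(1) and~(2) from the metric comparison contain genuine gaps.

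First, the claim that ``$ds_R^2 = ds_{X^{\mathrm{fu}}}^2$ on the funnel ends'' is false. The metric $ds_R^2$ is the Poincar\'e metric on the isolated annulus $A_{R_1,R_2}$ alone; since this annulus sits as a proper conformal subdomain of $X^{\mathrm{fu}}$, domain monotonicity gives $ds_{X^{\mathrm{fu}}}^2 < ds_R^2$ strictly everywhere. The central geodesic $\gamma_i$ of $X^{\mathrm{fu}}$ is therefore \emph{not} the core circle $\{\abs{z}=\sqrt{R_1R_2}\}$ of $A_{R_1,R_2}$, and formula \eqref{eqn:core_length} does not directly give $\length_{X^{\mathrm{fu}}}(\gamma_i)$. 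For the same reason, your Part~(1) argument is circular: you invoke the pinching $ds_X^2\le ds_{X^{\mathrm{fu}}}^2\le(1+\delta)ds_X^2$ ``near $\gamma_i$,'' but Part~(3) asserts that inequality precisely on the complement of a collar around $\gamma_i$, so it is unavailable where you want it (indeed, near $\gamma_i$ the two metrics have completely different asymptotics---a cusp versus a funnel---so any such pinching would be absurd). The correct argument compares $ds_{X^{\mathrm{fu}}}^2$ to the intermediate metric $ds_{X^\delta}^2$ \emph{globally} (this comparison does hold near $\gamma_i$), identifies the exact geodesic $\gamma_{i,\delta}$ of $ds_{X^\delta}^2$ (which, because $ds_{X^\delta}^2=ds_R^2$ on the inner annulus $\cusp_i(\epsilon_0,2\epsilon_0)$, really \emph{is} the core circle, with length exactly $2\pi^2/\log(R_2/R_1)$), and then transfers the length bound across the global $(1\pm\delta)$ comparison.

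Second, your distance computation for Part~(2) is wrong: $\tfrac12\log(R_2/R_1)=\tfrac12\bigl(\log r(l)-\log r(\epsilon_0)\bigr)=\pi/\epsilon_0-\pi/l$, a bounded quantity converging to $\pi/\epsilon_0$, not something of order $\log l$. It is also not a hyperbolic distance in any of the relevant metrics (the $ds_R^2$-distance from the core circle to $\abs{z}=R_2$ is $+\infty$). The $\log l$ growth comes instead from the horocyclic distance formula \eqref{eqn:dist_horo} applied in the metric $ds_X^2$ between horocycles of length $\Theta(\epsilon_0)$ and length $l$, transferred across the metric comparison. But to apply this you must first \emph{locate} $\gamma_i$: show it stays within a bounded horocyclic band (the paper proves $\gamma_i$ crosses the horocycle of length $2\pi\epsilon_0$ and is not entirely inside $\cusp_i(6\epsilon_0/5)$ using Lemmas \ref{lem:horocycle} and \ref{lem:hypercycle}). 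This localization step is absent from your proposal and cannot be replaced by invoking the Collar Lemma, whose width $w(\gamma_i)=\arcsinh(1/\sinh(\pi\epsilon_0/2))$ is a fixed constant independent of $l$; the disjointness of the half-collars ultimately comes from the large-cusps hypothesis (the horocycles $\horo_i(l)$ are pairwise disjoint), not from the Collar Lemma.
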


As a direct corollary of Part (3) of Theorem \ref{thm:comparison_geometry}, we have the following. 
\begin{corollary}\label{cor:comparison_volume}
    The volume forms $\dvol_X$ and $\dvol_Y$ of $X$ and $Y$ satisfy that 
    \[ \dvol_{X} < \dvol_{Y} \leq (1+\delta)\dvol_{X} \] 
    on the subsurface 
    \[ Y\setminus\bigsqcup_{i=1}^n\collar(\gamma_i, w_{\delta,\epsilon_0}). \] 
\end{corollary}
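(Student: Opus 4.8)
The plan is to reduce the statement to a pointwise comparison of conformal factors. By construction $X^{\mathrm{fu}}$ carries the Poincar\'e metric in the conformal class induced from $X$, so $ds_{X^{\mathrm{fu}}}^2$ and $ds_X^2$ lie in the same conformal class on $X^{\mathrm{fu}}$; hence there is a smooth positive function $f$ on $X^{\mathrm{fu}}$ with $ds_{X^{\mathrm{fu}}}^2 = f\cdot ds_X^2$. Since the two metrics differ only by the scalar $f$, the inequalities between quadratic forms supplied by Part (3) of Theorem~\ref{thm:comparison_geometry} are equivalent to the pointwise scalar inequalities
\[ 1 < f(p) \leq 1+\delta \qquad \text{for all } p\in Y\setminus\bigsqcup_{i=1}^n\collar(\gamma_i, w_{\delta,\epsilon_0}). \]

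Next I would invoke the elementary fact that in real dimension two a conformal change $g_2 = f\, g_1$ rescales the area form exactly by the factor $f$ (not by $\sqrt{f}$): in local coordinates $\det g_2 = f^2\det g_1$, so the Riemannian area form of $g_i$, namely $\sqrt{\det g_i}\,dx\wedge dy$, transforms by $f$. Because $Y$ is by definition the compact convex core of $X^{\mathrm{fu}}$ equipped with the restriction of $ds_{X^{\mathrm{fu}}}^2$, on the subsurface in question we obtain $\dvol_{Y} = f\cdot\dvol_{X}$.

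Combining the two displays yields $\dvol_{X} < \dvol_{Y} \leq (1+\delta)\dvol_{X}$ on $Y\setminus\bigsqcup_{i=1}^n\collar(\gamma_i, w_{\delta,\epsilon_0})$, which is the claim. I do not expect any real obstacle here; the only point requiring a moment's care is the bookkeeping that an inequality between conformally related metrics is literally an inequality between their conformal factors, together with the observation that in dimension two the area form scales linearly (rather than as a square root) in that factor — this is precisely why the constant $1+\delta$ is inherited verbatim, with no loss.
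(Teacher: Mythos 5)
Your proof is correct and is exactly the computation the paper treats as immediate: since $ds_{X^{\mathrm{fu}}}^2$ and $ds_X^2$ are conformal, the metric inequality in Part~(3) of Theorem~\ref{thm:comparison_geometry} is literally a pointwise bound $1<f\leq 1+\delta$ on the conformal factor, and in real dimension two a conformal rescaling $g_2=f\,g_1$ multiplies the area form by $f$ (not $\sqrt{f}$), so the bound passes to the volume forms verbatim. Nothing to add.
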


The proof of Theorem \ref{thm:comparison_geometry} will be divided into several lemmas. To simplify notations, for each $i\in[1,n]$ and any $0<l_1<l_2$, let 
\[ \cusp_i(l_1,l_2) \df \cusp_i(l_2)\setminus\cusp_i(l_1). \] 

We first determine $l_{\delta,\epsilon_0}$. 
Suppose $X$ has large cusps of length $l$. By definition, each end of $X^{\mathrm{fu}}$ has the form 
$\cusp_i(\epsilon_0,\,l)$, 
hence is biholomorphic to the annulus 
\begin{equation*}
    \left\{ z\in\D:\ r(\epsilon_0) < \abs{z} \leq r(l) \right\}, 
\end{equation*} 
where the function $r(\cdot)$ is given by \eqref{eqn:cusp_disc}. Let 
\begin{equation}\label{eqn:R_1}
    R_1 \df r(\epsilon_0) = \exp(-2\pi/\epsilon_0). 
\end{equation}
Choose $\rho_0$ to be sufficiently large such that 
\begin{equation}\label{eqn:rho}
    r(2\epsilon_0  ) \leq \tanh\frac{\rho_0}{2},  
\end{equation}
and moreover, for any $0<\delta<1/3$, we can apply Lemma \ref{lem:intermediate} to $\rho_0$ to find an $R(\rho_0)$. Hence $\rho_0$ depends only on $\delta$ and $\epsilon_0$. Finally let 
\begin{equation}\label{eqn:R_2}
    r(l_{\delta,\epsilon_0}) = R_2 \df \max \left\{ r(4\epsilon_0/\delta), \ R(\rho_0) \right\}. 
\end{equation}
The reason we choose $\rho_0$ and $R_2$ as above will be seen in the proof of Lemma \ref{lem:comparison_geometry_lem3}. 

Recall that $ds_{R}^2$ denotes the Poincar\'e metric on the annulus $A_{R_1,R_2}$. 
\begin{lemma}\label{lem:comparison_geometry_lem1}
    If $X$ has large cusps of length $l\geq l_{\delta,\epsilon_0}$, then there exists a smooth Riemannian metric $ds_{X^\delta}^2$ on $X^{\mathrm{fu}}$ such that the followings hold (see Figure \ref{fig:IntermediateMetric}): 
    \begin{enumerate}
        \item $ds_{X^\delta}^2$ is conformal to $ds_{X}^2$, hence is conformal to $ds_{X^{\mathrm{fu}}}^2$. 
        \item $ds_{X^\delta}^2 = ds_{R}^2$ on each component of the disjoint union of annuli 
        \[ \bigsqcup_{i=1}^n \cusp_i\!\left( \epsilon_0, 2\epsilon_0   \right). \] 
        \item $ds_{X^\delta}^2 \geq ds_{X}^2$ on $X^{\mathrm{fu}}$, and $ds_{X^\delta}^2 = ds_X^2$ on the subsurface 
        \[ X\setminus\bigsqcup_{i=1}^n\cusp_i(l_{\delta,\epsilon_0}).\] 
        \item The Gaussian curvature of $ds_{X^\delta}^2$ is everywhere between $-1-\delta$ and $-1+\delta$. 
    \end{enumerate}
\end{lemma}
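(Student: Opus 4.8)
The plan is to build $ds_{X^\delta}^2$ by starting from $ds_X^2$ on all of $X^{\mathrm{fu}}$ and then, on each funnel end, replacing it with the intermediate metric supplied by Lemma~\ref{lem:intermediate}. First I would record the identification of the ends. Since $X$ has large cusps of length $l\geq l_{\delta,\epsilon_0}$, the region $\cusp_i(\epsilon_0,l)=\cusp_i(l)\setminus\cusp_i(\epsilon_0)$ carries $ds_X^2$ and is, via the coordinate~\eqref{eqn:cusp_disc}, isometric to $(A_{R_1,r(l)},ds_O^2)$, where $R_1=r(\epsilon_0)$ as in~\eqref{eqn:R_1} and $ds_O^2$ is the Poincar\'e metric on $\D^*$. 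Because $r(\cdot)$ is increasing and $l\geq l_{\delta,\epsilon_0}$, the radius $R_2=r(l_{\delta,\epsilon_0})$ of~\eqref{eqn:R_2} satisfies $R_2\leq r(l)$, so the subannulus $A_{R_1,R_2}$ lies inside the $i$-th end, corresponding to $\cusp_i(\epsilon_0,l_{\delta,\epsilon_0})$. With $\rho_0$ as in~\eqref{eqn:rho}, taken large enough that Lemmas~\ref{lem:calculus2} and~\ref{lem:intermediate} apply, and with $R_2=r(l_{\delta,\epsilon_0})\geq R(\rho_0)$ by~\eqref{eqn:R_2}, Lemma~\ref{lem:intermediate} produces a metric $ds_\delta^2$ on $A_{R_1,R_2}$ that is conformal to $ds_R^2$, satisfies $ds_\delta^2\geq ds_O^2$, equals $ds_R^2$ on $\{R_1<\abs{z}\leq\tanh(\rho_0/2)\}$, equals $ds_O^2$ on $\{\tanh((\rho_0+1)/2)\leq\abs{z}<R_2\}$, and has Gaussian curvature in $[-1-\delta,-1+\delta]$.

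Next I would define $ds_{X^\delta}^2$ to equal $ds_X^2$ on $X^{\mathrm{fu}}\setminus\bigsqcup_{i=1}^n\cusp_i(\epsilon_0,l_{\delta,\epsilon_0})=X\setminus\bigsqcup_{i=1}^n\cusp_i(l_{\delta,\epsilon_0})$ and to equal the pullback of $ds_\delta^2$ under the identification $\cusp_i(\epsilon_0,l_{\delta,\epsilon_0})\cong A_{R_1,R_2}$ on each end. Since $\tanh((\rho_0+1)/2)<R_2$ by Lemma~\ref{lem:intermediate}, and $ds_\delta^2$ agrees with $ds_O^2=ds_X^2$ to infinite order on the inner collar $\{\tanh((\rho_0+1)/2)\leq\abs{z}<R_2\}$ of the gluing circle $\abs{z}=R_2$, while $ds_X^2=ds_O^2$ holds on the adjacent annulus $\{R_2\leq\abs{z}\leq r(l)\}$, the two prescriptions patch into a smooth metric on $X^{\mathrm{fu}}$. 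The four properties are then read off: (1) holds because $ds_\delta^2$, $ds_R^2$ and $ds_O^2=ds_X^2$ all represent the conformal structure of the annulus, and $ds_{X^{\mathrm{fu}}}^2$ lies in the same conformal class by construction; (2) holds because~\eqref{eqn:rho} gives $r(2\epsilon_0)\leq\tanh(\rho_0/2)$, hence $\cusp_i(\epsilon_0,2\epsilon_0)\subseteq\{R_1<\abs{z}\leq\tanh(\rho_0/2)\}$, on which $ds_\delta^2=ds_R^2$; (3) holds because $ds_\delta^2\geq ds_O^2$ on the ends while $ds_{X^\delta}^2=ds_X^2$ on $X\setminus\bigsqcup_i\cusp_i(l_{\delta,\epsilon_0})$; and (4) holds because $ds_\delta^2$ has curvature pinched in $[-1-\delta,-1+\delta]$ and $ds_{X^\delta}^2$ has curvature $\equiv-1$ elsewhere.

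The verification is routine; the only point needing care is the bookkeeping of radii, namely the chain $R_1<r(2\epsilon_0)\leq\tanh(\rho_0/2)<\tanh((\rho_0+1)/2)<R(\rho_0)\leq R_2\leq r(l)$ that localizes where each of the model metrics $ds_R^2$, $ds_\delta^2$, $ds_O^2$ appears, and in particular the two-sided constraint on $R_2=r(l_{\delta,\epsilon_0})$: it must be small enough that $A_{R_1,R_2}$ embeds into the (possibly much longer) true end $A_{R_1,r(l)}$, and large enough to lie in the admissible window $[R(\rho_0),1]$ of Lemma~\ref{lem:intermediate}. Both are arranged by the choices~\eqref{eqn:R_1}--\eqref{eqn:R_2}, so I expect no genuine obstacle here: the analytic content has already been isolated in Lemma~\ref{lem:intermediate}.
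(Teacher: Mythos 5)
Your proposal is correct and follows exactly the route the paper takes: apply Lemma~\ref{lem:intermediate} with the $R_1$, $\rho_0$, $R_2$ of \eqref{eqn:R_1}--\eqref{eqn:R_2} on each cusp annulus, patch the resulting $ds_\delta^2$ onto $ds_X^2$ (smoothness being guaranteed because $ds_\delta^2 \equiv ds_O^2 \equiv ds_X^2$ on the outer sub-annulus $\{\tanh((\rho_0+1)/2)\leq\abs{z}<R_2\}$), and read off (1)--(4) from Lemma~\ref{lem:intermediate}. Your explicit chain $R_1<r(2\epsilon_0)\leq\tanh(\rho_0/2)<\tanh((\rho_0+1)/2)<R(\rho_0)\leq R_2\leq r(l)$ is a useful clarification of the bookkeeping that the paper leaves implicit, but the argument itself is the same.
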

\begin{figure}
    \centering
    \includegraphics[scale = 0.36]{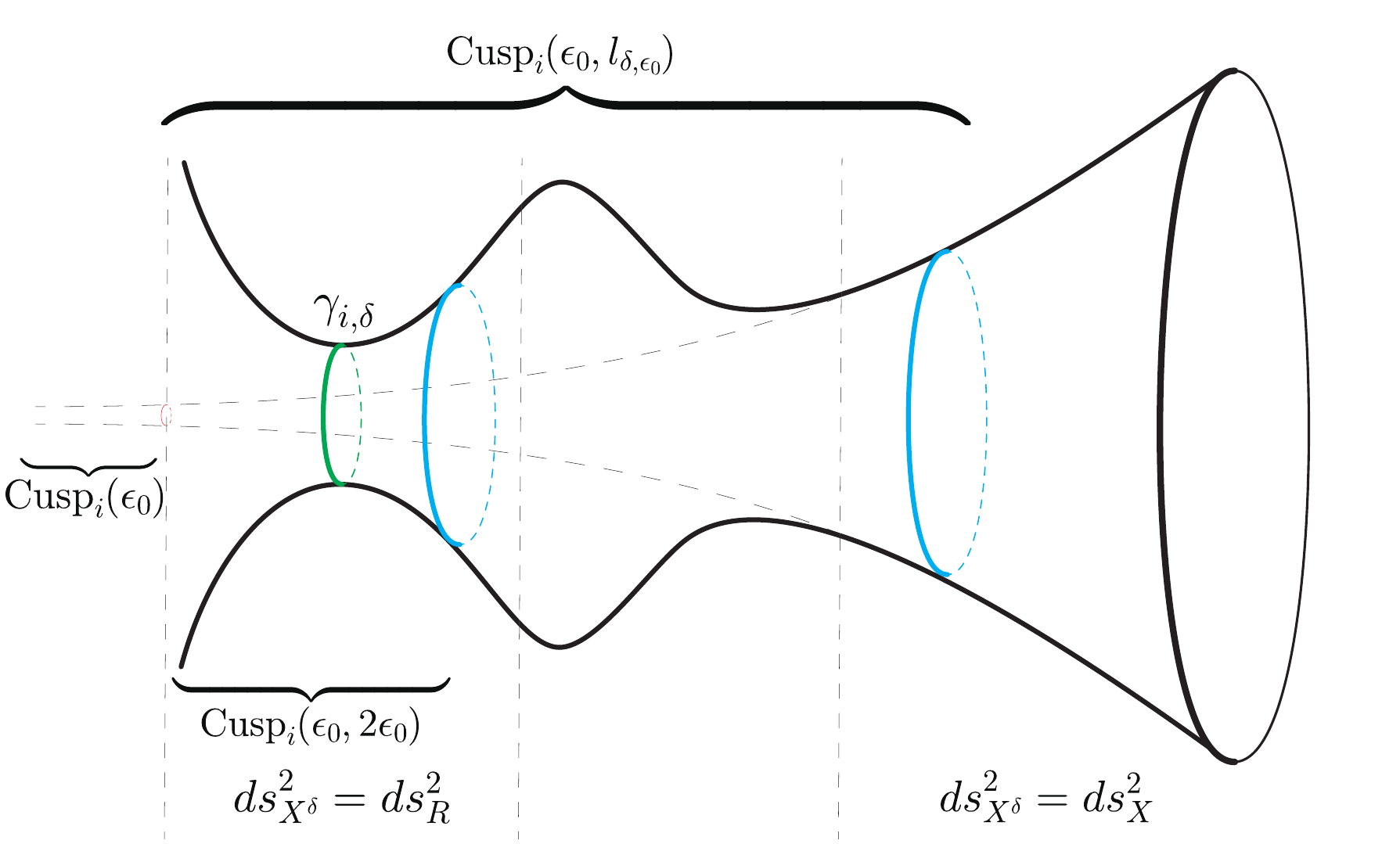}
    \caption{An illustration for the metric  $ds_{X^\delta}^2$ on $\cusp_i(l)$.}
    \label{fig:IntermediateMetric}
\end{figure}

\begin{proof}
    By Lemma \ref{lem:intermediate}, for the $R_1$, $\rho_0$ and $R_2$ defined by \eqref{eqn:R_1}, \eqref{eqn:rho} and \eqref{eqn:R_2}, there exists a smooth Riemannian metric $ds_{\delta}^2$ on the annulus $A_{R_1,R_2}$. 
    By \eqref{eqn:rho} we have 
    \[ r\!\left( 2\epsilon_0   \right) \leq \tanh\frac {\rho_0}{2}, \] 
    hence by Lemma \ref{lem:intermediate}, 
    \[ ds_{\delta}^2 = ds_{R}^2 \textrm{\ \ when $\abs{z}\leq r\!\left( 2\epsilon_0   \right)$}, \] which proves Part $(2)$. 
    By \eqref{eqn:R_2} we have 
    \[ \tanh\frac{\rho_0+1}{2} < R(\rho_0) \leq r(l_{\delta,\epsilon_0}), \] 
    hence by Lemma \ref{lem:intermediate}, 
    \[ ds_{\delta}^2 = ds_O^2 \textrm{\ \ when $\abs{z}\geq r(l_{\delta,\epsilon_0})$}. \] 
    Now we replace the metric $\eval{ds_{X}^2}_{X^{\mathrm{fu}}}$ on each end of $X^{\mathrm{fu}}$ by the metric $ds_{\delta}^2$. 
    Since $l\geq l_{\delta,\epsilon_0}$ and
    \[ ds_{X}^2\big|_{\cusp_i(l)} = ds_{O}^2, \] 
     the resulting metric, denoted by $ds_{X^\delta}^2$, is a smooth Riemannian metric on $X^{\mathrm{fu}}$. The conclusion then follows from Lemma \ref{lem:intermediate}. The proof is complete. 
\end{proof}

\begin{lemma}\label{lem:comparison_geometry_lem2}
    Let $ds_{X^\delta}^2$ be the metric on $X^{\mathrm{fu}}$ given by Lemma \ref{lem:comparison_geometry_lem1}, then 
    \begin{equation*}
        (1-\delta)ds_{X^\delta}^2 \leq ds_{X^{\mathrm{fu}}}^2 \leq (1+\delta)ds_{X^\delta}^2. 
    \end{equation*}
\end{lemma}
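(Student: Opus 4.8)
The plan is to deduce both inequalities from a single comparison tool --- the generalized Ahlfors--Schwarz lemma (in the form due to Yau) --- applied twice, with the two metrics exchanging the roles of ``domain'' and ``target.'' Recall that by Lemma~\ref{lem:comparison_geometry_lem1} the metric $ds_{X^\delta}^2$ is conformal to $ds_{X^{\mathrm{fu}}}^2$ and has Gaussian curvature everywhere in $[-1-\delta,\,-1+\delta]$, whereas $ds_{X^{\mathrm{fu}}}^2$ is the complete Poincar\'e metric on $X^{\mathrm{fu}}$, of constant curvature $-1$. Since $\delta<1/3$, all the relevant curvature bounds are strictly negative.

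The first (and essentially only nontrivial) step is to verify that $ds_{X^\delta}^2$ is itself a \emph{complete} Riemannian metric on $X^{\mathrm{fu}}$. By Part~(3) of Lemma~\ref{lem:comparison_geometry_lem1} one has $ds_{X^\delta}^2 = ds_X^2$ on $X\setminus\bigsqcup_{i}\cusp_i(l_{\delta,\epsilon_0})$, and $ds_X^2$ is complete; so the only non-compact directions left to examine are the funnel ends of $X^{\mathrm{fu}}$, i.e.\ the loci $\abs{z}\to R_1^{+}$ in each cusp coordinate, where $R_1=r(\epsilon_0)$ as in~\eqref{eqn:R_1}. There, since $R_1<r(2\epsilon_0)\leq\tanh(\rho_0/2)$ by~\eqref{eqn:rho}, Part~(2) of Lemma~\ref{lem:comparison_geometry_lem1} together with Part~(2) of Lemma~\ref{lem:intermediate} give $ds_{X^\delta}^2=ds_{R}^2$ near $\abs{z}=R_1$; and $ds_{R}^2$, being the Poincar\'e metric on the annulus $A_{R_1,R_2}$, is complete as $\abs{z}\to R_1$ (immediate from the density in Lemma~\ref{lem:metrics_r}, or simply because it is the complete hyperbolic metric of that annulus). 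Hence $ds_{X^\delta}^2$ is complete.

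Now apply the generalized Schwarz lemma to the identity map $\mathrm{id}\colon(X^{\mathrm{fu}},ds_{X^{\mathrm{fu}}}^2)\to(X^{\mathrm{fu}},ds_{X^\delta}^2)$: the domain is complete with curvature $\equiv-1\geq-1$ and the target has curvature $\leq-(1-\delta)<0$, so the lemma yields $ds_{X^\delta}^2\leq\frac{1}{1-\delta}\,ds_{X^{\mathrm{fu}}}^2$, which is the left-hand inequality $(1-\delta)\,ds_{X^\delta}^2\leq ds_{X^{\mathrm{fu}}}^2$. Applying the same lemma with the roles reversed, to $\mathrm{id}\colon(X^{\mathrm{fu}},ds_{X^\delta}^2)\to(X^{\mathrm{fu}},ds_{X^{\mathrm{fu}}}^2)$ --- now the domain $ds_{X^\delta}^2$ is complete with curvature $\geq-(1+\delta)$ and the target has curvature $\equiv-1\leq-1$ --- yields $ds_{X^{\mathrm{fu}}}^2\leq(1+\delta)\,ds_{X^\delta}^2$, the right-hand inequality. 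Together these prove the lemma.

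The main obstacle is the completeness check above; granting it, everything reduces to a standard Ahlfors--Schwarz-type comparison, in the spirit of Brooks~\cite{Brooks1999}. If one prefers a self-contained argument to citing the comparison lemma, one writes $ds_{X^\delta}^2=e^{2u}\,ds_{X^{\mathrm{fu}}}^2$, uses the usual relation between $K_\delta$, the constant $-1$, $u$ and $\Delta_{X^{\mathrm{fu}}}u$, and runs the Omori--Yau maximum principle on $u$ --- completeness of the appropriate one of the two metrics being exactly what makes the noncompact maximum principle available. The one bit of bookkeeping is to line up the sign of each curvature bound with the correct side of the inequality, which is precisely why the two applications swap domain and target.
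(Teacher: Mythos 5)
Your proposal is correct. The first inequality $(1-\delta)\,ds_{X^\delta}^2\le ds_{X^{\mathrm{fu}}}^2$ is obtained exactly as in the paper, by one application of the Ahlfors--Schwarz lemma using that $ds_{X^{\mathrm{fu}}}^2$ is the complete Poincar\'e metric on $X^{\mathrm{fu}}$. For the second inequality your route is genuinely different: you apply the Schwarz lemma a second time with domain and target swapped, which forces the completeness check for $ds_{X^\delta}^2$; your check is sound (near each end $ds_{X^\delta}^2=ds_R^2$, the Poincar\'e metric of $A_{R_1,R_2}$, which is complete as $\abs{z}\to R_1^+$), and the Yau version of the lemma then gives $ds_{X^{\mathrm{fu}}}^2\le(1+\delta)\,ds_{X^\delta}^2$. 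The paper neither re-invokes the Schwarz lemma nor discusses completeness of $ds_{X^\delta}^2$: it sets $f=h_{X^\delta}^2/h_{X^{\mathrm{fu}}}^2$, uses domain monotonicity of Poincar\'e metrics to show $f>1$ on the end neighborhoods $\bigsqcup_i\cusp_i(\epsilon_0,2\epsilon_0)$ (where $ds_{X^\delta}^2=ds_R^2$, the Poincar\'e metric of a proper subdomain) and $f<1$ on the compact set $X\setminus\bigsqcup_i\cusp_i(l_{\delta,\epsilon_0})$ (where $ds_{X^\delta}^2=ds_X^2$, the Poincar\'e metric of a larger domain), concludes that $\log f$ attains an interior minimum at some $p_0$, and then runs the classical pointwise maximum principle there together with the curvature relation. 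Your version is shorter if Yau's generalized Schwarz lemma (which conceals the Omori--Yau maximum principle) is taken as a black box; the paper's argument is entirely self-contained and sidesteps the completeness issue by pinpointing the location of the extremum via domain monotonicity, a trick worth noting as it is specific to the geometry of this cusp-to-funnel construction.
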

\begin{proof}
    Suppose in local conformal coordinate $z=x+iy$ we have 
    \[ ds_{X^{\mathrm{fu}}}^2 = h^2_{X^{\mathrm{fu}}}\cdot \abs{\dif z}^2, \quad ds_{X^\delta}^2 = h^2_{X^\delta}\cdot \abs{\dif z}^2, \] 
    then by Lemma \ref{lem:comparison_geometry_lem1} we know that their Gaussian curvatures satisfy  
    \begin{equation}\label{eqn:comparison_geometry_lem2-1}
        \begin{aligned}
            K_{X^{\mathrm{fu}}} &= - h_{X^{\mathrm{fu}}}^{-2}\,\Delta\log h_{X^{\mathrm{fu}}} = -1,\\ 
            -1-\delta \leq K_{X^\delta} &= - h_{X^\delta}^{-2}\,\Delta\log h_{X^\delta} \leq -1+\delta. 
        \end{aligned}
    \end{equation}
    Here $\Delta=\frac{\pa^2}{\pa x^2}+\frac{\pa^2}{\pa y^2}$ is the Euclidean Laplacian. By the Ahlfors-Schwarz lemma (see \eg \cite[Theorem 2.6.1]{Jost2006}), from \eqref{eqn:comparison_geometry_lem2-1} and Part (1) of Lemma \ref{lem:comparison_geometry_lem1} we have 
    \begin{equation}\label{eqn:AS_lemma1}
        (1-\delta)h_{X^\delta}^2 \leq h_{X^{\mathrm{fu}}}^2. 
    \end{equation}
    Moreover, since for each $1\leq i\leq n$ we have 
    \[ \cusp_i(\epsilon_0,\, l_{\delta,\epsilon_0}) \subsetneq X^{\mathrm{fu}} \subsetneq X, \textrm{\quad and\quad} \cusp_i(\epsilon_0,\, l_{\delta,\epsilon_0})\simeq A_{R_1,R_2}, \] 
    by domain monotonicity of Poincar\'e metrics and Part (2) and (3) of Lemma \ref{lem:comparison_geometry_lem1}, we have 
    \begin{equation*}
        f(p) \df \big( h_{X^\delta}^2 \big/ h_{X^{\mathrm{fu}}}^2 \big)(p) = 
        \begin{cases}
            \big( ds_{R}^2\, / ds_{X^{\mathrm{fu}}}^2 \big)(p) > 1, & p\in\bigsqcup_{i=1}^n \cusp_i\!\left( \epsilon_0, 2\epsilon_0   \right);\\
            \big( ds_{X}^2 / ds_{X^{\mathrm{fu}}}^2 \big)(p) < 1, & p\in X\setminus\bigsqcup_{i=1}^n\cusp_i(l_{\delta,\epsilon_0}). 
        \end{cases}
    \end{equation*} 
    It follows that $f$, and hence $\log f$, achieves an minimum at some $p_0\in X^{\mathrm{fu}}$. Thus 
    \begin{equation}\label{eqn:AS_lemma2}
        \begin{aligned}
            \left( (1+\delta)\frac{h_{X^\delta}^2(p)}{h_{X^{\mathrm{fu}}}^2(p)} - 1 \right)\cdot h_{X^{\mathrm{fu}}}^2(p_0)
            &\geq (1+\delta)h_{X^\delta}^2(p_0) - h_{X^{\mathrm{fu}}}^2(p_0)\\
            \textrm{(by \eqref{eqn:comparison_geometry_lem2-1})\qquad} 
            &\geq (- K_{X^\delta}\cdot h_{X^\delta}^2 + K_{X^{\mathrm{fu}}}\cdot h_{X^{\mathrm{fu}}}^2 )(p_0)\\
            \textrm{(by \eqref{eqn:comparison_geometry_lem2-1})\qquad} 
            & = (\Delta\log h_{X^\delta} - \Delta\log h_{X^{\mathrm{fu}}})(p_0)\\
            \textrm{(by definition of $f$)\qquad} 
            & = (\Delta\log f)(p_0)/2\\
            \textrm{($\log f$ has a minimum at $p_0$)\qquad} 
            &\geq 0. 
        \end{aligned}
    \end{equation}
    Then the conclusion follows from \eqref{eqn:AS_lemma1} and \eqref{eqn:AS_lemma2}. 
\end{proof}

For each $1\leq i\leq n$, let $\gamma_{i,\delta}$ be the circle (see Figure \ref{fig:IntermediateMetric}) 
\[ \gamma_{i,\delta} \df \left\{ \abs{z} = \sqrt{R_1R_2} \right\} \] 
contained in the $i$-th end $\cusp_i( \epsilon_0,\,l )$ of $X^{\mathrm{fu}}$. 
We have the following lemma: 
\begin{lemma}\label{lem:comparison_geometry_lem3}
    For each $1\leq i\leq n$, the circle $\gamma_{i,\delta}$ is a simple closed geodesic in $ds_{X^\delta}^2$ (and is unique since $ds_{X^\delta}^2$ is complete and has negative curvature), and we have 
    \begin{equation*}
        \pi\epsilon_0 \leq \length_{X^\delta}(\gamma_{i,\delta}) \leq (1+\delta/3)\pi\epsilon_0, 
    \end{equation*}
    where $\length_{X^\delta}$ denotes the length function with respect to $ds_{X^\delta}^2$. 
\end{lemma}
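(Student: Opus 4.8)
The plan is to reduce the statement to the explicit geometry of the hyperbolic cylinder $(A_{R_1,R_2},ds_{R}^2)$: I will show that the parameters fixed in \eqref{eqn:R_1}, \eqref{eqn:rho}, \eqref{eqn:R_2} were chosen precisely so that $\gamma_{i,\delta}$ lies deep inside the subannulus of the $i$-th end of $X^{\mathrm{fu}}$ on which $ds_{X^\delta}^2$ coincides with $ds_{R}^2$, and then read off both conclusions from the known facts about $ds_{R}^2$.

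First I would locate $\gamma_{i,\delta}=\{\abs{z}=\sqrt{R_1R_2}\}$. Since $0<\delta<1/3<2$ and $r(\cdot)=\exp(-2\pi/\,\cdot\,)$ is strictly increasing, \eqref{eqn:R_2} gives $R_1=r(\epsilon_0)<r(2\epsilon_0)<r(4\epsilon_0/\delta)\le R_2=r(l_{\delta,\epsilon_0})<1$. Combining this with \eqref{eqn:R_1} and \eqref{eqn:rho},
\[
R_1<\sqrt{R_1R_2}<\sqrt{R_1}=e^{-\pi/\epsilon_0}=r(2\epsilon_0)\le\tanh\frac{\rho_0}{2},
\]
so $\gamma_{i,\delta}$ lies in the open annulus $\{R_1<\abs{z}<r(2\epsilon_0)\}\subset\cusp_i(\epsilon_0,2\epsilon_0)$, on which $ds_{X^\delta}^2=ds_{R}^2$ by Lemma \ref{lem:comparison_geometry_lem1}(2). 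Being a geodesic is a local condition, and $\{\abs{z}=\sqrt{R_1R_2}\}$ is the unique simple closed geodesic of $(A_{R_1,R_2},ds_{R}^2)$, with length given by \eqref{eqn:core_length}; hence $\gamma_{i,\delta}$ is a simple closed geodesic of $ds_{X^\delta}^2$. For its uniqueness in the appropriate free homotopy class I would note that $ds_{X^\delta}^2$ has curvature $\le-1+\delta<0$ by Lemma \ref{lem:comparison_geometry_lem1}(4) and is complete, the latter because $(1+\delta)\,ds_{X^\delta}^2\ge ds_{X^{\mathrm{fu}}}^2$ by Lemma \ref{lem:comparison_geometry_lem2} and $ds_{X^{\mathrm{fu}}}^2$ is complete.

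For the length estimate, since $\gamma_{i,\delta}$ lies where $ds_{X^\delta}^2=ds_{R}^2$, formula \eqref{eqn:core_length} gives $\length_{X^\delta}(\gamma_{i,\delta})=2\pi^2/(\log R_2-\log R_1)$. From \eqref{eqn:R_1}, $\log R_1=-2\pi/\epsilon_0$; while $R_2\le1$ and $R_2\ge r(4\epsilon_0/\delta)$ give $0\ge\log R_2\ge-\pi\delta/(2\epsilon_0)$, so that
\[
\frac{2\pi}{\epsilon_0}\ \ge\ \log R_2-\log R_1\ \ge\ \frac{2\pi}{\epsilon_0}\Big(1-\frac{\delta}{4}\Big),
\]
and therefore $\pi\epsilon_0\le\length_{X^\delta}(\gamma_{i,\delta})\le\pi\epsilon_0/(1-\delta/4)$. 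The proof finishes by the elementary inequality $1/(1-\delta/4)\le1+\delta/3$ valid for $0<\delta<1/3$, which holds because $(1+\delta/3)(1-\delta/4)=1+\delta(1-\delta)/12\ge1$.

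I do not expect a genuine obstacle here: the lemma is essentially the bookkeeping check that the tuned parameters put $\gamma_{i,\delta}$ in the ``hyperbolic cylinder'' zone of $ds_{X^\delta}^2$ and make the resulting explicit length $2\pi^2/(\log R_2-\log R_1)$ land in the target window $[\pi\epsilon_0,(1+\delta/3)\pi\epsilon_0]$. The only mildly delicate points are justifying completeness (hence uniqueness) of $ds_{X^\delta}^2$ in the second paragraph, and keeping straight which of \eqref{eqn:rho} and \eqref{eqn:R_2} forces which link in the inequality chain $R_1<\sqrt{R_1R_2}<r(2\epsilon_0)\le\tanh(\rho_0/2)$ together with $R_2<1$.
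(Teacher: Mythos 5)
Your argument is correct and follows the same route as the paper: you show $\sqrt{R_1R_2}<\sqrt{R_1}=r(2\epsilon_0)$ so that $\gamma_{i,\delta}$ lies where $ds_{X^\delta}^2=ds_R^2$ by Lemma~\ref{lem:comparison_geometry_lem1}(2), then read off the length $2\pi^2/(\log R_2-\log R_1)$ from \eqref{eqn:core_length} and bound it using $\log R_1=-2\pi/\epsilon_0$ together with $r(4\epsilon_0/\delta)\le R_2<1$, exactly as in the paper's proof. The only addition is your explicit justification of completeness via Lemma~\ref{lem:comparison_geometry_lem2}, which the paper asserts without comment in the lemma statement; that is a sound and welcome clarification.
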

\begin{proof}
    By definition, $R_1 = r(\epsilon_0) = \exp(-2\pi/\epsilon_0)$, so we have (note that $R_2 <1$) 
    \[ \sqrt{R_1R_2} < \sqrt{R_1} = r(2\epsilon_0), \] 
    which means that $\gamma_{i,\delta}$ is contained in the annulus $\cusp_i(\epsilon_0, 2\epsilon_0 )$. Thus, by Part (2) of Lemma \ref{lem:comparison_geometry_lem1}, $\gamma_{i,\delta}$ is a simple closed geodesic in $ds_{X^\delta}^2$. By \eqref{eqn:core_length} we have 
    \[ \length_{X^\delta}(\gamma_{i,\delta}) = \frac{2\pi^2}{\log R_2 - \log R_1}, \] 
    it follows that 
    \begin{equation}\label{eqn:comparison_geometry_lem3-1}
        \length_{X^\delta}(\gamma_{i,\delta}) \geq \frac{2\pi^2}{-\log r(\epsilon_0)} = \frac{2\pi^2}{2\pi/\epsilon_0} = \pi\epsilon_0. 
    \end{equation}
    Moreover by \eqref{eqn:R_2}, we have $R_2\geq r(4\epsilon_0/\delta)$, hence 
    \begin{equation}\label{eqn:comparison_geometry_lem3-2}
        \length_{X^\delta}(\gamma_{i,\delta}) \leq \frac{2\pi^2}{\log r(4\epsilon_0/\delta) - \log r(\epsilon_0)} = \frac{2\pi^2}{(1 - \delta/4)\cdot2\pi/\epsilon_0} \leq (1+\delta/3)\pi\epsilon_0. 
    \end{equation}
    The proof is complete. 
\end{proof}

Recall that $\gamma_1,\gamma_2,\cdots,\gamma_n$ denote the central simple closed geodesics of the funnel ends of $X^{\mathrm{fu}}$. 
For any $l>0$, denote by $\horo_i(l)$ the embedded horocycle of length $l$ in the $i$-th cusp. 
We have the following lemma: 
\begin{figure}
    \centering
    \includegraphics[scale=0.32]{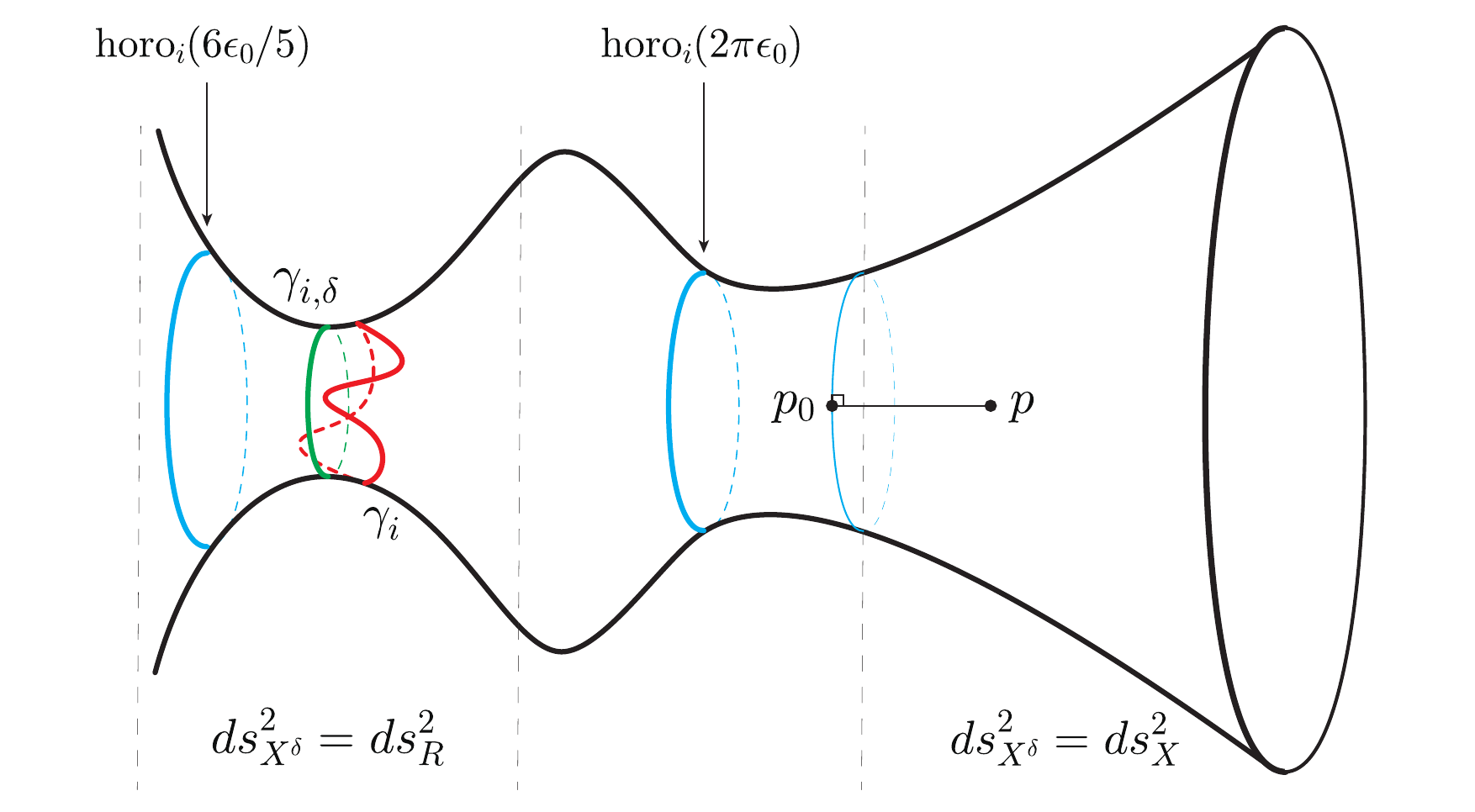}
    \caption{The location of $\gamma_i$ in the proof of Lemma \ref{lem:comparison_geometry_lem4}.}
    \label{fig:position}
\end{figure}

\begin{lemma}\label{lem:comparison_geometry_lem4}
    For each $1\leq i\leq n$, we have 
    \[ \gamma_i\cap\cusp_i(2\pi\epsilon_0)\neq\emptyset, \] 
    and 
    \[ \gamma_i\cap\big(X\setminus\cusp_i(6\epsilon_0 / 5)\big) \neq \emptyset. \] 
\end{lemma}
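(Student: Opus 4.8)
The plan is to reduce both statements to a single upper bound for $\length_{X^{\mathrm{fu}}}(\gamma_i)$ and then argue by contradiction in each case.

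First I would record that, for $0<\delta<1/3$,
\[ \length_{X^{\mathrm{fu}}}(\gamma_i)\;\leq\;\sqrt{1+\delta}\,(1+\delta/3)\,\pi\epsilon_0\;<\;2\pi\epsilon_0 . \]
Indeed, $\gamma_i$ minimizes length in its free homotopy class by \eqref{eqn:geodesic_smooth}, so $\length_{X^{\mathrm{fu}}}(\gamma_i)\leq\length_{X^{\mathrm{fu}}}(\gamma_{i,\delta})$; then $ds_{X^{\mathrm{fu}}}^2\leq(1+\delta)ds_{X^\delta}^2$ (Lemma~\ref{lem:comparison_geometry_lem2}) and $\length_{X^\delta}(\gamma_{i,\delta})\leq(1+\delta/3)\pi\epsilon_0$ (Lemma~\ref{lem:comparison_geometry_lem3}) give the estimate, while $\sqrt{4/3}\cdot\tfrac{10}{9}=\tfrac{20}{9\sqrt3}<2$. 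The matching lower bound $\length_{X^{\mathrm{fu}}}(\gamma_i)\geq\sqrt{1-\delta}\,\pi\epsilon_0$ (from $\length_{X^\delta}(\gamma_i)\geq\length_{X^\delta}(\gamma_{i,\delta})\geq\pi\epsilon_0$ and Lemma~\ref{lem:comparison_geometry_lem2}) is not needed here but is worth noting.

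For the first assertion I would assume $\gamma_i\cap\cusp_i(2\pi\epsilon_0)=\emptyset$ and derive a contradiction. Since $l_{\delta,\epsilon_0}$ is much larger than $2\pi\epsilon_0$, the set $\cusp_i(2\pi\epsilon_0)$ is an embedded cusp of $X$ with bounding horocycle of length $2\pi\epsilon_0$, and $\gamma_i$ is a smooth simple closed curve lying in $X\setminus\cusp_i(2\pi\epsilon_0)$ and freely homotopic in $X$ to that horocycle. Lemma~\ref{lem:horocycle} then gives $\length_X(\gamma_i)\geq 2\pi\epsilon_0$, and domain monotonicity of Poincaré metrics ($ds_{X^{\mathrm{fu}}}^2>ds_X^2$) upgrades this to $\length_{X^{\mathrm{fu}}}(\gamma_i)>2\pi\epsilon_0$, contradicting the displayed bound.

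For the second assertion I would assume $\gamma_i\subset\cusp_i(6\epsilon_0/5)$, i.e.\ $\gamma_i\subset\{R_1<|z|\leq r(6\epsilon_0/5)\}$ in the coordinate of the $i$-th end (recall $R_1=r(\epsilon_0)$ and $\gamma_i\subset X^{\mathrm{fu}}$ excludes $\{|z|\leq R_1\}$), and again reach a contradiction. Because $6\epsilon_0/5<2\epsilon_0$ and $l_{\delta,\epsilon_0}\geq 4\epsilon_0/\delta>12\epsilon_0$, this region lies inside $A_{R_1,R_2}$, is contained in the annulus of Lemma~\ref{lem:comparison_geometry_lem1}(2) where $ds_{X^\delta}^2=ds_{R}^2$, and does not contain the core geodesic $\gamma_{R}=\gamma_{i,\delta}$ (as $r(6\epsilon_0/5)<\sqrt{R_1R_2}$). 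A direct computation with Lemma~\ref{lem:metrics_r} shows that the $ds_{R}^2$-distance $d_0$ from $\gamma_{R}$ to the hypercycle $\{|z|=r(6\epsilon_0/5)\}$ equals $\log\tan(\pi u_0/2)$ with $u_0=\frac{5/6-\epsilon_0/l_{\delta,\epsilon_0}}{1-\epsilon_0/l_{\delta,\epsilon_0}}\in(\tfrac12,\tfrac56)$; since $\epsilon_0/l_{\delta,\epsilon_0}<\tfrac1{12}$ we get $u_0>\tfrac9{11}$, hence $\cosh d_0>\tfrac74$ (in fact $\cosh d_0\to 2$ as $l_{\delta,\epsilon_0}\to\infty$). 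As $\gamma_i$ is an essential simple closed curve in $A_{R_1,R_2}$ avoiding the interior of $\collar(\gamma_{R},d_0)$, Lemma~\ref{lem:hypercycle} (applied with widths $d'<d_0$ and letting $d'\uparrow d_0$ to cover the boundary case) together with the collar metric \eqref{eqn:collar_metric} gives
\[ \length_{X^\delta}(\gamma_i)\;\geq\;\length_{X^\delta}\big(\{|z|=r(6\epsilon_0/5)\}\big)\;=\;\length_{X^\delta}(\gamma_{i,\delta})\cosh d_0\;\geq\;\pi\epsilon_0\cosh d_0 . \]
Combining with $ds_{X^{\mathrm{fu}}}^2\geq(1-\delta)ds_{X^\delta}^2$ on this region yields
\[ \length_{X^{\mathrm{fu}}}(\gamma_i)\;\geq\;\sqrt{1-\delta}\,\pi\epsilon_0\cosh d_0\;\geq\;\sqrt{\tfrac23}\cdot\tfrac74\,\pi\epsilon_0\;>\;\tfrac{20}{9\sqrt3}\,\pi\epsilon_0\;>\;\sqrt{1+\delta}\,(1+\delta/3)\,\pi\epsilon_0 \]
for $0<\delta<1/3$, contradicting the displayed bound. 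Hence $\gamma_i\not\subset\cusp_i(6\epsilon_0/5)$, which is exactly the claim.

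I expect the main obstacle to be this second assertion. Pushing the funnel core ``out'' toward the cusp is soft and uses only the curve-shortening Lemma~\ref{lem:horocycle}, but preventing it from collapsing into a thin near-cusp annulus forces a passage to the explicit model metric $ds_{R}^2$ via Lemma~\ref{lem:comparison_geometry_lem2} and a careful bookkeeping of the hyperbolic-cylinder numerology — the interplay of the modulus $\log(R_2/R_1)$, the core length, and the hypercycle distance $d_0$ — so that the multiplicative losses $(1\pm\delta)$ and $(1+\delta/3)$ are absorbed when $\delta<1/3$. This is precisely what dictates the constant $6\epsilon_0/5$ in the statement and the freedom to enlarge $l_{\delta,\epsilon_0}$.
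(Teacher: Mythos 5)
Your proof is correct, and the overall strategy matches the paper's: first show $\length_{X^{\mathrm{fu}}}(\gamma_i)$ is at most a fixed multiple of $\pi\epsilon_0$ strictly below $2\pi\epsilon_0$, then derive a contradiction from each contrary hypothesis via Lemma~\ref{lem:horocycle} (resp.\ Lemma~\ref{lem:hypercycle}). For the first assertion your use of domain monotonicity $ds_{X^{\mathrm{fu}}}^2>ds_X^2$ is an interchangeable substitute for the paper's $ds_{X^\delta}^2\geq ds_X^2$. Where you diverge is in the quantitative estimate of the hypercycle length in the second assertion. Both you and the paper reduce to $\length_R(\gamma_i)\geq\length_R(\{|z|=r(6\epsilon_0/5)\})$ via Lemma~\ref{lem:hypercycle}, but the paper then bounds the right-hand side below by $2\pi\cdot r(6\epsilon_0/5)\cdot h_{R_1,1}(r(6\epsilon_0/5))=2\pi\epsilon_0$ using domain monotonicity of annulus Poincar\'e densities $h_{R_1,R_2}>h_{R_1,1}$, which cleanly closes the argument at the level of $\length_{X^\delta}(\gamma_i)<2\pi\epsilon_0$; you instead compute the explicit $ds_R^2$-distance $d_0=\log\tan(\pi u_0/2)$ from the core geodesic to the hypercycle, use $\length(\text{hypercycle})=\length(\text{core})\cdot\cosh d_0$, and numerically verify $\cosh d_0>7/4$, which forces an extra trip back to $\length_{X^{\mathrm{fu}}}(\gamma_i)$ and the constant $\sqrt{1-\delta}$ to absorb the slack. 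Your bookkeeping checks out ($\sqrt{2/3}\cdot\tfrac74\approx 1.429>\tfrac{20}{9\sqrt3}\approx 1.283$), so the argument is sound, just slightly less economical; the paper's $h_{R_1,1}$ comparison is the trick worth internalizing because it gives the sharp constant $2\pi\epsilon_0$ without any trigonometric evaluation.
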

\begin{proof}
    Denote by $\length_{X}$ and $\length_{X^{\mathrm{fu}}}$ the length functions with respect to $ds_{X}^2$ and $ds_{X^{\mathrm{fu}}}^2$, respectively. Since $\gamma_{i}$ is homotopic to $\gamma_{i,\delta}$, we have 
    \begin{equation}\label{eqn:comparison_geometry_lem4-1}
        \begin{aligned}
            \length_{X^{\mathrm{fu}}}(\gamma_i) 
            &= \inf\left\{ \length_{X^{\mathrm{fu}}}(\gamma'):\ \textrm{$\gamma'\simeq\gamma_{i}$ and $\gamma'$ is smooth} \right\}\\
            \textrm{(by Lemma \ref{lem:comparison_geometry_lem2})\qquad} &\leq (1+\delta)^{\frac12}\cdot \inf\left\{ \length_{X^\delta}(\gamma'):\ \textrm{$\gamma'\simeq\gamma_{i,\delta}$ and $\gamma'$ is smooth} \right\}\\
            \textrm{(by Lemma \ref{lem:comparison_geometry_lem3})\qquad} &= (1+\delta)^{\frac12}\cdot \length_{X^\delta}(\gamma_{i,\delta})\\
            \textrm{(by \eqref{eqn:comparison_geometry_lem3-2})\qquad} &\leq (1+\delta/2)(1+\delta/3)\pi\epsilon_0 \leq (1+\delta)\pi\epsilon_0. 
        \end{aligned}
    \end{equation}
    Therefore by Lemma \ref{lem:comparison_geometry_lem2}, for $\delta<1/3$ we have 
    \begin{equation}\label{eqn:comparison_geometry_lem4-2}
        \length_{X^\delta}(\gamma_i) \leq (1-\delta)^{-\frac12}\cdot\length_{X^{\mathrm{fu}}}(\gamma_i) \leq (1+\delta)\cdot\length_{X^{\mathrm{fu}}}(\gamma_i) < 2\pi\epsilon_0. 
    \end{equation}
    
    Suppose for contradiction that $\gamma_i\cap\cusp_i(2\pi\epsilon_0)=\emptyset$. Recall that Part (3) of Lemma \ref{lem:comparison_geometry_lem1} tells that $ds_{X^\delta}^2 \geq ds_{X}^2$, thus by Lemma \ref{lem:horocycle} we have 
    \[ \length_{X^\delta}(\gamma_i) \geq \length_{X}(\gamma_i) \geq \length_{X}(\horo_i(2\pi\epsilon_0)) = 2\pi\epsilon_0, \] 
    leading to a contradiction to \eqref{eqn:comparison_geometry_lem4-2}, hence $\gamma_i\cap\cusp_i(2\pi\epsilon_0)$ is not empty. 
    
    Suppose for contradiction that $\gamma_i$ is contained in $\cusp_i(\epsilon_0,\, 6\epsilon_0 / 5)$. Recall that $R_1=r(\epsilon_0)$ and $R_2\geq r(4\epsilon_0/\delta)>r(12\epsilon_0)$, by definition of $r(\cdot)$ we have 
    \[ R_1 < r(6\epsilon_0/5) < r\Big(\frac{2\epsilon_0}{1+1/12}\Big) < \sqrt{R_1R_2} < r(2\epsilon_0) < R_2. \] 
    Since $ds_{X^\delta}^2 = ds_{R}^2$ on the annulus 
    \[ \cusp_i\! \left( \epsilon_0, 2\epsilon_0  \right), \] 
    combining with Lemma \ref{lem:hypercycle} we have (see Figure \ref{fig:position} for an illustration) 
    \begin{equation}\label{eqn:comparison_geometry_lem4-3}
        \length_{X^\delta}(\gamma_i) = \length_{R}(\gamma_i) \geq \length_{R}(\horo_i(6\epsilon_0/5)), 
    \end{equation}
    where $\length_{R}$ denotes the length function with respect to $ds_{R}^2$. 
    The RHS of \eqref{eqn:comparison_geometry_lem4-3} is equal to 
    \begin{equation}\label{eqn:comparison_geometry_lem4-4}
        \int_{0}^{2\pi} r(6\epsilon_0/5)\cdot h_{R_1,R_2}\!\left( r(6\epsilon_0/5) \right) \dif\theta = 2\pi\cdot r(6\epsilon_0/5)\cdot h_{R_1,R_2}\!\left( r(6\epsilon_0/5) \right). 
    \end{equation}
    Recall that $h_{R_1,R_2}$ is the density function of the Poincar\'e metric on the annulus $A_{R_1,R_2}$, we thus have 
    \[ h_{R_1,R_2} > h_{R_1,1}, \] 
    where $h_{R_1,1}$ is the metric density function of the Poincar\'e metric on the annulus $A_{R_1,1}$. 
    It follows that 
    \begin{equation}\label{eqn:comparison_geometry_lem4-5}
        \begin{aligned}
            2\pi\cdot r(6\epsilon_0/5)\cdot h_{R_1,R_2}\!\left( r(6\epsilon_0/5) \right) 
            &\geq 2\pi\cdot r(6\epsilon_0/5)\cdot h_{R_1,1}\!\left( r(6\epsilon_0/5) \right)\\
            \textrm{(by Lemma \ref{lem:metrics_r})\qquad} 
            & = 2\pi\cdot \frac{\pi}{- \log r(\epsilon_0)}\cdot \frac{1}{\sin\frac{\pi(- \log r(6\epsilon_0/5))}{- \log r(\epsilon_0)}}\\
            \textrm{(by definition of $r(\cdot)$)\qquad} 
            & = 2\pi\cdot \frac{\pi}{2\pi/\epsilon_0}\cdot \frac{1}{\sin\frac{\pi\cdot 5\pi/3\epsilon_0}{2\pi/\epsilon_0}}\\
            & = 2\pi\cdot \frac{\epsilon_0}{2}\cdot \frac{1}{\sin\frac{5\pi}{6}} = 2\pi\epsilon_0. 
        \end{aligned}
    \end{equation}
    By \eqref{eqn:comparison_geometry_lem4-3}, \eqref{eqn:comparison_geometry_lem4-4} and \eqref{eqn:comparison_geometry_lem4-5}, we have 
    \[ \length_{X^\delta}(\gamma_i) \geq 2\pi\epsilon_0, \]     
    also leading to a contradiction to \eqref{eqn:comparison_geometry_lem4-2}, hence $\gamma_i$ is not contained in $\cusp_i(\epsilon_0,\, 6\epsilon_0/5)$. The proof is complete. 
\end{proof}

Denote by $\dist_{X}$, $\dist_{X^{\mathrm{fu}}}$, and $\dist_{X^\delta}$ the distance functions with respect to $ds_{X}^2$, $ds_{X^{\mathrm{fu}}}^2$, and $ds_{X^\delta}^2$, respectively. We have the following lemma: 
\begin{lemma}\label{lem:comparison_geometry_lem5}
    Suppose $l\geq 2\pi\epsilon_0$. For any point $p\in\horo_i(l)$, we have 
    \begin{equation}\label{eqn:comparison_geometry_lem5-1}
        \dist_{X^{\mathrm{fu}}}(p,\gamma_i) \geq (1-\delta) \log l - c_{\delta,\epsilon_0}, 
    \end{equation}
    and 
    \begin{equation}\label{eqn:comparison_geometry_lem5-2}
        \dist_{X^{\mathrm{fu}}}(p,\gamma_i) \leq (1+\delta) \log l + C_{\delta,\epsilon_0}, 
    \end{equation}
    where $c_{\delta,\epsilon_0}, C_{\delta,\epsilon_0}>0$ are constants depending only on $\delta$ and $\epsilon_0$. 
\end{lemma}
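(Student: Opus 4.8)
The plan is to reduce everything to a radial distance computation in the cusp coordinate of the $i$-th end. By Lemma~\ref{lem:comparison_geometry_lem4} I fix a point $q_i\in\gamma_i\cap\cusp_i(2\pi\epsilon_0)$; since $\gamma_i\subset X^{\mathrm{fu}}=X\setminus\cusp_i(\epsilon_0)$, this point lies in $\cusp_i(\epsilon_0,2\pi\epsilon_0)$, so that $R_1<\abs{z(q_i)}\leq r(2\pi\epsilon_0)<r(l)=\abs{z(p)}$ (I may and do assume that $l$ exceeds a universal constant, as for bounded $l$ both asserted inequalities are trivial after enlarging the constants). Because $\length_{X^{\mathrm{fu}}}(\gamma_i)\leq(1+\delta)\pi\epsilon_0$ by \eqref{eqn:comparison_geometry_lem4-1}, every point of $\gamma_i$ lies within $ds_{X^{\mathrm{fu}}}^2$-distance $\tfrac{1}{2}(1+\delta)\pi\epsilon_0$ of $q_i$, hence $\bigl|\dist_{X^{\mathrm{fu}}}(p,\gamma_i)-\dist_{X^{\mathrm{fu}}}(p,q_i)\bigr|\leq\tfrac{1}{2}(1+\delta)\pi\epsilon_0$, which is absorbed into the additive constants; so it suffices to bound $\dist_{X^{\mathrm{fu}}}(p,q_i)$. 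Throughout I use that in the cusp coordinate the metrics $ds_X^2=ds_O^2$ and $ds_{X^\delta}^2$ are rotationally symmetric, of the shape $h(\rho)^2(\dif\rho^2+\sinh^2\rho\,\dif\theta^2)$ with $h\geq h_O>1$, so that the distance between concentric circles $\{\abs{z}=c_1\}$ and $\{\abs{z}=c_2\}$ equals the radial distance $\int h(\rho)\,\dif\rho$, and that $\rho(p)=2\arctanh r(l)=\log\frac{1+e^{-2\pi/l}}{1-e^{-2\pi/l}}$ satisfies $\log l-\log(2\pi)\leq\rho(p)\leq\log l+1$.

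\emph{Lower bound.} By domain monotonicity $ds_{X^{\mathrm{fu}}}^2\geq ds_X^2$, so $\dist_{X^{\mathrm{fu}}}(p,q_i)\geq\dist_X(p,q_i)$. In $(X,ds_X^2)$ the round annulus $\{\,r(2\pi\epsilon_0)<\abs{z}<r(l)\,\}$, which lies inside the embedded cusp $\cusp_i(l)$, separates $q_i$ from $p$; hence any path joining them has $ds_X^2$-length at least the radial width of this annulus, which by $h_O>1$ is at least $2\arctanh r(l)-2\arctanh r(2\pi\epsilon_0)\geq\log l-c_{\epsilon_0}$. With the reduction this gives \eqref{eqn:comparison_geometry_lem5-1}, the factor $1-\delta$ then being automatic.

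\emph{Upper bound.} Here I pass to $ds_{X^\delta}^2$ by Lemma~\ref{lem:comparison_geometry_lem2}, so $\dist_{X^{\mathrm{fu}}}(p,q_i)\leq\sqrt{1+\delta}\;\dist_{X^\delta}(p,q_i)$, and I bound $\dist_{X^\delta}(p,q_i)$ by the length of the path that runs radially from $p$ inward to the circle $\{\abs{z}=\abs{z(q_i)}\}$ and then along that circle to $q_i$. The bound $\length_{X^{\mathrm{fu}}}(\gamma_i)\leq(1+\delta)\pi\epsilon_0$, together with Theorem~\ref{thm:collar_lemma} and Lemma~\ref{lem:hypercycle} and the facts from Lemma~\ref{lem:comparison_geometry_lem4} that $\gamma_i$ meets $\cusp_i(2\pi\epsilon_0)$ but is not contained in $\cusp_i(6\epsilon_0/5)$, confines $\gamma_i$ — hence $q_i$ — to a sub-cylinder $\{\,c_1\leq\abs{z}\leq c_2\,\}$ of the $i$-th funnel end with $R_1<c_1<c_2<1$ depending only on $\delta$ and $\epsilon_0$; on such a sub-cylinder $ds_{X^\delta}^2$ has density bounded by a $(\delta,\epsilon_0)$-constant, so the circle through $q_i$ has length $O_{\delta,\epsilon_0}(1)$. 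For the radial segment, split its $\rho$-interval at $\rho_0+1$: on the compact piece $[\,2\arctanh\abs{z(q_i)}\,,\,\rho_0+1\,]$ the density $h_\delta$ is bounded by a $(\delta,\epsilon_0)$-constant, while on $[\,\rho_0+1\,,\,2\arctanh r(l)\,]$ one has $h_\delta=h_O$ and, after enlarging $\rho_0$ using Lemma~\ref{lem:calculus1}, $h_O\leq\sqrt{1+\delta}$ there, so this piece contributes at most $\sqrt{1+\delta}\cdot 2\arctanh r(l)\leq\sqrt{1+\delta}(\log l+1)$. Summing, $\dist_{X^\delta}(p,q_i)\leq\sqrt{1+\delta}\,\log l+C_{\delta,\epsilon_0}$, hence $\dist_{X^{\mathrm{fu}}}(p,\gamma_i)\leq(1+\delta)\log l+C_{\delta,\epsilon_0}$, which is \eqref{eqn:comparison_geometry_lem5-2}.

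\emph{Main difficulty.} The two delicate points are: arranging the multiplicative constants to be exactly $1\pm\delta$ rather than $1\pm O(\delta)$ — this is why $\rho_0$ must be taken so large that $h_O$ is within $\sqrt{1+\delta}$ of $1$ on $\{\rho\geq\rho_0\}$, after which the leftover $\sqrt{1\pm\delta}$ from Lemma~\ref{lem:comparison_geometry_lem2} is exactly what is needed — and the uniform confinement of $\gamma_i$ to a $(\delta,\epsilon_0)$-controlled part of the funnel end, which keeps both the circumferential part of the comparison path and the non-bulk part of the radial segment bounded independently of $X$; everything else reduces to elementary estimates on $h_O$, $h_\delta$ and $\arctanh$.
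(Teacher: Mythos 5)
Your proof is correct and follows essentially the same approach as the paper: a triangle-inequality reduction via Lemma~\ref{lem:comparison_geometry_lem4}, domain monotonicity and Lemma~\ref{lem:comparison_geometry_lem2} to pass between the metrics $ds_X^2$, $ds_{X^\delta}^2$ and $ds_{X^{\mathrm{fu}}}^2$, and explicit radial distance computations in the cusp coordinate. The paper works with the horocycles $\horo_i(2\pi\epsilon_0)$ and $\horo_i(6\epsilon_0/5)$ rather than an anchor point $q_i\in\gamma_i$, and for the upper bound it avoids your enlargement of $\rho_0$ by observing that on the outer radial segment $ds_{X^\delta}^2 = ds_O^2 = ds_X^2$, so the horocycle distance formula \eqref{eqn:dist_horo} gives the length without any multiplicative error; these are cosmetic rather than structural differences.
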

\begin{proof}
    See Figure \ref{fig:position} for an illustration. By Lemma \ref{lem:comparison_geometry_lem2} we have 
    \begin{equation}\label{eqn:comparison_geometry_lem5-3}
        \dist_{X^{\mathrm{fu}}}( p, \gamma_i ) \geq (1-\delta)^{\frac12}\cdot \dist_{X^\delta}( p, \gamma_i ), 
    \end{equation}
    thus by Lemma \ref{lem:comparison_geometry_lem4} and \eqref{eqn:comparison_geometry_lem4-2} we have 
    \begin{equation}\label{eqn:comparison_geometry_lem5-4}
        \begin{aligned}
            \dist_{X^\delta}( p, \gamma_i )
            &\geq \dist_{X^\delta}\!\big( p,\, \horo_i(2\pi\epsilon_0) \big) - 2\pi\epsilon_0\\
            \textrm{(since $ds_{X^\delta}^2\geq ds_{X}^2$)\qquad} 
            &\geq \dist_{X}\!\big( p,\, \horo_i(2\pi\epsilon_0) \big) - 2\pi\epsilon_0\\
            \textrm{(by \eqref{eqn:dist_horo})\qquad} 
            &= \log l - \log(2\pi\epsilon_0) - 2\pi\epsilon_0. 
        \end{aligned}
    \end{equation}
    From \eqref{eqn:comparison_geometry_lem5-3} and \eqref{eqn:comparison_geometry_lem5-4} it follows that 
    \begin{equation*}
        \begin{aligned}
            \dist_{X^{\mathrm{fu}}}( p, \gamma_i ) 
            &\geq (1-\delta)\cdot \left( \log l - \log(2\pi\epsilon_0) - 2\pi\epsilon_0 \right)\\
            & = (1-\delta) \log l - c_{\delta,\epsilon_0}, 
        \end{aligned}
    \end{equation*}
    which proves \eqref{eqn:comparison_geometry_lem5-1} by suitable choosing of $c_{\delta,\epsilon_0}$. 

    Similarly, by Lemma \ref{lem:comparison_geometry_lem2}, Lemma \ref{lem:comparison_geometry_lem4} and \eqref{eqn:comparison_geometry_lem4-2} we have 
    \begin{equation}\label{eqn:comparison_geometry_lem5-5}
        \dist_{X^{\mathrm{fu}}}( p,\, \gamma_i ) 
        \leq (1+\delta)\cdot \dist_{X^\delta}\!\big(p,\, \horo_i(6\epsilon_0/5) \big) + 4\pi\epsilon_0. 
    \end{equation}
    Let $p_0$ be the projection of $p$ to the horocycle 
    \[ \Big\{ \abs{z} = \tanh\frac{\rho_0+1}{2} \Big\}, \] 
    where $\rho_0$ is given by \eqref{eqn:rho}. Let $\dist_{R}$ be the distance function with respect to $ds_{R}^2$. It follows from the construction of $ds_{X^\delta}^2$ that 
    \begin{equation}\label{eqn:comparison_geometry_lem5-6}
        \begin{aligned}
            \dist_{X^\delta}\!\big(p,\, \horo_i(6\epsilon_0/5) \big) 
            & = \dist_{X^\delta} (p,p_0) + \dist_{X^\delta}\!\big( p_0,\, \horo_i(6\epsilon_0/5) \big)\\
            &\leq \dist_{X}(p,p_0) + \dist_{R}\!\big( p_0,\horo_i(6\epsilon_0/5) \big)\\
            &\leq \log l - \log(2\epsilon_0) + \dist_{R}\!\big( p_0,\horo_i(6\epsilon_0/5) \big). 
        \end{aligned}
    \end{equation}
    Since $R_1$, $R_2$ and $\rho_0$ depend only on $\delta$ and $\epsilon_0$, we find that $\dist_{R}( p_0,\horo_i(6\epsilon_0/5) )$ also depends only on $\delta$ and $\epsilon_0$. Thus \eqref{eqn:comparison_geometry_lem5-2} follows from \eqref{eqn:comparison_geometry_lem5-5} and \eqref{eqn:comparison_geometry_lem5-6} by suitable choosing of $C_{\delta,\epsilon_0}$. The proof is complete. 
\end{proof}

Now we can go back to the proof of Theorem \ref{thm:comparison_geometry}. 
\begin{proof}[Proof of Theorem \ref{thm:comparison_geometry}]
    Suppose $X$ has large cusps of length $l\geq l_{\delta,\epsilon_0}$. 

    \underline{Proof of statement (1)}. 
    Recall \eqref{eqn:comparison_geometry_lem4-1} tells that, 
    \begin{equation}\label{eqn:comparison_geometry_pf1}
        \length_{X^{\mathrm{fu}}}(\gamma_i) \leq (1+\delta)\pi\epsilon_0, 
    \end{equation}
    similar to its proof, by \eqref{eqn:comparison_geometry_lem3-1} and Lemma \ref{lem:comparison_geometry_lem2} and \ref{lem:comparison_geometry_lem3} we have 
    \begin{equation}\label{eqn:comparison_geometry_pf2}
        \length_{X^{\mathrm{fu}}}(\gamma_i) \geq (1-\delta)^{\frac12}\cdot \length_{X^\delta}(\gamma_{i,\delta}) \geq (1-\delta)\pi\epsilon_0, 
    \end{equation}
    thus the first statement of the theorem follows from \eqref{eqn:comparison_geometry_pf1} and \eqref{eqn:comparison_geometry_pf2}. 
    
    \underline{Proof of statement (2)}. 
    For each $1\leq i\leq n$, consider the embedded horocycle $\horo_i(l)$ of length $l$. By Lemma \ref{lem:comparison_geometry_lem5} we have 
    \begin{equation*}
        \dist_{X^{\mathrm{fu}}}\!\big( \horo_i(l),\, \gamma_i \big) \geq (1-\delta)\log l - c_{\delta,\epsilon_0} \equalscolon w(l), 
    \end{equation*}
    which means that $\gamma_i$ has an embedded half-collar of width $w(l)$ in $X^{\mathrm{fu}}$. Since $X$ has large cusps of length $l$, these horocycles $\{\horo_i(l)\}$ are pairwise disjoint, so these half-collars are also disjoint. The second statement of the theorem follows. 
    
    \underline{Proof of statement (3)}. 
    By Lemma \ref{lem:comparison_geometry_lem5}, for any $p\in \horo_i(l_{\delta,\epsilon_0})$ we have 
    \begin{equation}\label{eqn:comparison_geometry_pf3}
        \dist_{X^{\mathrm{fu}}}(p, \gamma_i ) \leq (1+\delta) \log l_{\delta,\epsilon_0} + C_{\delta,\epsilon_0}. 
    \end{equation}
    Let $w_{\delta,\epsilon_0}$ be the RHS of \eqref{eqn:comparison_geometry_pf3}, then it depends only on $\delta$ and $\epsilon_0$. We see that 
    \[ \cusp_i(l_{\delta,\epsilon_0}) \quad\textrm{is contained in}\quad \collar_i(\gamma_i,w_{\delta,\epsilon_0}), \] 
    which in turn implies that 
    \[ Y\setminus\bigsqcup_{i=1}^n \collar_i(\gamma_i, w_{\delta,\epsilon_0})\quad\textrm{is contained in}\quad X\setminus\bigsqcup_{i=1}^n\cusp_i(l_{\delta,\epsilon_0}). \] 
    Combining above with Part (3) of Lemma \ref{lem:comparison_geometry_lem1} and Lemma \ref{lem:comparison_geometry_lem2}, the third statement of the theorem follows. 
\end{proof}

\section{Mass distribution of eigenfunctions on collars}\label{section5}
In this section, let $f$ be a Laplacian eigenfunction with eigenvalue $\lambda$ on some hyperbolic surface $S$. If $S$ is bordered, then we further require that $f$ satisfies the \emph{Dirichlet} or \emph{Neumann boundary condition} on each component of $\pa S$. We show that under the condition that $\lambda$ is small, the $L^2$-norm of $f$ is concentrated far away from the central closed geodesics of the collars. The main proposition of this section is as follows. 
\begin{proposition}\label{prop:distribution}
    Let $0< w_1\leq w_2$. Let $\gamma$ be a simple closed geodesic in $S$ (which may be a component of $\pa S$). Suppose $\collar(\gamma,w_1)$ and $\collar(\gamma,w_2)$ are embedded in $S$. 
    \begin{enumerate}
        \item If $\lambda\leq\frac14$, then 
        \begin{align}\label{distribution_1} 
            \frac{\norm{f}_{L^2(\collar(\gamma, w_1))}^2}{\norm{f}_{L^2(\collar(\gamma, w_2))}^2} \leq \frac{w_1}{w_2}; 
        \end{align}
        \item If $\lambda\leq\frac14 - \delta^2$ for some $\delta>0$, then, 
        \begin{equation}\label{distribution_2} 
            \frac{\norm{f}_{L^2(\collar(\gamma, w_1))}^2}{\norm{f}_{L^2(\collar(\gamma, w_2))}^2}\leq \frac{4\delta w_1+\sinh(2\delta w_1)}{4\delta w_2+\sinh(2\delta w_2)}. 
        \end{equation}
        \noindent In particular, as $w_1\to \infty$, 
          \begin{equation}\label{distribution_3} 
            \frac{\norm{f}_{L^2(\collar(\gamma, w_1))}^2}{\norm{f}_{L^2(\collar(\gamma, w_2))}^2} \leq \frac{2}{e^{2\delta\cdot(w_2-w_1)}}. 
        \end{equation}
    \end{enumerate}
\end{proposition}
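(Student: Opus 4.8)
The plan is to pass to Fermi (collar) coordinates around $\gamma$ and reduce the statement to a one–dimensional differential inequality for the transverse mass of $f$, then compare with an explicit model. By the collar lemma (Theorem \ref{thm:collar_lemma}), $\collar(\gamma,w_2)$ is isometric to $[-w_2,w_2]\times(\R/\Z)$ with metric $\dif\rho^2+\ell^2\cosh^2\!\rho\,\dif t^2$, $\ell\df\length_S(\gamma)$ (a half–collar $[0,w_2]\times(\R/\Z)$ carrying the Dirichlet or Neumann condition at $\rho=0$, when $\gamma\subset\pa S$). I would set
\[ F(\rho)\df\int_0^1 f(\rho,t)^2\,\dif t,\qquad G(\rho)\df\sqrt{\cosh\rho\cdot F(\rho)}\ \ge\ 0, \]
so that $\norm{f}_{L^2(\collar(\gamma,w))}^2=\ell\int_{-w}^{w}G(\rho)^2\,\dif\rho$; thus everything becomes a statement about how $\int_{-w}^w G^2$ grows in $w$.

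\emph{The differential inequality.} Writing $\Delta f=\lambda f$ in these coordinates, multiplying by $f$ and integrating over $t\in\R/\Z$ (using periodicity, $\int_0^1 ff_{tt}\,\dif t=-\int_0^1 f_t^2\,\dif t\le 0$, and Cauchy--Schwarz $\int_0^1 f_\rho^2\,\dif t\ge (F')^2/(4F)$) yields $F''\ge \tfrac{(F')^2}{2F}-\tanh\rho\,F'-2\lambda F$ wherever $F>0$. Substituting $F=g^2$ converts this to $(\cosh\rho\,g')'+\lambda\cosh\rho\,g\ge 0$, and the Liouville substitution $G=\sqrt{\cosh\rho}\,g$ converts it further to
\[ G''\ \ge\ \Big(\tfrac14-\lambda+\tfrac{1}{4\cosh^2\rho}\Big)G\ \ge\ \big(\tfrac14-\lambda\big)G, \]
using $G\ge 0$. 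At isolated zeros of $F$ the continuous function $G$ still satisfies this distributionally (a zero of the nonnegative $G$ only adds nonnegative mass to $G''$), so on $[-w_2,w_2]$ the function $G\ge 0$ is a supersolution of $G''=(\tfrac14-\lambda)G$.

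\emph{Monotonicity of the mass ratio.} For part (1), $\lambda\le\tfrac14$ gives $G''\ge 0$, hence $G^2$ is convex ($(G^2)''=2(G')^2+2GG''\ge 0$), and the Hermite--Hadamard inequality makes $w\mapsto\tfrac1{2w}\int_{-w}^w G^2$ nondecreasing, which is \eqref{distribution_1}. For part (2), fix $\delta>0$ with $\lambda\le\tfrac14-\delta^2$, so $G''\ge\delta^2 G$; for each $w\in(0,w_2]$ let $\phi_w$ solve $v''=\delta^2 v$ with $v(\pm w)=G(\pm w)$, so that the elementary maximum principle for $\pa_\rho^2-\delta^2$ gives $0\le G\le\phi_w$ on $[-w,w]$ and hence $\int_{-w}^w G^2\le\int_{-w}^w\phi_w^2$. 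Expanding $\phi_w=\alpha\cosh(\delta\rho)+\beta\sinh(\delta\rho)$ and using $\int_{-w}^w\cosh^2(\delta\rho)\,\dif\rho=\tfrac{2\delta w+\sinh 2\delta w}{2\delta}$, $\int_{-w}^w\sinh^2(\delta\rho)\,\dif\rho=\tfrac{\sinh 2\delta w-2\delta w}{2\delta}$, a short computation shows $w\mapsto\big(\int_{-w}^w G^2\big)\big/\big(\int_{-w}^w\cosh^2(\delta\rho)\,\dif\rho\big)$ is nondecreasing — it reduces to $x\cosh x\ge\sinh x$ for $x\ge 0$. Hence
\[ \frac{\norm{f}_{L^2(\collar(\gamma,w_1))}^2}{\norm{f}_{L^2(\collar(\gamma,w_2))}^2}\ \le\ \frac{\int_{-w_1}^{w_1}\cosh^2(\delta\rho)\,\dif\rho}{\int_{-w_2}^{w_2}\cosh^2(\delta\rho)\,\dif\rho}\ =\ \frac{2\delta w_1+\sinh 2\delta w_1}{2\delta w_2+\sinh 2\delta w_2}, \]
and since $t\mapsto\tfrac{\sinh x+tx}{\sinh y+ty}$ is nondecreasing for $0\le x\le y$ this gives \eqref{distribution_2}; \eqref{distribution_3} follows by letting $w_1\to\infty$ and using $\tfrac14 e^{2\delta w}\le\sinh 2\delta w\le\tfrac12 e^{2\delta w}$ for $w$ large. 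The half–collar case is identical, with comparison function $\alpha\cosh(\delta\rho)$ in the Neumann case and $\alpha\sinh(\delta\rho)$ in the Dirichlet case.

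\emph{Main obstacle.} The crux is the middle step — recognizing $G=\sqrt{\cosh\rho\cdot F}$ as the right quantity and pushing the eigenvalue equation through the $t$–average, Cauchy--Schwarz and the Liouville transformation down to the clean bound $G''\ge(\tfrac14-\lambda)G$. This is where the constant $\tfrac14$ enters (through the Liouville potential $\tfrac14+\tfrac1{4\cosh^2\rho}$), hence where the threshold $\lambda<\tfrac14$ and the decay rate $\delta$ originate. Once that inequality is in hand, the monotonicity is a routine maximum–principle comparison; the only delicate point is the regularity of $G$ at zeros of $F$, handled either by a unique–continuation remark or by working on the maximal intervals where $F>0$.
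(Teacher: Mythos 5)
Your proof is correct, but it takes a genuinely different route from the paper's. The paper expands $f$ in a Fourier series in $t$, reduces to a family of ODEs $\frac{\dif^2 u}{\dif\rho^2}=\big(\tfrac14-\lambda+(\tfrac14+\tfrac{4\pi^2j^2}{\ell^2})\operatorname{sech}^2\!\rho\big)u$ for $u=\sqrt{\cosh\rho}\,\phi_j$, decomposes each mode into its even/odd solutions $\psi_j,\varphi_j$ (shown to be positive on $\rho>0$), and then applies a Sturm-type ratio-monotonicity lemma (Lemma \ref{lem:integral}) comparing each $\varphi_j,\psi_j$ against $\cosh(\delta\rho)$; the orthogonality $\int\varphi_j\psi_j=0$ is what keeps the Fourier sum manageable. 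You instead average over $t$ first, setting $F=\int_0^1 f^2\,\dif t$ and $G=\sqrt{\cosh\rho\,F}$, push the eigenvalue equation through the $t$-integral (periodicity kills the $\pa_t$ boundary term, Cauchy--Schwarz controls $\int f_\rho^2$ by $(F')^2/(4F)$), and the Liouville substitution gives the single scalar inequality $G''\ge(\tfrac14-\lambda)G$; you then finish part (1) by convexity/Hermite--Hadamard and part (2) by a maximum-principle comparison with a barrier in $\operatorname{span}\{\cosh\delta\rho,\sinh\delta\rho\}$. This is cleaner in that it avoids the mode decomposition entirely, at the price of needing the regularity remark at isolated zeros of $F$ (which you handle correctly: since $F$ is real-analytic, zeros are isolated and of even order, $G$ is Lipschitz with nonnegative Dirac contributions to $G''$, and in any case $G^2=\cosh\rho\,F$ is smooth so the convexity in part (1) is unproblematic). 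Both routes hinge on exactly the same Liouville potential $\tfrac14+\tfrac14\operatorname{sech}^2\!\rho$, hence the same threshold $\tfrac14$ and the same comparison function $\cosh(\delta\rho)$. You also correctly observe that the direct computation yields the slightly stronger bound $\tfrac{2\delta w_1+\sinh 2\delta w_1}{2\delta w_2+\sinh 2\delta w_2}$ and that the stated \eqref{distribution_2} follows by the monotonicity of $t\mapsto\tfrac{\sinh x+tx}{\sinh y+ty}$ (equivalently $x\cosh x\ge\sinh x$); the paper's own Lemma \ref{lem:integral} gives the same sharper constant before the stated rounding.
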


Before proving the proposition, we need a useful lemma: 
\begin{lemma}\label{lem:integral}
    Let $u_1(\rho),u_2(\rho)$ be smooth positive functions defined for $\rho\geq0$. If 
    \[ \frac{\dif^2 u_2}{\dif \rho^2}u_1 - u_2\frac{\dif^2 u_1}{\dif \rho^2} \geq 0 \quad\textrm{and}\quad \bigg( \frac{\dif u_2}{\dif\rho}u_1 - u_2\frac{\dif u_1}{\dif\rho} \bigg)(0) \geq 0, \] 
    then for any $w_2\geq w_1>0$, one has 
    \begin{equation*}
        \left(\int_0^{w_1}u_2^2 \middle/ \int_0^{w_2}u_2^2\right) \leq \left(\int_0^{w_1}u_1^2 \middle/ \int_0^{w_2}u_1^2\right). 
    \end{equation*} 
\end{lemma}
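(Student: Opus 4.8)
The plan is to reduce the statement to the monotonicity of the ratio $u_2/u_1$, and then to a comparison of integrals against a common weight $u_1^2\,\dif\rho$.

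First I would introduce the Wronskian-type quantity $W(\rho) \df \frac{\dif u_2}{\dif\rho}u_1 - u_2\frac{\dif u_1}{\dif\rho}$. A one-line computation gives $\frac{\dif W}{\dif\rho} = \frac{\dif^2 u_2}{\dif\rho^2}u_1 - u_2\frac{\dif^2 u_1}{\dif\rho^2}$, which is $\geq 0$ by the first hypothesis; combined with $W(0)\geq 0$ this yields $W(\rho)\geq 0$ for all $\rho\geq 0$. Since $u_1>0$, the function $g \df u_2/u_1$ is smooth and satisfies $\frac{\dif g}{\dif\rho} = W/u_1^2 \geq 0$, so $g$ is non-decreasing on $[0,\infty)$.

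Next I would rewrite the claimed inequality in cross-multiplied form. Because all the integrals involved are strictly positive (the $u_i$ are positive), the inequality $\big(\int_0^{w_1}u_2^2 \big/ \int_0^{w_2}u_2^2\big) \leq \big(\int_0^{w_1}u_1^2 \big/ \int_0^{w_2}u_1^2\big)$ is equivalent to $\big(\int_0^{w_1}u_2^2\big)\big(\int_0^{w_2}u_1^2\big) \leq \big(\int_0^{w_1}u_1^2\big)\big(\int_0^{w_2}u_2^2\big)$. Splitting $\int_0^{w_2} = \int_0^{w_1} + \int_{w_1}^{w_2}$ on each side and cancelling the common product $\big(\int_0^{w_1}u_1^2\big)\big(\int_0^{w_1}u_2^2\big)$, this reduces to
\[ \Big(\int_0^{w_1}u_2^2\Big)\Big(\int_{w_1}^{w_2}u_1^2\Big) \ \leq\ \Big(\int_0^{w_1}u_1^2\Big)\Big(\int_{w_1}^{w_2}u_2^2\Big). \]
Finally, writing $u_2^2 = g^2 u_1^2$ and setting $\dif\mu \df u_1^2\,\dif\rho$, this last inequality reads $\big(\int_0^{w_1}g^2\dif\mu\big)\big(\int_{w_1}^{w_2}\dif\mu\big) \leq \big(\int_0^{w_1}\dif\mu\big)\big(\int_{w_1}^{w_2}g^2\dif\mu\big)$. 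Since $g$ is non-decreasing, $g^2 \leq g(w_1)^2$ on $[0,w_1]$ while $g^2 \geq g(w_1)^2$ on $[w_1,w_2]$; hence the left-hand side is $\leq g(w_1)^2\big(\int_0^{w_1}\dif\mu\big)\big(\int_{w_1}^{w_2}\dif\mu\big) \leq$ the right-hand side. (The degenerate case $w_1 = w_2$ is trivial.)

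There is no serious obstacle here: the content is the two elementary observations that $W$ controls the sign of $(u_2/u_1)'$, and that a monotone function can be bounded above and below by its value at any chosen breakpoint on the two complementary intervals. The only places to be careful are the bookkeeping in the cross-multiplication and cancellation step, and checking that the sign of $\frac{\dif W}{\dif\rho}$ matches the first hypothesis exactly as stated; accordingly I expect the final write-up to be quite short.
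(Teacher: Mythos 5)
Your proof is correct, and the core observation is the same as the paper's: the Wronskian $W = u_2'u_1 - u_2 u_1'$ has non-negative derivative and non-negative initial value, hence is non-negative, and therefore $u_2/u_1$ is non-decreasing. The two arguments diverge only in how that monotonicity is converted into the stated integral inequality. The paper considers the function $F(w) = \bigl(\int_0^w u_2^2\bigr)\big/\bigl(\int_0^w u_1^2\bigr)$, differentiates it, and invokes Cauchy's mean value theorem to reduce $F'\ge 0$ to the monotonicity of $u_2/u_1$; you instead cross-multiply, cancel the common product $\bigl(\int_0^{w_1}u_1^2\bigr)\bigl(\int_0^{w_1}u_2^2\bigr)$, and finish with a Chebyshev-type rearrangement comparison against the value $g(w_1)^2$ at the breakpoint. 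Both routes are elementary and about equally short; yours avoids differentiation under the integral sign and the appeal to Cauchy's MVT, at the cost of the explicit cross-multiplication bookkeeping, while the paper's version more transparently exhibits the ``the ratio of integrals is itself monotone in $w$'' intuition. Either write-up would be perfectly acceptable.
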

\begin{proof}
    Let $F(w) \df \left(\int_0^{w}u_2^2 \middle/ \int_0^{w}u_1^2\right)$ for $w>0$. Then it suffices to show that 
    \[ \frac{\dif F}{\dif w}(w) = \frac{u_2^2(w)\int_0^wu_1^2 - u_1^2(w)\int_0^wu_2^2}{\left(\int_0^wu_1^2\right)^2} \geq 0, \] 
    which is equivalent to showing that $\left(u_2^2(w)\middle/u_1^2(w)\right) \geq \left(\int_0^{w}u_2^2 \middle/ \int_0^{w}u_1^2\right)$. By Cauchy's mean value theorem, 
    \[ \frac{\int_0^{w}u_2^2}{\int_0^{w}u_1^2} = \frac{u_2^2(\rho_0)}{u_1^2(\rho_0)} \] 
    for some $\rho_0\in(0,w)$, thus, it suffices to show that $u_2/u_1$ is non-decreasing, which is equivalent to proving that  
    \[ \frac{\dif}{\dif \rho}\Big(\frac{u_2}{u_1}\Big) = \frac{\frac{\dif u_2}{\dif\rho}u_1 - u_2\frac{\dif u_1}{\dif\rho}}{u_1^2} \geq 0. \] 
    Since the derivative of the numerator is 
    \[ \frac{\dif^2 u_2}{\dif \rho^2}u_1 - u_2\frac{\dif^2 u_1}{\dif \rho^2}, \] 
    which is non-negative from our assumption, it follows that the numerator is non-decreasing, hence 
    \[ \frac{\dif u_2}{\dif\rho}u_1 - u_2\frac{\dif u_1}{\dif\rho} \geq \bigg(\frac{\dif u_2}{\dif\rho}u_1 - u_2\frac{\dif u_1}{\dif\rho}\bigg)(0) \geq 0, \] 
    from which the lemma follows. 
\end{proof}

\begin{proof}[Proof of Proposition \ref{prop:distribution}]
    In the pair of coordinates $(\rho,t)$, the hyperbolic metric is given by (see the Collar lemma, Theorem \ref{thm:collar_lemma})
    \[ \dif\rho^2 + \ell^2\cosh^2\!\rho\dif t^2, \] 
    where $\ell$ is the length of $\gamma$, hence the geometric Laplacian $\Delta$ is given by  
    \begin{equation}\label{eqn:distribution1}
        -\Delta = \frac{\pa^2}{\pa\rho^2} + \tanh\rho\frac{\pa}{\pa\rho} + \frac{1}{\ell^2\cosh^2\!\rho}\frac{\pa^2}{\pa t^2}. 
    \end{equation}
    Therefore if $\Delta f = \lambda f$, then by \eqref{eqn:distribution1} one has 
    \begin{equation}\label{eqn:distribution2}
        \left( \frac{\pa^2}{\pa\rho^2} + \tanh\rho\frac{\pa}{\pa\rho} + \lambda + \frac{1}{\ell^2\cosh^2\!\rho}\frac{\pa^2}{\pa t^2} \right) f = 0. 
    \end{equation}
    As an eigenfunction of the Laplace operator, $f$ is smooth, and we may assume the Fourier series of $f$ with respect to $t\in\R/\Z$ is 
    \begin{equation}\label{eqn:distribution3}
        f(\rho,t) = \alpha_0(\rho) + \sum_{j=1}^{\infty}\big(\alpha_j(\rho)\cos(2\pi jt) + \beta_j(\rho)\sin(2\pi jt)\big), 
    \end{equation}
    then it follows from \eqref{eqn:distribution2} and \eqref{eqn:distribution3} that for each $j\geq0$, the coefficient functions $\alpha_j(\rho)$ and $\beta_j(\rho)$ satisfy the following ordinary differential equation: 
    \begin{equation}\label{eqn:distribution4}
        \frac{\dif^2\phi}{\dif\rho^2} + \tanh\rho\frac{\dif\phi}{\dif\rho} + \left(\lambda - \frac{4\pi^2j^2}{\ell^2\cosh^2\!\rho}\right) \phi = 0. 
    \end{equation}
    Let $u(\rho) = \sqrt{\cosh\rho}\cdot\phi(\rho)$. Then \eqref{eqn:distribution4} is reduced to 
    \begin{equation}\label{eqn:distribution5}
        \frac{\dif^2 u}{\dif\rho^2} = \left( \frac 14 - \lambda + \left(\frac14+\frac{4\pi^2j^2}{\ell^2}\right)\cdot\frac{1}{\cosh^2\!\rho} \right)u. 
    \end{equation}
    Equation \eqref{eqn:distribution5} has two linearly independent solutions $\varphi_j$, $\psi_j$ satisfying
    \[ \varphi_j(0) = \frac{\dif\psi_j}{\dif\rho}(0) = 0, \textrm{\quad and\quad} \frac{\dif\varphi_j}{\dif\rho}(0) = \psi_j(0) = 1. \] 
    It can be easily checked that $-\varphi_j(-\rho)$ and $\psi_j(-\rho)$ also satisfy \eqref{eqn:distribution5}. Therefore by the uniqueness of solutions of ordinary differential equations, one has 
    \[ \varphi_j(\rho) = -\varphi_j(-\rho), \quad \psi_j(\rho) = \psi_j(-\rho). \] 
    Moreover one has $\varphi_j(\rho),\psi_j(\rho)>0$ when $\rho>0$; otherwise suppose $\varphi_j(\rho_0)=0$ for some $\rho_0>0$ and $\varphi_j(\rho)>0$ for $0<\rho<\rho_0$. By the Mean Value Theorem, there exists a $\rho_1\in(0,\rho_0)$ with $(\dif\varphi_j/\dif\rho)(\rho_1)=0$, hence there exists a $\rho_2\in(0,\rho_1)$ with $(\dif^2\varphi_j/\dif\rho^2)(\rho_2)<0$, which is in contradiction with \eqref{eqn:distribution5}. The case of $\psi_j$ can be similarly proved. 
    
    Since both $\sqrt{\cosh\rho}\cdot\alpha_j$ and $\sqrt{\cosh\rho}\cdot\beta_j$ satisfy \eqref{eqn:distribution5}, one can write them as 
    \begin{equation}\label{eqn:distribution6}
        \begin{aligned}
            \sqrt{\cosh\rho}\cdot\alpha_j(\rho) &= a_{j1}\cdot\varphi_j(\rho) + a_{j2}\cdot\psi_j(\rho),\\
            \sqrt{\cosh\rho}\cdot\beta_j(\rho) &= b_{j1}\cdot\varphi_j(\rho) + b_{j2}\cdot\psi_j(\rho),
        \end{aligned}
    \end{equation}
    where $a_{ji},b_{ji}$ are constants depending on $f$. In particular, by \eqref{eqn:distribution6} we have 
    \begin{equation}\label{eqn:distribution7}
        \begin{aligned}
            \alpha_j(0) &= a_{j2}, \\
            \beta_j(0)  &= b_{j2}, 
        \end{aligned}
        \quad
        \begin{aligned}
            (\dif\alpha_j/\dif\rho)(0) &= a_{j1}; \\
            (\dif\beta_j/\dif\rho)(0)  &= b_{j1}. 
        \end{aligned}
    \end{equation}

    If $\gamma$ is not a component of $\pa S$, then we have 
    \begin{equation}\label{eqn:norm_collar_1}
        \begin{aligned}
            \norm{f}_{L^2\left(\collar(\gamma, w)\right)}^2 
            &= \int_{0}^{1}\int_{-w}^{w}f(\rho,t)^2\ell\cosh\rho \dif\rho \dif t\\
            \textrm{(by \eqref{eqn:distribution3})\qquad} 
            &= \int_{-w}^{w}\Big(\alpha_0^2 + \frac12\sum_{j=1}^\infty(\alpha_j^2+\beta_j^2)\Big)\ell\cosh\rho \dif\rho\\
            \textrm{(by \eqref{eqn:distribution6})\qquad} 
            &= \int_{0}^{w} 2\ell \Big(a_{01}^2\varphi_0^2+a_{02}^2\psi_0^2 + \frac12\sum_{j=1}^\infty(c_{j1}^2\varphi_j^2+c_{j2}^2\psi_j^2)\Big) \dif\rho, 
        \end{aligned}
    \end{equation}
    where $c_{ji}^2=a_{ji}^2+b_{ji}^2$. In the second equality we have used that $\int_{0}^{1}\cos^2(2\pi jt)\dif t = \int_{0}^{1}\sin^2(2\pi jt)\dif t = 1/2$, and in the third equality we have used that $\int_{-w}^w \varphi_j\psi_j = 0$. 
    
    If $\gamma\subset\pa S$, and $f$ satisfies the Neumann boundary condition on $\gamma$, \ie 
    \begin{align*}
        0=\frac{\pa f}{\pa\rho}(0,t) 
        &= \frac{\dif\alpha_0}{\dif\rho}(0) + \sum_{j=1}^\infty\big(\frac{\dif\alpha_j}{\dif\rho}(0)\cos(2\pi jt) + \frac{\dif\beta_j}{\dif\rho}(0)\sin(2\pi jt)\big)\\
        \textrm{(by \eqref{eqn:distribution7})\qquad}
        &= a_{01} + \sum_{j=1}^\infty\big(a_{j1}\cos(2\pi jt) + b_{j1}\sin(2\pi jt)\big), 
    \end{align*}
    then we have 
    \[ a_{01}=a_{j1} = b_{j1} = 0, \ \Forall j\geq 1. \] 
    It follows that 
    \begin{equation}\label{eqn:norm_collar_2}
        \begin{aligned}
            \norm{f}_{L^2\left(\collar(\gamma, w)\right)}^2 
            &= \int_{0}^{1}\int_{0}^{w}f(\rho,t)^2\ell\cosh\rho \dif\rho \dif t\\
            \textrm{(by \eqref{eqn:distribution3})\qquad} 
            &= \int_{0}^{w}\Big(\alpha_0^2 + \frac12\sum_{j=1}^\infty(\alpha_j^2+\beta_j^2)\Big)\ell\cosh\rho \dif\rho\\
            \textrm{(by \eqref{eqn:distribution6})\qquad} 
            &= \int_{0}^{w} \ell \Big(a_{02}^2\psi_0^2 + \frac12\sum_{j=1}^\infty c_{j2}^2\psi_j^2\Big) \dif\rho. 
        \end{aligned}
    \end{equation}
    Similarly if $f$ satisfies the Dirichlet boundary condition, \ie $f$ vanishes on $\gamma$, then 
    \begin{equation}\label{eqn:norm_collar_3}
        \begin{aligned}
            \norm{f}_{L^2\left(\collar(\gamma, w)\right)}^2 = \int_{0}^{w} \ell \Big(a_{01}^2\varphi_0^2 + \frac12\sum_{j=1}^\infty c_{j1}^2\varphi_j^2\Big) \dif\rho. 
        \end{aligned}
    \end{equation}

    For each $j\geq0$, and for all $\rho>0$, since $\varphi_j(\rho),\psi_j(\rho)>0$, by \eqref{eqn:distribution5} we have 
    \begin{align*}
        \frac{\dif^2\varphi_j}{\dif\rho^2} \geq \Big(\frac14-\lambda\Big)\varphi_j, \quad \frac{\dif^2\psi_j}{\dif\rho^2} \geq \Big(\frac14-\lambda\Big)\psi_j. 
    \end{align*}
    Suppose $\lambda\leq\frac14-\delta^2$ for some $\delta\geq0$. Let $u_1=\cosh(\delta\rho)$, $u_2=\varphi_j$ or $\psi_j$. One can easily check that $u_1$ and $u_2$ satisfy the conditions of Lemma \ref{lem:integral}. Thus by \eqref{eqn:norm_collar_1}, \eqref{eqn:norm_collar_2} and \eqref{eqn:norm_collar_3}, for any simple closed geodesic $\gamma$ in $S$ we have 
    \begin{align*}
        \frac{\norm{f}_{L^2(\collar(\gamma, w_1))}^2}{\norm{f}_{L^2(\collar(\gamma, w_2))}^2}
        &\leq \sup_{j\geq0} \bigg\{ \frac{\int_{0}^{w_1}\varphi_j^2\dif\rho}{\int_{0}^{w_2}\varphi_j^2\dif\rho},\ \frac{\int_{0}^{w_1}\psi_j^2\dif\rho}{\int_{0}^{w_2}\psi_j^2\dif\rho} \bigg\}\\
        \textrm{(by Lemma \ref{lem:integral})\qquad}
        &\leq \frac{\int_{0}^{w_1}\cosh^2(\delta\rho)\dif\rho}{\int_{0}^{w_2}\cosh^2(\delta\rho)\dif\rho}. 
    \end{align*}
    If $\delta=0$, then 
    \[ \frac{\norm{f}_{L^2(\collar(\gamma, w_1))}^2}{\norm{f}_{L^2(\collar(\gamma, w_2))}^2} \leq \frac{\int_{0}^{w_1}1\dif\rho}{\int_{0}^{w_2}1\dif\rho} = \frac{w_1}{w_2}, \] 
    which proves \eqref{distribution_1}. If $\delta>0$, then 
    \[ \frac{\norm{f}_{L^2(\collar(\gamma, w_1))}^2}{\norm{f}_{L^2(\collar(\gamma, w_2))}^2} \leq \frac{4\delta w_1+\sinh(2\delta w_1)}{4\delta w_2+\sinh(2\delta w_2)}, \] 
    which proves \eqref{distribution_2}. 
\end{proof}
\begin{remark*}
    Mondal \cite[Lemma 3.17]{Mondal2015} had proved that, for any $\ell< \epsilon\leq \epsilon_0< 1$, let $w=\arccosh(\epsilon/\ell)$, $w_0=\arccosh(\epsilon_0/\ell)$ and $w_1=\arccosh(1/\ell)$, then the ratio 
    \[ \frac{\norm{f}^2_{L^2( \collar(w_0)\setminus\collar(w) )}}{\norm{f}^2_{L^2( \collar(w_1)\setminus\collar(w_0) )}} \] 
    has an upper bound depending only on $\epsilon$ and $\epsilon_0$. Gamburd \cite[Lemma 4.2]{Gamburd2002} had also proved a similar statement on mass distribution of eigenfunctions on funnels, which asserted that for any $w>0$, the ratio 
    \[ \frac{\norm{f}^2_{L^2( \collar(w+\log2)\setminus\collar(w) )}}{\norm{f}^2_{L^2( \collar(\infty)\setminus\collar(w+\log2) )}} \] 
    has a uniform lower bound depending only on the eigenvalue $\lambda$. One may also see \cite[Lemma 4.1]{BM2001}, \cite[Lemma 4.1]{Gamburd2002} and \cite[Lemma 3.1]{Mondal2015} for similar results on mass distribution of eigenfunctions on cusps. 
\end{remark*}

\section{Proof of Theorem \ref{thm:comparison_eigenvalue}}\label{section6}
In this section we prove Theorem \ref{thm:comparison_eigenvalue}, which is stated earlier in the introduction. Let $X$ be a finite-area non-compact hyperbolic surface with $n$ large cusps, and recall that 
\[ X^{\mathrm{fu}} = X\setminus\bigsqcup_{i=1}^n\cusp_i(\epsilon_0) \] 
is an infinite-area hyperbolic surface with $n$ funnel ends, where $\epsilon_0$ is a positive constant. Let $Y$ be the compact convex core of $X^{\mathrm{fu}}$, and let $\gamma_1,\gamma_2,\cdots,\gamma_n$ be the set of all components of $\pa Y$. Note that the Poincar\'e metric $ds_{Y}^2$ of $Y$ is just the restriction of $ds_{X^{\mathrm{fu}}}^2$ on $Y$. 

Let $f$ be a normalized first Neumann eigenfunction of $Y$. If $\sigma_1(Y)\geq\frac14$, then Theorem \ref{thm:comparison_eigenvalue} is obvious; from now on we always assume that 
\[ \sigma_1(Y)\leq\frac14. \]  
By Lemma \ref{lem:RQ2}, to prove Theorem \ref{thm:comparison_eigenvalue} it suffices to show that for $l$ sufficiently large we have 
\begin{equation}\label{comparison:RQ}
    \sigma_1(Y) = \int_Y\abs{\nabla_{Y} f}^2 \geq (1-4\delta)\cdot\frac{\int_X\abs{\nabla_{X} h}^2}{\int_X h^2 - \frac{1}{\area(X)}(\int_X h)^2} - \delta 
\end{equation}
for some test function $h$ on $X$. 

The proof of \eqref{comparison:RQ} will be divided into four parts. 
We first construct the test function $h$. 
For any $\delta>0$, let $l'_\delta=l_{\delta,\epsilon_0}$ be the constant given by Theorem \ref{thm:comparison_geometry}. Suppose $X$ has large cusps of length $l\geq l'_\delta$, then $\pa Y$ has long half-collars of width $w=w(l)$, and moreover, 
\[ w\to\infty \textrm{\quad as\quad} l\to\infty. \] 
Let $\chi_1=\chi_1(\rho)$ be a universal smooth cut-off function defined for $\rho\geq0$ such that 
\begin{align*}
    \chi_1(\rho) &= 0 \text{\quad for } 0\leq\rho\leq 1,\\
    0 \leq \chi_1(\rho) &\leq 1 \text{\quad for } 1\leq\rho\leq 2,\\
    \chi_1(\rho) &= 1 \text{\quad for } \rho\geq 2,\\
    \text{and\quad} \abs{\chi'_1(\rho)} &\leq 2 \text{\quad for } \rho\geq 0. 
\end{align*}
Suppose $w\geq1$. Let $\chi$ be a smooth function defined on $Y$ such that $\chi$ equals to $\chi_1(2\rho/\sqrt{w})$ in the pair of coordinates $(\rho,t)$ on each half-collar $\collar(\gamma_i,w)$, and equals to $1$ elsewhere. Then we have 
\begin{equation}\label{bound_chi}
    0\leq\chi\leq 1, \quad \abs{\nabla\chi}\leq \frac{4}{\sqrt{w}}. 
\end{equation} 
Multiply $f$ by $\chi$ and extend $\chi f$ by zero to the whole $X$. Denote the extension function by $h$, then it will be our desired test function.

\begin{lemma}\label{lem:comparison_eigenvalue_lem1}
    For $l$ sufficiently large we have 
    \begin{equation*}
        \int_Y \abs{\nabla_{Y} f}^2 \geq \int_Y \abs{\nabla_{Y} h}^2 - \delta. 
    \end{equation*}
\end{lemma}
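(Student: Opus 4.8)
The plan is to expand $\abs{\nabla_Y h}^2$ with the product rule and to show that, apart from the main term $\chi^2\abs{\nabla_Y f}^2 \le \abs{\nabla_Y f}^2$, every remaining term is negligible once $l$ (hence $w=w(l)$) is large. Since $h=\chi f$ on $Y$, we have
\[ \abs{\nabla_Y h}^2 = \chi^2\abs{\nabla_Y f}^2 + 2\chi f\,\nabla_Y f\cdot\nabla_Y\chi + f^2\abs{\nabla_Y\chi}^2 \le \abs{\nabla_Y f}^2 + 2\chi f\,\nabla_Y f\cdot\nabla_Y\chi + f^2\abs{\nabla_Y\chi}^2, \]
using $0\le\chi\le1$ from \eqref{bound_chi}. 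Hence it suffices to show that $\int_Y\big(2\chi f\,\nabla_Y f\cdot\nabla_Y\chi + f^2\abs{\nabla_Y\chi}^2\big) \le \delta$ for $l$ large enough.

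The first step is to localize the error. By construction $\nabla_Y\chi$ is supported in $\bigsqcup_{i=1}^n\big(\collar(\gamma_i,\sqrt w)\setminus\collar(\gamma_i,\sqrt w/2)\big)$, and these half-collars are embedded and pairwise disjoint in $Y$ by Part (2) of Theorem \ref{thm:comparison_geometry} (using $w\ge1$), while $\abs{\nabla_Y\chi}\le 4/\sqrt w$ by \eqref{bound_chi}. The essential input is the mass-concentration estimate of Part (1) of Proposition \ref{prop:distribution}: since $\sigma_1(Y)\le\frac14$, applying it on each half-collar with $w_1=\sqrt w\le w=w_2$ gives $\norm{f}_{L^2(\collar(\gamma_i,\sqrt w))}^2\le w^{-1/2}\norm{f}_{L^2(\collar(\gamma_i,w))}^2$. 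Summing over $i$ and using disjointness of the half-collars together with $\norm{f}_{L^2(Y)}=1$, we obtain $\sum_{i=1}^n\norm{f}_{L^2(\collar(\gamma_i,\sqrt w))}^2\le w^{-1/2}$, whence $\int_Y f^2\abs{\nabla_Y\chi}^2 \le 16\,w^{-3/2}$.

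For the cross term I would invoke the Cauchy-Schwarz inequality together with $\int_Y\abs{\nabla_Y f}^2=\sigma_1(Y)\le\frac14$ and $\abs{\chi}\le1$, which gives
\[ \bigg|\int_Y 2\chi f\,\nabla_Y f\cdot\nabla_Y\chi\bigg| \le 2\bigg(\int_Y\abs{\nabla_Y f}^2\bigg)^{1/2}\bigg(\int_Y f^2\abs{\nabla_Y\chi}^2\bigg)^{1/2} \le 4\,w^{-3/4}. \]
Combining the two estimates yields $\int_Y\abs{\nabla_Y h}^2 \le \int_Y\abs{\nabla_Y f}^2 + 4\,w^{-3/4} + 16\,w^{-3/2}$, and since $w=w(l)\to\infty$ as $l\to\infty$, the last two terms fall below $\delta$ once $l$ is large, which is the assertion. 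I do not anticipate a genuine obstacle here: the analytic heart is already contained in Proposition \ref{prop:distribution}, and the only point deserving care is that the error bound is quantitative and uniform in $Y$ --- which it is, precisely because Proposition \ref{prop:distribution} is a uniform statement and the collar width $w(l)$ grows with $l$.
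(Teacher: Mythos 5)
Your proof is correct, and it is essentially the paper's argument: expand $\abs{\nabla_Y h}^2$, use $\chi^2\abs{\nabla_Y f}^2\le\abs{\nabla_Y f}^2$, and bound the cross and quadratic terms by Cauchy--Schwarz and $\abs{\nabla_Y\chi}\le 4/\sqrt{w}$. The one place where you depart from the paper is that you invoke Part (1) of Proposition \ref{prop:distribution} to localize $\int_Y f^2\abs{\nabla_Y\chi}^2$ to the union of the half-collars $\collar(\gamma_i,\sqrt{w})$, buying the sharper error $O(w^{-3/4})$. The paper's proof of this lemma does not need that input: the cruder bound $\int_Y f^2\abs{\nabla_Y\chi}^2\le (16/w)\int_Y f^2 = 16/w$ (using only $\norm{f}_{L^2(Y)}=1$) combined with $\abs{\int_Y 2\chi f\,\nabla_Y\chi\cdot\nabla_Y f}\le (8/\sqrt{w})\sqrt{\sigma_1(Y)}\le 4/\sqrt{w}$ already yields an $O(1/\sqrt{w})$ error, which suffices since $w(l)\to\infty$. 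So your argument is valid and uniform, just slightly overbuilt for this particular lemma; the paper saves the mass-concentration estimate of Proposition \ref{prop:distribution} for the later steps (Lemmas \ref{lem:comparison_eigenvalue_lem3} and \ref{lem:comparison_eigenvalue_lem4}) where it is actually needed.
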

\begin{proof}
    To simplify notations, we omit the subscript $Y$. By definition of $h$ we have 
    \[ \int_Y \abs{\nabla h}^2 
    = \int_Y \abs{\nabla \chi}^2f^2 + \int_Y 2\chi f\nabla\chi\cdot\nabla f + \int_Y \chi^2\abs{\nabla f}^2. \] 
    By \eqref{bound_chi} we have 
    \begin{align*}
        \abs{ \int_Y 2\chi f\nabla\chi\cdot\nabla f} 
        \leq \int_Y 2\abs{f}\abs{\nabla\chi}\abs{\nabla f}
        \leq \frac{8}{\sqrt{w}}\int_Y \abs{f}\abs{\nabla f}, 
    \end{align*} 
    since $f$ is normalized, 
    \[ \int_Y f^2 = 1, \quad \int_Y \abs{\nabla f}^2 = \sigma_1(Y), \] 
    hence by Cauchy's inequality we find that 
    \begin{align*}
        \int_Y \abs{\nabla h}^2 
        &\leq \frac{16}{w}\int_Y f^2 + \frac{8}{\sqrt{w}}\int_Y \abs{f}\abs{\nabla f} + \int_Y \abs{\nabla f}^2\\
        &\leq \frac{16}{w} + \frac{8}{\sqrt{w}}\sqrt{\sigma_1(Y)} + \int_Y \abs{\nabla f}^2\\
        &\leq \frac{20}{\sqrt{w}} + \int_Y \abs{\nabla f}^2 \leq \delta+\int_Y \abs{\nabla f}^2, 
    \end{align*}
    when $w$ is sufficiently large. The lemma follows. 
\end{proof}

\begin{lemma}\label{lem:comparison_eigenvalue_lem2}
    We have 
    \begin{equation*}
        \int_Y \abs{\nabla_Y h}^2 \dvol_{Y} = \int_X\abs{\nabla_X h}^2 \dvol_{X}. 
    \end{equation*}
\end{lemma}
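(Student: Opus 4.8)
The plan is to reduce the identity to the conformal invariance of the Dirichlet energy in dimension two. Recall that if $g'=e^{2\varphi}g$ are conformal metrics on a surface, then pointwise $\abs{\nabla_{g'}u}^2=e^{-2\varphi}\abs{\nabla_g u}^2$ while $\dvol_{g'}=e^{2\varphi}\dvol_g$, so the energy density $\abs{\nabla u}^2\dvol$ is unchanged; integrating, the total Dirichlet energy of $u$ is the same computed in $g$ or in $g'$.

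First I would check that $h$ is supported in the interior of $Y$, bounded away from $\pa Y$. On each half-collar $\collar(\gamma_i,w)$ the cut-off $\chi$ equals $\chi_1(2\rho/\sqrt w)$, and $\chi_1$ vanishes for arguments $\leq 1$; hence $\chi$, and therefore $h=\chi f$, vanishes on the neighborhood $\{\rho\leq\sqrt w/2\}$ of $\gamma_i$ (such a one-sided collar lies inside $Y$ by Part (2) of Theorem \ref{thm:comparison_geometry}, taking $w\leq w(l)$). Thus the zero-extension of $h$ is a smooth, compactly supported function on $X$ with support contained in $Y\subset X^{\mathrm{fu}}$; in particular $h\in H^{1,2}(X)$ and all integrals below are well defined.

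Next I would invoke that $ds_Y^2$ is by definition $ds_{X^{\mathrm{fu}}}^2$ restricted to $Y$, and that $ds_{X^{\mathrm{fu}}}^2$ and $ds_X^2$ are conformal on $X^{\mathrm{fu}}$ via the conformal inclusion $(X^{\mathrm{fu}},ds_{X^{\mathrm{fu}}}^2)\hookrightarrow(X,ds_X^2)$ used throughout Section \ref{section4}. Using $h\equiv0$ on $X\setminus Y$ at the two ends and the pointwise conformal invariance of the energy density in the middle,
\begin{align*}
    \int_Y\abs{\nabla_Y h}^2\dvol_Y
    &=\int_{X^{\mathrm{fu}}}\abs{\nabla_{X^{\mathrm{fu}}}h}^2\dvol_{X^{\mathrm{fu}}}
    =\int_{X^{\mathrm{fu}}}\abs{\nabla_X h}^2\dvol_X
    =\int_X\abs{\nabla_X h}^2\dvol_X,
\end{align*}
which is the desired equality.

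There is essentially no obstacle; the only points requiring a moment's care are confirming that $h$ genuinely vanishes near $\pa Y$ (so that the zero-extension is an admissible function on all of $X$) and applying the conformal-invariance identity to the product $\abs{\nabla u}^2\dvol$ as a whole, rather than to the two factors separately.
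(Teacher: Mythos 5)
Your proof is correct and follows essentially the same argument as the paper's: the zero-extension identity reduces the integral over $X$ to an integral over $Y$, and conformal invariance of the two-dimensional Dirichlet energy converts $(\nabla_X,\dvol_X)$ to $(\nabla_Y,\dvol_Y)$. The additional verification that $h$ vanishes in a neighborhood of $\pa Y$ (so the extension is smooth) is a reasonable point to spell out, though the paper takes it as immediate from the construction of $\chi$.
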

\begin{proof}
    Since $h$ is the zero-extension of $\chi f$, 
    \[ \int_X \abs{\nabla_X h}^2 \dvol_{X} = \int_Y \abs{\nabla_X h}^2 \dvol_{X}. \] 
    By the conformal invariance of two-dimensional Dirichlet energy, 
    \[ \int_Y \abs{\nabla_X h}^2 \dvol_{X} = \int_Y \abs{\nabla_Y h}^2 \dvol_{Y}. \]
    The lemma follows. 
\end{proof}

By Lemma \ref{lem:comparison_eigenvalue_lem1} and \ref{lem:comparison_eigenvalue_lem2}, to prove the inequality \eqref{comparison:RQ} it suffices to prove 
\begin{equation}\label{eqn:denominator}
    \int_X h^2 \dvol_{X} - \frac{1}{\area(X)}\Big(\int_X h \dvol_{X}\Big)^2
    \geq 1-4\delta. 
\end{equation}
The proof of \eqref{eqn:denominator} will be divided into two parts, following the strategy of the proof of \cite[Lemma 1.2]{BM2001}. 
\begin{lemma}\label{lem:comparison_eigenvalue_lem3}
    For $l$ sufficiently large we have 
    \begin{equation*}
        \int_X h^2 \dvol_{X} \geq 1 - 2\delta. 
    \end{equation*}
\end{lemma}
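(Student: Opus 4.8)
The plan is to bound $\int_X h^2\,\dvol_X$ from below by the total mass $\int_Y f^2\,\dvol_Y = 1$, losing only $O(\delta)+O(w^{-1/2})$. There are exactly two sources of loss: passing from $ds_X^2$ to $ds_{X^{\mathrm{fu}}}^2$ on the region where $h$ actually lives, and the part of the mass of $f$ that is killed by the cutoff $\chi$. I would control the first by the volume comparison (Corollary \ref{cor:comparison_volume}) and the second by the mass-distribution estimate on collars (Proposition \ref{prop:distribution}).

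First I would locate the support of $h$. Since $\chi$ equals $\chi_1(2\rho/\sqrt w)$ in the coordinates $(\rho,t)$ on each half-collar $\collar(\gamma_i,w)$ and $\chi_1$ vanishes on $[0,1]$, we have $\chi\equiv 0$ on $\collar(\gamma_i,\sqrt w/2)$, so $h$ is supported in $Y\setminus\bigsqcup_{i=1}^n\collar(\gamma_i,\sqrt w/2)$. By Theorem \ref{thm:comparison_geometry} we have $w=w(l)\to\infty$ as $l\to\infty$, so for $l$ large enough $\sqrt w/2\geq w_{\delta,\epsilon_0}$, and then $\supp h\subseteq Y\setminus\bigsqcup_{i=1}^n\collar(\gamma_i,w_{\delta,\epsilon_0})$. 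On this set Corollary \ref{cor:comparison_volume} gives $\dvol_Y\leq(1+\delta)\dvol_X$, hence
\[ \int_X h^2\,\dvol_X = \int_{\supp h} h^2\,\dvol_X \geq \frac{1}{1+\delta}\int_{\supp h} h^2\,\dvol_Y = \frac{1}{1+\delta}\int_Y \chi^2 f^2\,\dvol_Y. \]

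Next I would estimate $\int_Y\chi^2 f^2\,\dvol_Y = 1 - \int_Y(1-\chi^2)f^2\,\dvol_Y$, using the normalization $\int_Y f^2\,\dvol_Y=1$. Since $\chi_1\equiv 1$ on $[2,\infty)$, the function $\chi$ equals $1$ off $\bigsqcup_{i=1}^n\collar(\gamma_i,\sqrt w)$, and $0\leq 1-\chi^2\leq 1$, so $\int_Y(1-\chi^2)f^2\,\dvol_Y \leq \sum_{i=1}^n \norm{f}_{L^2(\collar(\gamma_i,\sqrt w))}^2$. Because $f$ is a first Neumann eigenfunction of $Y$ with $\sigma_1(Y)\leq 1/4$, and each half-collar $\collar(\gamma_i,w)$ is embedded by Part (2) of Theorem \ref{thm:comparison_geometry}, Part (1) of Proposition \ref{prop:distribution} applied with $\gamma=\gamma_i$, $w_1=\sqrt w$, $w_2=w$ gives $\norm{f}_{L^2(\collar(\gamma_i,\sqrt w))}^2\leq w^{-1/2}\norm{f}_{L^2(\collar(\gamma_i,w))}^2$ (valid once $w\geq1$). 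These half-collars are pairwise disjoint by Part (2) of Theorem \ref{thm:comparison_geometry}, so $\sum_{i=1}^n\norm{f}_{L^2(\collar(\gamma_i,w))}^2\leq\norm{f}_{L^2(Y)}^2=1$, and therefore $\int_Y(1-\chi^2)f^2\,\dvol_Y\leq w^{-1/2}$. Combining the two steps,
\[ \int_X h^2\,\dvol_X \geq \frac{1-w^{-1/2}}{1+\delta} \geq (1-\delta)(1-w^{-1/2}) \geq 1-\delta-w^{-1/2}, \]
which is $\geq 1-2\delta$ as soon as $w^{-1/2}\leq\delta$, i.e.\ once $l$ (hence $w$) is large enough; this proves the lemma.

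I do not expect a real obstacle here: once Corollary \ref{cor:comparison_volume} and Proposition \ref{prop:distribution} are available the argument is bookkeeping. The points needing care are the width comparisons — that $\sqrt w/2\geq w_{\delta,\epsilon_0}$ and $\sqrt w\leq w$ for $l$ large, both immediate since $w(l)\to\infty$ — and the fact that the width-$w$ half-collars are embedded and pairwise disjoint so that $L^2$-masses over them are subadditive. The conceptual input, supplied by Proposition \ref{prop:distribution}(1), is that a Neumann eigenfunction with eigenvalue $\leq 1/4$ carries only an $O(w^{-1/2})$ fraction of its $L^2$-mass within distance $\sqrt w$ of $\pa Y$, which is exactly what makes the cutoff harmless.
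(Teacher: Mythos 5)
Your proposal is correct and follows essentially the same route as the paper: restrict to the region far enough from $\pa Y$ that the volume comparison (Corollary~\ref{cor:comparison_volume}) applies, then control the mass of $f$ near $\pa Y$ by Part~(1) of Proposition~\ref{prop:distribution} with $w_1=\sqrt w$, $w_2=w$. The only cosmetic difference is that you first pin down $\supp h\subseteq Y\setminus\collar(\sqrt w/2)$ and then ask for $\sqrt w/2\geq w_{\delta,\epsilon_0}$, whereas the paper simply discards the integral over $\collar(w_{\delta,\epsilon_0})$ (nonnegativity of the integrand suffices) and asks for $\sqrt w\geq w_{\delta,\epsilon_0}$; both are available once $l$ is large.
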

\begin{proof}
    To simplify notations, let 
    \[ \collar(\cdot) \df \bigsqcup_{i=1}^n\collar(\cdot,\gamma_i). \] 
    Let $l$ be sufficiently large such that $\sqrt{w}\geq w_{\delta,\epsilon_0}$ that is given by Theorem \ref{thm:comparison_geometry}, then we have 
    \begin{equation}\label{eqn:comparison_eigenvalue_lem3-1}
        \begin{aligned}
            \int_X h^2 \dvol_{X} = \int_Y h^2 \dvol_{X}
            &\geq \int_{Y\setminus\collar(w_{\delta,\epsilon_0})} h^2 \dvol_{X}\\
            \textrm{(by Corollary \ref{cor:comparison_volume})\qquad} 
            &\geq \left(1-\delta\right)\cdot \int_{Y\setminus\collar(w_{\delta,\epsilon_0})} (\chi f)^2 \dvol_{Y}\\
            \textrm{(since $\sqrt{w}\geq w_{\delta,\epsilon_0}$)\qquad} 
            &\geq \left(1-\delta\right)\cdot \int_{Y\setminus\collar(\sqrt{w})} (\chi f)^2 \dvol_{Y}.
        \end{aligned}
    \end{equation}
    By the definition of $\chi$ we have 
    \begin{equation}\label{eqn:comparison_eigenvalue_lem3-2}
        \int_{Y\setminus\collar(\sqrt{w})} (\chi f)^2 \dvol_{Y} = \int_{Y} f^2 \dvol_{Y} - \int_{\collar(\sqrt{w})} f^2 \dvol_{Y}. 
    \end{equation}
    By Part (1) of Proposition \ref{prop:distribution} we have 
    \begin{equation}\label{eqn:comparison_eigenvalue_lem3-3}
        \int_{\collar(\sqrt{w})} f^2 \dvol_{Y} \leq \frac{1}{\sqrt{w}}\cdot \int_{\collar(w)} f^2 \dvol_{Y} \leq \frac{1}{\sqrt{w}}.
    \end{equation}
    It follows from \eqref{eqn:comparison_eigenvalue_lem3-1}, \eqref{eqn:comparison_eigenvalue_lem3-2} and \eqref{eqn:comparison_eigenvalue_lem3-3} that 
    \begin{align*}
        \int_X h^2 \dvol_{X} 
        &\geq (1-\delta)\cdot(1-\frac{1}{\sqrt{w}})\\
        &\geq 1-\delta-\frac{1}{\sqrt{w}} \geq 1-2\delta, 
    \end{align*}
when $w$ is sufficiently large. The lemma follows. 
\end{proof}

\begin{lemma}\label{lem:comparison_eigenvalue_lem4}
    For $l$ sufficiently large we have 
    \begin{equation*}
        \frac{1}{\area(X)}\Big(\int_X h \dvol_{X}\Big)^2 \leq 2\delta. 
    \end{equation*}
\end{lemma}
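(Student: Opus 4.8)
The plan is to exploit that $f$, being a first Neumann eigenfunction of $Y$, is $L^2$-orthogonal to the constants, \ie $\int_Y f\dvol_Y = 0$; this is the source of the smallness of $\int_X h\dvol_X$. Since $h$ is the zero-extension of $\chi f$ from $Y$ to $X$, we have $\int_X h\dvol_X = \int_Y\chi f\dvol_X$. Writing $\chi\dvol_X = g\dvol_Y$ on $Y$, where $g\df\chi\cdot(\dvol_X/\dvol_Y)$, the orthogonality gives
\[ \int_X h\dvol_X = \int_Y gf\dvol_Y = \int_Y(g-1)f\dvol_Y, \]
so by the Cauchy--Schwarz inequality and $\norm{f}_{L^2(Y)} = 1$,
\[ \Big(\int_X h\dvol_X\Big)^2 \leq \int_Y(g-1)^2\dvol_Y. \]
Everything thus reduces to bounding $\int_Y(g-1)^2\dvol_Y$ by $2\delta\area(X)$ for $l$ large.

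To estimate $\int_Y(g-1)^2\dvol_Y$, take $l$ large enough that $\sqrt w > w_{\delta,\epsilon_0}$ and $w\geq1$ (possible since $w=w(l)\to\infty$), and split $Y$ into $Y\setminus\bigsqcup_i\collar(\gamma_i,\sqrt w)$ and the inner half-collars $\collar(\gamma_i,\sqrt w)$. On the first piece $\chi\equiv1$, and since $\sqrt w > w_{\delta,\epsilon_0}$ it is contained in $Y\setminus\bigsqcup_i\collar(\gamma_i,w_{\delta,\epsilon_0})$, so Part (3) of Theorem \ref{thm:comparison_geometry} gives $(1+\delta)^{-1}\leq\dvol_X/\dvol_Y<1$ there, whence $\abs{g-1}\leq\delta$. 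On each $\collar(\gamma_i,\sqrt w)$ one has trivially $0\leq g<1$, so $\abs{g-1}\leq1$. Using the half-collar area formula $\area_Y(\collar(\gamma_i,\sqrt w)) = \length_{X^{\mathrm{fu}}}(\gamma_i)\sinh\sqrt w$ together with $\length_{X^{\mathrm{fu}}}(\gamma_i)\leq(1+\delta)\pi\epsilon_0<2\pi\epsilon_0$ from Part (1) of Theorem \ref{thm:comparison_geometry}, this yields
\[ \int_Y(g-1)^2\dvol_Y \leq \delta^2\area(Y) + 2n\pi\epsilon_0\sinh\sqrt w. \]

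Finally, by Gauss--Bonnet for the compact geodesic-bordered surface $Y$ one has $\area(Y) = 2\pi\abs{\chi(Y)} = 2\pi\abs{\chi(X)} = \area(X)$, and since $X$ has large cusps of length $l$, the $n$ disjoint embedded cusp regions $\cusp_i(l)$, each of area $l$, give $\area(X)\geq nl$. Hence
\[ \frac{1}{\area(X)}\Big(\int_X h\dvol_X\Big)^2 \leq \delta^2 + \frac{2\pi\epsilon_0\sinh\sqrt w}{l}. \]
By Part (2) of Theorem \ref{thm:comparison_geometry}, $w = w(l) = (1-\delta)\log l - c_{\delta,\epsilon_0}\leq\log l$, so $\sinh\sqrt w < \tfrac12 e^{\sqrt{\log l}}$ and the last term is at most $\pi\epsilon_0\exp(\sqrt{\log l}-\log l)$, which tends to $0$ as $l\to\infty$. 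So for $l$ large the right-hand side is $\leq\delta^2+\delta\leq2\delta$ (since $\delta<1/3$), as required.

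The step I expect to be the crux is the last one: the inner half-collars have area $\sim\sinh\sqrt w$, which grows \emph{exponentially} in $\sqrt w$, and one must check it is still negligible against $\area(X)$. It is, precisely because the half-collar width $w = w(l)$ grows only logarithmically in $l$ while $\area(X)\geq nl$ grows linearly, so $\sinh\sqrt w/l\sim e^{\sqrt{\log l}-\log l}\to0$. A secondary point is the identity $\area(Y) = \area(X)$ (Gauss--Bonnet), which keeps the coefficient of $\delta^2$ from degrading; note that, unlike Lemma \ref{lem:comparison_eigenvalue_lem3}, this argument does not require Proposition \ref{prop:distribution}.
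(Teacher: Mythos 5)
Your proof is correct, and it takes a genuinely different route from the paper's. The paper begins by splitting $\int_Y h\dvol_X$ into the contribution from $Y\setminus\collar(\sqrt{w})$ (compared with $\int f\dvol_Y$ using Corollary \ref{cor:comparison_volume} and then with $0$ using the orthogonality $\int_Y f\dvol_Y=0$) and the contribution from $\collar(\sqrt{w})$. In both pieces the paper controls $\int_{\collar(\sqrt w)}|f|$ by Cauchy--Schwarz against $\int_{\collar(\sqrt w)}f^2\leq 1/\sqrt w$, which is exactly Part (1) of Proposition \ref{prop:distribution}. Your argument instead pushes the orthogonality further: writing $\int_X h\dvol_X = \int_Y(g-1)f\dvol_Y$ with $g=\chi\cdot(\dvol_X/\dvol_Y)$ and applying Cauchy--Schwarz once gives the bound $\int_Y(g-1)^2\dvol_Y$, which makes no further reference to the eigenfunction. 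You then dispose of the collar contribution by the crude estimate $|g-1|\leq1$ there, and make it negligible by the purely geometric fact that $\area(\collar(\gamma_i,\sqrt w))\sim\epsilon_0\sinh\sqrt{w(l)}$ is $o(l)\leq o(\area(X))$. Thus your route entirely avoids Proposition \ref{prop:distribution} for this lemma (though it is still needed for Lemma \ref{lem:comparison_eigenvalue_lem3}), at the cost of invoking two extra geometric inputs the paper's proof of this particular lemma does not need: the lower bound $\area(X)\geq nl$ coming from the disjoint cusp regions, and the explicit logarithmic growth $w(l)\leq\log l$ from Theorem \ref{thm:comparison_geometry}(2). The paper's version only uses $\area(Y)=\area(X)$ and $w\to\infty$, so it is slightly more robust, but your version is cleaner in that it consolidates the two subcases into a single Cauchy--Schwarz step and cleanly isolates what has to be small, namely $\|g-1\|_{L^2(Y)}$.
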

\begin{proof}
    By definition of $h$ we have 
    \begin{align}\label{eqn:comparison_eigenvalue_lem4-1}
        \int_X h \dvol_{X} 
        = \int_Y h \dvol_{X}
        = \int_{Y\setminus\collar(\sqrt{w})} f \dvol_{X} + \int_{\collar(\sqrt{w})} h \dvol_{X}. 
    \end{align}

    \noindent We first estimate the first term of the RHS of \eqref{eqn:comparison_eigenvalue_lem4-1}. Let $l$ be sufficiently large such that $\sqrt{w}\geq w_{\delta,\epsilon_0}$, then by Corollary \ref{cor:comparison_volume} we have 
    \begin{equation}\label{eqn:comparison_eigenvalue_lem4-2}
        \begin{aligned}
            \abs{ \int_{Y\setminus\collar(\sqrt{w})} f \dvol_{X} - \int_{Y\setminus\collar(\sqrt{w})} f \dvol_{Y} }
            &\leq \delta\cdot \int_{Y\setminus\collar(\sqrt{w})} \abs{f}\dvol_{Y}\\
            &\leq \delta\cdot \sqrt{\area\big(Y\setminus\collar(\sqrt{w})\big)}. 
        \end{aligned}
    \end{equation}
    Since $f$ is a first Neumann eigenfunction of $Y$, we have $\int_Y f \dvol_{Y} = 0$, thus 
    \begin{equation}\label{eqn:comparison_eigenvalue_lem4-3}
        \begin{aligned}
            \abs{ \int_{Y\setminus\collar(\sqrt{w})} f \dvol_{Y} }
            & = \abs{ \int_{\collar(\sqrt{w})} f \dvol_{Y} }\\
            &\leq \sqrt{\int_{\collar(\sqrt{w})} f^2 \dvol_{Y}} \cdot \sqrt{\area\big(\collar(\sqrt{w})\big)}, 
        \end{aligned}
    \end{equation}
    then it follows from \eqref{eqn:comparison_eigenvalue_lem4-3} and \eqref{eqn:comparison_eigenvalue_lem3-3} that 
    \begin{align}\label{eqn:comparison_eigenvalue_lem4-4}
        \abs{ \int_{Y\setminus\collar(\sqrt{w})} f \dvol_{Y} }
        \leq \sqrt{\frac{1}{\sqrt{w}}}\cdot \sqrt{\area\big(\collar(\sqrt{w})\big)}. 
    \end{align}
    Combining \eqref{eqn:comparison_eigenvalue_lem4-2} and \eqref{eqn:comparison_eigenvalue_lem4-4}, we find that 
    \begin{equation}\label{eqn:comparison_eigenvalue_lem4-5}
        \begin{aligned}
            &\abs{ \int_{Y\setminus\collar(\sqrt{w})} f \dvol_{X}}\\
            \leq & \delta\cdot\sqrt{\area\big(Y\setminus\collar(\sqrt{w})\big)} + \sqrt{\frac{1}{\sqrt{w}}}\cdot \sqrt{\area\big(\collar(\sqrt{w})\big)}. 
        \end{aligned}
    \end{equation}
    We then estimate the second term in the RHS of \eqref{eqn:comparison_eigenvalue_lem4-1}. We have 
    \begin{equation}\label{eqn:comparison_eigenvalue_lem4-6}
        \begin{aligned}
            \abs{ \int_{\collar(\sqrt{w})} h \dvol_{X} } 
            &\leq \int_{\collar(\sqrt{w})} \abs{h} \dvol_{X}\\
            \textrm{(by definition of $h$)\qquad} 
            &\leq \int_{\collar(\sqrt{w})} \abs{f} \dvol_{X}\\
            \textrm{(since $\dvol_X\leq \dvol_Y$)\qquad} 
            &\leq \int_{\collar(\sqrt{w})} \abs{f} \dvol_{Y}\\
            \textrm{(By Cauchy's inequality)\qquad} 
            &\leq \sqrt{\frac{1}{\sqrt{w}}}\cdot \sqrt{\area\big(\collar(\sqrt{w})\big)}
        \end{aligned}
    \end{equation}
    Now combining \eqref{eqn:comparison_eigenvalue_lem4-1}, \eqref{eqn:comparison_eigenvalue_lem4-5} and \eqref{eqn:comparison_eigenvalue_lem4-6}, by Cauchy's inequality we have 
    \begin{equation*}
        \begin{aligned}
            \Big(\int_X h \dvol_{X}\Big)^2 
            &\leq \Big(\delta^2 + \frac{4}{\sqrt{w}}\Big) \cdot \area(Y)\\
            &  =  \Big(\delta^2 + \frac{4}{\sqrt{w}}\Big) \cdot \area(X).
        \end{aligned}
    \end{equation*}
    Thus for $w$ sufficiently large, 
    \[ \frac{1}{\area(X)}\Big(\int_X h \dvol_{X}\Big)^2 \leq \delta^2 + \frac{4}{\sqrt{w}} \leq 2\delta. \] 
    The lemma follows. 
\end{proof}

\begin{proof}[Proof of Theorem \ref{thm:comparison_eigenvalue}]
    By Lemma \ref{lem:comparison_eigenvalue_lem1}, \ref{lem:comparison_eigenvalue_lem2}, \ref{lem:comparison_eigenvalue_lem3} and \ref{lem:comparison_eigenvalue_lem4}, the inequality \eqref{comparison:RQ} holds when $\sigma_1(Y)\leq 1/4$ and $l$ is sufficiently large, thus the theorem follows. 
\end{proof}

\section{Proof of Theorem \ref{main}}\label{section7}
In this section, assuming Theorem \ref{thm:stability}, we first prove Proposition \ref{prop:piece}, then use Proposition \ref{prop:piece} to prove Theorem \ref{main}. We postpone the proof of Theorem \ref{thm:stability} until Section \ref{section8}.

\begin{proof}[Proof of Proposition \ref{prop:piece} by assuming Theorem \ref{thm:stability}]
    Fix $\epsilon>0$, and let $\epsilon_0=\epsilon/\pi$. For any $\delta>0$, there exists an $l_{\delta,\epsilon_0}$ by Theorem \ref{thm:comparison_eigenvalue}, and an $l'_{\delta,\epsilon_0}$ by Theorem \ref{thm:comparison_geometry}. Let $l$ be sufficiently large such that 
    \[ l\geq\max\{l_{\delta,\epsilon_0},l'_{\delta,\epsilon_0}\} \textrm{\quad and\quad} w(l)\geq\epsilon, \] 
    where $w(l)$ is defined in Part (2) of Theorem \ref{thm:comparison_geometry}. 
    
    \underline{Step-1}. 
    By Proposition \ref{prop:surfaces}, for each sufficiently large $g$, there exists a finite-area non-compact hyperbolic surface $\sX_{2i}$, such that 
    \begin{enumerate}
        \item $\abs{\chi(\sX_{2i})} = 2i = 2g$; 
        \item All simple closed geodesics in $\sX_{2i}$ have length $\geq 2\epsilon$; 
        \item $\sX_{2i}$ has large cusps of length $l$; 
        \item $\bar\lambda_1(\sX_{2i}) \geq \frac14 - \delta$. 
    \end{enumerate}

    \underline{Step-2}. 
    For each $i$, we apply the compactification procedure in the introduction to $\sX_{2i}$ to obtain a bordered compact hyperbolic surface $\sY_{2i}$ which is a compact convex core of an infinite-area hyperbolic surface with funnel ends. By Theorem \ref{thm:comparison_geometry}, each component of $\pa\sY_{2i}$ has length between $(1-\delta)\epsilon$ and $(1+\delta)\epsilon$; and for each simple closed geodesic $\gamma$ in $\sY_{2i}$ which is not a component of $\pa\sY_{2i}$, we have 
    \[ \length_{\sY_{2i}}(\gamma) > \length_{\sX_{2i}}(\gamma) \geq 2\epsilon. \] 
    Moreover by Theorem \ref{thm:comparison_eigenvalue}, we have 
    \begin{align*}
        \sigma_1(\sY_{2i}) 
        &\geq  (1-4\delta)\cdot\bar\lambda_1(\sX_{2i}) - \delta \\
        &\geq \frac 14 - 3\delta. 
    \end{align*}
    Thus the bordered compact hyperbolic surface $\sY_{2i}$ satisfies that 
    \begin{enumerate}
        \item $\abs{\chi(\sY_{2i})} = 2g$; 
        \item All simple closed geodesic in $\sY_{2i}$ have length $\geq 2\epsilon$; 
        \item $\pa\sY_{2i}$ has long half-collars of width $\epsilon$ (by Part (2) of Theorem \ref{thm:comparison_geometry}); 
        \item $\sigma_1(\sY_{2i}) \geq \frac14 - 3\delta$; 
        \item Each component of $\pa\sY_{2i}$ has length between $(1-\delta)\epsilon$ and $(1+\delta)\epsilon$. 
    \end{enumerate}

    \underline{Step-3}. 
    It was proved in \cite[Theorem 2]{Zograf1987} (see also \cite[Theorem 13]{SW2022}) that 
    \[ \lim_{n\to\infty} \sup_{X\in\sM_{0,n}} \lambda_1(X) = 0, \] 
    we deduce that for $g$ sufficiently large, $\sX_{2i}$ has genus $\geq1$. So $\sY_{2i}$ also has genus $\geq1$. Let $\{\gamma_j\}$ be set of all components of $\pa\sY_{2i}$. Therefore for sufficiently small $\delta$ we can apply Theorem \ref{thm:stability} to $\sY_{2i}$ with 
    \[ \delta_j = \frac{\epsilon}{\ell(\gamma_j)}-1, \] 
    to obtain a bordered compact hyperbolic surface $\sY'_{2i}$. Moreover, for $\delta$ sufficiently small the map $\mu:\sY_{2i} \to \sY_{2i}'$ is a bi-Lipschitz map with Lipschitz constant $2$, it follows that 
    \begin{enumerate}
        \item $\abs{\chi(\sY'_{2i})} = 2g$; 
        \item All simple closed geodesic in $\sY'_{2i}$ have length $\geq \epsilon$; 
        \item $\pa\sY'_{2i}$ has long half-collars of width $\epsilon / 2$; 
        \item $\sigma_1(\sY'_{2i}) \geq \frac14 - O(\delta)$, where the implied constant depends only on $\epsilon$; 
        \item Each component of $\pa\sY'_{2i}$ has the same length $\epsilon$. 
    \end{enumerate}

    \underline{Step-4}. 
    For each $i$, the Euler characteristic of $\sY'_{2i}$ is even, thus $\pa\sY'_{2i}$ has an even number of components $\{\gamma'_1,\cdots,\gamma'_{2n}\}$ for some $n\geq 1$. Gluing $\gamma'_{2j-1}$ with $\gamma'_{2j}$ with arbitrary twist parameters for $2\leq j\leq n$, we obtain a bordered compact hyperbolic surface $\sY_{g_i,2}(\epsilon,\epsilon)$ with two boundary components. 
    Since 
    \[ 2g = \abs{\chi(\sY'_{2i})} = \abs{\chi(\sY_{g_i,2}(\epsilon,\epsilon))} = 2g_i-2+2, \] 
    we deduce that $g_i=g$. 
    For each simple closed geodesic in $\sY_{g,2}$, since it is either contained in $\sY'_{2i}$, a component of $\pa\sY'_{2i}$, or transitively intersecting with some $\gamma'_j$ for $2\leq j\leq n$, by \eqref{eqn:cross_collar} we find that it always has length $\geq\epsilon$. Moreover, by Lemma \ref{lem:eigenvalue_monotonicity} we have 
    \[ \sigma_1(\sY_{g,2}) \geq \sigma_1(\sY'_{2i}) \geq \frac14-O(\delta). \] 
    The proposition follows. 
\end{proof}

Now we are ready to prove Theorem \ref{main}.

\begin{proof}[Proof of Theorem \ref{main}]
    For any sufficiently large integer $i$, let $\sY_{i,2} \df \sY_{i,2}(\epsilon,\epsilon)$ be the bordered compact hyperbolic surface given by Proposition \ref{prop:piece}. 
    For any given sufficiently small $\delta>0$, we are going to construct a sequence of closed hyperbolic surfaces $\{\sZ_g\}$, one in each sufficiently large genus $g$, such that $\sZ_g\in\sM_g^{\geq \epsilon}$,  
    \begin{align*}
        \lambda_{k-1}(\sZ_g) \leq \delta \quad \textrm{and} \quad \lambda_k(\sZ_g) \geq \frac 14 -\delta. 
    \end{align*}

    If $k=1$, then we just glue the two boundary components of $\sY_{g-1,2}$ together with an arbitrary twist parameter to obtain a closed hyperbolic surface $\sZ_g$ of genus $g$. By Lemma \ref{lem:eigenvalue_monotonicity}, we have 
    \[ \lambda_1(\sZ_g)\geq\sigma_1(\sY_{g-1,2})\geq\frac14-O(\delta), \] 
    and it can be easily seen from Part (1) and Part (2) of Proposition \ref{prop:piece} that $$\sZ_g\in\sM_g^{\geq\epsilon}.$$ 

    If $k\geq2$, then for each sufficiently large $g$, let 
    \begin{equation}\label{eqn:main_pf1}
        g-1 = i_g\cdot k + r_g, \textrm{\quad where $0\leq r_g < k$}. 
    \end{equation}
    Then we can glue $k-r_g$ copies of $\sY_{i_g,2}$ and $r_g$ copies of $\sY_{i_g+1,2}$ together with arbitrary twist parameters, as illustrated by Figure \ref{fig:gluing}, to obtain a connected closed hyperbolic surface $\sZ_g$ of genus $g$. Similarly $\sZ_g\in\sM_g^{\geq\epsilon}$. Since by \eqref{eqn:main_pf1} we have 
    \[ i_g\to\infty \] 
    as $g\to\infty$, by Corollary \ref{cor:eigenvalue_bound}, for $g$ sufficiently large we have 
    \[ \lambda_{k-1}(\sZ_g) \leq \delta; \] 
    and by the Mini-max principle, \ie Theorem \ref{thm:mini-max}, 
    \[ \lambda_k(\sZ_g) \geq \min\{\sigma_1(\sY_{i_g,2}), \sigma_1(\sY_{i_g+1,2})\} \geq \frac{1}{4}-O(\delta). \] 

    Combining the two above cases, it follows that for any $\delta>0$ and any fixed $k\geq1$, we have 
    \[ \liminfg \sup_{X_g\in\sM_g^{\geq\epsilon}} \left(\lambda_k(X_g) - \lambda_{k-1}(X_g) \right) \geq \frac14-O(\delta), \]
    which implies that  
    \begin{equation}\label{eqn:main_pf2}
        \liminfg \sup_{X_g\in\sM_g^{\geq\epsilon}} \left(\lambda_k(X_g) - \lambda_{k-1}(X_g) \right) \geq \frac14. 
    \end{equation}
    On the other hand,  it follows from \eg Cheng \cite[Corollary 2.3]{Cheng1975} that for any fixed $k$, 
    \begin{equation}\label{eqn:main_pf3}
        \limsupg \sup_{X_g\in\sM_g^{\geq\epsilon}} \left(\lambda_k(X_g) - \lambda_{k-1}(X_g) \right) \leq \limsupg \sup_{X_g\in\sM_g} \lambda_k(X_g) \leq \frac 14. 
    \end{equation}
    Combining \eqref{eqn:main_pf2} and \eqref{eqn:main_pf3}, we have finished the proof of Theorem \ref{main}. 
\end{proof}

\section{Stability of Neumann eigenvalues}\label{section8}
In this section we prove Theorem \ref{thm:stability}. We first recall several basic facts and formulas on two-dimensional hyperbolic geometry.

\subsection{Pants decompositions}
A \emph{pair of pants} is a hyperbolic surface of genus zero with three totally geodesic boundaries (a cusp is viewed as a geodesic boundary component of zero length). We denote a pair of pants by $P= P(\alpha,\beta,\gamma)$, where $\alpha$, $\beta$, and $\gamma$ denote the boundary geodesics of $P$, and by abuse of notations, also denote their lengths. Each pair of pants is uniquely determined by the lengths of its three boundary geodesics. 

A \emph{pants decomposition} of a hyperbolic surface $S_{g,n}$ is a maximal collection of pairwise disjoint simple closed geodesics in $S_{g,n}$. Each pants decomposition of $S_{g,n}$ contains exactly $3g-3+n$ geodesics, and decomposes $S_{g,n}$ into $-\abs{\chi(S)} = 2g-2+n$ pairs of pants, \ie 
\[ S_{g,n} = \bigcup_{k=1}^{2g-2+n} S_{0,3}(\ell_{k1},\ell_{k2},\ell_{k3}). \] 
Here $\ell_{k1}$, $\ell_{k2}$ and $\ell_{k3}$ are lengths (which could be zero) of boundary geodesics of the corresponding pair of pants. Given a pants decomposition $\{\gamma_i\}$ of $S_{g,n}$, then the \emph{Fenchel-Nielsen coordinates} of the Teichm\"uller space of $S_{g,n}$, consists of $3g-3+n$ lengths of these $\gamma_i$'s, and $3g-3+n$ \emph{twist parameters} along these geodesics. 
See \cite[Chapter 3]{Buser1992} for more details. 
\begin{figure}
    \centering
    \includegraphics[scale=0.38]{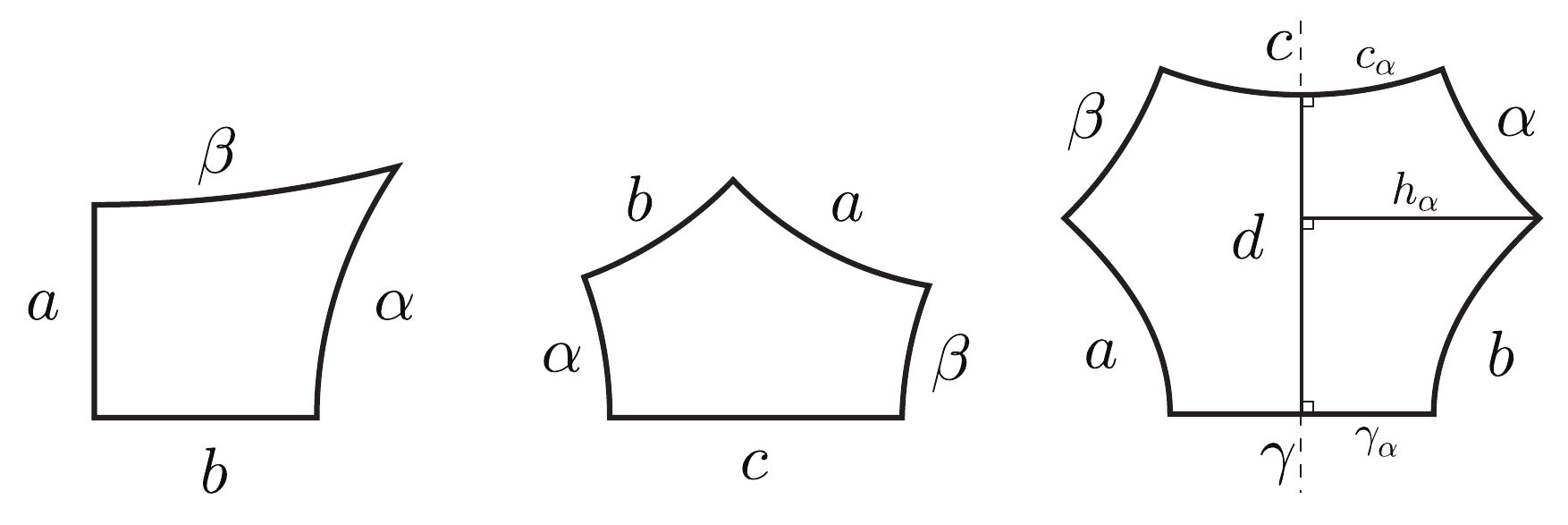}
    \caption{Hyperbolic polygons}
    \label{fig:polygon}
\end{figure}

We recall some formulas below which will be used later, and we refer the readers to \cite[Chapter 2]{Buser1992} for more details. 
\begin{lemma}[{\cite[Theorem 2.3.1]{Buser1992}}]\label{lem:trirectangle}
    For any trirectangle with consecutive sides $a$, $b$, $\alpha$ and $\beta$ (see Figure \ref{fig:polygon}), one has 
    \begin{enumerate}
        \item $\tanh \alpha = \cosh b \cdot \tanh a$. 
        \item $\sinh \alpha = \cosh \beta \cdot \sinh a$. 
    \end{enumerate}
\end{lemma}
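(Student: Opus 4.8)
The plan is to derive both identities from the standard trigonometric formulas for a right-angled geodesic triangle, after cutting the trirectangle along a diagonal. First I would label the vertices $P_1,P_2,P_3,P_4$ cyclically so that the three right angles sit at $P_1,P_2,P_3$, the acute angle sits at $P_4$, and $a=\dist(P_1,P_2)$, $b=\dist(P_2,P_3)$, $\alpha=\dist(P_3,P_4)$, $\beta=\dist(P_4,P_1)$, matching Figure \ref{fig:polygon}. Let $e=\dist(P_2,P_4)$ and draw this diagonal. Since a trirectangle is convex, the diagonal lies in the interior and splits it into two right-angled triangles: $T_1=P_2P_3P_4$, with right angle at $P_3$ and legs $b,\alpha$, and $T_2=P_1P_2P_4$, with right angle at $P_1$ and legs $a,\beta$. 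Write $\mu_1,\mu_2$ for the angles of $T_1,T_2$ at $P_2$ and $\psi_1,\psi_2$ for the angles of $T_1,T_2$ at $P_4$. The diagonal subdivides the right angle at $P_2$, so $\mu_1+\mu_2=\pi/2$, hence $\sin\mu_1=\cos\mu_2$.

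Next I would invoke the classical right-triangle relations in $\H^2$ (see \cite{Buser1992}): with a leg paired with the angle opposite it, one has $\tanh(\text{leg})=\tanh(\text{hyp})\cos(\text{angle adjacent to that leg})$, $\sinh(\text{leg})=\sinh(\text{hyp})\sin(\text{angle opposite that leg})$, and $\cos(\text{angle})=\cosh(\text{leg opposite that angle})\sin(\text{other angle})$. In $T_1$, the angle $\mu_1$ is opposite the leg $\alpha$ and $\psi_1$ is opposite the leg $b$, so $\tanh\alpha=\tanh e\cos\psi_1$, $\sinh\alpha=\sinh e\sin\mu_1$, and $\cos\psi_1=\cosh b\sin\mu_1$. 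In $T_2$, the angle $\mu_2$ is opposite $\beta$ and $\psi_2$ is opposite $a$, so $\tanh a=\tanh e\cos\mu_2$, $\sinh a=\sinh e\sin\psi_2$, and $\cos\mu_2=\cosh\beta\sin\psi_2$.

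From here (1) and (2) are two-line manipulations. For (1): $\tanh\alpha=\tanh e\cos\psi_1=\tanh e\cosh b\sin\mu_1=\tanh e\cosh b\cos\mu_2=\cosh b\cdot\tanh a$, using $\sin\mu_1=\cos\mu_2$ together with $\tanh a=\tanh e\cos\mu_2$. For (2): $\sinh\alpha=\sinh e\sin\mu_1=\sinh e\cos\mu_2=\sinh e\cosh\beta\sin\psi_2=\cosh\beta\cdot\sinh a$, using $\cos\mu_2=\cosh\beta\sin\psi_2$ and $\sinh a=\sinh e\sin\psi_2$. It is worth noting that neither the Pythagorean relation nor the angle at $P_4$ is needed. (Alternatively, one could place $P_2$ at $i\in\H$ with $b$ running up the imaginary axis and $a$ along the unit semicircle, locate $P_1,P_3,P_4$ and compute the distances by explicit Möbius transformations, but the diagonal argument is shorter.)

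The only genuinely delicate point — and the step I expect to be the main obstacle — is the bookkeeping of which angle of $T_1$ (resp. $T_2$) is adjacent to, and which is opposite, each leg: a mismatch interchanges $\cos$ and $\sin$ and breaks the identity. This is exactly where the hypothesis that the acute angle sits at $P_4$, and not at $P_1$, $P_2$ or $P_3$, is used, since it fixes which two angles the diagonal creates at $P_2$ and guarantees $\mu_1+\mu_2=\pi/2$. Once the labelling is pinned down, the rest is the routine algebra displayed above.
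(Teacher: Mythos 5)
Your proof is correct, but it takes a genuinely different route from the paper. The paper treats this as a known fact and simply cites Buser's book: Part~(2) is literally formula~(v) of \cite[Theorem 2.3.1]{Buser1992}, and Part~(1) is obtained by combining~(i),~(ii) and~(iv) of that theorem. You instead rederive both identities from first principles by cutting the trirectangle along the diagonal $P_2P_4$ into two right-angled triangles and applying the standard right-triangle relations $\tanh(\text{leg}) = \tanh(\text{hyp})\cos(\text{adjacent angle})$, $\sinh(\text{leg}) = \sinh(\text{hyp})\sin(\text{opposite angle})$, $\cos(\text{angle}) = \cosh(\text{opposite leg})\sin(\text{other angle})$, then chaining them through the complementary-angle relation $\mu_1+\mu_2=\pi/2$ at $P_2$. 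I checked the angle bookkeeping and it is consistent: in $T_1$, $\mu_1$ is opposite $\alpha$ and $\psi_1$ is opposite $b$ and adjacent to $\alpha$; in $T_2$, $\mu_2$ is opposite $\beta$ and adjacent to $a$, and $\psi_2$ is opposite $a$; the cancellations in both chains then go through exactly as you wrote them, and indeed neither the Pythagorean law nor the acute angle at $P_4$ is used directly. What the paper's citation buys is brevity and reliance on a standard reference; what your derivation buys is self-containedness and a clear picture of \emph{why} the trirectangle formulas hold as consequences of the more elementary right-triangle trigonometry — a useful thing to record, since this lemma's formulas are used repeatedly in Section~8 and the diagonal decomposition is the same device Buser himself uses to prove Theorem~2.3.1.
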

\begin{proof}
Part (2) is exactly \cite[(v) of Theorem 2.3.1]{Buser1992}. Part (1) directly follows from \cite[(i), (ii) and (iv) of Theorem 2.3.1]{Buser1992}. 
\end{proof}

\begin{lemma}[{\cite[Theorem 2.3.4]{Buser1992}}]\label{lem:pentagon}
    For any convex right-angled pentagon with consecutive sides $a$, $b$, $\alpha$, $c$ and $\beta$ (see Figure \ref{fig:polygon}), one has 
    \begin{enumerate}
        \item $\cosh c = \sinh a \cdot \sinh b$. 
        \item $\cosh c = \coth \alpha \cdot \coth \beta$. 
    \end{enumerate}
\end{lemma}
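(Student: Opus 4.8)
The plan is to obtain part (2) by a direct computation in the upper half-plane, and then to deduce part (1) from part (2) by an elementary algebraic manipulation exploiting the cyclic symmetry of a right-angled pentagon.

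\emph{Step 1 (coordinates).} Place the pentagon in $\H$ so that the side $c$ lies on the imaginary axis, with $c$ meeting $\alpha$ at $i$ and meeting $\beta$ at $ie^{c}$, and with the pentagon contained in $\{\re z\ge 0\}$. Then $\alpha$ and $\beta$ are arcs of the Euclidean circles $|z|=1$ and $|z|=e^{c}$ respectively, these being the geodesics through $i$ (resp. $ie^{c}$) orthogonal to the imaginary axis. On $|z|=\rho$, writing $z=\rho e^{i\phi}$, the hyperbolic length element is $d\phi/\sin\phi$; hence the endpoint of $\alpha$ other than $i$ is $e^{i\psi}$ with $\log\cot(\psi/2)=\alpha$, equivalently $\cos\psi=\tanh\alpha$, and similarly the endpoint of $\beta$ other than $ie^{c}$ is $e^{c}e^{i\phi_{0}}$ with $\cos\phi_{0}=\tanh\beta$.

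\emph{Step 2 (the right angle at $a\cap b$).} The side $b$ is the geodesic through $e^{i\psi}$ orthogonal to $|z|=1$; since a circle centered at $m\in\R$ of radius $r$ meets the unit circle orthogonally iff $m^{2}=1+r^{2}$, one finds that $b$ is the circle of center $1/\cos\psi$ and radius $\tan\psi$, and likewise $a$ is the circle of center $e^{c}/\cos\phi_{0}$ and radius $e^{c}\tan\phi_{0}$. Two geodesic circles centered on $\R$ are orthogonal iff the square of the distance of their centers equals the sum of the squares of their radii; imposing this on $a$ and $b$ and simplifying using $\sec^{2}-\tan^{2}=1$ yields
\[ \cos\psi\,\cos\phi_{0}=\frac{2e^{c}}{1+e^{2c}}=\frac{1}{\cosh c}. \]
With $\cos\psi=\tanh\alpha$ and $\cos\phi_{0}=\tanh\beta$ this is exactly $\cosh c=\coth\alpha\coth\beta$, which is part (2).

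\emph{Step 3 (from (2) to (1)).} Part (2), applied to any labelling of the sides of a convex right-angled pentagon, states that the hyperbolic cosine of a side equals the product of the hyperbolic cotangents of its two neighbours. Applying this to the side $a$ (neighbours $\beta$ and $b$) and to the side $b$ (neighbours $a$ and $\alpha$) gives $\cosh a=\coth\beta\coth b$ and $\cosh b=\coth a\coth\alpha$; multiplying these two identities and cancelling $\cosh a\cosh b$ leaves $\sinh a\sinh b\,\sinh\alpha\sinh\beta=\cosh\alpha\cosh\beta$, that is $\sinh a\sinh b=\coth\alpha\coth\beta=\cosh c$ by part (2), which is part (1).

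\emph{Expected difficulty.} There is no real conceptual obstacle; the substance is the planar computation of Step 2, and the only care needed is in checking that the five sides are realized by the Euclidean circles described and that the pentagon sits in $\{\re z\ge 0\}$, so that $\psi,\phi_{0}\in(0,\pi/2)$ and the identifications $\cos\psi=\tanh\alpha$, $\cos\phi_{0}=\tanh\beta$ are legitimate. An alternative, closer in spirit to Lemma \ref{lem:trirectangle}, is to drop the perpendicular from the vertex $a\cap b$ onto the opposite side $c$ (whose foot lies in the interior of $c$, by convexity of the pentagon), cutting the pentagon into two trirectangles whose acute angles at $a\cap b$ are complementary, and then to combine the trirectangle relations; this route also uses the relation $\cos(\text{acute angle})=\sinh\cdot\sinh$ for a trirectangle, which is among \cite[Theorem 2.3.1]{Buser1992} but is not recorded in Lemma \ref{lem:trirectangle}.
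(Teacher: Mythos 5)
Your proof is correct. A small clarification on the comparison: the paper does not prove Lemma~\ref{lem:pentagon}; it is stated as a direct citation to \cite[Theorem~2.3.4]{Buser1992}, so there is no in-paper argument to compare against. Buser's standard proof (and the one most compatible with the tools already on record in this paper, in particular Lemma~\ref{lem:trirectangle}) is the trirectangle decomposition you mention as an aside at the end: drop the perpendicular from the vertex $a\cap b$ onto $c$, split the pentagon into two trirectangles with complementary acute angles at $a\cap b$, and combine the trirectangle relations. Your primary route is different and self-contained: you compute part (2) directly by placing $c$ on the imaginary axis, realizing $\alpha,\beta,a,b$ as Euclidean circles, and imposing the orthogonality condition $d^{2}=r_{1}^{2}+r_{2}^{2}$, and then you deduce part (1) purely algebraically by cyclically permuting (2) and cancelling. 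Both the coordinate computation in Step~2 (the identities $\cos\psi=\tanh\alpha$, $\cos\phi_{0}=\tanh\beta$ and the simplification to $\cos\psi\cos\phi_{0}=1/\cosh c$) and the cyclic-symmetry manipulation in Step~3 ($\cosh a=\coth\beta\coth b$, $\cosh b=\coth a\coth\alpha$, multiply and cancel) check out. The trade-off is that your route avoids needing any trirectangle identity beyond what Lemma~\ref{lem:trirectangle} records, at the cost of a hands-on half-plane computation; the Buser route is shorter given the full trirectangle toolkit but uses a relation ($\cos$ of the acute angle $=\sinh a\,\sinh b$) that Lemma~\ref{lem:trirectangle} omits.
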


\begin{lemma}\label{lem:pentagon'}
    Using the same notation as in Lemma \ref{lem:pentagon}, one has 
    \begin{enumerate}
        \item $\sinh^2\! c = 1/\sinh^2\! \beta + 1/(\sinh^2\! \alpha\tanh^2\! \beta)$. 
        \item $\tanh^2\! c = 1-\tanh^2\! \alpha\tanh^2\! \beta$. 
    \end{enumerate}
\end{lemma}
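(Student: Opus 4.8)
The plan is to derive both identities purely algebraically from Part (2) of Lemma \ref{lem:pentagon}, namely $\cosh c = \coth\alpha \cdot \coth\beta$, using only the elementary hyperbolic identity
\[ \coth^2 x = \frac{\cosh^2 x}{\sinh^2 x} = \frac{1+\sinh^2 x}{\sinh^2 x} = 1 + \frac{1}{\sinh^2 x} = \frac{1}{\tanh^2 x}. \]

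First I would square the relation $\cosh c = \coth\alpha\coth\beta$ to get $\cosh^2 c = \coth^2\alpha\,\coth^2\beta$. For Part (1), I would then write $\sinh^2 c = \cosh^2 c - 1 = \coth^2\alpha\,\coth^2\beta - 1$ and substitute $\coth^2\alpha = 1 + 1/\sinh^2\alpha$ and $\coth^2\beta = 1 + 1/\sinh^2\beta$. Expanding the product gives
\[ \coth^2\alpha\,\coth^2\beta - 1 = \frac{1}{\sinh^2\beta} + \frac{1}{\sinh^2\alpha}\left(1 + \frac{1}{\sinh^2\beta}\right), \]
and recognizing $1 + 1/\sinh^2\beta = \coth^2\beta = 1/\tanh^2\beta$ turns the second summand into $1/(\sinh^2\alpha\tanh^2\beta)$, which is exactly the claimed formula.

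For Part (2) I would instead divide: $\tanh^2 c = 1 - 1/\cosh^2 c = 1 - 1/(\coth^2\alpha\,\coth^2\beta) = 1 - \tanh^2\alpha\,\tanh^2\beta$, using $1/\coth^2 x = \tanh^2 x$. This is immediate once $\cosh^2 c = \coth^2\alpha\,\coth^2\beta$ is in hand.

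There is no real obstacle here: the statement is a routine rearrangement of Lemma \ref{lem:pentagon}, and the only thing to be careful about is consistently applying the single identity $\coth^2 x = 1 + \operatorname{csch}^2 x = 1/\tanh^2 x$. The lemma is stated separately only because both forms are convenient in the later pants-comparison computations of Subsections \ref{subs-l1} and \ref{subs-l2}.
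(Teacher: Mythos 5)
Your proof is correct and takes essentially the same approach as the paper: both start from $\cosh c = \coth\alpha\coth\beta$ (equivalently $1/(\tanh\alpha\tanh\beta)$) from Part (2) of Lemma \ref{lem:pentagon}, square it, and rearrange algebraically; the only difference is that the paper splits a single fraction's numerator while you expand $(1+1/\sinh^2\alpha)(1+1/\sinh^2\beta)-1$ and regroup, which is a cosmetic variation of the same computation.
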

\begin{proof}
    By Lemma \ref{lem:pentagon}, one has $\cosh c = 1/(\tanh \alpha\tanh \beta)$, therefore
    \begin{equation*}
        \begin{aligned}
        \sinh^2\! c 
        & = \cosh^2\! c - 1\\ 
        & = \frac{1 - \tanh^2\! \alpha\tanh^2\! \beta}{\tanh^2\! \alpha\tanh^2\! \beta}\\ 
        & = \frac{\tanh^2\! \alpha - \tanh^2\! \alpha\tanh^2\! \beta + 1 - \tanh^2\! \alpha}{\tanh^2\! \alpha\tanh^2\! \beta}\\ 
        & = \frac{1}{\sinh^2\! \beta} + \frac{1}{\sinh^2\! \alpha\tanh^2\! \beta}, 
        \end{aligned}
    \end{equation*}
    and $\tanh^2\! c = 1-1/\cosh^2\! c = 1 - \tanh^2\! \alpha\tanh^2\! \beta$. 
\end{proof}

\begin{lemma}[{\cite[Theorem 2.4.1]{Buser1992}}]\label{lem:hexagon}
    For any convex right-angled hexagon with consecutive sides $a$, $\gamma$, $b$, $\alpha$, $c$ and $\beta$ (see Figure \ref{fig:polygon}), one has 
    \begin{equation*}
        \cosh \gamma = \frac{\cosh c + \cosh a \cdot \cosh b}{\sinh a \cdot \sinh b}. 
    \end{equation*}
\end{lemma}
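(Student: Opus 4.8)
The plan is to derive the right-angled hexagon identity of Lemma~\ref{lem:hexagon} from the right-angled pentagon identities of Lemma~\ref{lem:pentagon}, by cutting the hexagon into two pentagons.

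First I would set up the decomposition. Let $H$ be a convex right-angled hexagon with consecutive sides $a,\gamma,b,\alpha,c,\beta$. The opposite sides $\gamma$ and $c$ lie on disjoint geodesics $g_\gamma, g_c$ (a standard feature of convex right-angled hexagons; it is also visible from the reverse construction, in which gluing two convex right-angled pentagons along a side of equal length produces a convex right-angled hexagon — the two gluing vertices acquire interior angle $\tfrac\pi2+\tfrac\pi2=\pi$ and hence cease to be vertices, two pairs of pentagon sides becoming collinear — whose seam is exactly the common perpendicular of a pair of opposite sides). Hence $g_\gamma$ and $g_c$ admit a unique common perpendicular $\delta$, and a short convexity argument (the function $p\mapsto\dist(p,g_c)$ is convex along the geodesic $g_\gamma$, and $a,b$ meet $\gamma$ orthogonally into $H$) shows that the feet of $\delta$ lie in the interiors of $\gamma$ and $c$ and that the interior of $\delta$ lies in $H$. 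Cutting $H$ along $\delta$ thus produces two convex right-angled pentagons $H_1,H_2$; writing $\gamma=\gamma_1+\gamma_2$ and $c=c_1+c_2$ for the induced subdivisions, and abusing notation so that $\delta$ also denotes the length of the cut, $H_1$ has consecutive sides $\gamma_1,b,\alpha,c_1,\delta$ and $H_2$ has consecutive sides $\gamma_2,a,\beta,c_2,\delta$.

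Next I would apply Lemma~\ref{lem:pentagon}, cyclically (that is, once for each choice of distinguished side), to the pentagons $H_1$ and $H_2$. From $H_1$ this yields, in particular,
\[
\cosh b=\sinh c_1\,\sinh\delta,\qquad \cosh\gamma_1=\coth\delta\,\coth b,\qquad \cosh c_1=\sinh\gamma_1\,\sinh b,
\]
and by the evident symmetry, from $H_2$,
\[
\cosh a=\sinh c_2\,\sinh\delta,\qquad \cosh\gamma_2=\coth\delta\,\coth a,\qquad \cosh c_2=\sinh\gamma_2\,\sinh a.
\]
Feeding the first identity of each line into the second gives $\cosh\gamma_1=\cosh\delta\,\sinh c_1/\sinh b$ and $\cosh\gamma_2=\cosh\delta\,\sinh c_2/\sinh a$, while the third identities give $\sinh\gamma_1=\cosh c_1/\sinh b$ and $\sinh\gamma_2=\cosh c_2/\sinh a$.

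Finally I would expand $\cosh\gamma=\cosh(\gamma_1+\gamma_2)=\cosh\gamma_1\cosh\gamma_2+\sinh\gamma_1\sinh\gamma_2$. Substituting the four relations just obtained turns the right-hand side into
\[
\frac{\cosh^2\!\delta\,\sinh c_1\sinh c_2+\cosh c_1\cosh c_2}{\sinh a\,\sinh b};
\]
on the other hand, using $c=c_1+c_2$ together with $\cosh a\cosh b=(\sinh c_1\sinh\delta)(\sinh c_2\sinh\delta)$, one computes $\cosh c+\cosh a\cosh b=\cosh c_1\cosh c_2+(1+\sinh^2\!\delta)\sinh c_1\sinh c_2$, which is the same numerator. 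This gives $\cosh\gamma=(\cosh c+\cosh a\cosh b)/(\sinh a\sinh b)$, as claimed. The computation is routine; the point I expect to require the most care is the geometric assertion in the first paragraph — that the common perpendicular of the opposite sides $\gamma$ and $c$ genuinely subdivides $H$ into two right-angled pentagons. An alternative proof that bypasses synthetic geometry is to realize the six side-lines in the hyperboloid model, encoding adjacency of sides as orthogonality of the corresponding unit spacelike vectors and the distances between non-adjacent sides through their Lorentzian inner products, and then to read off the identity from the vanishing of a suitable $4\times 4$ minor of the Gram matrix of these six vectors, which has rank $3$.
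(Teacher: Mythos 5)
The paper gives no proof of this lemma — it is a direct citation to Buser's book — so the question is simply whether your argument is correct, and it is. The decomposition you use (cutting the hexagon along the common perpendicular $\delta$ of the opposite sides $\gamma$ and $c$ into two right-angled pentagons with cyclic sides $(\gamma_1, b, \alpha, c_1, \delta)$ and $(\gamma_2, a, \beta, c_2, \delta)$) is exactly the one the paper itself invokes later in the proof of Proposition \ref{prop:mu_hexagon}, so the geometric fact you flag as the delicate point is already a standing assumption elsewhere in the paper, and your convexity sketch plus the reverse-gluing and Gram-matrix alternatives address it appropriately. The three cyclic instances of Lemma \ref{lem:pentagon} you list for each pentagon are the correct ones, and the algebra checks: with $\cosh\gamma_1 = \cosh\delta\,\sinh c_1/\sinh b$ and $\sinh\gamma_1 = \cosh c_1/\sinh b$ (and the analogues for $\gamma_2$), expanding $\cosh(\gamma_1+\gamma_2)$ gives the numerator $\cosh^2\!\delta\,\sinh c_1\sinh c_2 + \cosh c_1\cosh c_2$ over $\sinh a\sinh b$; and since $\cosh(c_1+c_2) = \cosh c_1\cosh c_2 + \sinh c_1\sinh c_2$ while $\cosh a\cosh b = \sinh^2\!\delta\,\sinh c_1\sinh c_2$, the identity $1+\sinh^2\!\delta=\cosh^2\!\delta$ shows this numerator equals $\cosh c + \cosh a\cosh b$, as claimed.
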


By the formulas above one has the following lemmas: 

\begin{lemma}\label{lem:hexagon'}
    Using the same notation as in Lemma \ref{lem:hexagon}, let $d$ be the common perpendicular between $c$ and $\gamma$ (see Figure \ref{fig:polygon}), then one has 
    \begin{equation*}
        \sinh^2\! d = \frac{\cosh^2\! \alpha+\cosh^2\! \beta + 2\cosh\alpha\cosh\beta\cosh\gamma}{\sinh^2\! \gamma}. 
    \end{equation*}
\end{lemma}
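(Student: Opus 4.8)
The plan is to drop the common perpendicular $d$ between the two opposite sides $\gamma$ and $c$ of the hexagon. Since the hexagon is convex and right-angled, $\gamma$ and $c$ are disjoint and their common perpendicular lies in the interior, cutting the hexagon into two convex right-angled pentagons: one with consecutive sides $d,\gamma_1,b,\alpha,c_1$ and the other with consecutive sides $d,\gamma_2,a,\beta,c_2$, where $\gamma_1+\gamma_2=\gamma$ and $c_1+c_2=c$. (That this decomposition exists and has the stated combinatorial type is standard for convex right-angled hexagons; see \cite[Chapter 2]{Buser1992}.)

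Next I would apply the pentagon identity of Lemma \ref{lem:pentagon}, in the form $\cosh(\text{side}) = \sinh(\text{side}')\sinh(\text{side}'')$ for the two sides not adjacent to a given one, to the first pentagon. This yields
\[ \cosh\alpha = \sinh d\,\sinh\gamma_1, \quad \cosh b = \sinh d\,\sinh c_1, \quad \cosh d = \sinh b\,\sinh\alpha, \quad \cosh\gamma_1 = \sinh\alpha\,\sinh c_1. \]
Eliminating $b$ and $c_1$ and using $\cosh^2-\sinh^2=1$ gives the closed forms
\[ \sinh\gamma_1 = \frac{\cosh\alpha}{\sinh d}, \qquad \cosh\gamma_1 = \frac{\sqrt{\sinh^2\alpha + \cosh^2 d}}{\sinh d}, \]
and, by the symmetric computation in the second pentagon, $\sinh\gamma_2 = \cosh\beta/\sinh d$ and $\cosh\gamma_2 = \sqrt{\sinh^2\beta + \cosh^2 d}/\sinh d$.

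Then I would expand $\cosh\gamma = \cosh(\gamma_1+\gamma_2) = \cosh\gamma_1\cosh\gamma_2 + \sinh\gamma_1\sinh\gamma_2$, substitute, and rearrange so that only one square root remains:
\[ \cosh\gamma\,\sinh^2 d - \cosh\alpha\cosh\beta = \sqrt{(\sinh^2\alpha + \cosh^2 d)(\sinh^2\beta + \cosh^2 d)}. \]
The left-hand side is automatically non-negative — it equals $\sinh^2 d\,\cosh\gamma_1\cosh\gamma_2$ — so squaring is legitimate. Writing $D\df\sinh^2 d$ (so $\cosh^2 d = D+1$) and using the elementary identities $\cosh^2\alpha\cosh^2\beta - \sinh^2\alpha\sinh^2\beta = \cosh^2\alpha + \cosh^2\beta - 1$ and $\sinh^2\alpha + \sinh^2\beta = \cosh^2\alpha + \cosh^2\beta - 2$, the resulting polynomial equation in $D$ collapses: the $D^2$-terms and the constant terms cancel, leaving $D(\cosh^2\gamma - 1) = \cosh^2\alpha + \cosh^2\beta + 2\cosh\alpha\cosh\beta\cosh\gamma$, which is exactly the claimed formula for $\sinh^2 d$.

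The only genuinely delicate point is the geometric setup in the first paragraph; after that the argument is bookkeeping with the pentagon relations and the addition formula for $\cosh$. I expect the algebraic simplification in the last step, while routine, to be the part most prone to sign slips, so I would verify the intermediate formulas for $\gamma_1,\gamma_2$ against the consistency check $\cosh^2\gamma_i - \sinh^2\gamma_i = 1$ before squaring.
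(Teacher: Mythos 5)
Your proof is correct, but it goes by a genuinely different route from the paper's. The paper obtains $\sinh^2 d = \sinh^2\alpha\sinh^2 b - 1$ from the single pentagon relation $\cosh d = \sinh\alpha\sinh b$, substitutes the already-established hexagon formula (Lemma~\ref{lem:hexagon}) for $\cosh b$, and simplifies — a four-line computation with no radicals and no sign check. You instead dispense with the hexagon formula entirely: you split $\gamma=\gamma_1+\gamma_2$ at the foot of $d$, extract $\sinh\gamma_1=\cosh\alpha/\sinh d$ and $\sinh\gamma_2=\cosh\beta/\sinh d$ from the two pentagons, feed them into the addition formula $\cosh\gamma=\cosh\gamma_1\cosh\gamma_2+\sinh\gamma_1\sinh\gamma_2$, and square away the surviving radical. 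I verified the polynomial bookkeeping in $D=\sinh^2 d$: after moving $\cosh\alpha\cosh\beta$ across and squaring, the $D^2$ and constant terms cancel exactly as you claim, and the non-negativity of the left side before squaring is correctly justified by identifying it with $\sinh^2 d\,\cosh\gamma_1\cosh\gamma_2$. Your version is more symmetric in $\alpha,\beta$ and in effect re-derives the content of Lemma~\ref{lem:hexagon} from the pentagon relations, at the cost of one squaring step; the paper's version is shorter because it leans on the hexagon formula directly. One small simplification available to you: there is no need to eliminate $b$ and $c_1$ through the system of four pentagon relations — $\cosh\gamma_1=\sqrt{\sinh^2 d+\cosh^2\alpha}/\sinh d$ follows immediately from $\sinh\gamma_1=\cosh\alpha/\sinh d$ and the Pythagorean identity.
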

\begin{proof}
    By Lemma \ref{lem:pentagon} and \ref{lem:hexagon}, one has 
    \begin{align*}
        \sinh^2\! d 
        & = \sinh^2\! \alpha \sinh^2\! b - 1\\ 
        & = \sinh^2\! \alpha \left(\frac{\cosh \beta + \cosh\alpha\cosh\gamma}{\sinh\alpha\sinh\gamma}\right)^2 - \sinh^2\! \alpha - 1\\ 
        & = \frac{(\cosh \beta + \cosh\alpha\cosh\gamma)^2 - \cosh^2\! \alpha\sinh^2\! \gamma}{\sinh^2\! \gamma}\\
        & = \frac{\cosh^2\! \alpha+\cosh^2\! \beta + 2\cosh\alpha\cosh\beta\cosh\gamma}{\sinh^2\! \gamma}. 
    \end{align*}
    The proof is complete. 
\end{proof}

\begin{lemma}\label{lem:hexagon_altitude}
   Using the same notation as in Lemma \ref{lem:hexagon'}, let $h_\alpha$ be the altitude of the pentagon bounded by $\alpha$, $b$ and $d$ (see Figure \ref{fig:polygon}), then one has 
    \begin{equation*}
        \sinh h_\alpha \leq \frac{\cosh^2\! \gamma}{\tanh \alpha}. 
    \end{equation*}
\end{lemma}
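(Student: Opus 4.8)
The plan is to cut the pentagon in question along the altitude $h_\alpha$ and read off $\sinh h_\alpha$ from Buser's formulas for Lambert quadrilaterals (Lemma \ref{lem:trirectangle}) and right-angled pentagons (Lemma \ref{lem:pentagon}); the stated bound then falls out of the trivial observation $\cosh \gamma_1 \le \cosh\gamma \le \cosh^2\gamma$, where $\gamma_1$ is a sub-arc of $\gamma$. Concretely, cutting the hexagon along the common perpendicular $d$ produces the convex right-angled pentagon $P$ with consecutive sides $\gamma_1,\, b,\, \alpha,\, c_1,\, d$ (all interior angles $\pi/2$), where $\gamma_1\subset\gamma$ and $c_1\subset c$ are the sub-arcs lying on the $b$-$\alpha$ side of $d$; this is the ``pentagon bounded by $\alpha$, $b$ and $d$'', and $h_\alpha$ is the foot of the perpendicular from the vertex $\alpha\cap b$ onto the opposite side $d$.

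First I would note that $h_\alpha$ splits $P$ into two Lambert quadrilaterals. The one containing $b$ and $\gamma_1$ has consecutive sides $\gamma_1$, (a sub-arc of) $d$, $h_\alpha$, $b$, with right angles at its three vertices other than $\alpha\cap b$, where the angle is acute. Applying Lemma \ref{lem:trirectangle}(2) to this quadrilateral, with $h_\alpha$, $b$, $\gamma_1$ playing the roles of the sides ``$\alpha$'', ``$\beta$'', ``$a$'' respectively, gives
\[ \sinh h_\alpha = \cosh b \cdot \sinh \gamma_1. \]
Next, applying Lemma \ref{lem:pentagon}(2) to the right-angled pentagon $P$ with the cyclic labeling $(p_1,\dots,p_5) = (c_1,\, d,\, \gamma_1,\, b,\, \alpha)$ yields $\cosh b = \coth\gamma_1 \cdot \coth\alpha$. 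Substituting this into the previous display and simplifying,
\[ \sinh h_\alpha = \coth\gamma_1 \cdot \coth\alpha \cdot \sinh\gamma_1 = \coth\alpha \cdot \cosh\gamma_1 = \frac{\cosh\gamma_1}{\tanh\alpha}. \]
Finally, since the perpendicular $d$ meets the side $\gamma$ of the convex hexagon in its interior, $\gamma_1$ is a (possibly degenerate) proper sub-arc of $\gamma$, so $\gamma_1 \le \gamma$ and hence $\cosh\gamma_1 \le \cosh\gamma \le \cosh^2\gamma$; this gives $\sinh h_\alpha \le \cosh^2\gamma/\tanh\alpha$, as claimed.

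The only real obstacle is bookkeeping: one must keep careful track of which sub-arc of which side plays which role in the hexagon/pentagon/quadrilateral formulas, and in particular arrive at $\sinh h_\alpha = \cosh b\,\sinh\gamma_1$ rather than the superficially similar $\cosh\gamma_1\,\sinh b$, which would \emph{not} yield the stated inequality (indeed it blows up as $\gamma\to 0$). The geometric facts that $d$ meets $\gamma$ (and $c$) in their interiors, so that $\gamma_1$ is genuinely no longer than $\gamma$, are standard for a convex right-angled hexagon and can be quoted from \cite{Buser1992}; the degenerate case $\gamma_1 = 0$ causes no trouble since then $\cosh\gamma_1 = 1 \le \cosh^2\gamma$.
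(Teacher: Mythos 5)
Your proof is correct and slightly streamlines the paper's argument. Both arguments cut the hexagon along $d$ into two right-angled pentagons, drop the altitude $h_\alpha$ from the vertex $\alpha\cap b$ in $\pent_\alpha$, and arrive at the same intermediate identity $\sinh h_\alpha = \cosh\gamma_1/\tanh\alpha$ (the paper does so via the other Lambert quadrilateral, $\sinh h_\alpha = \cosh\alpha\sinh c_\alpha$, followed by the pentagon relation $\cosh\gamma_1 = \sinh c_\alpha\sinh\alpha$ from Lemma \ref{lem:pentagon}(1); you use the mirror quadrilateral and Lemma \ref{lem:pentagon}(2) — cosmetically different, same destination). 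The genuine divergence is in the final estimate: the paper substitutes $\cosh\gamma_1 = \coth b\coth d$ and then invokes Lemmas \ref{lem:hexagon} and \ref{lem:hexagon'} to deduce $\sinh b\sinh\gamma\geq 1$ and $\sinh d\sinh\gamma\geq 1$, hence $\coth b\leq\cosh\gamma$ and $\coth d\leq\cosh\gamma$. You instead observe directly that $\gamma_1$ is a sub-arc of $\gamma$, so $\cosh\gamma_1\leq\cosh\gamma\leq\cosh^2\gamma$. Your route is shorter, avoids the hexagon formulas entirely, and in fact proves the sharper bound $\sinh h_\alpha\leq\cosh\gamma/\tanh\alpha$, of which the stated $\cosh^2\gamma/\tanh\alpha$ is a weakening. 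The one geometric input you need — that the foot of $d$ on $\gamma$ lies in the interior of $\gamma$ so that $\gamma_1\leq\gamma$ — is standard for a convex right-angled hexagon, as you note.
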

\begin{proof}
    By Lemma \ref{lem:trirectangle} and \ref{lem:pentagon}, we have 
    \begin{equation}\label{eqn:hexagon1}
        \sinh h_\alpha = \cosh \alpha \sinh c_\alpha = \frac{\cosh \gamma_\alpha}{\tanh \alpha} = \frac{1}{\tanh \alpha\tanh b\tanh d}. 
    \end{equation}
    From Lemma \ref{lem:hexagon} and \ref{lem:hexagon'}, one can find that 
    \begin{equation*}
        \sinh b\cdot\sinh \gamma \geq 1 \quad \textrm{and} \quad \sinh d\cdot\sinh \gamma \geq 1, 
    \end{equation*}
    which leads to  
    \begin{equation}\label{eqn:hexagon2}
        \frac{1}{\tanh b} \leq \cosh \gamma \quad \textrm{and}  \quad \frac{1}{\tanh d} \leq \cosh \gamma. 
    \end{equation}
    Then the lemma follows from \eqref{eqn:hexagon1} and \eqref{eqn:hexagon2}. 
\end{proof}

\subsection{Proof of Theorem \ref{thm:stability}}
In this subsection, we prove Theorem \ref{thm:stability} assuming the following technical proposition, whose proof is delayed to the next subsection. 
\begin{proposition}\label{prop:mu_pants}
    Let $P$ (resp., $P'$) be a hyperbolic pair of pants with boundaries $\alpha$, $\beta$ and $\gamma$ (resp., $\alpha'$, $\beta'$ and $\gamma'$). Suppose that there exists a constant $\ell>0$ such that $\length(\gamma), \length(\gamma') \leq \ell$. Let 
    \[ \delta \df \abs{\frac{\length(\gamma)}{\length(\gamma')}-1}, \] 
    and assume that $450e^{5\ell}\sqrt{\delta} \leq 1$. If $\length(\alpha) = \length(\alpha')$, $\length(\beta) = \length(\beta')$, and $\length(\alpha), \length(\beta) \geq 2\arcsinh1$, then there exists a piecewise smooth homeomorphism 
    \[ \mu:P\to P', \] 
    which maps the boundaries of $P$ to the corresponding boundaries of $P'$, such that 
    \begin{enumerate}
        \item $\eval{\mu}_{\alpha}$ and $\eval{\mu}_{\beta}$ are the identity maps. 
        \item At the points where $\mu$ is smooth, we have \[ \big(1+O(\sqrt{\delta})\big) \cdot ds_{P}^2 \geq \mu^*(ds_{P'}^2) \geq \big(1-O(\sqrt{\delta})\big) \cdot ds_{P}^2, \] where the implied constants depend only on $\ell$. 
    \end{enumerate}
\end{proposition}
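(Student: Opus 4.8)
The plan is to build $\mu$ by gluing together explicit ``coordinate'' maps on the canonical scissors-pieces of a pair of pants. Recall that cutting $P$ along its three seams (the shortest geodesic arcs joining the boundary components pairwise) produces two isometric right-angled hexagons with side lengths, in cyclic order, $\tfrac\alpha2,\ \sigma_{\gamma\alpha},\ \tfrac\gamma2,\ \sigma_{\beta\gamma},\ \tfrac\beta2,\ \sigma_{\alpha\beta}$, where the seam lengths $\sigma_{\ast}$ are determined by $\alpha,\beta,\gamma$ through Lemma~\ref{lem:hexagon}; the same holds for $P'$ with $\gamma$ replaced by $\gamma'$. Since the reflection of $P$ fixing the three seams swaps the two hexagons, it suffices to produce a piecewise-smooth homeomorphism $H\to H'$ of one hexagon that is the identity on the $\tfrac\alpha2$- and $\tfrac\beta2$-sides, carries each seam-side into the corresponding seam of $P'$, and carries the $\tfrac\gamma2$-side onto the $\tfrac{\gamma'}2$-side by the obvious affine reparametrization; such a map glues with its mirror copy to the desired $\mu:P\to P'$. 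Moreover, by Theorem~\ref{thm:collar_lemma} the standard half-collars $\collar(\alpha,w(\alpha))$ and $\collar(\beta,w(\beta))$ depend (up to canonical isometry in Fermi coordinates) only on $\alpha$ and $\beta$, so they are literally the same in $P$ and $P'$; taking $\mu$ to be the identity there (and noting these collars are embedded, pairwise disjoint, and disjoint from the standard collar of $\gamma$), we reduce the problem to the compact ``core'' region obtained by deleting the two half-collars.

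On the core I would drop the common perpendicular $d$ from the $\tfrac\gamma2$-side to the opposite side $\sigma_{\alpha\beta}$ (\cf Lemma~\ref{lem:hexagon'}), splitting each hexagon into two right-angled pentagons, and then one more perpendicular inside each pentagon to reach Lambert quadrilaterals (trirectangles); the same dissection is performed in $H'$. On every resulting piece one has a global Fermi chart $(t,\rho)$ over a chosen base side, in which the metric is $d\rho^2+\cosh^2\!\rho\,dt^2$ and the piece is $\{0\le t\le b,\ 0\le\rho\le f(t)\}$ for a smooth positive ceiling $f$; the base length $b$, the endpoints, and the function $f$ are all expressed through the side lengths and the altitudes $h_\ast$ of the piece, which are computed from $\alpha,\beta,\gamma$ by Lemmas~\ref{lem:trirectangle}--\ref{lem:hexagon_altitude}. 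I then define $\mu$ piece-by-piece by the normalized charts — essentially $(t,\rho)\mapsto(\text{affine in }t,\ \tfrac{f'(t)}{f(t)}\rho)$, adjusted so the prescribed sides are fixed pointwise and the definitions agree on shared edges — and check that the result is a homeomorphism with the required boundary behaviour and is smooth off the dissection.

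The analytic heart is part~(2), and it naturally splits into the two computational blocks of Subsections~\ref{subs-l1} and \ref{subs-l2}. First, one shows that every auxiliary length entering the construction — the seams $\sigma_{\gamma\alpha},\sigma_{\beta\gamma},\sigma_{\alpha\beta}$, the perpendicular $d$, the sub-arcs into which $d$ cuts $\tfrac\gamma2$ and $\sigma_{\alpha\beta}$, and the altitudes $h_\ast$ of Lemma~\ref{lem:hexagon_altitude} — changes by a factor $1+O_\ell(\sqrt\delta)$ when $\gamma$ is replaced by $\gamma'$. Here the hypotheses are used decisively: $\gamma,\gamma'\le\ell$ bounds every hyperbolic function of $\gamma$ that appears (this, combined with squaring of such quantities via Lemmas~\ref{lem:pentagon'} and \ref{lem:hexagon_altitude}, is the source of the exponential-in-$\ell$ constant $450e^{5\ell}$ in the statement), while $\alpha=\alpha'$, $\beta=\beta'\ge2\arcsinh 1$ keeps $\sinh\tfrac\alpha2,\sinh\tfrac\beta2\ge 1$, which is precisely what makes all constants independent of $\alpha,\beta$, hence of $Y_{g,n}$. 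Second, one feeds these length estimates into the coordinate maps: on the bounded-$\rho$ core the density $\cosh^2\!\rho$ is bounded, so $\mu^{*}(ds_{P'}^2)/ds_P^2$ is controlled by the $C^1$-closeness of the ceiling functions $f$ and $f'$ together with the shear terms created when matching the slightly displaced feet of $d$, and the worst of these contributes exactly the claimed $1+O_\ell(\sqrt\delta)$.

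The step I expect to be the main obstacle is this uniformity in $(\alpha,\beta)$: the crude bounds on the seams and on the positions of the perpendicular feet degenerate as $\alpha,\beta\to\infty$ (for instance $\sigma_{\alpha\beta}\to 0$, so a relative error cannot be read off from an absolute one), and the whole comparison of $P$ with $P'$ must therefore be carried out in quantities — such as $\cosh\sigma_{\alpha\beta}-1$ and the $\tanh$'s of the various sides — that perturb proportionally to their own size; one must arrange the identities of Lemmas~\ref{lem:trirectangle}--\ref{lem:hexagon_altitude} so that a relative perturbation $\delta$ of $\gamma$ produces only a relative perturbation $O_\ell(\sqrt\delta)$ of each auxiliary length, with no hidden $\alpha,\beta$-dependence. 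The loss of a square root, rather than a linear bound, is merely an artifact of repeatedly inverting $\cosh$ near $1$ and of balancing a shift against its derivative in the Fermi charts, and it is harmless for the use of this proposition in Theorem~\ref{thm:stability}.
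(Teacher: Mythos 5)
Your proposal follows the paper's skeleton — decompose into right-angled hexagons and then pentagons, build $\mu$ in Fermi coordinates, and show every auxiliary length perturbs by a multiplicative $1+O_\ell(\sqrt\delta)$ — but diverges in two substantive ways, the second of which is a genuine gap. First, the paper never deletes the half-collars around $\alpha$ and $\beta$ nor declares $\mu$ to be the identity there; it defines $\mu$ on the full pentagon in a Fermi chart based on the perpendicular $d$, and obtains the uniformity in $(\alpha,\beta)$ that you correctly single out as the danger from Lemma~\ref{lem:hexagon_altitude}, which gives $\sinh h_\alpha \le \cosh^2\!\gamma/\tanh\alpha \le 2e^{2\ell}$ once $\alpha,\beta\ge\arcsinh 1$. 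That altitude bound already caps the $v$-extent of the chart by a constant depending only on $\ell$, achieving what you hoped collar deletion would achieve, but without creating the delicate matching problem of gluing a collar-isometry to a core map along hypercycles (and your core region — a pentagon with two hypercyclic bites taken out — is not amenable to the clean trirectangle dissection you invoke).

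Second, the normalization you propose, ``affine in $t$, scale $\rho$ by $f'/f$,'' does not make $\mu|_\alpha$ the identity, which is Part~(1) of the statement. The paper's choice is the $\tanh$-normalization $\tanh u'/\tanh d' = \tanh u/\tanh d$, $\ \tanh v'/\tanh h'(u') = \tanh v/\tanh h(u)$, and it is forced, not cosmetic: the trirectangle identity $\tanh\alpha(u) = \cosh c\cdot\tanh u = \coth\alpha\coth d\cdot\tanh u$ (Lemmas~\ref{lem:trirectangle} and~\ref{lem:pentagon}) shows that only a $\tanh$-linear reparametrization of $u$ turns $\tanh\alpha(u)\cdot\tanh\alpha$ into an invariant, so that $\alpha(u)=\alpha'(u')$ when $\alpha=\alpha'$ (Lemma~\ref{lem:mu_pf1}). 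An affine rescaling $u'=(d'/d)u$ fails this because $\tanh$ is not linear, so the ``adjustment so the prescribed sides are fixed pointwise'' you defer to is not a minor fix — it is the whole point, and carrying it out essentially forces you to rediscover the $\tanh$ map and the ensuing chain of estimates on $\partial u'/\partial u$, $\partial v'/\partial v$, $\partial v'/\partial u$, $\cosh v'/\cosh v$ in Subsection~\ref{subs-l2}. A minor point: the $\sqrt\delta$ in the paper comes from the positive-definiteness criterion for $(1\pm\delta_0)A-B$ as $2\times 2$ symmetric matrices, which forces the Jacobian entries to agree with identity to order $\delta_0^2$ (inequalities~\eqref{eqn:mu_pf2-1}--\eqref{eqn:mu_pf2-4}); it is a matrix-algebra effect, not an artifact of inverting $\cosh$ near~$1$.
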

\begin{remark*}
    Given two hyperbolic pairs of pants $P=P(\alpha, \beta, \gamma)$ and $P'=P(\alpha,\beta,\gamma')$, assume that $\varepsilon=\abs{\log(\gamma'/\gamma)}\leq2$. Bishop \cite[Theorem 1.1]{Bishop2002} had shown that, if $\alpha,\beta,\gamma,\gamma'\leq L$ for some constant $L>0$, then there exists a quasiconformal map $f:P\to P'$ which is affine on each of the boundary geodesics, such that its maximal dilation $K(f)$ satisfies 
    \[ K(f)\leq 1 + O(\varepsilon), \] 
    where the implied constant depends only on $L$. One may also see \cite{PT2012} of Papadopoulos-Th{\'e}ret and \cite{PY2015} of Papadopoulos-Yamada for related topics. 
\end{remark*}

To prove Theorem \ref{thm:stability}, we will apply the following topological lemma. 
\begin{lemma}\label{lem:largepants}
    Let $Y=Y_{g,n}$ be a bordered hyperbolic surface with genus $g\geq1$, and let $\{\gamma_i\}_{1\leq i\leq n}$ be its boundary components. Then there exists a pants decomposition of $Y$ such that 
    \begin{enumerate}
        \item each pair of pants contains at most one boundary component. 
        \item For each $1\leq  i \leq n$, if $P(\alpha,\beta,\gamma_i)$ is the pair of pants containing $\gamma_i$, then we have $\min\{\length(\alpha), \length(\beta)\} \geq 2\arcsinh1$. 
    \end{enumerate}
\end{lemma}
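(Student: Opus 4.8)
The plan is to use the collar lemma to explain the threshold $2\arcsinh 1$, and then to build the decomposition by first fixing a purely topological model and afterwards lengthening the curves adjacent to the boundary by Dehn twisting. First I would record the following consequence of the collar lemma (Theorem \ref{thm:collar_lemma}): if two distinct simple closed geodesics $\sigma,\sigma'$ of $Y$ intersect, then $\length(\sigma')\geq 2w(\sigma)$ by \eqref{eqn:cross_collar}, where $w(\sigma)=\arcsinh(1/\sinh(\length(\sigma)/2))$ by \eqref{eqn:collar_width}; hence $\length(\sigma)<2\arcsinh 1$ forces $\sinh(\length(\sigma)/2)<1$, so $w(\sigma)>\arcsinh 1$ and $\length(\sigma')>2\arcsinh 1$. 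Thus any two distinct simple closed geodesics of length $<2\arcsinh 1$ are disjoint, and since distinct simple closed geodesics are never isotopic (each free homotopy class has a unique geodesic representative, cf. \eqref{eqn:geodesic_smooth}), the set $\mathcal S$ of all interior simple closed geodesics of $Y$ with length $<2\arcsinh 1$ is a multicurve with at most $3g-3+n$ components. In particular any essential non-peripheral simple closed curve $c$ either lies in $\mathcal S$ or has $\length(c)\geq 2\arcsinh 1$, so it suffices to exhibit a pants decomposition with property (1) in which the two non-boundary curves of every boundary pair of pants avoid $\mathcal S$; when $\mathcal S=\emptyset$ any pants decomposition with property (1) already works.

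Next I would produce a topological model. Since $g\geq1$ we have $2g-2+n\geq n$, so there are at least as many pairs of pants as boundary components, and a pants decomposition $\mathcal P_0$ with property (1) exists. Concretely one may choose, for each $i$, a disjointly embedded pair of pants $P_i$ with $\gamma_i\subset\partial P_i$ and its other two boundary curves in the interior of $Y$: cutting $Y$ along all these curves leaves a surface of Euler characteristic $2-2g\leq 0$, which decomposes into pairs of pants (a union of annuli when $g=1$), and none of these further pairs of pants meets $\partial Y$.

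If $\mathcal S\neq\emptyset$, I would then correct the boundary pairs of pants of $\mathcal P_0$ one at a time. Given a boundary pair of pants $P(\alpha_i,\beta_i,\gamma_i)$ with $\{\alpha_i,\beta_i\}\cap\mathcal S\neq\emptyset$, form the subsurface $W_i$ by gluing $P_i$ to the pairs of pants of $\mathcal P_0$ adjacent to it across $\alpha_i$ and $\beta_i$; a short case analysis (a five-holed sphere or a twice-holed torus, depending on the adjacencies) shows $W_i$ contains a simple closed curve $\rho_i$ that crosses both $\alpha_i$ and $\beta_i$ and is disjoint from $\gamma_i$. A high power $T_{\rho_i}^{N}$ of the Dehn twist about $\rho_i$ carries $\mathcal P_0$ to a pants decomposition of the same combinatorial type — so property (1) persists — which agrees with $\mathcal P_0$ outside $W_i$, keeps $\gamma_i$ as a boundary of its pair of pants, and sends $\alpha_i,\beta_i$ to curves whose lengths tend to infinity with $N$; for $N$ large both images leave $\mathcal S$. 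Carrying this out for every $i$, with disjoint twisting regions when the boundary pairs of pants are non-adjacent and otherwise in sequence with rapidly increasing powers, produces the desired pants decomposition, and the dichotomy of Step 1 then gives $\min\{\length(\alpha_i),\length(\beta_i)\}\geq 2\arcsinh 1$.

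The hard part will be the coordination in the last step: ensuring that performing these Dehn twists never destroys property (1) and that a correction already made near $\gamma_i$ survives the later correction near an adjacent $\gamma_j$ (whose region $W_j$ may overlap $W_i$). This is arranged by ordering the corrections and using that a sufficiently high twist about a curve crossing a fixed simple closed curve only lengthens that curve, but it takes some bookkeeping of the subsurfaces $W_i$ and of which curves each twist moves. (The topological existence statement in Step 2, while standard, also needs a little care when $g=1$, where the complement of the $P_i$ degenerates to a union of annuli.)
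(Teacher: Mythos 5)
Your proposal takes a genuinely different route from the paper, although both hinge on the same collar-lemma dichotomy that you record in your Step 1: any two simple closed geodesics of length $< 2\arcsinh 1$ are disjoint, and any curve crossing a geodesic of length $< 2\arcsinh 1$ automatically has length $> 2\arcsinh 1$. The paper starts from two \emph{intersecting} non-separating simple closed geodesics (which exist because $g\geq 1$); by the collar lemma at least one, call it $\eta$, is long. It then builds a concrete pants decomposition with property (1) containing $\eta$, and whenever some $\alpha_i$ adjacent to a boundary pair of pants is short, it applies a single \emph{Whitehead move}: the replacement curve $\widetilde\alpha_i$ lives in the four-holed sphere around $\alpha_i$, intersects $\alpha_i$, and hence is long \emph{for free} by the very observation you made in Step 1. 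Since a Whitehead move alters exactly one curve of the decomposition and the replacement is already long, there is no coordination problem and the iteration terminates trivially.

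Your proposal uses Dehn twists with high powers instead, and this is where it has a real gap. You correctly note that $T_{\rho_i}^N(\alpha_i)$ and $T_{\rho_i}^N(\beta_i)$ become arbitrarily long as $N\to\infty$, and that property (1) is combinatorial and thus preserved. But you then acknowledge, without resolving, the ``hard part'': when $W_i$ and $W_j$ overlap (e.g.\ $\alpha_i=\alpha_j$), a later twist about $\rho_j$ acts on the curve already fixed near $\gamma_i$, and ``sufficiently high twist eventually lengthens'' is a statement about a single twist family, not about arbitrary compositions. The rapidly-increasing-powers strategy can be made to work, but it needs an argument — for instance convexity of length along twist paths, or a quantitative version of the asymptotic linear growth — and an explicit ordering of the $W_i$; without it the proof is incomplete. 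There is also a small gap at $g=1$ in your Step 2 (also acknowledged), where your construction of $\mathcal P_0$ degenerates; the paper sidesteps this by building the decomposition directly around $\eta$.

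The cleanest repair of your argument is already contained in your own Step 1: you do not need $N$ to be large at all. Since $\rho_i$ crosses $\alpha_i$, the curve $T_{\rho_i}(\alpha_i)$ (a single twist) already intersects $\alpha_i$, so if $\alpha_i$ is short, $T_{\rho_i}(\alpha_i)$ has length $\geq 2w(\alpha_i) > 2\arcsinh 1$ by \eqref{eqn:cross_collar}. If one then chooses $\rho_i$ to cross only the short curves among $\{\alpha_i,\beta_i\}$ (or, more directly, uses the Whitehead move as the paper does), each modification changes one curve and produces a long replacement outright, and the coordination problem disappears. Your Step 1 observation is exactly the lemma the paper leans on; the Dehn twist machinery and the ``large $N$'' limit are a detour that creates the bookkeeping burden you flagged.
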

\begin{proof}
    Since $g\geq1$, there exist at least two non-separating simple closed geodesics in $Y$ that intersect each other at one point which both cut $Y$ into a surface of genus $g-1$ with $n+2$ boundary geodesics. It follows from the standard Collar Lemma that at least one of them must have length $\geq 2\arcsinh 1$, which is denoted by $\eta$. Next we take a pants decomposition $$P=\{\alpha_0 = \eta,\alpha_1,\alpha_2,\cdots,\alpha_{n+1},\cdots\}$$ of $Y$ as illustrated in Figure \ref{fig:decomposition}. In particular, if $g=1$, then $\alpha_n=\eta$ and there is no $\alpha_{n+1}$. 
    \begin{figure}
        \centering
        \includegraphics[scale = 0.34]{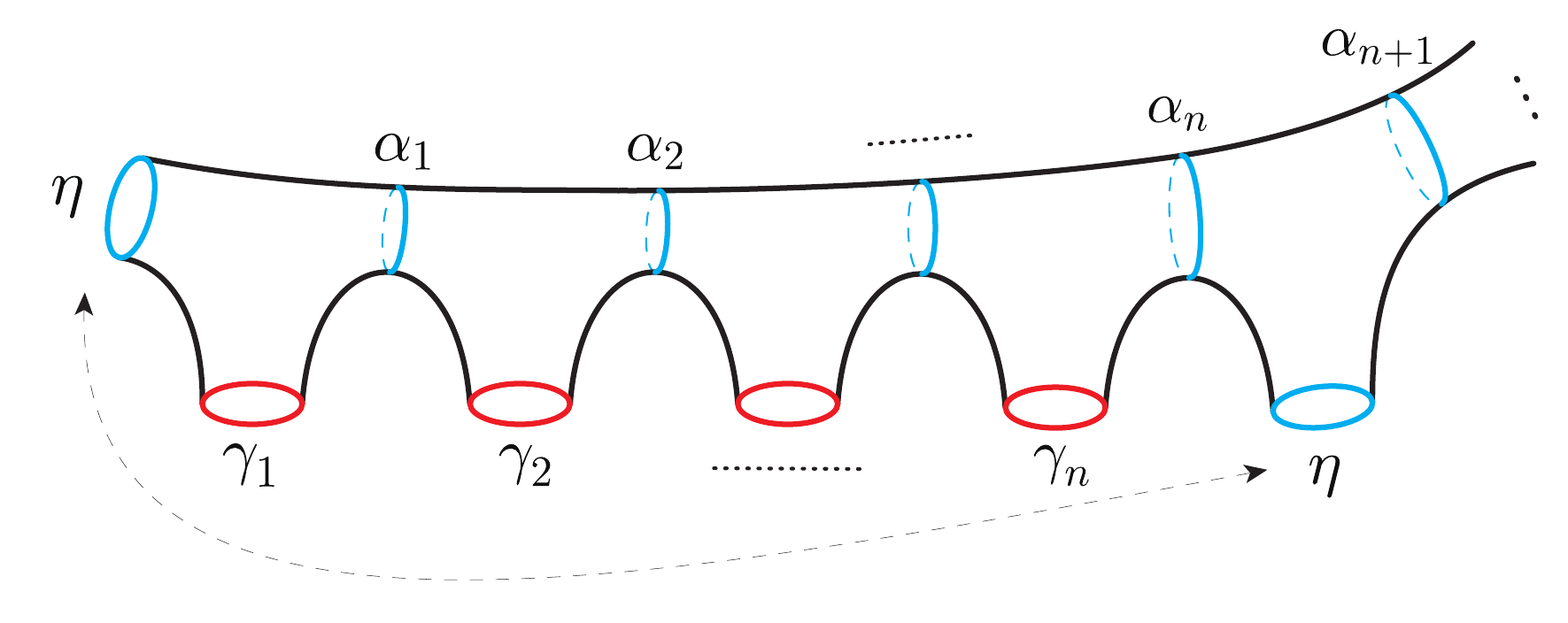}
        \caption{One special pants decomposition of $Y$.}
        \label{fig:decomposition}
    \end{figure}
    If $\length(\alpha_i) < 2\arcsinh1$ for some $1\leq i\leq n$, then one may apply the Whitehead move as illustrated in Figure \ref{fig:WhiteheadMove} to $\alpha_i$ to get another simple closed geodesic $\widetilde{\alpha}_i$ intersecting with $\alpha_i$. 
    \begin{figure}
        \centering
        \includegraphics[scale=0.25]{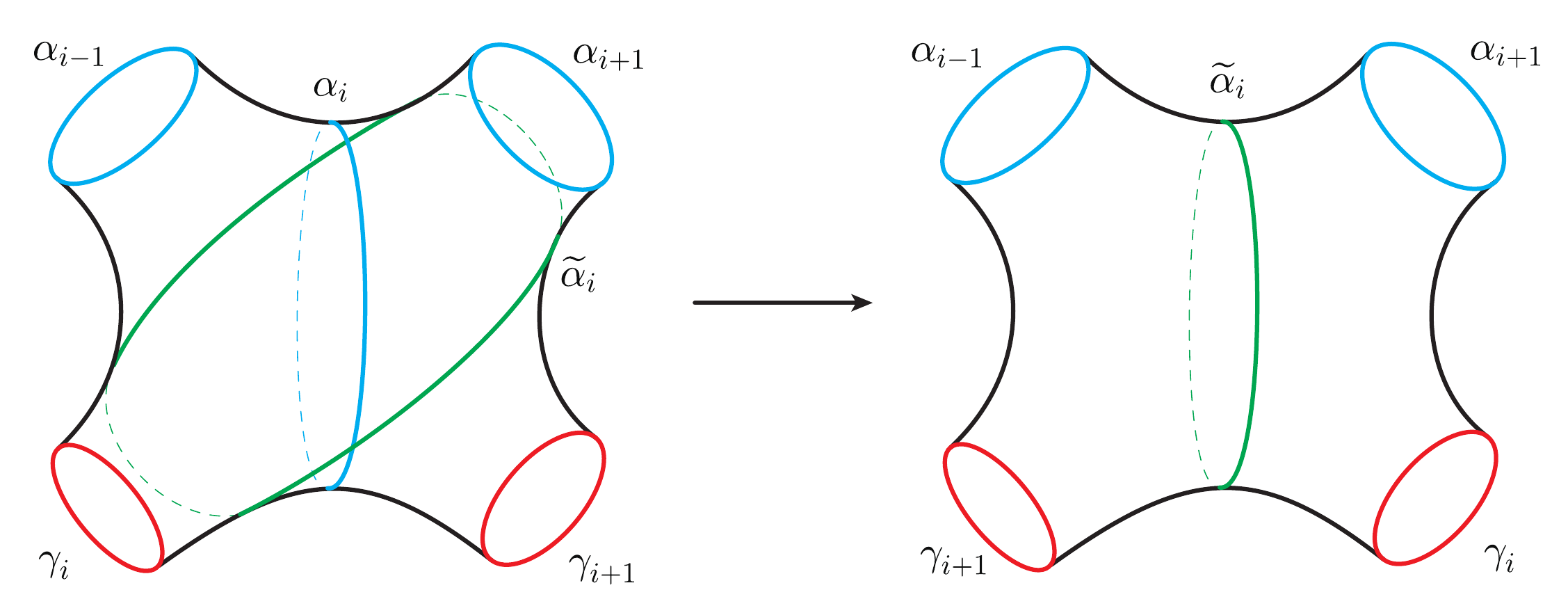}
        \caption{One Whitehead move with the property that each new pair of pants still has at most one boundary component of $Y$.}
        \label{fig:WhiteheadMove}
    \end{figure} 
    Now we replace $\alpha_i$ by $\widetilde{\alpha}_i$. The curve system $\left(\cup_{\gamma \in P\setminus \alpha_i}\gamma\right)  \cup \widetilde{\alpha}_i$ still forms a pants decomposition of $Y$ satisfying Part (1) of the lemma. From the standard Collar Lemma we have that $\length(\tilde{\alpha}_i)\geq 2 \arcsinh 1$. The conclusion then follows from the induction on $n$. 
\end{proof}

\underline{Construction of $Y'_{g,n}$}. 
Let $Y = Y_{g.n}$. Since $g\geq1$, by Lemma \ref{lem:largepants}, we can find a pants decomposition of $Y$ such that 
\begin{equation*}
    Y = \bigcup_{i=1}^{2g-2+n} P_i, \textrm{\quad and\quad} \gamma_i\subset P_i,\ \Forall 1\leq i\leq n. 
\end{equation*}
For each $1\leq i\leq n$, replace the pair of pants $P_i=P_i(\alpha_i, \beta_i, \gamma_i)$ by the pair of pants $P_i(\alpha_i, \beta_i, \gamma'_i)$, where $\ell(\gamma'_i) = (1+\delta_i)\cdot\ell(\gamma_i)$. Then the bordered compact hyperbolic $Y_{g.n}$ is obtained by gluing back the pairs of pants 
$$\{P_1(\alpha_1, \beta_1, \gamma'_1),\cdots, P_n(\alpha_n, \beta_n, \gamma'_n),P_{n+1},\cdots, P_{2g-2+n}\}$$ 
in an obvious way (see Figure \ref{fig:Y'} for illustration), without changing the twist parameters along the simple closed geodesics other than $\{\alpha_i,\beta_i\}_{i=1}^n$ in the pants decomposition. 
\begin{figure}
    \centering
    \includegraphics[scale=0.32]{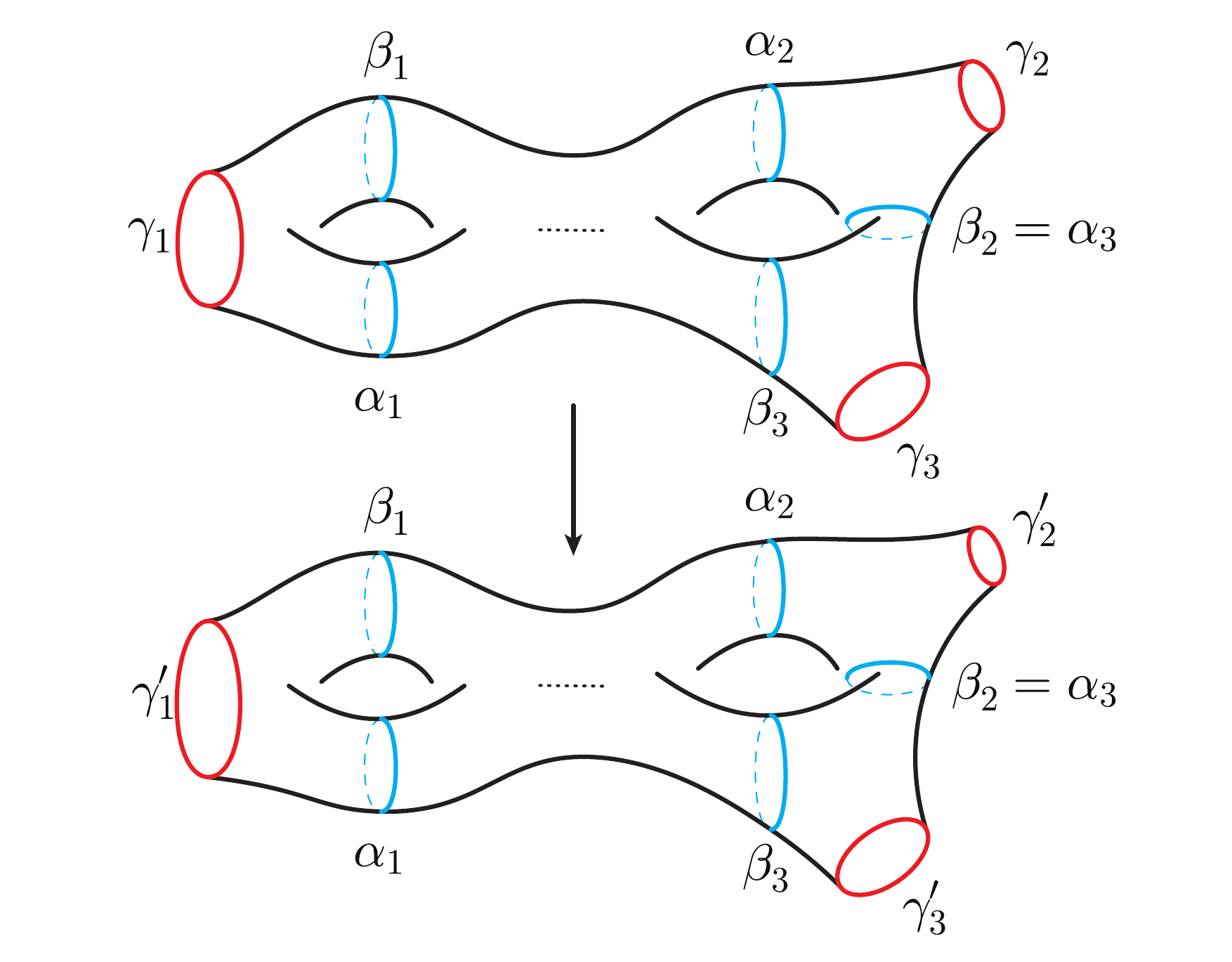}
    \caption{The construction of $Y'$.}
    \label{fig:Y'}
\end{figure}

\begin{remark*}
    Similar constructions had also been studied by Parlier \cite{Parlier2005} and Wu-Xue \cite{WX2022} for other interests. 
\end{remark*}

By an abuse of notation, define a map $\mu:Y\to Y'=Y'_{g,n}$ by 
\begin{equation}\label{eqn:mu_of_Y}
    \mu\,\big|_{P_i} = 
    \begin{cases}
        \,\mu:P_i\to P'_i, &1\leq i\leq n,\\
        \mathrm{id}:P_i\to P_i, &n+1\leq i, 
    \end{cases}
\end{equation}
where $\mathrm{id}$ denotes the identity map, and $\mu:P_i\to P'_i$ is given by Lemma \ref{prop:mu_pants}. Since $\mu|_{\alpha_i}$ and $\mu|_{\beta_i}$ are identity maps for each $i\in[1,n]$,  it follows that $\mu:Y\to Y'$ is a piecewise smooth homeomorphism. 
Denote by $ds_Y^2$ and $ds_{Y'}^2$ the hyperbolic metrics of $Y$ and $Y'$, respectively, then at the points where $\mu$ is smooth, by Part (2) of Proposition \ref{prop:mu_pants} we have 
\begin{proposition}\label{prop:comparison_metric_Y}
    For $\delta$ sufficiently small, we have 
    \[ \big(1+O(\sqrt{\delta})\big) \cdot ds_{Y}^2 \geq \mu^*(ds_{Y'}^2) \geq \big(1-O(\sqrt{\delta})\big) \cdot ds_{Y}^2, \] 
    where the implied constants depend only on $\ell$. 
\end{proposition}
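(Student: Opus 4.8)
The plan is to deduce Proposition \ref{prop:comparison_metric_Y} directly from Proposition \ref{prop:mu_pants} by working pair-of-pants by pair-of-pants, exploiting that $Y$ and $Y'$ have been built from the \emph{same} collection of pants curves and the \emph{same} twist parameters, with only the pants $P_1,\dots,P_n$ carrying a boundary component of $Y$ being altered.

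First I would record that, by Fenchel--Nielsen theory, the hyperbolic metric $ds_{Y'}^2$ restricts to the intrinsic hyperbolic metric of each pair of pants used in the construction of $Y'$: for $n+1\le i\le 2g-2+n$ this is $ds_{P_i}^2$, exactly as for $Y$ (these pants are reglued with unchanged boundary lengths and twists), and for $1\le i\le n$ it is $ds_{P_i'}^2$ with $P_i' = P_i(\alpha_i,\beta_i,\gamma_i')$. By the definition \eqref{eqn:mu_of_Y}, on each $P_i$ with $i>n$ the map $\mu$ is the identity, so $\mu^*(ds_{Y'}^2) = ds_Y^2$ there verbatim; on each $P_i$ with $i\le n$ the restriction $\mu|_{P_i}$ is the map $P_i\to P_i'$ furnished by Proposition \ref{prop:mu_pants}.

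Next I would verify that Proposition \ref{prop:mu_pants} applies to all of $P_1,\dots,P_n$ with a single, $\ell$-dependent constant. The hypothesis $\length(\alpha_i),\length(\beta_i)\ge 2\arcsinh 1$ is exactly Part (2) of Lemma \ref{lem:largepants}; the bound $\length(\gamma_i)\le\ell$ is a hypothesis of Theorem \ref{thm:stability}, and since $\length(\gamma_i') = (1+\delta_i)\length(\gamma_i)$ we have $\length(\gamma_i')\le 2\ell$ once $\delta$ is small, so one invokes Proposition \ref{prop:mu_pants} with $2\ell$ in place of $\ell$; the relevant ratio $\abs{\length(\gamma_i)/\length(\gamma_i')-1} = |\delta_i|/(1+\delta_i)$ is $O(\delta)$ (in particular $\le 2\delta$ for $\delta$ small), so the smallness condition $450\,e^{10\ell}\sqrt{\delta}\le 1$ holds for $\delta$ sufficiently small, and since $\sqrt{O(\delta)} = O(\sqrt\delta)$ the resulting pointwise two-sided bound $(1+O(\sqrt\delta))\,ds_{P_i}^2\ge\mu^*(ds_{P_i'}^2)\ge(1-O(\sqrt\delta))\,ds_{P_i}^2$ holds on the smooth locus of $\mu|_{P_i}$ with implied constant depending only on $\ell$, and not on $i$ or on $Y$.

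Finally I would assemble the pieces: the smooth locus of $\mu$ on $Y$ is the union of the smooth loci on the individual $P_i$, on each of which the two-sided comparison holds --- trivially with constant $1$ for $i>n$, and with the common $\ell$-dependent constant for $i\le n$. Taking the larger of these finitely many constants (still a function of $\ell$ alone) gives the assertion on all of $Y$. I do not anticipate a real obstacle, since all the analytic content sits in Proposition \ref{prop:mu_pants}; the only points to be careful about are (i) that the pants outside $P_1,\dots,P_n$ genuinely carry the old metric, so that $\mu$ is a bona fide piecewise-smooth homeomorphism with the trivial metric comparison there, and (ii) that the implied constant remains uniform, which it does because it only ever depends on the boundary-length bound $\ell$.
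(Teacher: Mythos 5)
Your proposal is correct and follows essentially the same route as the paper, which simply asserts the proposition as an immediate consequence of Part (2) of Proposition \ref{prop:mu_pants} applied on each modified pair of pants $P_1,\dots,P_n$ and the identity comparison on the remaining pants. Your fleshing-out of the hypothesis checks---that $\length(\gamma_i'),\length(\gamma_i)\le 2\ell$ for $\delta<1$, that $\abs{\length(\gamma_i)/\length(\gamma_i')-1}=|\delta_i|/(1+\delta_i)=O(\delta)$ so $O(\sqrt{\cdot})=O(\sqrt\delta)$, and that the resulting constant depends only on $\ell$ and not on $i$ or $Y$---is exactly what is implicit in the paper's one-line derivation.
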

As direct corollaries of Proposition \ref{prop:comparison_metric_Y}, we have 
\begin{corollary}\label{cor:lipschitz}
    For $\delta$ sufficiently small, $\mu:Y\to Y'$ is a bi-Lipschitz map with Lipschitz constant $K_\mu$ satisfying
    \[ \abs{K_\mu - 1} = O(\sqrt{\delta}), \] 
    where the implied constant depends only on $\ell$. 
\end{corollary}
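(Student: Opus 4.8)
The plan is to upgrade the infinitesimal metric comparison of Proposition~\ref{prop:comparison_metric_Y} to a global bi-Lipschitz estimate by integrating along paths — the standard mechanism by which a pointwise sandwich $\tfrac1{K^2}g \le f^*g' \le K^2 g$ between Riemannian metrics forces an honest $K$-bi-Lipschitz bound for the induced length distances. Write the conclusion of Proposition~\ref{prop:comparison_metric_Y} as
\[ (1-C\sqrt{\delta})\,ds_Y^2 \;\le\; \mu^*(ds_{Y'}^2) \;\le\; (1+C\sqrt{\delta})\,ds_Y^2, \]
with $C=C(\ell)$, valid wherever $\mu$ is smooth. Taking square roots, at such points $|d\mu(v)|_{Y'} \le (1+C\sqrt{\delta})^{1/2}\,|v|_Y$ for every tangent vector $v$; substituting $w=d\mu(v)$, the lower bound rephrases as $|d\mu^{-1}(w)|_Y \le (1-C\sqrt{\delta})^{-1/2}\,|w|_{Y'}$, so the same considerations apply to $\mu^{-1}\colon Y'\to Y$, which is smooth off the image of the bad set.

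Next I would fix $p,q\in Y$, choose a near-minimizing rectifiable path $c$ from $p$ to $q$ in $Y$ with $\length_Y(c)<\dist_Y(p,q)+\varepsilon$, and arrange (after an arbitrarily small modification of, say, a piecewise geodesic representative) that $c$ meets the set of non-smoothness in a subset of parameters of measure zero. Then $\mu\circ c$ is rectifiable from $\mu(p)$ to $\mu(q)$ with $|(\mu\circ c)'(t)|_{Y'}\le(1+C\sqrt{\delta})^{1/2}\,|c'(t)|_Y$ for a.e.\ $t$, so $\dist_{Y'}(\mu(p),\mu(q))\le\length_{Y'}(\mu\circ c)\le(1+C\sqrt{\delta})^{1/2}(\dist_Y(p,q)+\varepsilon)$; letting $\varepsilon\to0$ and then running the identical argument with $\mu^{-1}$ in place of $\mu$ yields
\[ (1-C\sqrt{\delta})^{1/2}\,\dist_Y(p,q) \;\le\; \dist_{Y'}(\mu(p),\mu(q)) \;\le\; (1+C\sqrt{\delta})^{1/2}\,\dist_Y(p,q). \]
Hence $\mu$ is bi-Lipschitz with constant $K_\mu=(1-C\sqrt{\delta})^{-1/2}$, and since $(1-C\sqrt{\delta})^{-1/2}=1+\tfrac{C}{2}\sqrt{\delta}+O(\delta)$ as $\delta\to0$ we conclude $\abs{K_\mu-1}=O(\sqrt{\delta})$ with the implied constant depending only on $\ell$, as claimed.

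I expect the only real obstacle to be bookkeeping rather than mathematics: $\mu$ is only \emph{piecewise} smooth, so the infinitesimal bound is available merely off a finite $1$-complex $B\subset Y$ (assembled from the pants curves $\{\alpha_i,\beta_i\}$ and the arcs of non-smoothness of each $\mu|_{P_i}$ coming from Proposition~\ref{prop:mu_pants}). One must therefore make sure the approximating paths can be taken to cross $B$ in a null set of parameters — which is easy given how much freedom there is in selecting near-minimizing paths, since $B$ has empty interior — after which the path-length computation above proceeds verbatim. Everything else is routine two-dimensional Riemannian geometry.
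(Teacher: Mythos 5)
Your proposal is correct and fills in exactly the argument the paper leaves implicit: the paper states this corollary (together with Corollary~\ref{cor:comparison_volume_Y}) with no proof, treating it as a direct consequence of Proposition~\ref{prop:comparison_metric_Y}, and the standard route you take — taking square roots of the pointwise metric sandwich, integrating along near-minimizing paths, and handling the measure-zero singular locus of the piecewise-smooth map $\mu$ by a small transversal perturbation — is precisely the canonical way to pass from an infinitesimal conformal-factor bound to a global bi-Lipschitz bound. The final bookkeeping $K_\mu = (1-C\sqrt{\delta})^{-1/2} = 1 + O(\sqrt{\delta})$ is also as intended.
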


\begin{corollary}\label{cor:comparison_volume_Y}
    For $\delta$ sufficiently small, we have 
    \[ \big(1+O(\sqrt{\delta})\big) \cdot \dvol_{Y} \geq \mu^*(\dvol_{Y'}) \geq \big(1-O(\sqrt{\delta})\big) \cdot \dvol_{Y}, \] 
    where the implied constants depend only on $\ell$. 
\end{corollary}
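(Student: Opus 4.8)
The plan is to deduce this pointwise from the metric comparison in Proposition \ref{prop:comparison_metric_Y} by passing to determinants, using crucially that $Y$ is two-dimensional. Fix a point $p\in Y$ at which $\mu$ is smooth and choose a local conformal coordinate near $p$ in which $ds_Y^2$ is represented by a scalar multiple of the identity; let $G$ denote the symmetric positive-definite matrix representing the pulled-back quadratic form $\mu^*(ds_{Y'}^2)$ at $p$ with respect to the $ds_Y^2$-orthonormal frame. By the very definition of the Riemannian volume forms one has $\mu^*(\dvol_{Y'}) = \sqrt{\det G}\,\dvol_Y$ at $p$, so it suffices to show $\sqrt{\det G} = 1 + O(\sqrt{\delta})$ with implied constant depending only on $\ell$.

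First I would restate Proposition \ref{prop:comparison_metric_Y} as an inequality of quadratic forms: there is a constant $C=C(\ell)>0$ such that, for all sufficiently small $\delta$ and all tangent vectors $v$,
\[ (1-C\sqrt{\delta})\, ds_Y^2(v,v) \leq \mu^*(ds_{Y'}^2)(v,v) \leq (1+C\sqrt{\delta})\, ds_Y^2(v,v). \]
In the chosen frame this says $(1-C\sqrt{\delta})I \leq G \leq (1+C\sqrt{\delta})I$, hence both eigenvalues $\lambda_1(G),\lambda_2(G)$ lie in the interval $[1-C\sqrt{\delta},\,1+C\sqrt{\delta}]$. Therefore
\[ \det G = \lambda_1(G)\lambda_2(G) \in \big[\,(1-C\sqrt{\delta})^2,\ (1+C\sqrt{\delta})^2\,\big], \]
and taking square roots gives $\sqrt{\det G} \in [1-C\sqrt{\delta},\,1+C\sqrt{\delta}]$, which is exactly the asserted two-sided bound with implied constant $C(\ell)$.

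There is no real obstacle here; the two points that need care are both inherited from the input. The uniformity of the estimate over all $Y_{g,n}$ is immediate, since the constant $C$ comes directly from the uniform bound in Proposition \ref{prop:comparison_metric_Y} (equivalently, Part (2) of Proposition \ref{prop:mu_pants}). And the fact that $\mu$ is only piecewise smooth is harmless: the identity $\mu^*(\dvol_{Y'}) = \sqrt{\det G}\,\dvol_Y$ is only claimed away from the measure-zero set where $\mu$ fails to be smooth, which matches the scope of the statement. Alternatively one could argue from Corollary \ref{cor:lipschitz}, using that a $K_\mu$-bi-Lipschitz map between surfaces distorts area by a factor in $[K_\mu^{-2},K_\mu^{2}]$ together with $|K_\mu^{\pm2}-1| = O(\sqrt{\delta})$; but the determinant computation is cleaner and yields the sharp two-sided bound directly.
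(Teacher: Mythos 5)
Your argument is correct and is precisely the ``direct corollary'' computation the paper has in mind: the paper gives no explicit proof, treating the volume-form comparison as an immediate consequence of the two-sided quadratic-form bound in Proposition \ref{prop:comparison_metric_Y}, exactly as you do via eigenvalues of $G$ and $\sqrt{\det G}$. Nothing to add.
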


\underline{Comparison between Neumann eigenvalues}. 
In what follows, we compare $\sigma_1(Y)$ and $\sigma_1(Y')$. Let $f$ be a first Neumann eigenfunction of $Y'$, then $f\circ\mu$ is a test function in the Sobolev space $H^{1,2}(Y)$. Similarly to the proof of Theorem \ref{thm:comparison_eigenvalue}, we will compare the Rayleigh quotients of $f$ and $f\circ \mu$. The proof is divided into three lemmas: 

\begin{lemma}\label{lem:stab2}
    We have \[ \int_Y (f\circ \mu)^2 \dvol_{Y} \geq \big(1-O(\sqrt{\delta})\big) \cdot \int_{Y'} f^2 \dvol_{Y'}. \] 
\end{lemma}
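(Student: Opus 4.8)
The plan is to reduce the statement to the change-of-variables formula together with the volume comparison of Corollary \ref{cor:comparison_volume_Y}. First I would record that both sides are finite: $f$ is a genuine first Neumann eigenfunction of the compact surface $Y'$, hence smooth up to $\pa Y'$ and in particular $f\in L^2(Y')$; and $f\circ\mu\in L^2(Y)$ because $\mu\colon Y\to Y'$ is bi-Lipschitz by Corollary \ref{cor:lipschitz}.

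Next I would use that $\mu$ is a piecewise smooth homeomorphism to apply the change-of-variables formula cell by cell. Writing $Y=\bigcup_i P_i$ and further subdividing each $P_i$ into the finitely many pieces on which the pants map of Proposition \ref{prop:mu_pants} is smooth, $\mu$ restricts to a diffeomorphism of each such cell onto its image, so the usual change-of-variables identity holds there. The locus where $\mu$ fails to be smooth is a finite union of Lipschitz curves — the preimages of the gluing geodesics $\alpha_i,\beta_i$ and of the arcs along which the pants maps are only piecewise smooth — hence has $\dvol_Y$-measure zero, and its image has $\dvol_{Y'}$-measure zero. Summing the identity over the smooth cells therefore gives
\[ \int_{Y'} f^2\,\dvol_{Y'} = \int_Y (f\circ\mu)^2\,\mu^*(\dvol_{Y'}). \]

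Then I would invoke Corollary \ref{cor:comparison_volume_Y}, which on the smooth locus gives the pointwise bound $\mu^*(\dvol_{Y'})\le\big(1+O(\sqrt\delta)\big)\dvol_Y$ with implied constant depending only on $\ell$. Since $(f\circ\mu)^2\ge0$, this yields
\[ \int_{Y'} f^2\,\dvol_{Y'} \le \big(1+O(\sqrt\delta)\big)\int_Y (f\circ\mu)^2\,\dvol_Y, \]
and dividing through (legitimate once $\delta$ is small enough that $1+O(\sqrt\delta)>0$) and using $\frac{1}{1+O(\sqrt\delta)}=1-O(\sqrt\delta)$ completes the proof.

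The only step needing any care — and the main, rather mild, obstacle — is the measure-theoretic bookkeeping that the non-smooth locus of $\mu$ is $\dvol_Y$-null, so that the change-of-variables formula may be applied on each smooth cell and summed; the rest is a single pointwise inequality. (Alternatively one could bypass this by approximating $f\circ\mu$ in $H^{1,2}(Y)$ by smooth functions, but the direct null-set argument is cleaner here, and the same remark will serve for the companion Dirichlet-energy comparison.)
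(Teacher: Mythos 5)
Your proof is correct and follows the same route as the paper's: change of variables for the bi-Lipschitz map $\mu$, then the volume comparison of Corollary \ref{cor:comparison_volume_Y}. The only difference is expository — the paper compresses the measure-theoretic justification into the single phrase ``Since $\mu$ is Lipschitz,'' whereas you spell out that the non-smooth locus of $\mu$ is a $\dvol_Y$-null union of curves so the change-of-variables identity may be applied cell by cell; both are fine, and your version makes the bookkeeping explicit without changing the argument.
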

\begin{proof}
    Since $\mu$ is Lipschitz, we can apply the change of variables formula to obtain
    \begin{equation*}
        \int_{Y'} f^2 \dvol_{Y'} = \int_Y (f\circ \mu)^2 \,\mu^*(\dvol_{Y'}),
    \end{equation*}
    thus combining with Corollary \ref{cor:comparison_volume_Y} the conclusion follows. 
\end{proof}

\begin{lemma}\label{lem:stab3}
    We have \[ \frac{1}{\area(Y)}\Big(\int_Y f\circ \mu \dvol_{Y}\Big)^2 \leq O(\sqrt{\delta}) \cdot \int_{Y'} f^2 \dvol_{Y'}. \] 
\end{lemma}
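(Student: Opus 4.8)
The plan is to control the integral $\int_Y (f\circ\mu)\,\dvol_Y$ by relating it to $\int_{Y'} f\,\dvol_{Y'}$, which vanishes because $f$ is a first Neumann eigenfunction of $Y'$. First I would apply the change-of-variables formula: since $\mu$ is bi-Lipschitz (Corollary \ref{cor:lipschitz}), one has
\[
\int_Y (f\circ\mu)\,\mu^*(\dvol_{Y'}) = \int_{Y'} f\,\dvol_{Y'} = 0.
\]
Therefore $\int_Y (f\circ\mu)\,\dvol_Y = \int_Y (f\circ\mu)\,\big(\dvol_Y - \mu^*(\dvol_{Y'})\big)$. By Corollary \ref{cor:comparison_volume_Y}, the signed measure $\dvol_Y - \mu^*(\dvol_{Y'})$ has total variation at most $O(\sqrt\delta)\cdot\dvol_Y$, so by Cauchy–Schwarz,
\[
\Big|\int_Y (f\circ\mu)\,\dvol_Y\Big|
\le O(\sqrt\delta)\int_Y |f\circ\mu|\,\dvol_Y
\le O(\sqrt\delta)\Big(\int_Y (f\circ\mu)^2\,\dvol_Y\Big)^{1/2}\big(\area(Y)\big)^{1/2}.
\]

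Next I would square this and divide by $\area(Y)$, obtaining
\[
\frac{1}{\area(Y)}\Big(\int_Y (f\circ\mu)\,\dvol_Y\Big)^2
\le O(\delta)\int_Y (f\circ\mu)^2\,\dvol_Y.
\]
Then I would invoke the other half of Corollary \ref{cor:comparison_volume_Y} (or equivalently the change-of-variables identity together with the two-sided volume comparison) to replace $\int_Y (f\circ\mu)^2\,\dvol_Y$ by $\big(1+O(\sqrt\delta)\big)\int_{Y'} f^2\,\dvol_{Y'}$, exactly as in Lemma \ref{lem:stab2}. Since $O(\delta)\cdot\big(1+O(\sqrt\delta)\big) = O(\sqrt\delta)$ for $\delta$ small, this yields
\[
\frac{1}{\area(Y)}\Big(\int_Y f\circ\mu\,\dvol_Y\Big)^2 \le O(\sqrt\delta)\cdot\int_{Y'} f^2\,\dvol_{Y'},
\]
as claimed. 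Note that $\area(Y)=\area(Y')$ by Gauss–Bonnet since $\mu$ preserves the topological type, so one may freely use either area in the normalization.

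The only mild subtlety — hardly an obstacle — is that $\mu$ is merely piecewise smooth, not globally $C^1$; but $\mu$ is Lipschitz by Corollary \ref{cor:lipschitz}, which is all that is needed to justify the change-of-variables formula for the $L^2$ and $L^1$ integrals above (the exceptional set where $\mu$ fails to be smooth is a finite union of curves, hence of measure zero). So the argument is essentially a two-line application of the volume comparison corollaries together with the vanishing of the mean of $f$ over $Y'$; the main point worth stating carefully is the bookkeeping that turns an $O(\delta)$ times an $L^2$-norm into the desired $O(\sqrt\delta)$ bound.
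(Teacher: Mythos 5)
Your argument is correct and is essentially identical to the paper's proof: both use the vanishing of $\int_{Y'} f\,\dvol_{Y'}$, the change of variables through $\mu$, Corollary~\ref{cor:comparison_volume_Y} to extract the $O(\sqrt{\delta})$ factor, Cauchy--Schwarz to introduce $\sqrt{\area(Y)}$, and the $L^2$ comparison from Lemma~\ref{lem:stab2} (equivalently Corollary~\ref{cor:comparison_volume_Y} again). The observation $O(\delta)(1+O(\sqrt\delta)) = O(\sqrt\delta)$ at the end is the same bookkeeping the paper uses implicitly; the remark about $\area(Y)=\area(Y')$ is true but unnecessary, since only $\area(Y)$ appears in the statement.
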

\begin{proof}
    Since $f$ is a first Neumann eigenfunction of $Y'$, and $\mu$ is Lipschitz, we have 
    \begin{equation*}
        0 = \int_{Y'} f \dvol_{Y'} = \int_Y (f\circ \mu) \,\mu^*(\dvol_{Y'}). 
    \end{equation*}
    By Corollary \ref{cor:comparison_volume_Y} we have 
    \begin{equation}\label{eqn:stab3-1}
        \begin{aligned}
            \abs{\int_Y f\circ \mu \dvol_{Y}}
            &  =  \abs{\int_Y f\circ \mu \dvol_{Y} - \int_Y (f\circ \mu) \,\mu^*(\dvol_{Y'})}\\ 
            &\leq O(\sqrt{\delta}) \cdot \int_{Y} \abs{f\circ \mu} \dvol_{Y}.
        \end{aligned}
    \end{equation}
    By Cauchy's inequality, 
    \begin{equation}\label{eqn:stab3-2}
        \int_{Y} \abs{f\circ \mu} \dvol_{Y} \leq \sqrt{\area(Y)}\sqrt{\int_{Y} (f \circ \mu)^2 \dvol_{Y}}, 
    \end{equation}
    again by Corollary \ref{cor:comparison_volume_Y} we have 
    \begin{equation}\label{eqn:stab3-3}
            \begin{aligned}
        \int_Y (f\circ \mu)^2 \dvol_{Y} &\leq \big(1+O(\sqrt{\delta})\big) \cdot \int_Y (f\circ \mu)^2 \mu^*(\dvol_{Y'})\\
        &= \big(1+O(\sqrt{\delta})\big)\cdot \int_{Y'} f^2 \dvol_{Y'}.
            \end{aligned}
    \end{equation}
    Combining \eqref{eqn:stab3-1}, \eqref{eqn:stab3-2}, and \eqref{eqn:stab3-3} the conclusion follows. 
\end{proof}

\begin{lemma}\label{lem:stab1}
    We have 
    \[ \int_{Y'}\abs{\nabla_{Y'} f}^2_{Y'} \dvol_{Y'} \geq \big(1-O(\sqrt{\delta})\big) \cdot \int_Y\abs{\nabla_{Y} (f\circ \mu)}^2_{Y} \dvol_{Y}. \] 
\end{lemma}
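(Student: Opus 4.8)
The plan is to transfer the metric comparison of Proposition~\ref{prop:comparison_metric_Y} into a comparison of Dirichlet integrals, pointwise through the differential of $\mu$, in the same spirit as the proofs of Lemmas~\ref{lem:stab2} and~\ref{lem:stab3}. Since $\mu\colon Y\to Y'$ is bi-Lipschitz by Corollary~\ref{cor:lipschitz} and piecewise smooth, it is smooth off a finite union of geodesics, a set of measure zero; hence $f\circ\mu\in H^{1,2}(Y)$ and its weak gradient is given almost everywhere by the chain rule, so it suffices to compare the integrands away from this negligible set.

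Fix a point $p\in Y$ at which $\mu$ is smooth and put $A_p\df d\mu_p\colon T_pY\to T_{\mu(p)}Y'$. With respect to the metrics $ds_Y^2$ and $ds_{Y'}^2$, Proposition~\ref{prop:comparison_metric_Y} says precisely that the singular values $s_1(p)\geq s_2(p)>0$ of $A_p$ satisfy $1-O(\sqrt\delta)\leq s_2(p)\leq s_1(p)\leq 1+O(\sqrt\delta)$. Let $A_p^{*}\colon T_{\mu(p)}Y'\to T_pY$ be the adjoint of $A_p$ for these inner products; it has the same singular values as $A_p$. By the chain rule $\nabla_Y(f\circ\mu)(p)=A_p^{*}\big((\nabla_{Y'}f)(\mu(p))\big)$, whence
\[ \abs{\nabla_Y(f\circ\mu)(p)}^2\leq s_1(p)^2\,\abs{(\nabla_{Y'}f)(\mu(p))}^2\leq\big(1+O(\sqrt\delta)\big)\abs{(\nabla_{Y'}f)(\mu(p))}^2, \]
while the Jacobian of $\mu$ at $p$ equals $\jac(\mu)(p)=s_1(p)s_2(p)\geq 1-O(\sqrt\delta)$.

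Integrating over $Y$ and applying the change of variables formula for the bi-Lipschitz map $\mu$ gives
\begin{align*}
    \int_Y\abs{\nabla_Y(f\circ\mu)}^2\dvol_Y
    &\leq \big(1+O(\sqrt\delta)\big)\int_Y\abs{(\nabla_{Y'}f)\circ\mu}^2\dvol_Y\\
    &\leq \big(1+O(\sqrt\delta)\big)\int_Y\abs{(\nabla_{Y'}f)\circ\mu}^2\,\jac(\mu)\dvol_Y\\
    &= \big(1+O(\sqrt\delta)\big)\int_{Y'}\abs{\nabla_{Y'}f}^2\dvol_{Y'},
\end{align*}
where in the second inequality we used that $\jac(\mu)\geq 1-O(\sqrt\delta)$ implies $1\leq\big(1+O(\sqrt\delta)\big)\jac(\mu)$ pointwise, and where the product of these finitely many factors of the form $1+O(\sqrt\delta)$ is again of that form for $\delta$ small. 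Rearranging yields the claimed inequality, with implied constants depending only on $\ell$. Conceptually this is simply the two-dimensional fact that the Dirichlet energy is stable under a bi-Lipschitz change of metric with distortion $1+O(\sqrt\delta)$, up to a multiplicative error of the same order.

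The computation is entirely routine once Proposition~\ref{prop:comparison_metric_Y} is in hand; the only points needing care are that the non-smooth locus of $\mu$ is negligible (handled by bi-Lipschitzness and the resulting validity of the chain rule and area formula), and that the Jacobian correction must be inserted in the direction dictated by $\jac(\mu)\geq 1-O(\sqrt\delta)$, equivalently $\dvol_Y\leq\big(1+O(\sqrt\delta)\big)\mu^{*}\dvol_{Y'}$, which is exactly what makes the inequality point the right way. There is no genuine obstacle here.
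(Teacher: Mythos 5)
Your proof is correct, and it is essentially the argument the paper gives: both rely on Proposition~\ref{prop:comparison_metric_Y} to control the pointwise distortion of $d\mu$, on Corollary~\ref{cor:comparison_volume_Y} (or equivalently the Jacobian bound $\jac(\mu)\geq 1-O(\sqrt{\delta})$) for the volume forms, and on the change of variables formula. The paper phrases the pointwise step in explicit matrix coordinates — writing both integrands as quadratic forms in $(\pa_1 f,\pa_2 f)$ and reducing to an inequality of positive definite matrices via $A\geq B\iff B^{-1}\geq A^{-1}$ — whereas you phrase it via singular values of $d\mu_p$ and the operator norm of the adjoint; these are the same fact expressed more and less invariantly, so no substantive difference in approach.
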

\begin{proof}
    Since $\mu$ is Lipschitz, by the change of variables formula we have 
    \begin{equation}\label{eqn:stab1-1}
        \begin{aligned}
            \int_{Y'}\abs{\nabla_{Y'} f}^2_{Y'} \dvol_{Y'} 
            & = \int_{Y'}\abs{\dif f}^2_{Y'} \dvol_{Y'}\\ 
            & = \int_{Y} \big(\abs{\dif f}^2_{Y'} \circ \mu \big) \mu^*(\dvol_{Y'}).
        \end{aligned}
    \end{equation}
    Moreover, the Jacobian matrix $\jac(\mu)$ of $\mu$ is a.e. well-defined, hence we have 
    \begin{equation}\label{eqn:stab1-2}
        \begin{aligned}
            \int_Y\abs{\nabla_{Y} (f\circ \mu)}^2_{Y} \dvol_{Y}
            & = \int_Y\abs{\dif (f\circ \mu)}^2_{Y} \dvol_{Y}\\
            & = \int_Y\abs{(\dif f\circ \mu) \cdot \jac(\mu)}^2_{Y} \dvol_{Y}. 
        \end{aligned}
    \end{equation}
    Recall that Corollary \ref{cor:comparison_volume_Y} tells 
    \begin{equation*}
        \mu^*(\dvol_{Y'}) \geq  \big(1-O(\sqrt{\delta})\big) \cdot \dvol_{Y}, 
    \end{equation*}
    thus by \eqref{eqn:stab1-1} and \eqref{eqn:stab1-2}, to prove the lemma it suffices to prove that 
    \begin{equation}\label{eqn:stab1-3}
        \abs{\dif f}^2_{Y'} \circ \mu \geq \big(1-O(\sqrt{\delta})\big) \cdot \abs{(\dif f\circ \mu) \cdot \jac(\mu)}^2_{Y}. 
    \end{equation}
    Suppose in local coordinates $z$ and $w$ the metrics $ds_Y^2$ and $ds_{Y'}^2$ are written as 
    \[ ds_Y^2 = h_{i,j}\dif z^i\dif z^j \textrm{\quad and\quad} ds_{Y'}^2 = h'_{i,j}\dif w^i\dif w^j \] 
    respectively, where $1\leq i,j\leq 2$, then 
    \begin{equation*}
        \begin{aligned}
            \abs{\dif f}^2_{Y'} \circ \mu &= \left( \pa_{1}f, \pa_{2}f \right) \cdot \left(h'_{i,j}(w(z))\right)^{-1} \cdot \left( \pa_{1}f, \pa_{2}f \right)^T,\\ 
            \abs{(\dif f\circ \mu) \cdot \jac(\mu)}^2_{Y} &= \left( \pa_{1}f, \pa_{2}f \right) \cdot \jac(\mu) \left(h_{i,j}(z)\right)^{-1} \jac(\mu)^T \cdot \left( \pa_{1}f, \pa_{2}f \right)^T, 
        \end{aligned}
    \end{equation*}
    where $\jac(\mu)^T$ is the transpose of $\jac(\mu)$. Thus to prove \eqref{eqn:stab1-3} it suffices to prove that, as positive definite matrices, we have 
    \begin{equation}\label{eqn:matrix_metric}
        \left(h'_{i,j}(w(z))\right)^{-1} \geq \big(1-O(\sqrt{\delta})\big) \cdot \jac(\mu) \left(h_{i,j}(z)\right)^{-1} \jac(\mu)^T. 
    \end{equation} 
    By linear algebra we know that, if $A$ and $B$ are two positive definite matrices, then 
    \[ A\geq B \iff B^{-1}\geq A^{-1}. \] 
    Thus \eqref{eqn:matrix_metric} is equivalent to 
    \[ \left(h_{i,j}(z)\right) \geq \big(1-O(\sqrt{\delta})\big) \cdot \jac(\mu)^T \left(h'_{i,j}(w(z))\right) \jac(\mu), \] 
    that is equivalent to saying
    \[ \big(1+O(\sqrt{\delta})\big) \cdot ds_{Y}^2 \geq \mu^*(ds_{Y'}^2). \] 
    This is guaranteed by Proposition \ref{prop:comparison_metric_Y}. So the conclusion follows. 
\end{proof}

Now we are ready to prove Theorem \ref{thm:stability}.
\begin{proof}[Proof of Theorem \ref{thm:stability}]
    Suppose $\delta$ is sufficiently small (\eg $900e^{10\ell}\sqrt{\delta}\leq1$) such that Proposition \ref{prop:mu_pants} holds. From the construction of $Y'$, it is clear that for all $1\leq i\leq n$, $$\ell(\gamma'_i) = \ell(\gamma_i)+\delta_i.$$ The second statement clearly follows from Corollary \ref{cor:lipschitz}. By Lemma \ref{lem:stab2}, \ref{lem:stab3} and \ref{lem:stab1}, we have 
    \begin{equation*}
        \begin{aligned}
            \sigma_1(Y') 
            &  =  
            \frac{\int_{Y'}\abs{\nabla_{Y'} f}^2 \dvol_{Y'}}{\int_{Y'} f^2 \dvol_{Y'}}\\ 
            &\geq \big(1-O(\sqrt{\delta})\big) \cdot \frac{\int_Y\abs{\nabla_{Y} (f\circ \mu)}^2 \dvol_{Y}}{\int_Y (f\circ \mu)^2 \dvol_{Y} - \frac{1}{\area(Y)}\left(\int_Y f\circ \mu \dvol_{Y}\right)^2}\\ 
            &\geq \big(1-O(\sqrt{\delta})\big) \cdot \sigma_1(Y). 
        \end{aligned}
    \end{equation*}
    The inequality in the inverse direction $$\sigma_1(Y)\geq\big(1-O(\sqrt{\delta})\big) \cdot \sigma_1(Y')$$ follows by switching $Y$ and $Y'$. 
\end{proof}

\subsection{Maps between pairs of pants}\label{subs-l1}
In this subsection we prove Proposition \ref{prop:mu_pants}. Following \cite{Buser1992}, a side of some polygon and its hyperbolic length will always be denoted by the same letter. 

We will use the pair of Fermi coordinates, which one may see in \cite[Page 4]{Buser1992} for more details. We briefly introduce it here. Let $\eta:\R\to \H$ be a geodesic line with parameter $u$, and let $o=\eta(0)$. Since $\H$ is separated by $\eta$ into two half-planes, each point $p\in\H$ has a signed distance $v$ from $p$ to $\eta$, positive on one side and negative on the other. Let $p_0$ be the projection of $p$ to $\eta$, and let $u$ be the signed distance from $p_0$ to $o$. The pair $(u,v)$ is called the Fermi coordinates with respect to $\eta$, in which the hyperbolic metric is expressed by 
\begin{equation}\label{eqn:metric_fermi}
    \cosh^2\!v \dif u^2 + \dif v^2. 
\end{equation}

We begin our proof with constructing maps between right-angled pentagons. 
\begin{proposition}\label{prop:mu_pentagon}
Let $\pent$ (resp., $\pent '$) be a right-angled pentagon with consecutive sides $\alpha$, $b$, $\gamma$, $d$ and $c$ (resp., $\alpha'$, $b'$, $\gamma'$, $d'$ and $c'$), and let $h_0$ (resp., $h'_0$) be the altitude of $\pent$ (resp., $\pent'$) perpendicular to $d$ (resp., $d'$). Set 
\[ \delta_d \df \abs{\frac{\sinh d'}{\sinh d} - 1}. \] 
Assume that $54e^{2\bar h}\sqrt{\delta_d}\leq1$, where $\bar h \df \max\{h_0,h'_0\}$. If $\alpha = \alpha'$, then there exists a piecewise homeomorphism 
\[ \mu:\pent \to \pent ', \] 
which maps the sides of $\pent$ to the corresponding sides of $\pent'$, such that 
\begin{enumerate}
    \item $\eval{\mu}_{\alpha}$ is the identity map. 
    \item At the points where $\mu$ is smooth, we have \[ ( 1 + 54e^{2\bar h}\sqrt{\delta_d} ) \cdot ds_{\pent}^2 \geq \mu^*(ds_{\pent'}^2) \geq ( 1 - 54e^{2\bar h}\sqrt{\delta_d} ) \cdot ds_{\pent}^2 \]where $ds_{\pent}^2$ and $ds_{\pent'}^2$ are standard hyperbolic metrics.
\end{enumerate}
\end{proposition}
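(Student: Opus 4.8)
The plan is to realise $\mu$ in the pair of Fermi coordinates $(u,v)$ with respect to the complete geodesic $L_\alpha$ carrying the side $\alpha$, normalised so that $\alpha$ is the segment $\{(u,0):0\le u\le\alpha\}$ and $\pent$ lies in $\{v\ge0\}$; by \eqref{eqn:metric_fermi} the metric is then $\cosh^2 v\,\dif u^2+\dif v^2$. The two sides of $\pent$ adjacent to $\alpha$, say $b$ and $c$, are perpendicular to $\alpha$, hence are the vertical segments $\{u=\alpha,\ 0\le v\le b\}$ and $\{u=0,\ 0\le v\le c\}$; since $\pent$ is convex it lies in the strip $\{0\le u\le\alpha\}$ and, examined slice by slice, is exactly the graph region $\{(u,v):0\le u\le\alpha,\ 0\le v\le\phi(u)\}$, where $v=\phi(u)$ traces the union $\gamma\cup d$ of the remaining two sides, is piecewise geodesic with a single interior corner at the vertex $\gamma\cap d$, and has horizontal tangent at $u=0$ and $u=\alpha$. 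The same description holds for $\pent'$ with a graphing function $\psi$ on the \emph{same} interval $[0,\alpha]$, since $\alpha=\alpha'$. The first task is the purely geometric comparison: a convex right-angled pentagon is determined by any two of its sides, so using Lemmas~\ref{lem:pentagon} and~\ref{lem:pentagon'} and the hypothesis $\delta_d=\abs{\sinh d'/\sinh d-1}$ I would show that every side of $\pent'$ differs from the corresponding side of $\pent$ by $O(e^{2\bar h}\delta_d)$, that the corner abscissa $u'_C$ of $\psi$ is within $O(e^{2\bar h}\delta_d)$ of the corner abscissa $u_C$ of $\phi$, and that away from their corners $\phi$ and $\psi$ have $u$-derivatives bounded by $e^{O(\bar h)}$; the powers of $e^{\bar h}$ come from propagating the perturbation of $\sinh d$ along paths of length $\asymp\bar h$ and from the exponential thinness of the pentagon near $\gamma\cap d$, and the constants must be tracked to reach the stated $54e^{2\bar h}$.

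Granting these comparisons, I would define $\mu$ in two regimes. On the bulk, namely for $u$ outside a window $W$ of width comparable to $e^{2\bar h}\sqrt{\delta_d}$ centred at $u_C$, set $\mu(u,v)\df(u,\ v\,\psi(u)/\phi(u))$. This is the identity on $\alpha=\{v=0\}$, carries the vertical sides $b,c$ of $\pent$ onto $b',c'$, and one computes $\mu^*(ds_{\pent'}^2)=\cosh^2(\tfrac{\psi}{\phi}v)\,\dif u^2+2\tfrac{\psi}{\phi}v\,\tfrac{d}{du}(\tfrac{\psi}{\phi})\,\dif u\,\dif v+(\tfrac{\psi}{\phi})^2\,\dif v^2$; since $\psi/\phi=1+O(e^{2\bar h}\delta_d)$ with $u$-derivative $O(e^{2\bar h}\delta_d)$ and $v\le\bar h+O(1)$, the bulk contribution to the distortion is $O(e^{2\bar h}\delta_d)$. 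On the window $W$ I would instead precompose the $v$-scaling with a homeomorphism that moves only the $u$-coordinate, carrying $u_C$ to $u'_C$ so that $\gamma$ maps onto $\gamma'$ and $d$ onto $d'$, and interpolating to the identity in $u$ on $\partial W$; absorbing an $O(e^{2\bar h}\delta_d)$ shift over a window of width $e^{2\bar h}\sqrt{\delta_d}$ introduces an extra $u$-derivative of size $O(\sqrt{\delta_d})$, which is exactly why the final bound carries $\sqrt{\delta_d}$ rather than $\delta_d$, and why the hypothesis $54e^{2\bar h}\sqrt{\delta_d}\le1$ is imposed, so that $W$ fits inside the possibly very short boundary arcs and the perturbed metric remains positive definite. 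Patching the two regimes yields a piecewise-smooth homeomorphism $\pent\to\pent'$ equal to the identity on $\alpha$, carrying each side of $\pent$ to the corresponding side of $\pent'$, and satisfying, at the points where it is smooth, $(1-54e^{2\bar h}\sqrt{\delta_d})\,ds_{\pent}^2\le\mu^*(ds_{\pent'}^2)\le(1+54e^{2\bar h}\sqrt{\delta_d})\,ds_{\pent}^2$.

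The main obstacle is the corner region together with the bookkeeping of constants. The crucial quantitative point is to control how far the far vertex $\gamma\cap d$ — which lies at height at most $\bar h+O(1)$ above $\alpha$ — is displaced under the change of $\sinh d$, and this is where the factor $e^{2\bar h}$ is genuinely produced: one $e^{\bar h}$ from the lever arm $\cosh v\le\cosh\bar h$ when propagating the perturbation through Lemmas~\ref{lem:pentagon} and~\ref{lem:pentagon'}, and another from the fact that near the corner the pentagon is so thin that the boundary arc on which the $u$-shift can be absorbed has length only $\asymp e^{-\bar h}$. One must then choose the window width so as to balance the shift (whose cost is the shift size divided by the width) against the width itself, which forces width $\asymp e^{2\bar h}\sqrt{\delta_d}$, and verify that the interpolation on $\partial W$ between the corner-shifting map and the bulk $v$-scaling does not itself worsen the distortion past $O(e^{2\bar h}\sqrt{\delta_d})$. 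Once these estimates are secured, checking that $\mu$ is a well-defined homeomorphism fixing $\alpha$ and respecting the side decomposition is routine.
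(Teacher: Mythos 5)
Your approach is genuinely different from the paper's and, as written, has two gaps that the paper's choice of coordinates is specifically designed to avoid. The paper works in Fermi coordinates along the geodesic carrying $d$ (not $\alpha$), so that the altitude function $h(u)$ appearing in the $v$-range is precisely the quantity controlled by $\bar h$. It then defines $\mu$ by $\tanh u'/\tanh d'=\tanh u/\tanh d$ and $\tanh v'/\tanh h'(u')=\tanh v/\tanh h(u)$. The crucial structural point is that the kink of $h$ (the abscissa $u_0$ where the altitude from $d$ switches from landing on $\alpha$ to landing on $\gamma$, and where $h(u_0)=h_0$) is characterized by $\tanh u_0/\tanh d=\tanh^2\alpha$; since $\alpha=\alpha'$, the $\tanh$-normalized $u$-reparametrization sends $u_0$ to $u'_0$ \emph{automatically}. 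No corner window, no extra patching, and no extra hypotheses on $\alpha$ are required -- the single formula is already piecewise smooth with compatible pieces, and the remaining work is a chain of trirectangle/pentagon-relation estimates (Lemmas~\ref{lem:mu_lem1}--\ref{lem:mu_pf2-4}) that produce the explicit constant $54e^{2\bar h}\sqrt{\delta_d}$.

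Your proposal instead uses Fermi coordinates along $\alpha$, graphs $\pent$ as $\{0\le v\le\phi(u)\}$, and scales $v\mapsto v\psi(u)/\phi(u)$, with a window $W$ of width $\asymp e^{2\bar h}\sqrt{\delta_d}$ in $u$ to absorb the mismatch between the corner abscissas $u_C$ and $u'_C$. Two concrete problems arise. First, your distortion estimates are keyed to the maximum of $v$ in these coordinates, and you assert that the far vertex $\gamma\cap d$ lies at height at most $\bar h+O(1)$ above $\alpha$; but $\bar h$ is measured perpendicular to $d$, not to $\alpha$, and the height of $\gamma\cap d$ above $\alpha$ is \emph{not} controlled by $\bar h$. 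For instance, fix $d,\gamma,c$ and let $\alpha\to0$: then $\cosh d=\sinh\alpha\sinh b$ forces $b\to\infty$, so $\mathrm{dist}(\gamma\cap d,\,\alpha)\to\infty$, while $h_0\to c$ stays bounded. Thus both the bulk $\cosh^2 v$-factors and the ``lever arm'' justifying the constant $e^{2\bar h}$ fail, and no estimate of the stated form can emerge. Second, the window mechanism requires $W$ to fit inside $[0,\alpha]$, i.e.\ $\alpha\gtrsim e^{2\bar h}\sqrt{\delta_d}$, which is not among the hypotheses of Proposition~\ref{prop:mu_pentagon} (the lower bound $\alpha\ge\arcsinh 1$ appears only later, in Proposition~\ref{prop:mu_hexagon}). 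Finally, the interpolation across $\partial W$ and the $u$-shift give a distortion mechanism quite unlike the paper's, so the precise constant $54e^{2\bar h}$ would not be recovered. The fix is to adopt the paper's coordinates along $d$ and exploit the automatic corner alignment coming from $\alpha=\alpha'$; once you do that the corner issue disappears entirely and the $e^{2\bar h}$ factor is exactly $\cosh^2 v\le\cosh^2\bar h$ on the $v$-range $[0,h(u)]\subseteq[0,\bar h]$.
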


\underline{Construction of $\mu$}. 
Let $(u,v)$ be the pair of Fermi coordinates with respect to the geodesic containing $d$, such that the two endpoints of $d$ have coordinates $(0,0)$ and $(d,0)$. For any $u\in[0,d]$, let $h=h(u)$ be the altitude of $\pent$ at $(u,0)$, and let $u_0\in[0,d]$ be such that 
\[ h(u_0) = h_0. \]  
See Figure \ref{fig:MuPentagon} for an illustration. Let $u'$, $v'$, $h'=h'(u')$, and $u'_0$ be defined similarly on $\pent'$. Then we define the map $\mu:(u,v)\mapsto (u',v')$ by 
\begin{equation}\label{eqn:mu_def}
    \frac{\tanh u'}{\tanh d'} = \frac{\tanh u}{\tanh d} \textrm{\quad and\quad} \frac{\tanh v'}{\tanh h'} = \frac{\tanh v}{\tanh h}, 
\end{equation}
One can easily see that $\mu$ is a well-defined piecewise smooth homeomorphism. 
\begin{figure}
    \centering
    \includegraphics[scale=0.36]{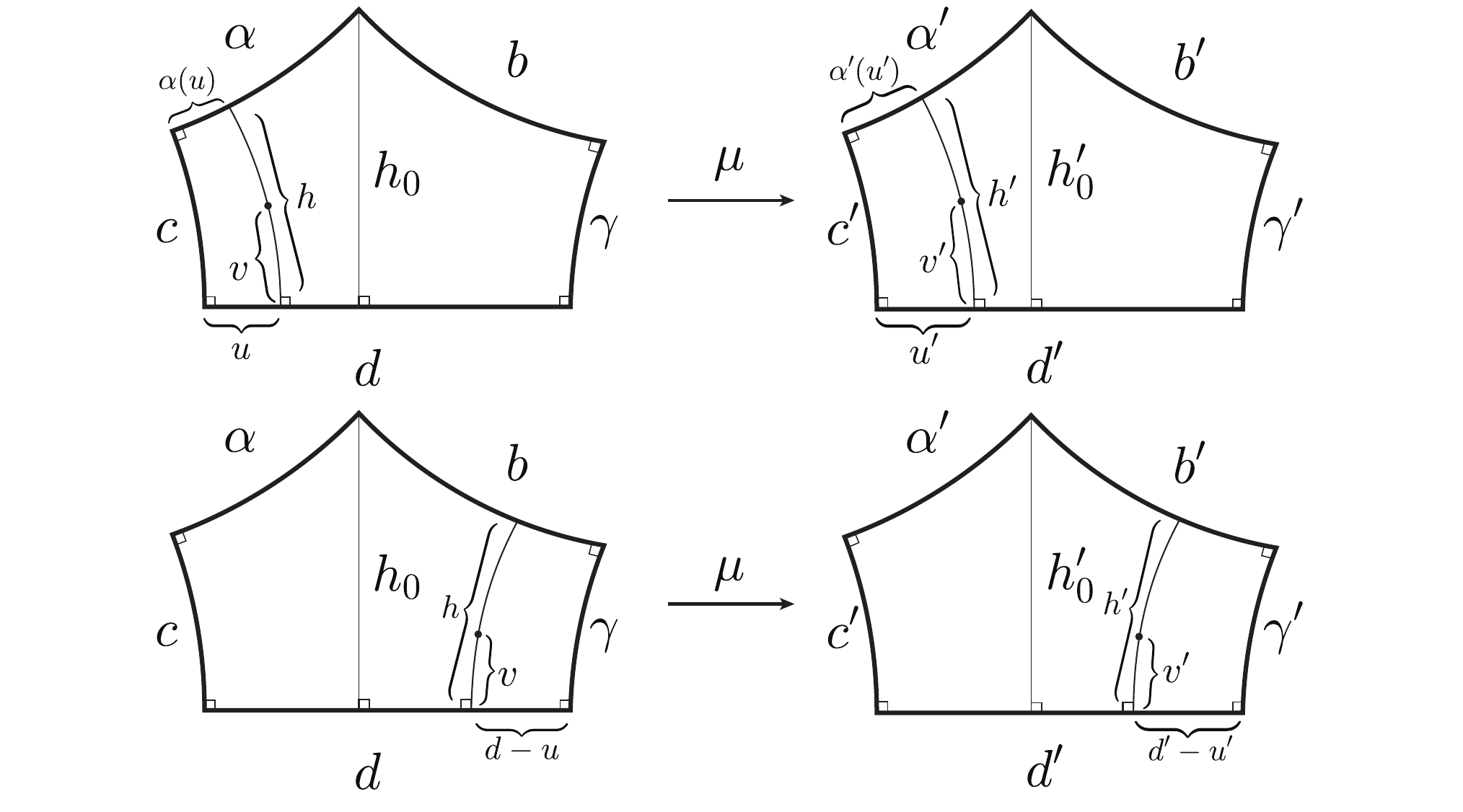}
    \caption{The construction of the map $ \mu:\pent \to \pent '$.}
    \label{fig:MuPentagon}
\end{figure}

\begin{lemma}\label{lem:mu_pf1}
    The restriction $\eval{\mu}_{\alpha}$ is the identity map. 
\end{lemma}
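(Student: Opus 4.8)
The plan is to follow the side $\alpha$ through the construction \eqref{eqn:mu_def} and show it is moved by an arc-length preserving bijection, which (after the tautological identification $\alpha=\alpha'$) is exactly the statement that $\eval{\mu}_{\alpha}$ is the identity. First I would fix the Fermi coordinates $(u,v)$ attached to $d$ so that the $c$--$d$ vertex sits at $(0,0)$; then $c$ is the segment $\{u=0,\ 0\le v\le c\}$, so $h(0)=c$, while $\gamma$ lies over $u=d$ with $h(d)=\gamma$, and the vertex of $\pent$ opposite $d$ --- the common endpoint of $\alpha$ and $b$ --- is the top $(u_0,h_0)$ of the altitude perpendicular to $d$. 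With this labelling the side $\alpha$ is precisely the graph $\{(u,h(u)):0\le u\le u_0\}$, with endpoints the $\alpha$--$c$ vertex $(0,c)$ and the $\alpha$--$b$ vertex $(u_0,h_0)$; the same description holds for $\pent'$. From \eqref{eqn:mu_def}, a point $(u,h(u))$ of $\alpha$ has $\tanh v/\tanh h = 1$, so its image has $v'=h'(u')$; hence $\mu$ carries $(u,h(u))$ to $(u',h'(u'))\in\alpha'$, where $\tanh u'/\tanh d'=\tanh u/\tanh d$. In particular $\mu(\alpha)=\alpha'$ and the $\alpha$--$c$ vertex ($u=0$) goes to the $\alpha'$--$c'$ vertex ($u'=0$).

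It then remains to show $\eval{\mu}_{\alpha}$ preserves the arc length $g(u)$ along $\alpha$ measured from $(0,c)$. Applying Parts (1) and (2) of Lemma \ref{lem:trirectangle} to the trirectangle with vertices $(0,0),(u,0),(u,h(u)),(0,c)$ (right angles at the first three, acute angle at $(u,h(u))$ between the vertical side $h(u)$ and the side $g(u)\subset\alpha$) gives
\[ \tanh h(u)=\cosh u\cdot\tanh c,\qquad \sinh h(u)=\cosh g(u)\cdot\sinh c, \]
and eliminating $h(u)$ yields the clean expression $\cosh g(u)=\bigl(1-\tanh^2 u\cdot\cosh^2 c\bigr)^{-1/2}$. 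On the other hand, Part (2) of Lemma \ref{lem:pentagon} applied to the side $c$ of $\pent$ (adjacent to $d$ and to $\alpha$) gives $\cosh c=\coth d\cdot\coth\alpha$, hence $\cosh c\cdot\tanh d=\coth\alpha$; the same identity holds in $\pent'$, and since $\alpha=\alpha'$ we get $\cosh c\cdot\tanh d=\cosh c'\cdot\tanh d'$. Combined with $\tanh u'/\tanh d'=\tanh u/\tanh d$ this is precisely $\tanh u'\cdot\cosh c'=\tanh u\cdot\cosh c$, so $\cosh g'(u')=\cosh g(u)$, i.e.\ $g'(u')=g(u)$.

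Therefore $\eval{\mu}_{\alpha}:\alpha\to\alpha'$ is a bijection preserving arc length from the $\alpha$--$c$ vertex (and hence also sending the $\alpha$--$b$ vertex to the $\alpha'$--$b'$ vertex, since $g(u_0)=\alpha=\alpha'$), which is the meaning of ``identity map''. The whole argument is elementary once the two trigonometric identities are in place; the only points requiring care are to orient the pentagon so that $\alpha$ lies over $[0,u_0]$ (rather than $[u_0,d]$) and to check that on this range $\tanh u\le\tanh u_0=\tanh\alpha/\cosh c<1/\cosh c$, so the formula for $\cosh g(u)$ is meaningful. I do not expect a genuine obstacle here; the only mild subtlety is bookkeeping the correspondence of sides when invoking Lemmas \ref{lem:trirectangle} and \ref{lem:pentagon}.
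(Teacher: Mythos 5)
Your proof is correct and uses essentially the same ingredients as the paper: the trirectangle identities from Lemma \ref{lem:trirectangle}, the pentagon identity $\cosh c=\coth\alpha\coth d$ from Lemma \ref{lem:pentagon}(2), and the defining relation $\tanh u/\tanh d=\tanh u'/\tanh d'$ of $\mu$. The only difference is the route to the arc-length parameter: the paper applies Lemma \ref{lem:trirectangle}(1) with the opposite orientation of the trirectangle to get $\tanh g(u)=\cosh c\tanh u$ directly (and then concludes $\tanh g(u)\tanh\alpha=\coth d\tanh u$ is preserved), whereas you eliminate $h(u)$ to land on $\cosh g(u)=(1-\tanh^2 u\cosh^2 c)^{-1/2}$ and then show $\tanh u\cosh c$ is preserved; these are the same computation up to one algebraic rearrangement, so this is the paper's proof in slightly longer form.
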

\begin{proof}
    Let $\alpha(u)$ and $\alpha'(u')$ be the segments as shown in Figure \ref{fig:MuPentagon}, then by definition $\mu$ maps $\alpha(u)$ to $\alpha'(u')$. By Lemma \ref{lem:trirectangle} and \ref{lem:pentagon}, in $\pent$ we have 
    \begin{equation}\label{eqn:mu_pf1-1}
        \begin{aligned}
            \tanh \alpha(u) 
            &= \cosh c \cdot \tanh u\\ 
            &= \coth \alpha \cdot \coth d \cdot \tanh u. 
        \end{aligned}
    \end{equation}
    Similarly in $\pent'$ we also have 
    \begin{equation}\label{eqn:mu_pf1-2}
        \tanh \alpha'(u') = \coth \alpha' \cdot \coth d' \cdot \tanh u'. 
    \end{equation}
    It follows from \eqref{eqn:mu_def}, \eqref{eqn:mu_pf1-1} and \eqref{eqn:mu_pf1-2} that 
    \begin{equation*}
        \tanh \alpha(u) \cdot \tanh \alpha = \tanh \alpha'(u') \cdot \tanh \alpha'. 
    \end{equation*}
    Thus if $\alpha = \alpha'$, then $\alpha(u) = \alpha'(u')$, \ie $\eval{\mu}_{\alpha}$ is the identity map. 
\end{proof}

\underline{Comparison between metrics}. Recall that the matrix coefficient of $ds_{\pent}^2$ is 
\begin{equation*}
    A \df 
    \begin{pmatrix}
        \cosh^2\! v & 0\\
        0 & 1
    \end{pmatrix}. 
\end{equation*}
From \eqref{eqn:mu_def} it is clear that $\pa u'/\pa v=0$. A direct computation shows that the matrix coefficient of $\mu^*(ds_{\pent'}^2)$ in the pair of coordinates $(u,v)$ is 
\begin{align*}
    B \df 
    \jac(\mu)^T 
    \begin{pmatrix}
        \cosh^2\! v' & 0\\
        0 & 1
    \end{pmatrix} 
    \jac(\mu) 
    & = 
    \renewcommand*{\arraystretch}{2}
    \begin{pmatrix}
        \Big(\dfrac{\pa u'}{\pa u}\Big)^2\! \cosh^2\! v' + \Big(\dfrac{\pa v'}{\pa u}\Big)^2 & \dfrac{\pa v'}{\pa u}\dfrac{\pa v'}{\pa v}\\ 
        \dfrac{\pa v'}{\pa u}\dfrac{\pa v'}{\pa v} & \Big(\dfrac{\pa v'}{\pa v}\Big)^2
    \end{pmatrix}. 
\end{align*}

\noindent By definition, proving that
\[ (1+\delta_0)\cdot ds_{\pent}^2 \geq \mu^*(ds_{\pent'}^2) \geq (1-\delta_0)\cdot ds_{\pent}^2 \] 
for some $0<\delta_0\leq1$ is equivalent to proving that
\[ (1+\delta_0)A - B \textrm{\quad and\quad} B - (1-\delta_0)A \] 
are non-negative definite matrices, which can be reduced to prove the following inequalities: 
\begin{gather*}
    (1+\delta_0)\cdot \cosh^2\! v > \Big(\dfrac{\pa u'}{\pa u}\Big)^2\!\! \cosh^2\! v' + \Big(\dfrac{\pa v'}{\pa u}\Big)^2,\\
    \bigg( (1+\delta_0)\cosh^2\! v - \Big(\dfrac{\pa u'}{\pa u}\Big)^2\!\! \cosh^2\! v' - \Big(\dfrac{\pa v'}{\pa u}\Big)^2 \bigg) \bigg( (1+\delta_0) - \Big(\dfrac{\pa v'}{\pa v}\Big)^2 \bigg) \geq \Big( \dfrac{\pa v'}{\pa u}\dfrac{\pa v'}{\pa v} \Big)^2\!; 
\end{gather*}
and 
\begin{gather*}
    \Big(\dfrac{\pa u'}{\pa u}\Big)^2\!\! \cosh^2\! v' + \Big(\dfrac{\pa v'}{\pa u}\Big)^2 > (1-\delta_0)\cdot \cosh^2\! v,\\
    \bigg( \Big(\dfrac{\pa u'}{\pa u}\Big)^2\!\! \cosh^2\! v' + \Big(\dfrac{\pa v'}{\pa u}\Big)^2\!\! - (1-\delta_0)\cosh^2\! v \bigg) \bigg( \Big(\dfrac{\pa v'}{\pa v}\Big)^2\!\! - (1-\delta_0) \bigg) \geq \Big( \dfrac{\pa v'}{\pa u}\dfrac{\pa v'}{\pa v} \Big)^2\!. 
\end{gather*}
Rearranging the inequalities above, it suffices to prove that for any $u\in[0,d]$ and any $v\in[0,h(u)]$ the following inequalities hold: 
\begin{align}
    \label{eqn:mu_pf2-1} 1-\delta_0^2/9 \leq &\Big(\dfrac{\pa u'}{\pa u}\Big)^2 \leq 1+\delta_0^2/9,\\
    \label{eqn:mu_pf2-2} 1-\delta_0^2/9 \leq &\Big(\dfrac{\pa v'}{\pa v}\Big)^2 \leq 1+\delta_0^2/9,\\
    \label{eqn:mu_pf2-3} &\Big(\dfrac{\pa v'}{\pa u}\Big)^2 \leq \delta_0^2/9,\\
    \label{eqn:mu_pf2-4} \textrm{and\quad} 1-\delta_0^2/9 \leq &\dfrac{\cosh^2\! v'}{\cosh^2\! v} \leq 1+\delta_0^2/9. 
\end{align}
In next subsection we will prove that 
\begin{lemma}\label{lem:mu_pf2}
    The inequalities \eqref{eqn:mu_pf2-1}, \eqref{eqn:mu_pf2-2}, \eqref{eqn:mu_pf2-3} and \eqref{eqn:mu_pf2-4} hold for 
    \[ \delta_0=54e^{2\bar h}\sqrt{\delta_d}. \] 
\end{lemma}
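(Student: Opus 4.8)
The plan is to estimate each of the four quantities $\partial u'/\partial u$, $\partial v'/\partial v$, $\partial v'/\partial u$ and $\cosh v'/\cosh v$ directly from the defining relations \eqref{eqn:mu_def}, treating the whole problem as a perturbation controlled by the single small parameter $\delta_d=\lvert \sinh d'/\sinh d - 1\rvert$. First I would record the consequences of $\alpha=\alpha'$ together with the pentagon formulas (Lemmas~\ref{lem:pentagon} and \ref{lem:pentagon'}) to see how $d$ and $d'$ are related, and how the altitude functions $h(u)$ and $h'(u')$ compare; the point is that $\alpha=\alpha'$ forces $\sinh d$ and $\sinh d'$ to differ only by the factor governing $\delta_d$, and then all the other side lengths of the two pentagons ($b$, $c$, $\gamma$, and their primed versions, as well as $h_0$, $h'_0$) differ by $O(\sqrt{\delta_d})$-type amounts with constants controlled by $e^{\bar h}$. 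Since $h(u)\le \bar h$ and $h'(u')\le \bar h$ throughout, every hyperbolic function appearing is bounded by $e^{\bar h}$, which is where the $e^{2\bar h}$ in the final constant comes from (squares of such bounds enter when one differentiates a ratio of $\tanh$'s).

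Next I would differentiate the first relation in \eqref{eqn:mu_def}, namely $\tanh u' = (\tanh d'/\tanh d)\tanh u$, to get
\[
\frac{\partial u'}{\partial u} = \frac{\tanh d'}{\tanh d}\cdot\frac{\cosh^2 u'}{\cosh^2 u},
\]
and bound $\tanh d'/\tanh d - 1$ and $\cosh^2 u'/\cosh^2 u - 1$ separately in terms of $\delta_d$ and $\bar h$; this gives \eqref{eqn:mu_pf2-1}. For \eqref{eqn:mu_pf2-4} I would argue similarly: along a vertical fiber $u$ is fixed, so $\cosh v'/\cosh v$ is controlled once we know $\tanh h'(u')/\tanh h(u)$, and the latter comes from comparing the two altitude functions, which are explicit via Lemma~\ref{lem:trirectangle} and depend on $u$ through the base geodesic. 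For \eqref{eqn:mu_pf2-2}, differentiating $\tanh v' = (\tanh h'/\tanh h)\tanh v$ in $v$ (with $u$, hence $h$ and $h'$, fixed) gives $\partial v'/\partial v = (\tanh h'/\tanh h)(\cosh^2 v'/\cosh^2 v)$, handled exactly as \eqref{eqn:mu_pf2-1}. The genuinely mixed term \eqref{eqn:mu_pf2-3} requires differentiating $v'$ with respect to $u$: here $h'=h'(u')$ and $h=h(u)$ both move, and also $u'$ moves with $u$, so $\partial v'/\partial u$ is a sum of terms each proportional to $h'(u)^{\,\prime}$ or to $\partial u'/\partial u - 1$ or to the difference of the two altitude-rescaling factors — each of which is $O(\sqrt{\delta_d})$ — times bounded hyperbolic factors. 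One has to be slightly careful that when $\delta_d=0$ the map $\mu$ is the identity, so $\partial v'/\partial u\equiv 0$ in that case, which is the structural reason this term is genuinely small rather than merely bounded.

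The main obstacle I expect is the bookkeeping in \eqref{eqn:mu_pf2-3} and \eqref{eqn:mu_pf2-4}: one must show that the altitude function $h(u)$ of the pentagon and its reparametrized counterpart $h'(u'(u))$ differ by only $O(\sqrt{\delta_d})$ uniformly in $u\in[0,d]$, and that their $u$-derivatives also stay close, which means carefully differentiating the trirectangle relations $\sinh h(u) = \cosh(\text{something}(u))\cdot(\cdots)$ and tracking how the constant depends on $\bar h$ rather than on the individual side lengths. The factor $54$ and the precise power $e^{2\bar h}$ are then obtained by summing the finitely many error contributions and using the hypothesis $54 e^{2\bar h}\sqrt{\delta_d}\le 1$ to absorb higher-order terms (e.g.\ to pass from $\lvert x-1\rvert\le c$ to $\lvert x^2-1\rvert\le 3c$ and similar). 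I would present the four estimates as separate short computations, each reduced to the two elementary bounds ``$\tanh d'/\tanh d = 1+O(\sqrt{\delta_d})$'' and ``$h' = h + O(\sqrt{\delta_d})$ with $e^{\bar h}$-constants,'' and then collect them with $\delta_0 = 54 e^{2\bar h}\sqrt{\delta_d}$ at the end.
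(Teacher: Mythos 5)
Your plan is sound and follows essentially the same route as the paper: differentiate the defining relations \eqref{eqn:mu_def}, reduce the four matrix inequalities to scalar bounds on $\partial u'/\partial u$, $\partial v'/\partial v$, $\partial v'/\partial u$, and $\cosh v'/\cosh v$, and control each through a chain of pentagon/trirectangle estimates driven by $\delta_d$, with $e^{2\bar h}$ entering through $\cosh^2 v'$ in the mixed term. One small correction to the bookkeeping in your sketch: the individual geometric quantities ($\tanh d'/\tanh d$, $\cosh u'/\cosh u$, $\tanh h'/\tanh h$, $\cosh h'/\cosh h$, etc.) all differ from $1$ by $O(\delta_d)$, not $O(\sqrt{\delta_d})$; the $\sqrt{\delta_d}$ appears only at the very end, because the target inequalities involve $\delta_0^2/9$ and one solves $c\,\delta_d \le \delta_0^2/9$ for $\delta_0$. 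If the errors in the intermediate quantities were genuinely of order $\sqrt{\delta_d}$ as your sketch suggests, you would only get $\delta_0 = O(\delta_d^{1/4})$, which is weaker than the lemma; so when you carry out the estimates, make sure to keep them at first order in $\delta_d$ (as the relation $\sinh d'/\sinh d = 1 + O(\delta_d)$ together with Lemma~\ref{lem:mu_inequality} gives).
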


Now we can prove Proposition \ref{prop:mu_pentagon}. 
\begin{proof}[Proof of Proposition \ref{prop:mu_pentagon}]
    The proposition follows from Lemma \ref{lem:mu_pf1} and \ref{lem:mu_pf2}. 
\end{proof}

Next we construct maps between right-angled hexagons and finish the proof of Proposition \ref{prop:mu_pants}. 
\begin{proposition}\label{prop:mu_hexagon}
Let $\hexa$ (resp., $\hexa'$) be a right-angled hexagon with consecutive sides $\alpha$, $c$ $\beta$, $a$, $\gamma$, and $b$ (resp., $\alpha'$, $c'$ $\beta'$, $a'$, $\gamma'$, and $b'$). Suppose that there exists a constant $\ell>0$ such that $\gamma, \gamma' \leq \ell$. Set
\[ \delta_\gamma \df \abs{\frac{\gamma}{\gamma'}-1}, \] 
and assume that $1350e^{5\ell}\sqrt{\delta_\gamma} \leq 1$. If $\alpha = \alpha'$, $\beta = \beta'$ and $\alpha, \beta \geq \arcsinh1$, then there exists a piecewise smooth homeomorphism 
\[ \mu:\hexa \to \hexa', \] 
which maps the sides of $\hexa$ to the corresponding sides of $\hexa'$, such that 
    \begin{enumerate}
        \item $\eval{\mu}_{\alpha}$ and $\eval{\mu}_{\beta}$ are the identity maps. 
        \item At the points where $\mu$ is smooth, we have \[ \big( 1 + O(\sqrt{\delta_\gamma}) \big) \cdot ds_{\hexa}^2 \geq \mu^*(ds_{\hexa'}^2) \geq \big( 1 - O(\sqrt{\delta_\gamma}) \big) \cdot ds_{\hexa}^2, \] where the implied constants depend only on $\ell$. 
    \end{enumerate}
\end{proposition}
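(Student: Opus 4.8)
\emph{Strategy.} The plan is to reduce to the pentagon case already settled in Proposition~\ref{prop:mu_pentagon}. Cut $\hexa$ along the common perpendicular $d$ between the sides $c$ and $\gamma$ (the perpendicular appearing in Lemmas~\ref{lem:hexagon'} and \ref{lem:hexagon_altitude}); this divides $\hexa$ into two right-angled pentagons, $\pent_\alpha$ with consecutive sides $\alpha,b,\gamma_\alpha,d,c_\alpha$ and $\pent_\beta$ with consecutive sides $\beta,a,\gamma_\beta,d,c_\beta$, where $\gamma=\gamma_\alpha\cup\gamma_\beta$ and $c=c_\alpha\cup c_\beta$ are split at the feet of $d$. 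Cutting $\hexa'$ along its analogous perpendicular $d'$ produces $\pent'_\alpha,\pent'_\beta$. Since $\alpha=\alpha'$ and $\beta=\beta'$, I want to apply Proposition~\ref{prop:mu_pentagon} to the pairs $(\pent_\alpha,\pent'_\alpha)$ and $(\pent_\beta,\pent'_\beta)$, obtaining piecewise smooth homeomorphisms $\mu_\alpha,\mu_\beta$ fixing $\alpha$ resp.\ $\beta$, and then glue them along $d$.

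\emph{Verifying the pentagon hypotheses.} Two quantities must be controlled. First, the altitude $\bar h$ of Proposition~\ref{prop:mu_pentagon}: by Lemma~\ref{lem:hexagon_altitude} the relevant altitude $h_\alpha$ of $\pent_\alpha$ satisfies $\sinh h_\alpha\le\cosh^2\gamma/\tanh\alpha$, and by the reflective symmetry of the hexagon that exchanges $\alpha\leftrightarrow\beta$ (and fixes $c,\gamma$) one also gets $\sinh h_\beta\le\cosh^2\gamma/\tanh\beta$; since $\gamma,\gamma'\le\ell$ and $\alpha,\beta\ge\arcsinh1$ (so $\tanh\alpha,\tanh\beta\ge1/\sqrt2$), every altitude involved, for $\hexa$ and $\hexa'$ alike, is at most $\arcsinh(\sqrt2\cosh^2\ell)$, whence $e^{2\bar h}=O(e^{4\ell})$. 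Second, $\delta_d\df\abs{\sinh d'/\sinh d-1}$ must be shown to be $O(\delta_\gamma)$ with constant depending only on $\ell$. For this I use the formula of Lemma~\ref{lem:hexagon'},
\[ \sinh^2 d=\frac{\cosh^2\alpha+\cosh^2\beta+2\cosh\alpha\cosh\beta\cosh\gamma}{\sinh^2\gamma}, \]
together with the same formula for $\sinh^2 d'$ with $\gamma'$ in place of $\gamma$ (using $\alpha'=\alpha$, $\beta'=\beta$). Writing $\gamma=\gamma'(1+\eta)$ with $\abs{\eta}=\delta_\gamma$ and $\gamma,\gamma'\le\ell$: the two numerators differ by $2\cosh\alpha\cosh\beta(\cosh\gamma-\cosh\gamma')=O(\cosh\alpha\cosh\beta\cdot\delta_\gamma)$ while the numerator of $\sinh^2 d'$ is $\ge2\cosh\alpha\cosh\beta$, so the unbounded factor $\cosh\alpha\cosh\beta$ cancels and the numerator ratio is $1+O(\delta_\gamma)$; for the denominators, $\sinh\gamma/\sinh\gamma'=\cosh(\gamma'\eta)+\coth\gamma'\sinh(\gamma'\eta)=1+O(\delta_\gamma)$, the dangerous factor $\coth\gamma'$ being controlled because $\gamma'\coth\gamma'$ is increasing and hence $\le\ell\coth\ell$. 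Thus $\sinh^2 d/\sinh^2 d'=1+O(\delta_\gamma)$, so $\delta_d=O(\delta_\gamma)$; consequently $54e^{2\bar h}\sqrt{\delta_d}=O(\sqrt{\delta_\gamma})$ with constant depending only on $\ell$, and keeping track of the numerical constants one sees that $1350e^{5\ell}\sqrt{\delta_\gamma}\le1$ forces $54e^{2\bar h}\sqrt{\delta_d}\le1$ for both pentagon pairs, so Proposition~\ref{prop:mu_pentagon} applies.

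\emph{Gluing and conclusion.} Choose as base point on $d$ its foot on $\gamma$, which is a common vertex of $\pent_\alpha$ and $\pent_\beta$ (and whose image is the foot of $d'$ on $\gamma'$, a common vertex of $\pent'_\alpha,\pent'_\beta$). Then by construction $\mu_\alpha|_d$ and $\mu_\beta|_d$ are both given, at $v=0$, by $\tanh u'/\tanh d'=\tanh u/\tanh d$ with the same $d$ and $d'$, so they coincide; hence $\mu\df\mu_\alpha\cup\mu_\beta$ is a well-defined piecewise smooth homeomorphism $\hexa\to\hexa'$ that carries each side of $\hexa$ to the corresponding side of $\hexa'$ and restricts to the identity on $\alpha$ and on $\beta$, which is part~(1). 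Setting $\delta_0\df54e^{2\bar h}\sqrt{\delta_d}$ — where $d,d',\delta_d$ are common to the two pairs and $\bar h$ is taken as the maximum over all four pentagons — Proposition~\ref{prop:mu_pentagon}(2) on $\pent_\alpha$ and on $\pent_\beta$ gives $(1+\delta_0)\,ds_{\hexa}^2\ge\mu^*(ds_{\hexa'}^2)\ge(1-\delta_0)\,ds_{\hexa}^2$ at points where $\mu$ is smooth, and since $\delta_0=O(\sqrt{\delta_\gamma})$ with constant depending only on $\ell$, this is part~(2).

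\emph{Main obstacle.} The crux is the uniform estimate $\delta_d=O(\delta_\gamma)$: the sides $\alpha,\beta$ are unbounded and $\gamma,\gamma'$ may be arbitrarily small, so $d,d'$ themselves can be large, and one must verify that $\sinh d'/\sinh d$ nonetheless stays within $O(\delta_\gamma)$ of $1$ with an implied constant depending only on $\ell$. This is precisely where the cancellation of the unbounded factor $\cosh\alpha\cosh\beta$ in the formula of Lemma~\ref{lem:hexagon'} and the monotonicity of $x\coth x$ are essential. A secondary but unavoidable chore is the bookkeeping of explicit constants, so that the hexagon-level smallness hypothesis $1350e^{5\ell}\sqrt{\delta_\gamma}\le1$ genuinely implies the pentagon-level hypothesis $54e^{2\bar h}\sqrt{\delta_d}\le1$ for both decomposition pieces.
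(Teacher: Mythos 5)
Your proposal is correct and follows essentially the same route as the paper's proof: cut $\hexa$ along the common perpendicular $d$ between $c$ and $\gamma$, apply Proposition~\ref{prop:mu_pentagon} to each resulting pentagon, control $\delta_d$ via the formula of Lemma~\ref{lem:hexagon'} (where the $\cosh\alpha\cosh\beta$ factor cancels) and the altitude via Lemma~\ref{lem:hexagon_altitude}, then glue along $d$. The only cosmetic difference is that the paper dispatches the bound $\delta_d\le\cosh\ell\cdot\delta_\gamma$ by observing that $\sinh^2 d'/\sinh^2 d$ is sandwiched between $1$ and $\sinh^2\gamma/\sinh^2\gamma'$ and then applying the Mean Value Theorem together with $\sinh\gamma'\ge\gamma'$, whereas you reproduce the same estimate by expanding the numerator and denominator separately; both are routine.
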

\begin{proof}
    Let $d$ be the common perpendicular between $c$ and $\gamma$, then $d$ decomposes $\hexa$ into two right-angled pentagons, $\pent_\alpha$ and $\pent_\beta$. Similarly, on $\hexa'$ we define $d'$ and the two right-angled pentagons separated by $d'$.  Define the map $$\mu:\hexa\to\hexa'$$ such that $\mu$ maps these two pentagons of $\hexa$ to the corresponding pentagons of $\hexa'$, and coincides with the map given by Proposition \ref{prop:mu_pentagon} on each pentagon. See Figure \ref{fig:MuHexagon} for an illustration. It is clear that $\eval{\mu}_{\alpha}$ and $\eval{\mu}_{\beta}$ are the identity maps.
    \begin{figure}
        \centering
        \includegraphics[scale=0.38]{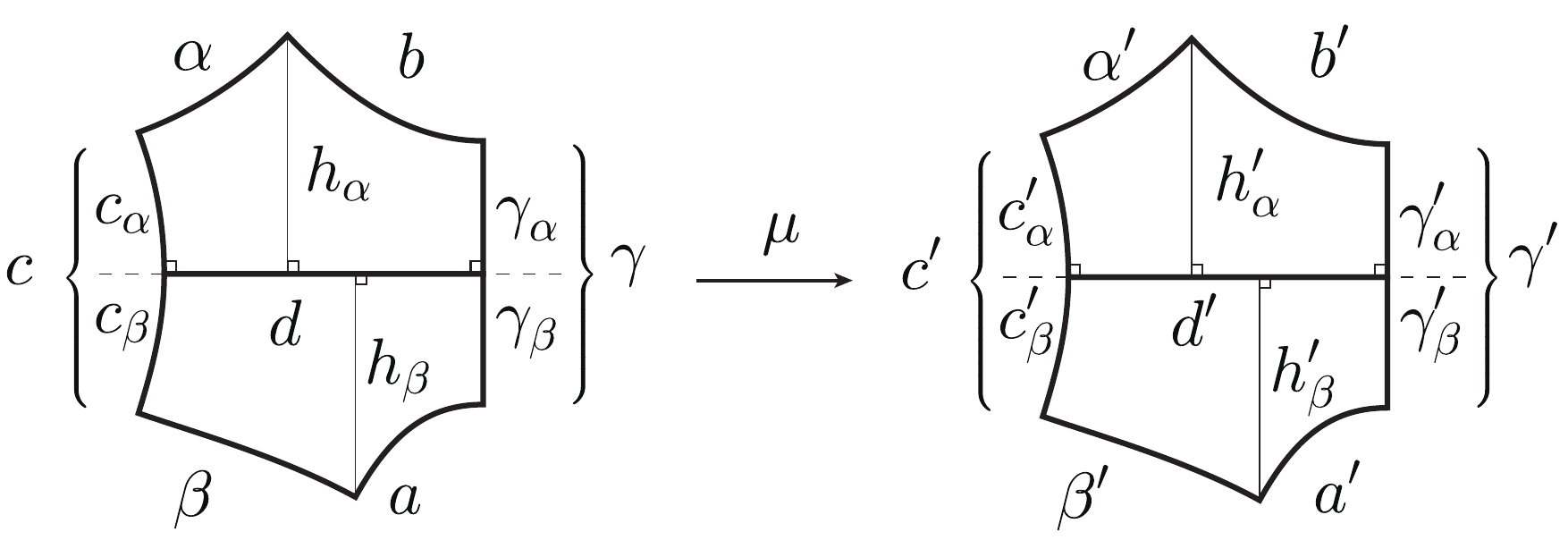}
        \caption{An illustration for the map  $\mu:\hexa \to \hexa'$.}
        \label{fig:MuHexagon}
    \end{figure}
                
    By Lemma \ref{lem:hexagon'} we have 
    \begin{equation*}
        \frac{\sinh^2\! d'}{\sinh^2\! d} = \frac{\sinh^2\! \gamma}{\sinh^2\! \gamma'}\frac{\cosh^2\! \alpha + \cosh^2\! \beta + 2\cosh\alpha\cosh\beta\cosh\gamma'}{\cosh^2\! \alpha + \cosh^2\! \beta + 2\cosh\alpha\cosh\beta\cosh\gamma}, 
    \end{equation*}
    it follows that 
    \begin{equation*}
        \min\left\{1,\, \frac{\sinh^2\! \gamma}{\sinh^2\! \gamma'}\right\} \leq \frac{\sinh^2\! d'}{\sinh^2\! d} \leq \max\left\{1,\, \frac{\sinh^2\! \gamma}{\sinh^2\! \gamma'}\right\}. 
    \end{equation*}
    Recall that $\gamma,\gamma'\leq \ell$ and $\sinh(\gamma')\geq \gamma'$, thus by the Mean Value Theorem we have 
    \begin{equation}\label{eqn:hexa1}
        \begin{aligned}
            \delta_d 
            &= \abs{\frac{\sinh d'}{\sinh d}-1}
            \leq \abs{\frac{\sinh \gamma}{\sinh \gamma'}-1}\\
            &\leq \frac{\cosh \ell}{\gamma'}\cdot\abs{\gamma-\gamma'} 
              =  \cosh \ell\cdot\delta_\gamma. 
        \end{aligned}
    \end{equation}
    Let $h_\alpha$ be the altitude of $\pent_\alpha$ perpendicular to $d$, and let $h_\beta$, $h'_\alpha$ and $h'_\beta$ be defined similarly. 
    If $\alpha \geq \arcsinh 1$, then by Lemma \ref{lem:hexagon_altitude} we have 
    \begin{equation*}
        \sinh h_\alpha \leq \frac{\cosh^2\! \gamma}{\tanh \alpha} \leq 2\cosh^2\! \gamma \leq 2e^{2\ell}, 
    \end{equation*}
    it follows that 
    \begin{equation}\label{eqn:hexa2}
        e^{h_\alpha} = \sinh h_\alpha + \cosh h_\alpha \leq 5e^{2\ell}.
    \end{equation}
    From \eqref{eqn:hexa1}, \eqref{eqn:hexa2} and our assumption we deduce that 
    \begin{equation}\label{eqn:hexa3}
        54e^{2h_\alpha}\sqrt{\delta_d} \leq 1350e^{5\ell}\sqrt{\delta_\gamma} \leq 1. 
    \end{equation}
    Similarly, the same upper bound \eqref{eqn:hexa3} also holds for $h_\beta$, $h'_\alpha$ and $h'_\beta$. The conclusion then follows from Proposition \ref{prop:mu_pentagon}. 
\end{proof}

Now we can prove Proposition \ref{prop:mu_pants}. 
\begin{proof}[Proof of Proposition \ref{prop:mu_pants}]
    Each pair of pants $P$ can be decomposed by the common perpendiculars between the three pairs of boundary geodesics into two isometric right-angled hexagons. Define the map $\mu$ from $P$ to $P'$ such that it maps the two hexagons of $P$ to the corresponding ones of $P'$, and coincides with the map given by Proposition \ref{prop:mu_hexagon} on each hexagon. It is clear from Proposition \ref{prop:mu_hexagon} that this map satisfies all the desired properties. 
\end{proof}

\subsection{Proof of Lemma \ref{lem:mu_pf2}}\label{subs-l2}
We will frequently use the following simple lemma whose proof follows from a direct computation that we omit here.
\begin{lemma}\label{lem:mu_inequality}
    For any positive constant $x$ and $x'$, we have 
    \begin{align*}
        \max\Big\{\frac{\tanh x'}{\tanh x}, \frac{\cosh x'}{\cosh x}\Big\} &\leq \max\Big\{1,\, \frac{\sinh x'}{\sinh x}\Big\},\\ 
        \min\Big\{\frac{\tanh x'}{\tanh x}, \frac{\cosh x'}{\cosh x}\Big\} &\geq \min\Big\{1,\, \frac{\sinh x'}{\sinh x}\Big\}. 
    \end{align*}
\end{lemma}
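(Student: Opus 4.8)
The plan is to reduce the whole statement to elementary monotonicity of $\sinh$ and $\cosh$ together with the subtraction formula $\sinh x'\cosh x-\cosh x'\sinh x=\sinh(x'-x)$. Write $A\df\sinh x'/\sinh x$, $B\df\cosh x'/\cosh x$ and $C\df\tanh x'/\tanh x$; since all of $\sinh x,\sinh x',\cosh x,\cosh x',\tanh x,\tanh x'$ are strictly positive, these ratios are well defined and positive, and one has the identity $C=A/B$. Thus the two claimed inequalities are simply statements about the relative order of the four positive numbers $1,A,B,C$ on the real line.

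First I would record the two facts that pin down this order. Since $\sinh$ and $\cosh$ are strictly increasing on $(0,\infty)$, both $A-1$ and $B-1$ have the same sign as $x'-x$; in particular $A\geq1\iff B\geq1\iff x'\geq x$. Second, $A-B=\dfrac{\sinh x'\cosh x-\cosh x'\sinh x}{\sinh x\cosh x}=\dfrac{\sinh(x'-x)}{\sinh x\cosh x}$, so $A\geq B\iff x'\geq x$ and $A\leq B\iff x'\leq x$.

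Then I would run the short case analysis. If $x'=x$, all four numbers equal $1$ and there is nothing to prove. If $x'>x$, the two facts give $1\leq B\leq A$, and hence $C=A/B$ satisfies $C\leq A$ (because $B\geq1$) and $C\geq1$ (because $B\leq A$); therefore $\max\{B,C\}\leq A=\max\{1,A\}$ and $\min\{B,C\}\geq1=\min\{1,A\}$, which is exactly the desired pair. The case $x'<x$ is entirely symmetric: now $A\leq B\leq1$, so $C=A/B$ satisfies $C\geq A$ and $C\leq1$, giving $\max\{B,C\}\leq1=\max\{1,A\}$ and $\min\{B,C\}\geq A=\min\{1,A\}$.

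There is no genuine obstacle here; the computation is elementary. The only points requiring care are keeping the direction of the inequalities straight in the two cases and noting at the outset that $x,x'>0$ makes every denominator appearing in $A$, $B$, $C$ strictly positive, so that forming the ratios and dividing by $B$ is legitimate. (One could also phrase the argument without a case split, by observing that $C=\tanh x'/\tanh x$ always lies weakly between $A$ and $B$ while $1$ lies weakly between $A$ and $B$ on the opposite side, but the case analysis above is the most transparent.)
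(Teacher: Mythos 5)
Your argument is correct, and since the paper explicitly omits the proof as ``a direct computation,'' your write-up simply supplies that omitted computation in a clean form. The key observations---that $C=A/B$, that $A$ and $B$ sit on the same side of $1$, and that $\sinh(x'-x)$ controls the sign of $A-B$---are exactly the right ingredients, and the two-case analysis (plus the trivial case $x'=x$) closes the argument without gaps.
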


We begin our proof of Lemma \ref{lem:mu_pf2} with some estimates. Recall that 
\[ \delta_d = \abs{\frac{\sinh d'}{\sinh d} - 1}. \] 
\begin{lemma}\label{lem:mu_lem1}
    We have 
    \begin{align}
        \label{eqn:mu_lem1-1} 1-\delta_d \leq \tanh d' &/ \tanh d \leq 1+\delta_d,\\
        \label{eqn:mu_lem1-2} 1-\delta_d \leq \cosh d' &/ \cosh d \leq 1+\delta_d. 
    \end{align}
\end{lemma}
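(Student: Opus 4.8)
The plan is to deduce both inequalities directly from the hypothesis $\delta_d = \abs{\sinh d'/\sinh d - 1}$ together with the elementary monotonicity comparisons recorded in Lemma \ref{lem:mu_inequality}. First I would rewrite the hypothesis as the two-sided bound
\[ 1 - \delta_d \leq \frac{\sinh d'}{\sinh d} \leq 1 + \delta_d, \]
so that in particular $\min\{1, \sinh d'/\sinh d\} \geq 1-\delta_d$ and $\max\{1, \sinh d'/\sinh d\} \leq 1+\delta_d$ (here using $\delta_d \geq 0$, so $1+\delta_d \geq 1$ and $1-\delta_d \leq 1$).

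Next, I would apply Lemma \ref{lem:mu_inequality} with $x = d$ and $x' = d'$. The first displayed inequality there gives
\[ \frac{\tanh d'}{\tanh d} \leq \max\Big\{1,\, \frac{\sinh d'}{\sinh d}\Big\} \leq 1+\delta_d, \qquad \frac{\cosh d'}{\cosh d} \leq \max\Big\{1,\, \frac{\sinh d'}{\sinh d}\Big\} \leq 1+\delta_d, \]
and the second displayed inequality gives
\[ \frac{\tanh d'}{\tanh d} \geq \min\Big\{1,\, \frac{\sinh d'}{\sinh d}\Big\} \geq 1-\delta_d, \qquad \frac{\cosh d'}{\cosh d} \geq \min\Big\{1,\, \frac{\sinh d'}{\sinh d}\Big\} \geq 1-\delta_d. \]
Combining the upper and lower bounds yields \eqref{eqn:mu_lem1-1} and \eqref{eqn:mu_lem1-2} respectively, which completes the proof.

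There is essentially no obstacle here: the content is entirely contained in Lemma \ref{lem:mu_inequality}, whose proof the authors have chosen to omit as a routine computation (the point being that $\tanh$ and $\cosh$ vary more slowly than $\sinh$, so a relative perturbation of $\sinh x$ controls the relative perturbation of $\tanh x$ and $\cosh x$ in the same direction). The only thing to be careful about is the direction of the $\min/\max$ comparisons and the sign of $\delta_d$, so that the clamping $1-\delta_d \leq \min\{1,\cdot\}$ and $\max\{1,\cdot\} \leq 1+\delta_d$ is applied correctly; this is immediate once $\delta_d \geq 0$ is noted. I would present the argument in the two or three lines above rather than expanding Lemma \ref{lem:mu_inequality} itself.
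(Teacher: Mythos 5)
Your argument is correct and is exactly the route the paper takes: the paper's one-line proof ("It follows directly from the definition of $\delta_d$ and Lemma \ref{lem:mu_inequality}") is precisely what you have unpacked, namely rewriting $\delta_d$ as the two-sided bound $1-\delta_d \leq \sinh d'/\sinh d \leq 1+\delta_d$ and then clamping via the $\min/\max$ comparisons in Lemma \ref{lem:mu_inequality}. Nothing is missing.
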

\begin{proof}
    It follows directly from the definition of $\delta_d$ and Lemma \ref{lem:mu_inequality}. 
\end{proof}

Recall that the map $(u,v)\mapsto(u',v')$ is defined by
\[ \frac{\tanh u'}{\tanh d'} = \frac{\tanh u}{\tanh d} \textrm{\quad and\quad} \frac{\tanh v'}{\tanh h'} = \frac{\tanh v}{\tanh h}. \] 
\begin{lemma}\label{lem:mu_lem2}
    Suppose $\delta_d\leq 1$. Then for any $u\in[0,d]$ we have 
    \begin{align}
        \label{eqn:mu_lem2-1} 1-\delta_d \leq \cosh u' &/ \cosh u \leq 1+\delta_d,\\
        \label{eqn:mu_lem2-2} 1-2\delta_d \leq \tanh (d'-u') &/ \tanh (d-u) \leq 1+3\delta_d,\\
        \label{eqn:mu_lem2-3} 1-\delta_d \leq \cosh (d'-u') &/ \cosh (d-u) \leq 1+\delta_d. 
    \end{align}
\end{lemma}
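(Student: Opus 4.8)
The plan is to reduce each of the three estimates to the monotonicity of an explicit function of a single variable, $t=\tanh u$ (or $s=\tanh d\tanh u$), so that the ratio in question is pinned between its two boundary values, which are then controlled by Lemma~\ref{lem:mu_lem1}. Throughout I will use that the map satisfies $\tanh u'=c\tanh u$ with $c\df\tanh d'/\tanh d$, hence $\tanh d'\tanh u'=c^{2}\tanh d\tanh u$, together with the subtraction formulas $\cosh(x-y)=\cosh x\cosh y\,(1-\tanh x\tanh y)$ and $\tanh(x-y)=(\tanh x-\tanh y)/(1-\tanh x\tanh y)$.

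For \eqref{eqn:mu_lem2-1}, since $\cosh^{2}u=(1-\tanh^{2}u)^{-1}$ one gets $\cosh^{2}u'/\cosh^{2}u=(1-t^{2})/(1-c^{2}t^{2})$; the derivative of $t\mapsto(1-t^{2})/(1-c^{2}t^{2})$ has the sign of $c^{2}-1$, so this function is monotone on $[0,\tanh d]$, with value $1$ at $t=0$ and $(1-\tanh^{2}d)/(1-\tanh^{2}d')=\cosh^{2}d'/\cosh^{2}d$ at $t=\tanh d$. Thus $\cosh u'/\cosh u$ lies between $1$ and $\cosh d'/\cosh d$, and Lemma~\ref{lem:mu_lem1} gives the claim. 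For \eqref{eqn:mu_lem2-2}, the subtraction formula yields $\tanh(d'-u')=c(\tanh d-\tanh u)/(1-c^{2}\tanh d\tanh u)$, so with $s=\tanh d\tanh u\in[0,\tanh^{2}d]$ the ratio equals $\phi(s)\df c(1-s)/(1-c^{2}s)$; since $\phi'(s)=c(c^{2}-1)/(1-c^{2}s)^{2}$ has constant sign, $\phi$ is monotone, with $\phi(0)=c$ and $\phi(\tanh^{2}d)=c\cosh^{2}d'/\cosh^{2}d=(\sinh d'/\sinh d)(\cosh d'/\cosh d)$. So $\tanh(d'-u')/\tanh(d-u)$ lies between $c$ and $(\sinh d'/\sinh d)(\cosh d'/\cosh d)$; a short case split on $d'\ge d$ versus $d'\le d$, using the monotonicity of $\tanh$ and $\cosh$ together with Lemma~\ref{lem:mu_lem1}, shows both endpoints lie in $[(1-\delta_{d})^{2},(1+\delta_{d})^{2}]$, which is contained in $[1-2\delta_{d},1+3\delta_{d}]$ because $\delta_{d}\le 1$.

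The substantive part is \eqref{eqn:mu_lem2-3}. Put $F(t)\df\cosh(d'-u')/\cosh(d-u)$; using $\cosh(d-u)=\cosh d\cosh u\,(1-\tanh d\tanh u)$ and its primed analogue, $F$ becomes an explicit function of $t=\tanh u$ with $F(0)=\cosh d'/\cosh d$ and $F(\tanh d)=1$. A direct computation gives $\frac{\dif}{\dif t}\log F(t)=(c^{2}-1)\Big(\frac{t}{(1-c^{2}t^{2})(1-t^{2})}-\frac{\tanh d}{(1-t\tanh d)(1-c^{2}t\tanh d)}\Big)$, and clearing the (manifestly positive) denominators reduces the bracketed quantity being $\le 0$ on $[0,\tanh d]$ to the evident inequality $(\tanh d-t)\big(1-c^{2}\tanh d\cdot t^{3}\big)\ge 0$ (note $c^{2}\tanh d\cdot t^{3}\le(\tanh d'\tanh d)^{2}<1$). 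Hence $F$ is monotone on $[0,\tanh d]$, so it lies between its boundary values $\cosh d'/\cosh d$ and $1$, and Lemma~\ref{lem:mu_lem1} finishes the proof.

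I expect \eqref{eqn:mu_lem2-3} to be the obstacle. Writing $\cosh(d'-u')/\cosh(d-u)=\tfrac{\cosh d'}{\cosh d}\cdot\tfrac{\cosh u'}{\cosh u}\cdot\tfrac{1-c^{2}s}{1-s}$ and bounding the three factors separately only yields $(1\pm\delta_{d})^{2}$-type control, because it ignores the cancellation between the first two factors (both near $1$ and on the same side of $1$) and the third; the monotonicity of $F$ is precisely what captures this cancellation, so the crux is recognizing the single-variable parametrization and locating the clean factorization $(\tanh d-t)(1-c^{2}\tanh d\cdot t^{3})$ of the relevant sign.
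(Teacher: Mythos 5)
Your proof is correct. Parts \eqref{eqn:mu_lem2-1} and \eqref{eqn:mu_lem2-2} follow essentially the same route as the paper: express the ratio in terms of $t=\tanh u$ (or $s=\tanh d\tanh u$), observe it is a monotone fractional-linear/quadratic function, pin it between the boundary values at $u=0$ and $u=d$, and invoke Lemma~\ref{lem:mu_lem1}. For \eqref{eqn:mu_lem2-3}, however, you take a genuinely different route. The paper's proof does not parametrize $\cosh(d'-u')/\cosh(d-u)$ at all; instead it first shows (from the monotonicity already established for the $\tanh$ ratio, together with the definition of $u'$) that the three differences $d'-d$, $u'-u$, $(d'-u')-(d-u)$ all share a sign, then writes $\sinh d'/\sinh d$ via the addition formula $\sinh d'=\sinh u'\cosh(d'-u')+\cosh u'\sinh(d'-u')$ and squeezes $\cosh(d'-u')/\cosh(d-u)$ between $\sinh d'/\sinh d$ and $1$ using a mediant-type bound, with a case split on $d'\gtrless d$. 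Your approach instead computes $\frac{\dif}{\dif t}\log F$ directly, factors the sign of the bracket cleanly as $(\tanh d-t)\bigl(1-c^{2}t^{3}\tanh d\bigr)$, and concludes monotonicity of $F$ on $[0,\tanh d]$ without any case split: the factor $(c^{2}-1)$ absorbs both directions automatically. I verified the log-derivative identity and the factorization, and both boundary values $F(0)=\cosh d'/\cosh d$ and $F(\tanh d)=1$ are correct, so the argument closes via Lemma~\ref{lem:mu_lem1} exactly as you say. Your approach buys a uniform one-line reason for monotonicity; the paper's approach avoids the log-derivative computation at the cost of a more delicate sign-tracking and case analysis. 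Your closing remark correctly identifies why naive term-by-term bounding of the three factors only gives $(1\pm\delta_d)^{2}$ and why the monotonicity (equivalently, the paper's squeeze between $\sinh d'/\sinh d$ and $1$) is what recovers the sharper $1\pm\delta_d$.
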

\begin{proof}
    \underline{Proof of \eqref{eqn:mu_lem2-1}}. 
    By definition we have 
    \[ \frac{\cosh u'}{\cosh u} = \frac{1}{\cosh u(1-\tanh^2\! u')^{1/2}} = \bigg(\frac{1}{\cosh^2\!u - \frac{\tanh^2\! d'}{\tanh^2\! d}\sinh^2\! u}\bigg)^{1/2}, \] 
    hence it is a monotonic function with respect to $u\in[0,d]$. It follows that 
    \begin{equation*}
        \min\Big\{1,\, \frac{\cosh d'}{\cosh d}\Big\} \leq \frac{\cosh u'}{\cosh u} \leq \max\Big\{1,\, \frac{\cosh d'}{\cosh d}\Big\}, 
    \end{equation*}
    thus \eqref{eqn:mu_lem2-1} follows from \eqref{eqn:mu_lem1-2}. 

    \underline{Proof of \eqref{eqn:mu_lem2-2}}. 
    By definition we have 
    \begin{equation}\label{eqn:mu_lem2-4}
        \begin{aligned}
            \frac{\tanh (d'-u')}{\tanh (d-u)} 
            & = \frac{\tanh d' - \tanh u'}{\tanh d - \tanh u}\cdot \frac{1-\tanh d\tanh u}{1-\tanh d'\tanh u'}\\ 
            & = \frac{\tanh d'}{\tanh d}\cdot \frac{1-\tanh d\tanh u}{1-\frac{\tanh^2\! d'}{\tanh d}\tanh u}, 
        \end{aligned}
    \end{equation}
    hence it is also a monotonic function with respect to $u\in[0,d]$. 
    It follows from \eqref{eqn:mu_lem2-4} that 
    \begin{equation}\label{eqn:mu_lem2-5}
        \begin{aligned}
            \frac{\tanh (d'-u')}{\tanh (d-u)} &\leq \max\Big\{\frac{\tanh d'}{\tanh d},\, \frac{\sinh d'\cosh d'}{\sinh d\cosh d}\Big\},\\
            \frac{\tanh (d'-u')}{\tanh (d-u)} &\geq \min\Big\{\frac{\tanh d'}{\tanh d},\, \frac{\sinh d'\cosh d'}{\sinh d\cosh d}\Big\}.
        \end{aligned}
    \end{equation}
    Thus, \eqref{eqn:mu_lem2-2} follows from Lemma \ref{lem:mu_lem1} and our assumption that $\delta_d\leq1$. 

    \underline{Proof of \eqref{eqn:mu_lem2-3}}. 
    From \eqref{eqn:mu_lem2-5} we also find that $d'-u'\leq d-u$ if and only if $d'\leq d$. Meanwhile, by definition of $u'$ we also have that $u'\leq u$ if and only if $d'\leq d$. To estimate $\cosh(d'-u')/\cosh(d-u)$, note that 
    \begin{equation}\label{eqn:mu_lem2-6}
        \frac{\sinh d'}{\sinh d} = \frac{\sinh u'\cosh (d'-u') + \cosh u'\sinh (d'-u')}{\sinh u\cosh (d-u) + \cosh u\sinh (d-u)}, 
    \end{equation}
    if $d'\leq d$, then $u'\leq u$ and $d'-u'\leq d-u$, hence by \eqref{eqn:mu_lem2-6} and Lemma \ref{lem:mu_inequality} we have 
    \begin{equation*}
        1-\delta_d \leq \frac{\sinh d'}{\sinh d} \leq \frac{\cosh u'\cosh (d'-u')}{\cosh u\cosh (d-u)} \leq \frac{\cosh (d'-u')}{\cosh (d-u)} \leq 1, 
    \end{equation*}
    if $d'\geq d$, then $u'\geq u$ and $d'-u'\geq d-u$, hence by \eqref{eqn:mu_lem2-6} and Lemma \ref{lem:mu_inequality} we have 
    \begin{equation*}
        1+\delta_d \geq \frac{\sinh d'}{\sinh d} \geq \frac{\cosh u'\cosh (d'-u')}{\cosh u\cosh (d-u)} \geq \frac{\cosh (d'-u')}{\cosh (d-u)} \geq 1. 
    \end{equation*}
    
    The proof is complete. 
\end{proof}

\begin{lemma}\label{lem:mu_lem3}
    Suppose $\delta_d\leq 1/2$. Then we have 
    \begin{align}
        \label{eqn:mu_lem3-1} 1-\delta_d \leq \sinh c' &/ \sinh c \leq 1+2\delta_d,\\
        \label{eqn:mu_lem3-2} 1-\delta_d \leq \sinh \gamma' &/ \sinh \gamma \leq 1+2\delta_d. 
    \end{align}
\end{lemma}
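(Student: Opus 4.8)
The plan is to express $\sinh\gamma$ and $\sinh c$ purely in terms of the two sides that are fixed or controlled (namely $\alpha$, which equals $\alpha'$, and $d$, for which $\delta_d=\abs{\sinh d'/\sinh d-1}$), using the right-angled pentagon trigonometry of Lemma~\ref{lem:pentagon} and Lemma~\ref{lem:pentagon'}, and then to push the hypothesis $\delta_d\leq1/2$ through those formulas, exactly as was done for $d$ itself in Lemma~\ref{lem:mu_lem1} and Lemma~\ref{lem:mu_lem2}.

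For the right-angled pentagon $\pent$ with consecutive sides $\alpha,b,\gamma,d,c$, the side $\alpha$ is the one not adjacent to either $\gamma$ or $d$, so Part~(1) of Lemma~\ref{lem:pentagon} gives $\cosh\alpha=\sinh\gamma\sinh d$; and the side $c$ has $d$ and $\alpha$ as its two neighbours, so Part~(1) of Lemma~\ref{lem:pentagon'} gives $\sinh^2 c=\tfrac{1}{\sinh^2\alpha}+\tfrac{1}{\sinh^2 d\,\tanh^2\alpha}$. Since $1/\tanh^{2}\alpha=\cosh^{2}\alpha/\sinh^{2}\alpha$ and $\cosh^2\gamma=1+\sinh^2\gamma=1+\cosh^2\alpha/\sinh^2 d$, these combine to the two clean identities
\[ \sinh\gamma=\frac{\cosh\alpha}{\sinh d},\qquad \sinh c=\frac{\cosh\gamma}{\sinh\alpha}, \]
and the same two identities hold in $\pent'$ with $\alpha'=\alpha$ in place of $\alpha$ and $d'$ in place of $d$.

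From the first identity together with $\alpha=\alpha'$ we get $\sinh\gamma'/\sinh\gamma=\sinh d/\sinh d'$. Because $\delta_d=\abs{\sinh d'/\sinh d-1}$ forces $\sinh d'/\sinh d\in[1-\delta_d,1+\delta_d]$, we have $\sinh d/\sinh d'\in\bigl[\tfrac{1}{1+\delta_d},\tfrac{1}{1-\delta_d}\bigr]$, and the elementary estimates $\tfrac{1}{1+\delta_d}\geq 1-\delta_d$ (always) and $\tfrac{1}{1-\delta_d}\leq 1+2\delta_d$ (valid precisely because $\delta_d\leq1/2$) give \eqref{eqn:mu_lem3-2}. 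For \eqref{eqn:mu_lem3-1}, the second identity and $\alpha=\alpha'$ give $\sinh c'/\sinh c=\cosh\gamma'/\cosh\gamma$; Lemma~\ref{lem:mu_inequality} applied with $x=\gamma$ and $x'=\gamma'$ then sandwiches this ratio between $\min\{1,\sinh\gamma'/\sinh\gamma\}$ and $\max\{1,\sinh\gamma'/\sinh\gamma\}$, and feeding in \eqref{eqn:mu_lem3-2} turns this into the required bounds $1-\delta_d$ and $1+2\delta_d$.

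The argument is entirely computational, and I expect no genuine obstacle: the only thing needing care is the bookkeeping of which sides of the pentagon occupy which slots in Lemma~\ref{lem:pentagon} and Lemma~\ref{lem:pentagon'}, so as to arrive at the two identities $\sinh\gamma=\cosh\alpha/\sinh d$ and $\sinh c=\cosh\gamma/\sinh\alpha$ above. Once these are in hand, the dependence on $d$ versus $d'$ is monotone and is absorbed by $\delta_d\leq1/2$ in the same manner as in the proofs of the previous two lemmas.
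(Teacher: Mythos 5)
Your proof is correct, and it arrives at the same constants as the paper. For the second estimate \eqref{eqn:mu_lem3-2} you argue exactly as the paper does: the pentagon identity $\cosh\alpha=\sinh\gamma\sinh d$ together with $\alpha=\alpha'$ gives $\sinh\gamma'/\sinh\gamma=\sinh d/\sinh d'$, and the hypothesis $\delta_d\leq1/2$ converts $\sinh d'/\sinh d\in[1-\delta_d,1+\delta_d]$ into the stated $[1-\delta_d,1+2\delta_d]$. For the first estimate \eqref{eqn:mu_lem3-1} your route is genuinely a bit different, and slightly cleaner. You reduce to the identity $\sinh c=\cosh\gamma/\sinh\alpha$ (which you could have obtained in one step from Lemma~\ref{lem:pentagon}(1) applied to the side $\gamma$, rather than by massaging Lemma~\ref{lem:pentagon'}), deduce $\sinh c'/\sinh c=\cosh\gamma'/\cosh\gamma$, and then feed the already-proven \eqref{eqn:mu_lem3-2} through Lemma~\ref{lem:mu_inequality}. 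The paper instead substitutes Lemma~\ref{lem:pentagon'}(1) for $\sinh^2 c$ directly and bounds the ratio $\sinh c'/\sinh c$ by a weighted-average argument in terms of $\sinh d/\sinh d'$ and $\tanh d/\tanh d'$, before again invoking Lemma~\ref{lem:mu_inequality}. The upshot is the same; your version makes \eqref{eqn:mu_lem3-1} a corollary of \eqref{eqn:mu_lem3-2} and replaces the weighted-average step by a single application of Lemma~\ref{lem:mu_inequality}, which is a mild but real simplification of the bookkeeping.
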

\begin{proof}
    \underline{Proof of \eqref{eqn:mu_lem3-1}}. 
    Since $\alpha'=\alpha$, by Lemma \ref{lem:pentagon'} we have 
    \begin{equation*}
        \frac{\sinh c'}{\sinh c} =  \bigg( \frac{1 / \sinh^2\! d' + 1/(\sinh^2\! \alpha\tanh^2\! d')}{1 / \sinh^2\! d + 1/(\sinh^2\! \alpha\tanh^2\! d)} \bigg)^{1/2}, 
    \end{equation*}
    thus by Lemma \ref{lem:mu_inequality} and our assumption that $\delta_d\leq1/2$ we have 
    \begin{equation*}
        \begin{aligned}
            \frac{\sinh c'}{\sinh c} &\leq \max\Big\{\frac{\sinh d}{\sinh d'},\, \frac{\tanh d}{\tanh d'}\Big\} \leq \max\Big\{1,\,\frac{\sinh d}{\sinh d'}\Big\} \leq 1+2\delta_d,\\ 
            \frac{\sinh c'}{\sinh c} &\geq \min\Big\{\frac{\sinh d}{\sinh d'},\, \frac{\tanh d}{\tanh d'}\Big\} \geq \min\Big\{1,\,\frac{\sinh d}{\sinh d'}\Big\} \geq 1-\delta_d. 
        \end{aligned}
    \end{equation*}
    Then \eqref{eqn:mu_lem3-1} follows. 

    \underline{Proof of \eqref{eqn:mu_lem3-2}}. 
    By Lemma \ref{lem:pentagon} we have 
    \begin{equation*}
        1-\delta_d \leq \frac{\sinh\gamma'}{\sinh\gamma} = \frac{\cosh \alpha/\sinh d'}{\cosh \alpha/\sinh d} = \frac{\sinh d}{\sinh d'} \leq 1+2\delta_d, 
    \end{equation*}
    thus \eqref{eqn:mu_lem3-2} also follows. The proof is complete. 
\end{proof}

\begin{lemma}\label{lem:mu_lem4}
    Suppose $\delta_d\leq1/2$. Then for any $u\in[0,d]$ we have 
    \begin{equation}\label{eqn:mu_lem4-1}
        1-2\delta_d \leq \tanh h'/\tanh h \leq 1+4\delta_d,\\
    \end{equation}
\end{lemma}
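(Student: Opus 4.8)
The plan is to reduce Lemma~\ref{lem:mu_lem4} to the ratio estimates already proved in Lemmas~\ref{lem:mu_lem1}--\ref{lem:mu_lem3} and Lemma~\ref{lem:mu_inequality} by first writing down an explicit closed formula for the altitude $h(u)$ of the right-angled pentagon $\pent$ in the Fermi coordinates $(u,v)$ attached to the geodesic carrying the side $d$. Since $\pent$ has right angles at the two endpoints of $d$, the sides $c$ and $\gamma$ lie along the verticals $\{u=0\}$ and $\{u=d\}$, so over the interval $u\in[0,d]$ the ``ceiling'' of $\pent$ is formed by the side $\alpha$ for $u\le u_0$ and by the side $b$ for $u\ge u_0$, meeting at the vertex lying over $u_0$ at height $h_0$. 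For $u\le u_0$ the quadrilateral with vertices $(0,0),(u,0),(u,h(u)),(0,c)$ is a trirectangle (right angles at the first three vertices), and Lemma~\ref{lem:trirectangle}(1) gives
\[ \tanh h(u)=\cosh u\cdot\tanh c ; \]
the symmetric trirectangle based at $(d,0)$ gives $\tanh h(u)=\cosh(d-u)\cdot\tanh\gamma$ for $u\ge u_0$. Because the first expression increases in $u$, the second decreases, they agree at $u_0$, and $\pent$ is convex, I would record the single identity, valid for \emph{all} $u\in[0,d]$,
\[ \tanh h(u)=\min\{\cosh u\cdot\tanh c,\ \cosh(d-u)\cdot\tanh\gamma\}, \]
together with the analogous formula for $\tanh h'(u')$ on $\pent'$.

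Next I would compare the two entries of the minimum across the map $u\mapsto u'$. Write $A=\cosh u\tanh c$, $B=\cosh(d-u)\tanh\gamma$, and $A',B'$ for the primed analogues evaluated at $u'=u'(u)$. Combining \eqref{eqn:mu_lem2-1} with \eqref{eqn:mu_lem3-1} and Lemma~\ref{lem:mu_inequality} yields $(1-\delta_d)^2\le A'/A\le(1+\delta_d)(1+2\delta_d)$, and \eqref{eqn:mu_lem2-3} with \eqref{eqn:mu_lem3-2} and Lemma~\ref{lem:mu_inequality} yields the same two-sided bound for $B'/B$. Since $\min$ is monotone in each variable, $\min(A',B')\le(1+\delta_d)(1+2\delta_d)\min(A,B)$ and $\min(A',B')\ge(1-\delta_d)^2\min(A,B)$, so $(1-\delta_d)^2\le\tanh h'/\tanh h\le(1+\delta_d)(1+2\delta_d)$. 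For $\delta_d\le 1/2$ one has $(1-\delta_d)^2\ge 1-2\delta_d$ and $(1+\delta_d)(1+2\delta_d)=1+3\delta_d+2\delta_d^2\le 1+4\delta_d$, which is exactly \eqref{eqn:mu_lem4-1}.

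The step requiring care is the first one: one must justify the closed form $\tanh h(u)=\min\{\cdots\}$ uniformly in $u$, and in particular observe that the transition abscissae $u_0$ of $\pent$ and $u'_0$ of $\pent'$ need \emph{not} correspond under $u\mapsto u'$, so that $u$ and $u'$ may sit on different ``branches.'' The $\min$-formula is precisely what makes the argument robust: the equalities $\tanh h(u)=A(u)$ (on the $\alpha$-branch) and $\tanh h(u)=B(u)$ (on the $b$-branch) come from the two trirectangles, while the inequalities $A(u)\le B(u)$ and $B(u)\le A(u)$ on the respective branches follow from convexity of $\pent$ (the pentagon lies on the interior side of the geodesics extending $b$ and $\alpha$). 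Once this is in place, the estimate is a purely mechanical combination of the already-proven lemmas, with no further hyperbolic trigonometry needed.
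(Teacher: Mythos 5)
Your argument is correct, but the careful step you flag is guarding against something that does not actually happen: the transition abscissae \emph{do} correspond under $\mu$. From the definition $\tanh u'/\tanh d'=\tanh u/\tanh d$ in \eqref{eqn:mu_def}, the identity $\tanh u_0=\tanh^2\!\alpha\cdot\tanh d$ (which the paper derives from \eqref{eqn:mu_pf1-1} and Lemma~\ref{lem:pentagon} at the start of the proof of Lemma~\ref{lem:mu_lem5}), and the hypothesis $\alpha=\alpha'$, one has $\tanh u_0/\tanh d=\tanh^2\!\alpha=\tanh^2\!\alpha'=\tanh u'_0/\tanh d'$, hence $u'(u_0)=u'_0$ and $u\le u_0\Leftrightarrow u'\le u'_0$. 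So the $\min$-formula is valid but unnecessary armor; the paper simply cases on $u\in[0,u_0]$ versus $u\in[u_0,d]$ and on each branch applies the single relevant trirectangle identity, relying tacitly on the branch correspondence. The one genuine difference of substance is that on the $\alpha$-branch the paper uses Lemma~\ref{lem:trirectangle}(2), i.e.\ $\sinh h(u)=\cosh\alpha(u)\sinh c$, and then invokes Lemma~\ref{lem:mu_pf1} to cancel $\cosh\alpha(u)=\cosh\alpha'(u')$ \emph{exactly}, yielding the sharper two-sided bound $1-\delta_d\le\tanh h'/\tanh h\le 1+2\delta_d$ there; you instead use Lemma~\ref{lem:trirectangle}(1), i.e.\ $\tanh h(u)=\cosh u\tanh c$, and estimate $\cosh u'/\cosh u$ only approximately via \eqref{eqn:mu_lem2-1}, which lands you in the wider window $(1-2\delta_d,\,1+4\delta_d)$ on both branches. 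Your version is slightly less sharp on the $\alpha$-branch but more uniform, and the final numerics $(1-\delta_d)^2\ge 1-2\delta_d$ and $(1+\delta_d)(1+2\delta_d)\le 1+4\delta_d$ for $\delta_d\le 1/2$ are checked correctly, so the conclusion of Lemma~\ref{lem:mu_lem4} follows either way.
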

\begin{proof}
    If $u\in[0,u_0]$, then by Lemma \ref{lem:trirectangle}, Lemma \ref{lem:mu_pf1} and \eqref{eqn:mu_lem3-1} we have 
    \begin{equation}\label{eqn:mu_lem4-2}
        1-\delta_d \leq \frac{\sinh h'}{\sinh h} = \frac{\cosh \alpha'(u')\sinh c'}{\cosh \alpha(u)\sinh c} = \frac{\sinh c'}{\sinh c} \leq 1+2\delta_d 
    \end{equation}
    which together with Lemma \ref{lem:mu_inequality} implies that whenever $u\in[0,u_0]$,
     \begin{equation}\label{eqn:mu_lem4-2-1}
        1-\delta_d \leq \frac{\tanh h'}{\tanh h} \leq 1+2\delta_d
    \end{equation}
    If $u\in[u_0,d]$, then by Lemma \ref{lem:trirectangle}, Lemma \ref{lem:mu_inequality}, and \eqref{eqn:mu_lem2-3}, \eqref{eqn:mu_lem3-2} we have 
    \begin{equation}\label{eqn:mu_lem4-3}
        1-2\delta_d \leq \frac{\tanh h'}{\tanh h} = \frac{\cosh(d'-u')\tanh \gamma'}{\cosh(d-u)\tanh \gamma} \leq 1+4\delta_d. 
    \end{equation}
    The conclusion then follows from \eqref{eqn:mu_lem4-2-1} and \eqref{eqn:mu_lem4-3}. 
\end{proof}

\begin{lemma}\label{lem:mu_lem5}
    Suppose $\delta_d\leq1/5$. Then for any $u\in[0,d]$ we have 
    \begin{equation}\label{eqn:mu_lem5-1}
        1-3\delta_d \leq \cosh h'/\cosh h \leq 1+7\delta_d. 
    \end{equation}
\end{lemma}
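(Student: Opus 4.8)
The plan is to argue exactly as in the proof of Lemma~\ref{lem:mu_lem4}: split the range of $u$ at $u_0$ and treat $[0,u_0]$ and $[u_0,d]$ separately, deducing in each case the two‑sided bound on $\cosh h'/\cosh h$ from ratios already controlled in Lemmas~\ref{lem:mu_lem1}--\ref{lem:mu_lem4}, the elementary comparison Lemma~\ref{lem:mu_inequality}, and the identity $\cosh x=\sinh x/\tanh x$.

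For $u\in[0,u_0]$ nothing new is needed. The proof of Lemma~\ref{lem:mu_lem4} has already established $1-\delta_d\le \sinh h'/\sinh h\le 1+2\delta_d$ on this range (inequality \eqref{eqn:mu_lem4-2}), and Lemma~\ref{lem:mu_inequality} then squeezes $\cosh h'/\cosh h$ between $\min\{1,\sinh h'/\sinh h\}$ and $\max\{1,\sinh h'/\sinh h\}$, hence into $[1-\delta_d,1+2\delta_d]$, which is comfortably inside the claimed interval.

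For $u\in[u_0,d]$ the subtlety is that \eqref{eqn:mu_lem4-3} controls only $\tanh h'/\tanh h$, and a $\tanh$‑ratio close to $1$ does not by itself force a $\cosh$‑ratio close to $1$ (the altitudes $h,h'$ need not be small). So I would return to the Lambert quadrilateral bounded by $d$, the side of $\pent$ opposite $\gamma$, and the altitude $h(u)$ — the one used to produce \eqref{eqn:mu_lem4-3} — and extract from Lemma~\ref{lem:trirectangle} the companion identity which, together with $\tanh h=\cosh(d-u)\tanh\gamma$, expresses $\sinh h(u)$ (equivalently $\cosh h(u)$) through $d-u$, $\gamma$ and the remaining auxiliary side of that quadrilateral. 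Since $\alpha=\alpha'$ and $\sinh d'/\sinh d\in[1-\delta_d,1+\delta_d]$, each quantity occurring there has its $\pent\leftrightarrow\pent'$ ratio already pinned down by \eqref{eqn:mu_lem2-1}, \eqref{eqn:mu_lem2-3}, \eqref{eqn:mu_lem3-2}, Lemma~\ref{lem:mu_lem1} and Lemma~\ref{lem:mu_inequality}; feeding these together with the $\tanh$‑bound \eqref{eqn:mu_lem4-3} into $\cosh h=\sinh h/\tanh h$ yields a two‑sided bound on $\cosh h'/\cosh h$ of the shape $1\pm c\,\delta_d$ with an explicit absolute $|c|$.

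Finally I would collect the multiplicative errors from the two cases: each factor is of the form $1\pm c_j\delta_d$ with $|c_j|$ a small absolute constant, and under the hypothesis $\delta_d\le 1/5$ all $\delta_d^2$ cross‑terms are absorbed into the linear part, so the product remains within $[1-3\delta_d,\,1+7\delta_d]$. The one genuine obstacle is the case $u\in[u_0,d]$: one must locate the correct trirectangle identity so that $\cosh h$ is expressed via quantities whose ratios have already been controlled rather than via $h$ itself, after which the argument is only bookkeeping of constants to check that the worst case over $0<\delta_d\le 1/5$ still fits inside the bounds $3\delta_d$ and $7\delta_d$.
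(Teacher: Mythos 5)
Your handling of $u\in[0,u_0]$ is correct and matches the paper: \eqref{eqn:mu_lem4-2} and Lemma~\ref{lem:mu_inequality} give the bound directly. However, your plan for $u\in[u_0,d]$ has a genuine gap, and it is precisely at the point you yourself flagged as the subtlety.

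The companion identity from Part~(2) of Lemma~\ref{lem:trirectangle}, applied to the Lambert quadrilateral used to derive \eqref{eqn:mu_lem4-3}, reads
\[
\sinh h(u) \;=\; \cosh\beta(u)\,\sinh\gamma,
\]
where $\beta(u)$ is the fourth side of that quadrilateral, namely the segment of the pentagon side $b$ between the vertex $\gamma\cap b$ and the foot of the altitude. Your proposal asserts that ``each quantity occurring there has its $\pent\leftrightarrow\pent'$ ratio already pinned down'' by Lemmas~\ref{lem:mu_lem1}--\ref{lem:mu_lem3} and Lemma~\ref{lem:mu_inequality}, but none of those lemmas controls the ratio $\cosh\beta'(u')/\cosh\beta(u)$. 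The only thing one can extract cheaply is a $\tanh$-ratio bound (via $\tanh\beta(u)=\cosh\gamma\,\tanh(d-u)$, both factors controlled) --- and you cannot pass from a $\tanh$-ratio bound to a $\cosh$-ratio bound when $\beta(u)$ is not small. But $\beta(u)$ is not small: it grows to the full side $b$ as $u\to u_0$, and $b$ itself is unbounded (from Lemma~\ref{lem:pentagon}, $\sinh b = \cosh d/\sinh\alpha$ blows up as $\gamma\to 0$ with $\alpha$ fixed). This is exactly the obstruction you noted for $h$, and it reappears for $\beta(u)$; the proposed route just relocates the problem without solving it. Phrased directly on $h$: the trirectangle identities give $1/\cosh^2 h = 1 - \sinh^2\gamma\sinh^2(d-u)$, a quantity that can be as small as $1/\cosh^2\bar h$, so a multiplicative $O(\delta_d)$ error in $\sinh^2\gamma\sinh^2(d-u)$ inflates to an $O(\cosh^2\bar h\,\delta_d)$ multiplicative error in $\cosh h'/\cosh h$. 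That is \emph{not} the absolute constant $7$ claimed by the lemma, and your final sentence asserting ``an explicit absolute $|c|$'' is unsubstantiated.

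The paper's proof does something genuinely different to defeat this near-cancellation. Multiplying $1/\cosh^2 h$ by $\tanh^2 c/\cosh^2 u$ and using Lemma~\ref{lem:pentagon} and Lemma~\ref{lem:pentagon'} turns the expression into $\tanh^2 d\cdot F_d(t)$, a quadratic polynomial in $t=\tanh u/\tanh d$; the ratio $F_d(t)/F_{d'}(t)$ is then shown to be \emph{monotone} on $t\in[\tanh^2\alpha,1]$, so it is sandwiched between its endpoint values at $u=u_0$ and $u=d$. At those endpoints the quantity $\cosh h\cosh u$ collapses to pentagon side combinations ($\cosh\alpha\cosh c$ at $u_0$, $\cosh\gamma\cosh d$ at $u=d$) whose ratios are already controlled without any blow-up. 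This monotonicity step is the essential ingredient you would need to add; without it the $[u_0,d]$ case does not close with an absolute constant, and the subsequent accounting in Lemmas~\ref{lem:mu_pf2-2} and \ref{lem:mu_pf2-4} would have to be redone with $\bar h$-dependent constants.
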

\begin{proof}
    If $u\in[0,u_0]$, then \eqref{eqn:mu_lem5-1} follows from Lemma \ref{lem:mu_inequality} and \eqref{eqn:mu_lem4-2}. 
    If $u\in[u_0,d]$, then by Part (1) of Lemma \ref{lem:trirectangle}, we have 
    \begin{equation*}
        \frac{1}{\cosh^2\! h} = 1-\tanh^2\! h = 1-\cosh^2(d-u)\tanh^2\! \gamma. 
    \end{equation*}
    By Part (2) of Lemma \ref{lem:pentagon}, we have 
    \begin{equation*}
        \tanh^2\! \gamma\tanh^2\! c = \frac{1}{\cosh^2\! d}. 
    \end{equation*}
    By Part (2) of Lemma \ref{lem:pentagon'}, we have 
    \begin{equation*}
        \tanh^2\! c = 1-\tanh^2\! \alpha\tanh^2\! d. 
    \end{equation*}
    Combining the three equalities above, we obtain that 
    \begin{equation}\label{eqn:mu_lem5-2}
        \begin{aligned}
            \frac{\tanh^2\! c}{\cosh^2\! h}\frac{1}{\cosh^2\! u}
            & = \frac{1-\tanh^2\! \alpha\tanh^2\! d}{\cosh^2\! u} - \frac{\cosh^2(d-u)}{\cosh^2\! d\cosh^2\! u}\\
            & = (1-\tanh^2\! u)(1-\tanh^2\! \alpha\tanh^2\! d) - (1-\tanh u\tanh d)^2.
        \end{aligned}
    \end{equation}
    Now we set 
    \[ t \df \frac{\tanh u}{\tanh d} = \frac{\tanh u'}{\tanh d'}, \] 
    then by \eqref{eqn:mu_pf1-1} we have 
    \[ \tanh u_0 = \tanh \alpha(u_0)\tanh \alpha\tanh d = \tanh^2\! \alpha \cdot \tanh d, \] 
    which means that if $u=u_0$, then $t=\tanh^2\!\alpha$. Next by taking a substitution of $\tanh u = t\cdot\tanh d$ into the RHS of \eqref{eqn:mu_lem5-2}, we find that it can be written as a function of $t$: 
    \begin{equation*}
        \tanh^2\! d\cdot\Big( (\tanh^2\! \alpha\tanh^2\! d-\tanh^2\! d-1)t^2 + 2t - \tanh^2\! \alpha \Big). 
    \end{equation*}
    Then we define the function $F_{d}(t)$ by 
    \begin{equation*}
        F_{d}(t) \df (\tanh^2\! \alpha\tanh^2\! d-\tanh^2\! d-1)t^2 + 2t - \tanh^2\! \alpha, 
    \end{equation*}
    and let $F_{d'}(t)$ be similarly defined, \ie 
    \begin{equation*}
        F_{d'}(t) \df (\tanh^2\! \alpha\tanh^2\! d'-\tanh^2\! d'-1)t^2 + 2t - \tanh^2\! \alpha. 
    \end{equation*}
    By a direct computation we have 
    \begin{equation*}
        \frac{\dif}{\dif t}\Big( \frac{F_{d}}{F_{d'}} \Big) 
        = \frac{2t\cdot (t-\tanh^2 \alpha)(\tanh^2\! \alpha-1)(\tanh^2\! d-\tanh^2\! d')}{(F_{d'})^2}, 
    \end{equation*}
    which tells that $F_{d}(t)/F_{d'}(t)$ is a monotonic function for $t\in[\tanh^2\!\alpha,1]$. Thus we conclude that 
    \begin{equation*}
        \frac{\cosh h'\cosh u'}{\cosh h\cosh u} = \frac{\tanh^2\! d\tanh^2\! c'}{\tanh^2\! d'\tanh^2\! c}\cdot \frac{F_{d}}{F_{d'}} 
    \end{equation*}
    is also a monotonic function for $u\in[u_0,d]$. 
    It follows that for any $u\in[u_0,d]$ we have 
    \begin{equation}\label{eqn:mu_lem5-3}
        \frac{\cosh h'\cosh u'}{\cosh h\cosh u} \leq \max\left\{ \frac{\cosh h'_0\cosh u'_0}{\cosh h_0\cosh u_0},\, \frac{\cosh \gamma'\cosh d'}{\cosh \gamma\cosh d} \right\}. 
    \end{equation}
    By Lemma \ref{lem:trirectangle} we have 
    \begin{equation}\label{eqn:mu_lem5-4}
        \cosh h_0\cosh u_0 = \frac{\sinh h_0\cosh u_0}{\tanh h_0}
        = \frac{\cosh\alpha\sinh c\cosh u_0}{\cosh u_0\tanh c}
        = \cosh \alpha\cosh c, 
    \end{equation}
    and by Lemma \ref{lem:pentagon} we have 
    \begin{equation}\label{eqn:mu_lem5-5}
        \begin{aligned}
            \cosh \alpha\cosh c &= \cosh \alpha\coth \alpha \coth d,\\
            \cosh\gamma &= \sinh\alpha\sinh c. 
        \end{aligned} 
    \end{equation}
    Thus by \eqref{eqn:mu_lem5-3}, \eqref{eqn:mu_lem5-4} and \eqref{eqn:mu_lem5-5}, we have 
    \begin{equation*}
        \frac{\cosh h'}{\cosh h} \leq \max\left\{\frac{\tanh d\cosh u}{\tanh d'\cosh u'},\, \frac{\sinh c'\cosh d'\cosh u}{\sinh c\cosh d\cosh u'}\right\}. 
    \end{equation*}
    Then it follows from Lemma \ref{lem:mu_lem1}, \eqref{eqn:mu_lem2-1} and \eqref{eqn:mu_lem3-1} that 
    \begin{equation}\label{eqn:mu_lem5-6}
        \frac{\cosh h'}{\cosh h} \leq (1+2\delta_d)^2(1+\delta_d) \leq 1+7\delta_d. 
    \end{equation}
    Similarly, we also have 
     \begin{equation*}
        \frac{\cosh h'}{\cosh h} \geq \min\left\{\frac{\tanh d\cosh u}{\tanh d'\cosh u'},\, \frac{\sinh c'\cosh d'\cosh u}{\sinh c\cosh d\cosh u'}\right\}, 
    \end{equation*}
    which together with Lemma \ref{lem:mu_lem1}, \eqref{eqn:mu_lem2-1} and \eqref{eqn:mu_lem3-1} implies that
    \begin{equation}\label{eqn:mu_lem5-7}
        \frac{\cosh h'}{\cosh h} \geq(1-\delta_d)^3 \geq 1-3\delta_d, 
    \end{equation}
    which finishes the proof. 
\end{proof}

\begin{lemma}\label{lem:mu_lem6}
    Suppose $\delta_d\leq1/5$. Then for any $u\in[0,d]$ and any $v\in[0,h(u)]$ we have 
    \begin{equation}\label{eqn:mu_lem6-1}
        1-3\delta_d \leq \cosh v'/\cosh v \leq 1+7\delta_d. 
    \end{equation}
\end{lemma}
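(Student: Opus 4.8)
The plan is to mimic the proof of \eqref{eqn:mu_lem2-1}: reduce the ratio $\cosh v'/\cosh v$ to a one-parameter family, exploit monotonicity in that parameter, and then feed in the bound on $\cosh h'/\cosh h$ already established in Lemma \ref{lem:mu_lem5}.

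First I would introduce the parameter $s \df \tanh v / \tanh h = \tanh v' / \tanh h' \in [0,1]$, which is precisely the quantity preserved by the definition of $\mu$ in the $v$-direction. Using the identity $1 - \tanh^2 x = 1/\cosh^2 x$, one rewrites
\[ \frac{\cosh^2\! v'}{\cosh^2\! v} = \frac{1 - \tanh^2\! v}{1 - \tanh^2\! v'} = \frac{1 - s^2\tanh^2\! h}{1 - s^2\tanh^2\! h'}. \]
Next I would check that, for fixed $h$ and $h'$, the right-hand side is a monotonic function of $s \in [0,1]$: a direct differentiation shows its derivative is proportional to $\tanh^2\! h' - \tanh^2\! h$, which has constant sign. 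Hence the ratio lies between its values at $s = 0$ and $s = 1$, namely between $1$ and $\cosh^2\! h' / \cosh^2\! h$. Taking square roots gives
\[ \min\Big\{1,\, \frac{\cosh h'}{\cosh h}\Big\} \leq \frac{\cosh v'}{\cosh v} \leq \max\Big\{1,\, \frac{\cosh h'}{\cosh h}\Big\}. \]
Finally, Lemma \ref{lem:mu_lem5} (applicable since $\delta_d \leq 1/5$) yields $1 - 3\delta_d \leq \cosh h'/\cosh h \leq 1 + 7\delta_d$, and since $1$ itself lies in the interval $[1 - 3\delta_d,\, 1 + 7\delta_d]$, the two-sided bound \eqref{eqn:mu_lem6-1} follows immediately.

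There is essentially no serious obstacle here; the only point requiring a little care is that the estimate must be uniform in $u \in [0,d]$. This is automatic: the dependence on $u$ enters only through $h = h(u)$ and $h' = h'(u')$, and these appear solely via the bounds of Lemma \ref{lem:mu_lem5}, which hold for every $u$; the monotonicity-in-$s$ computation itself is entirely independent of $u$.
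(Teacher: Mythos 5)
Your proof is correct and takes essentially the same route as the paper: the paper also fixes $u$, rewrites $\cosh v'/\cosh v$ as a function of the variable along the segment (the paper uses $v$ directly, you use the equivalent monotone reparametrization $s=\tanh v/\tanh h$), observes monotonicity, deduces the ratio is squeezed between its endpoint values $1$ and $\cosh h'/\cosh h$, and closes with Lemma \ref{lem:mu_lem5}.
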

\begin{proof}
    For any fixed $u\in[0,d]$, by definition we have 
    \begin{equation*}
        \frac{\cosh v'}{\cosh v} = \bigg(\frac{1}{\cosh^2\!v - \frac{\tanh^2\! h'}{\tanh^2\! h}\sinh^2\! v}\bigg)^{1/2}, 
    \end{equation*}
    hence it is a monotonic function with respect to $v\in[0,h(u)]$. It follows that 
    \begin{equation*}
        \min\Big\{ 1,\, \frac{\cosh h'}{\cosh h} \Big\} \leq \frac{\cosh v'}{\cosh v} \leq \max\Big\{ 1,\, \frac{\cosh h'}{\cosh h} \Big\}. 
    \end{equation*}
    Then the conclusion follows from \eqref{eqn:mu_lem5-1}. 
\end{proof}

Now we can go back to the proof of Lemma \ref{lem:mu_pf2}. Recall that $h(u_0) = h_0$, and the map $\mu$ from $\pent$ to $\pent'$ is defined as follows.
\[ (u,v) \mapsto (u',v') = \bigg( \arctanh \Big( \frac{\tanh d'}{\tanh d}\tanh u \Big), \arctanh \Big( \frac{\tanh h'(u')}{\tanh h(u)}\tanh v \Big) \bigg). \] 

\begin{lemma}\label{lem:mu_pf2-1}
    Suppose $\delta_d \leq 1/15$. Then \eqref{eqn:mu_pf2-1} holds for $\delta_0 = 10\sqrt{\delta_d}$. 
\end{lemma}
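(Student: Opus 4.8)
The plan is to turn \eqref{eqn:mu_pf2-1} into an explicit computation of $\pa u'/\pa u$ followed by the ratio estimates already available. First I would differentiate the defining relation $\tanh u'/\tanh d' = \tanh u/\tanh d$ implicitly with respect to $u$ (recall $d$ and $d'$ are fixed constants), which gives
\[ \frac{\pa u'}{\pa u} = \frac{\tanh d'}{\tanh d}\cdot\frac{\cosh^2\! u'}{\cosh^2\! u}, \]
and therefore
\[ \Big(\frac{\pa u'}{\pa u}\Big)^2 = \Big(\frac{\tanh d'}{\tanh d}\Big)^2\Big(\frac{\cosh u'}{\cosh u}\Big)^4. \]

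Next I would invoke \eqref{eqn:mu_lem1-1} of Lemma \ref{lem:mu_lem1} to bound $\tanh d'/\tanh d$ between $1-\delta_d$ and $1+\delta_d$, and \eqref{eqn:mu_lem2-1} of Lemma \ref{lem:mu_lem2} (applicable since $\delta_d\le 1/15<1$) to bound $\cosh u'/\cosh u$ between $1-\delta_d$ and $1+\delta_d$ for every $u\in[0,d]$. Combining these yields
\[ (1-\delta_d)^6 \leq \Big(\frac{\pa u'}{\pa u}\Big)^2 \leq (1+\delta_d)^6. \]

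It then remains to verify the elementary inequalities $(1+\delta_d)^6\le 1+\tfrac{100}{9}\delta_d$ and $(1-\delta_d)^6\ge 1-\tfrac{100}{9}\delta_d$ whenever $\delta_d\le 1/15$. For the upper one I would write $(1+\delta_d)^6-1=\delta_d\sum_{j=0}^{5}(1+\delta_d)^j\le 6\delta_d(1+\delta_d)^5\le 6(16/15)^5\delta_d<\tfrac{100}{9}\delta_d$; for the lower one, $1-(1-\delta_d)^6=\delta_d\sum_{j=0}^{5}(1-\delta_d)^j\le 6\delta_d\le\tfrac{100}{9}\delta_d$. Since $\delta_0=10\sqrt{\delta_d}$ gives $\delta_0^2/9=\tfrac{100}{9}\delta_d$, this is precisely \eqref{eqn:mu_pf2-1}. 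There is no genuine obstacle in this argument; the only point requiring care is checking that the chosen constant $10$ (equivalently the threshold $1/15$) is large enough to absorb the sixth power, which the numerical bound above confirms with room to spare.
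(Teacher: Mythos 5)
Your proof is correct and follows essentially the same route as the paper: compute $\partial u'/\partial u = \frac{\tanh d'}{\tanh d}\cdot\frac{\cosh^2 u'}{\cosh^2 u}$ and combine Lemma \ref{lem:mu_lem1} (via \eqref{eqn:mu_lem1-1}) with Lemma \ref{lem:mu_lem2} (via \eqref{eqn:mu_lem2-1}) to trap $(\partial u'/\partial u)^2$ between $(1-\delta_d)^6$ and $(1+\delta_d)^6$. The only cosmetic difference is that you bound the sixth powers directly against $1\pm\frac{100}{9}\delta_d$, whereas the paper passes through the intermediate bounds $1-6\delta_d$ and $1+11\delta_d$ before noting $11\leq 100/9$; both closings are valid for $\delta_d\leq 1/15$.
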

\begin{proof}
    By definition we have 
    \begin{equation*}
        \dfrac{\pa u'}{\pa u}  = \frac{\tanh d'}{\tanh d} \frac{\cosh^2\! u'}{\cosh^2\! u}, 
    \end{equation*}
    thus by \eqref{eqn:mu_lem1-1} and \eqref{eqn:mu_lem2-1} we have 
    \begin{equation*}
        1-6\delta_d \leq (1-\delta_d)^6 \leq \Big(\frac{\pa u'}{\pa u}\Big)^2 \leq (1+\delta_d)^6 \leq 1+11\delta_d. 
    \end{equation*}
    Then the conclusion follows. 
\end{proof}

\begin{lemma}\label{lem:mu_pf2-2}
    Suppose $\delta_d \leq 1/735$. Then \eqref{eqn:mu_pf2-2} holds for $\delta_0 = 21\sqrt{\delta_d}$. 
\end{lemma}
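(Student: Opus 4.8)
The plan is to mimic the proof of Lemma \ref{lem:mu_pf2-1}: express $\pa v'/\pa v$ in terms of the two ratios that have already been estimated, namely $\tanh h'/\tanh h$ (controlled by Lemma \ref{lem:mu_lem4}) and $\cosh v'/\cosh v$ (controlled by Lemma \ref{lem:mu_lem6}), and then combine those estimates by elementary inequalities. Concretely, for a fixed $u\in[0,d]$ (hence a fixed $u'$), the quantity $\kappa\df\tanh h'(u')/\tanh h(u)$ does not depend on $v$, and differentiating the defining relation $\tanh v'=\kappa\tanh v$ with respect to $v$ gives
\[ \frac{\pa v'}{\pa v}=\frac{\tanh h'}{\tanh h}\cdot\frac{\cosh^2\! v'}{\cosh^2\! v},\qquad\text{so}\qquad \Big(\frac{\pa v'}{\pa v}\Big)^2=\Big(\frac{\tanh h'}{\tanh h}\Big)^2\Big(\frac{\cosh v'}{\cosh v}\Big)^4, \]
which is exactly the $v$-analogue of the formula $\pa u'/\pa u=\tfrac{\tanh d'}{\tanh d}\tfrac{\cosh^2 u'}{\cosh^2 u}$ used in Lemma \ref{lem:mu_pf2-1}.

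Next, since $\delta_d\le 1/735\le 1/5$, both Lemma \ref{lem:mu_lem4} and Lemma \ref{lem:mu_lem6} apply, giving $1-2\delta_d\le \tanh h'/\tanh h\le 1+4\delta_d$ and $1-3\delta_d\le\cosh v'/\cosh v\le 1+7\delta_d$ for all admissible $u,v$. Substituting yields
\[ (1-2\delta_d)^2(1-3\delta_d)^4\le\Big(\frac{\pa v'}{\pa v}\Big)^2\le(1+4\delta_d)^2(1+7\delta_d)^4. \]
For the lower bound I would use $\prod_i(1-a_i)\ge 1-\sum_i a_i$ to get $(1-2\delta_d)^2(1-3\delta_d)^4\ge 1-16\delta_d\ge 1-49\delta_d$. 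For the upper bound I would first note $(1+4\delta_d)^2(1+7\delta_d)^4\le(1+7\delta_d)^6$ (since $4\delta_d\le7\delta_d$), expand binomially, and absorb the higher-order terms: the coefficient of $\delta_d^2$ in $(1+7\delta_d)^6$ is $\binom{6}{2}7^2=735$, so under $\delta_d\le 1/735$ the whole tail $735\delta_d^2+\binom{6}{3}7^3\delta_d^3+\cdots$ is bounded by $7\delta_d$ with room to spare, giving $(1+7\delta_d)^6\le 1+42\delta_d+7\delta_d=1+49\delta_d$. Since $\delta_0^2/9=(21\sqrt{\delta_d})^2/9=49\delta_d$, these two inequalities are precisely \eqref{eqn:mu_pf2-2} with $\delta_0=21\sqrt{\delta_d}$.

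I expect no genuine obstacle here: the only delicacy is the choice of crude inequalities so that the accumulated quadratic-in-$\delta_d$ error stays below $49\delta_d$, and the verification that the hypothesis $\delta_d\le 1/735$ (which makes $735\delta_d\le1$, hence controls the dominant remainder term of $(1+7\delta_d)^6$) is strong enough; it also trivially ensures the hypotheses $\delta_d\le1/5$ of the invoked lemmas. All the geometric content has already been supplied by Lemmas \ref{lem:mu_lem4} and \ref{lem:mu_lem6}, so this step is a short, self-contained computation once the reduction formula for $\pa v'/\pa v$ is recorded.
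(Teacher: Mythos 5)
Your proof is correct and follows essentially the same route as the paper: both start from the identity $\pa v'/\pa v = (\tanh h'/\tanh h)\cdot(\cosh^2 v'/\cosh^2 v)$ and feed in the bounds from Lemmas \ref{lem:mu_lem4} and \ref{lem:mu_lem6}; the only difference is trivial bookkeeping, where you obtain $1-16\delta_d$ and $1+49\delta_d$ while the paper settles for $1-18\delta_d$ and $1+47\delta_d$ (via $(1-3\delta_d)^6$ and $(1+7\delta_d)^6$), both comfortably within the target $1\pm 49\delta_d$.
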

\begin{proof}
    By definition we have 
    \begin{equation*}
        \dfrac{\pa v'}{\pa v}  = \frac{\tanh h'}{\tanh h} \frac{\cosh^2\! v'}{\cosh^2\! v}, 
    \end{equation*}
    thus by \eqref{eqn:mu_lem4-1} and \eqref{eqn:mu_lem6-1} we have 
    \begin{equation*}
        1-18\delta_d \leq (1-3\delta_d)^6 \leq \Big(\frac{\pa v'}{\pa v}\Big)^2 \leq (1+7\delta_d)^6 \leq 1+47\delta_d. 
    \end{equation*}
    Then the conclusion follows. 
\end{proof}

\begin{lemma}\label{lem:mu_pf2-3}
    Suppose $\delta_d\leq1/15$. Then \eqref{eqn:mu_pf2-3} holds for $\delta_0 = 54e^{2\bar h}\delta_d$. 
\end{lemma}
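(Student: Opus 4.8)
The plan is to bound $\partial v'/\partial u$ directly from the defining relation $\tanh v' = (\tanh h'/\tanh h)\tanh v$. Differentiating in $u$ at fixed $v$ and substituting $\tanh v = (\tanh h/\tanh h')\tanh v'$ to remove $v$, one gets the clean identity
\[ \frac{\partial v'}{\partial u} \;=\; \cosh v'\,\sinh v'\cdot\frac{\partial}{\partial u}\bigl(\log\tanh h' - \log\tanh h\bigr). \]
Since $v'\le h'(u')\le\bar h$, the prefactor obeys $\cosh v'\sinh v' = \tfrac12\sinh 2v'\le\tfrac14 e^{2\bar h}$, and as $\delta_0^2/9 = 324\,e^{4\bar h}\delta_d^2$ it suffices to prove $\bigl|\partial_u(\log\tanh h' - \log\tanh h)\bigr|\le 72\,\delta_d$ on $[0,d]$. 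I would verify this separately on $[0,u_0]$ and $[u_0,d]$.

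On $[0,u_0]$: the construction of $\mu$ forces $\alpha'(u')=\alpha(u)$ (equations \eqref{eqn:mu_pf1-1}--\eqref{eqn:mu_pf1-2}), so the trirectangle identity $\sinh h = \cosh\alpha(u)\sinh c$ together with its primed version shows that $K_1 \df \sinh h'/\sinh h = \sinh c'/\sinh c$ is a \emph{constant}, with $|K_1-1|\le 2\delta_d$ by \eqref{eqn:mu_lem3-1}. Writing $\tanh h'/\tanh h = K_1\cosh h/\cosh h'$ and using $\cosh^2 h' = K_1^2\cosh^2 h - (K_1^2-1)$, a short computation yields
\[ \frac{\partial}{\partial u}\bigl(\log\tanh h' - \log\tanh h\bigr) \;=\; -\,\frac{(K_1^2-1)\,\tanh h\,\partial_u h}{\cosh^2 h'}. \]
From $\sinh h = \cosh\alpha(u)\sinh c$ and $\tanh\alpha(u)=\cosh c\tanh u$ one gets an explicit formula for $\partial_u h$, and a direct manipulation shows that the inequality $|\partial_u h|\le\cosh h\sinh h$ reduces exactly to $\tanh u\le 1$; hence $|\partial_u h|\le\cosh h\sinh h$. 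Combining this with $\cosh^2 h' = 1 + K_1^2\sinh^2 h\ge K_1^2\sinh^2 h$ and $K_1\ge 1-\delta_d>\tfrac12$ gives
\[ \Bigl|\,\frac{\partial}{\partial u}\bigl(\log\tanh h' - \log\tanh h\bigr)\,\Bigr| \;\le\; |K_1^2-1|\cdot\frac{\sinh^2 h}{\cosh^2 h'} \;\le\; \frac{|K_1^2-1|}{K_1^2} \;\le\; 20\,\delta_d, \]
using $|K_1^2-1|=|K_1-1|(K_1+1)\le 5\delta_d$ for $\delta_d\le 1/15$.

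On $[u_0,d]$: the identity $\tanh h = \cosh(d-u)\tanh\gamma$ and its primed version give $\tanh h'/\tanh h = \bigl(\cosh(d'-u')/\cosh(d-u)\bigr)\cdot\bigl(\tanh\gamma'/\tanh\gamma\bigr)$, whence $\partial_u(\log\tanh h' - \log\tanh h) = \tanh(d-u) - \tanh(d'-u')\cdot(du'/du)$. By \eqref{eqn:mu_lem2-2} one has $|\tanh(d-u)-\tanh(d'-u')|\le 3\delta_d$, and from the computation in the proof of Lemma \ref{lem:mu_pf2-1}, $du'/du=(\tanh d'/\tanh d)(\cosh^2 u'/\cosh^2 u)$ lies within $O(\delta_d)$ of $1$ (use Lemma \ref{lem:mu_lem1} and \eqref{eqn:mu_lem2-1}); hence this expression is $O(\delta_d)$, comfortably $\le 72\delta_d$. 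Putting the two cases together, $|\partial v'/\partial u|\le\tfrac14 e^{2\bar h}\cdot 20\delta_d = 5\,e^{2\bar h}\delta_d\le 18\,e^{2\bar h}\delta_d$, so $(\partial v'/\partial u)^2\le 324\,e^{4\bar h}\delta_d^2 = \delta_0^2/9$ with $\delta_0 = 54\,e^{2\bar h}\delta_d$, which is \eqref{eqn:mu_pf2-3}. The only step needing any care is the bound $|\partial_u h|\le\cosh h\sinh h$ on $[0,u_0]$ (it reduces to $\tanh u\le 1$, but one must set up the trirectangle computation correctly); everything else is bookkeeping with estimates already in this subsection, so I expect no real obstacle here.
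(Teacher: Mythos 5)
Your argument is correct and follows essentially the same overall plan as the paper: you bound the $v$-dependent prefactor in $\partial v'/\partial u$ by $\tfrac14 e^{2\bar h}$, split into $u\in[0,u_0]$ and $u\in[u_0,d]$, and on each piece use the trirectangle identity for $\tanh h$ together with the estimates from Lemmas \ref{lem:mu_lem1}--\ref{lem:mu_lem4} to bound the $u$-derivative of the ratio $\tanh h'/\tanh h$. The one place you diverge is the $[0,u_0]$ computation: the paper differentiates $\tanh h'/\tanh h$ directly by the product rule and reduces to $\bigl|\frac{\tanh d'}{\tanh d}\frac{\partial u'}{\partial u}-1\bigr|\cdot\frac{\tanh h'}{\tanh h}$, whereas you use the log-derivative and the observation that $K_1=\sinh h'/\sinh h$ is \emph{constant} there, giving the closed-form relation $\cosh^2 h' = K_1^2\cosh^2 h-(K_1^2-1)$ and hence the tidy formula $\partial_u\log(\tanh h'/\tanh h)=-(K_1^2-1)\tanh h\,\partial_u h/\cosh^2 h'$; combined with $|\partial_u h|\le\sinh h\cosh h$ this collapses cleanly to $|K_1^2-1|/K_1^2$. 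Both routes land on compatible constants (you get roughly $20\delta_d$ versus the paper's $14\delta_d$ on $[0,u_0]$, and $7\delta_d$ versus $18\delta_d$ on $[u_0,d]$), each comfortably within the target $72\delta_d$. Your $[u_0,d]$ case is stated a bit loosely (``$O(\delta_d)$, comfortably $\le 72\delta_d$''), but the computation you sketch, using \eqref{eqn:mu_lem2-2} and the bound $|\partial u'/\partial u - 1|\le 4\delta_d$, does give a concrete bound of about $7\delta_d$, so no real gap.
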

\begin{proof}
    By definition we have 
    \begin{equation}\label{eqn:mu_pf2-3-1}
        \abs{\frac{\pa v'}{\pa u}} = \abs{ \frac{\pa}{\pa u}\Big(\frac{\tanh h'}{\tanh h}\Big)\cdot \tanh v\cdot \cosh^2\! v' } \leq \abs{ \frac{\pa}{\pa u}\Big(\frac{\tanh h'}{\tanh h}\Big) }\cdot e^{2\bar h}. 
    \end{equation}

    If $u\in[0,u_0]$, then by Lemma \ref{lem:trirectangle} we have $\tanh h = \cosh u \tanh c$, thus 
    \begin{equation}\label{eqn:mu_pf2-3-2}
        \frac{\pa}{\pa u}(\tanh h) = \sinh u\tanh c = \tanh u\tanh h. 
    \end{equation}
    Similarly, for $\tanh h'$ we also have 
    \begin{equation}\label{eqn:mu_pf2-3-3}
        \frac{\pa}{\pa u'}(\tanh h') = \sinh u'\tanh c' = \tanh u'\tanh h'. 
    \end{equation}
    It follows from \eqref{eqn:mu_pf2-3-2} and \eqref{eqn:mu_pf2-3-3} that 
    \begin{equation}\label{eqn:mu_pf2-3-4}
        \begin{aligned}
            \abs{ \frac{\pa}{\pa u}\Big(\frac{\tanh h'}{\tanh h}\Big) }
            & = \abs{ \frac{\tanh u'\tanh h'\tanh h\cdot \frac{\pa u'}{\pa u} - \tanh u\tanh h\tanh h'}{\tanh^2\! h} }\\
            & = \abs{ \frac{\tanh u'\cdot \frac{\pa u'}{\pa u} - \tanh u}{\tanh u}\cdot \tanh u\cdot \frac{\tanh h'}{\tanh h} }\\
            &\leq \abs{\frac{\tanh d'}{\tanh d}\frac{\pa u'}{\pa u}-1}\cdot \frac{\tanh h'}{\tanh h}. 
        \end{aligned}
    \end{equation}
    Thus by \eqref{eqn:mu_lem1-1}, \eqref{eqn:mu_lem4-1} and Lemma \ref{lem:mu_pf2-1} we have 
    \begin{equation}\label{eqn:mu_pf2-3-5}
        \abs{ \frac{\pa}{\pa u}\Big(\frac{\tanh h'}{\tanh h}\Big) } \leq 14\delta_d. 
    \end{equation}

    If $u\in[u_0,d]$, then by Lemma \ref{lem:trirectangle} we have $\tanh h = \cosh(d-u)\tanh \gamma$, thus 
    \begin{equation*}
        \frac{\pa }{\pa u}(\tanh h) = -\sinh(d-u)\tanh \gamma = -\tanh(d-u)\tanh h. 
    \end{equation*}
    Similar to \eqref{eqn:mu_pf2-3-4}, we have 
    \begin{equation*}
        \abs{ \frac{\pa}{\pa u}\Big(\frac{\tanh h'}{\tanh h}\Big) } \leq \abs{\frac{\tanh (d'-u')}{\tanh (d-u)}\frac{\pa u'}{\pa u}-1}\cdot \frac{\tanh h'}{\tanh h}. 
    \end{equation*}
    Thus by \eqref{eqn:mu_lem2-2}, \eqref{eqn:mu_lem4-1} and Lemma \ref{lem:mu_pf2-1} we have 
    \begin{equation}\label{eqn:mu_pf2-3-6}
        \abs{ \frac{\pa}{\pa u}\Big(\frac{\tanh h'}{\tanh h}\Big) } \leq 18\delta_d. 
    \end{equation}
    By \eqref{eqn:mu_pf2-3-1}, \eqref{eqn:mu_pf2-3-5} and \eqref{eqn:mu_pf2-3-6} we have 
    \begin{equation*}
        \abs{\frac{\pa v'}{\pa u}} \leq 18e^{2\bar h}\delta_d. 
    \end{equation*}
    Then the conclusion follows. 
\end{proof}

\begin{lemma}\label{lem:mu_pf2-4}
    Suppose $\delta_d\leq1/49$. Then \eqref{eqn:mu_pf2-4} holds for $\delta_0 = 12\sqrt{\delta_d}$. 
\end{lemma}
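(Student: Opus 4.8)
The plan is to obtain \eqref{eqn:mu_pf2-4} as an essentially immediate consequence of Lemma~\ref{lem:mu_lem6}, so that the only work left is bookkeeping with constants. Since the hypothesis $\delta_d\leq 1/49$ is stronger than the hypothesis $\delta_d\leq 1/5$ required by Lemma~\ref{lem:mu_lem6}, that lemma applies directly and gives, for every $u\in[0,d]$ and every $v\in[0,h(u)]$,
\[ 1-3\delta_d \;\leq\; \frac{\cosh v'}{\cosh v} \;\leq\; 1+7\delta_d. \]

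Next I would square these two-sided bounds and compare with $\delta_0^2/9$. With the prescribed value $\delta_0=12\sqrt{\delta_d}$ one has $\delta_0^2/9 = 16\delta_d$. For the upper estimate, $(1+7\delta_d)^2 = 1+14\delta_d+49\delta_d^2$; invoking $\delta_d\leq 1/49$ to absorb $49\delta_d^2\leq \delta_d$, this is at most $1+15\delta_d\leq 1+16\delta_d$. For the lower estimate, $(1-3\delta_d)^2\geq 1-6\delta_d\geq 1-16\delta_d$ holds trivially. Combining, $1-\delta_0^2/9 \leq \cosh^2\! v'/\cosh^2\! v \leq 1+\delta_0^2/9$, which is precisely \eqref{eqn:mu_pf2-4}, and the lemma follows.

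I do not expect a genuine obstacle here: the substance of the argument has already been carried out in the chain Lemmas~\ref{lem:mu_lem1}--\ref{lem:mu_lem6}, most notably in the monotonicity of $F_d/F_{d'}$ that underlies Lemma~\ref{lem:mu_lem5}. The present statement is only the final, purely numerical, packaging step, entirely parallel to Lemmas~\ref{lem:mu_pf2-1}, \ref{lem:mu_pf2-2} and \ref{lem:mu_pf2-3}; the one point requiring a moment's care is checking that the chosen constant $\delta_0=12\sqrt{\delta_d}$ together with the threshold $\delta_d\leq1/49$ makes the inequality $49\delta_d^2\leq\delta_d$ exactly enough to close the estimate.
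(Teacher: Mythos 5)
Your proof is correct and follows exactly the approach the paper intends: the paper's proof of this lemma is the single line ``It follows directly from \eqref{eqn:mu_lem6-1}.'', and what you have written is precisely the squaring-and-constant-bookkeeping that this cryptic sentence hides. The arithmetic checks out: with $\delta_0=12\sqrt{\delta_d}$ one has $\delta_0^2/9=16\delta_d$, and $(1+7\delta_d)^2\leq 1+15\delta_d\leq 1+16\delta_d$ (using $49\delta_d^2\leq\delta_d$), while $(1-3\delta_d)^2\geq 1-6\delta_d\geq 1-16\delta_d$.
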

\begin{proof}
    It follows directly from \eqref{eqn:mu_lem6-1}. 
\end{proof}

Now we complete the proof of Lemma \ref{lem:mu_pf2}.

\begin{proof}[Proof of Lemma \ref{lem:mu_pf2}]
It is clear that the conclusion follows directly from Lemma \ref{lem:mu_pf2-1}, \ref{lem:mu_pf2-2}, \ref{lem:mu_pf2-3} and \ref{lem:mu_pf2-4}. 
\end{proof}

\section{Several further remarks}\label{sect-last}
In this last section, we make several remarks. 

First, we recall the following two well-known results:
\begin{enumerate}
\item $\mathrm{Area}(B_\H(0,r))=2\pi(\cosh(r)-1).$

\item Let $X_g$ be a closed hyperbolic surface of genus $g$ $(g>1)$. For any $p\in X_g$ and $r>0$, we let $B(p,r)\subset X_g$ be a geodesic ball of radius $r>0$ centered at $p$, then it is known from \eg \cite[Theorem 1.1]{Cheng1975} that
\begin{equation}\label{ls-e-1}
    \lambda_1(B(p,r))\leq \lambda_1(B_{\H}(0,r))\leq \frac{1}{4}+\left(\frac{2\pi}{r}\right)^2
\end{equation}
where $\lambda_1$ is the first eigenvalue with respect to Dirichlet boundary condition.
\end{enumerate}

Then we show
\begin{proposition}\label{t-up}
For any $k(g)>0$,
\[\sup\limits_{X_g\in \sM_g}\lambda_{k(g)}(X_g)\leq \frac{1}{4}+\left(\frac{4\pi}{\cosh^{-1}\left(1+\frac{2(g-1)}{k(g)+1} \right)}\right)^2.\]
In particular, for $k(g)=o(g)$,
\[\limsup\limits_{g\to \infty}\sup\limits_{X_g\in \sM_g}\lambda_{k(g)}(X_g)\leq \frac{1}{4}.\]
\end{proposition}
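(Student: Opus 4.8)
The plan is to realize a low-dimensional ``test space'' geometrically: on any $X_g\in\sM_g$ I would pack $k(g)+1$ pairwise disjoint geodesic discs of a single, explicitly chosen radius $r$, feed their first Dirichlet eigenfunctions into the Mini-max Principle (Theorem \ref{thm:mini-max}(1)), and then bound each resulting Rayleigh quotient by the Cheng-type estimate \eqref{ls-e-1} for $\lambda_1$ of a hyperbolic disc. This is the classical packing argument behind upper bounds for eigenvalues of hyperbolic surfaces. Throughout I assume $k(g)$ is a positive integer (the relevant case; for a general index one replaces $k(g)$ by $\lfloor k(g)\rfloor$, which only weakens the requirement $m\geq k(g)+1$ below).

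\textbf{The packing step.} Fix $r>0$ (to be chosen) and let $\{p_1,\dots,p_m\}\subset X_g$ be a maximal collection of points with pairwise distances $>2r$. By $2r$-separation the open discs $B(p_i,r)$ are pairwise disjoint, while by maximality the discs $B(p_i,2r)$ cover $X_g$: any $q\in X_g$ lies within $2r$ of some $p_i$, or else $\{p_1,\dots,p_m,q\}$ would be a strictly larger separated set. Since the universal covering $\H\to X_g=\H/\Gamma$ is a local isometry and each $B(p_i,2r)$ is the image of a ball of radius $2r$ in $\H$, the covering projection does not increase area, so $\area(B(p_i,2r))\leq\area(B_\H(0,2r))=2\pi(\cosh(2r)-1)$ by fact (1). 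Combining the covering property with Gauss--Bonnet, $\area(X_g)=2\pi|\chi(X_g)|=4\pi(g-1)$, gives
\[ 4\pi(g-1)=\area(X_g)\leq\sum_{i=1}^{m}\area(B(p_i,2r))\leq 2\pi m\,(\cosh(2r)-1), \]
hence $m\geq \dfrac{2(g-1)}{\cosh(2r)-1}$.

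\textbf{Choice of radius and conclusion.} Next I would choose $r$ so that this lower bound equals $k(g)+1$, that is
\[ r=\tfrac12\cosh^{-1}\!\Big(1+\tfrac{2(g-1)}{k(g)+1}\Big), \]
which is well-defined as the argument of $\cosh^{-1}$ is $\geq 1$. For this $r$ we get $m\geq k(g)+1$, so we may select $k(g)+1$ of the disjoint discs $B(p_1,r),\dots,B(p_{k(g)+1},r)$. Let $f_i$ be the first Dirichlet eigenfunction of $B(p_i,r)$, extended by zero to $X_g$; these are $k(g)+1$ smooth functions with pairwise disjoint supports, so Theorem \ref{thm:mini-max}(1) gives
\[ \lambda_{k(g)}(X_g)\leq\max_{1\leq i\leq k(g)+1}\frac{\int_{X_g}|\nabla f_i|^2}{\int_{X_g}f_i^2}=\max_{1\leq i\leq k(g)+1}\lambda_1\big(B(p_i,r)\big)\leq\frac14+\Big(\frac{2\pi}{r}\Big)^2, \]
the last inequality being \eqref{ls-e-1}. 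Substituting the chosen $r$ turns $\tfrac{2\pi}{r}$ into $\tfrac{4\pi}{\cosh^{-1}(1+2(g-1)/(k(g)+1))}$, which is exactly the asserted bound; since it is uniform in $X_g$, taking the supremum over $\sM_g$ preserves it. Finally, if $k(g)=o(g)$ then $\tfrac{2(g-1)}{k(g)+1}\to\infty$, hence $\cosh^{-1}(1+\tfrac{2(g-1)}{k(g)+1})\to\infty$ and the error term tends to $0$, giving $\limsup_{g\to\infty}\sup_{X_g\in\sM_g}\lambda_{k(g)}(X_g)\leq\tfrac14$.

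\textbf{Main difficulty.} I do not anticipate a genuine obstacle here: the proof is elementary. The only points deserving a careful word are the area non-increase of the covering projection (needed to pass from $\area(B_\H(0,2r))$ to $\area(B(p_i,2r))$) and the integer bookkeeping in the step $m\geq k(g)+1$; both are routine.
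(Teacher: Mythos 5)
Your argument is correct and is essentially the paper's own proof: both take a maximal separated set (your separation parameter $2r$ is the paper's $r(g)$), bound its cardinality from below via Gauss--Bonnet and the area of a hyperbolic disc, and then feed the disjoint smaller discs into the Mini-max Principle together with Cheng's inequality \eqref{ls-e-1}. The only differences are cosmetic (choice of notation for the net radius and the explicit remark about area non-increase under the covering projection, which the paper leaves implicit).
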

\begin{proof}
Set $$r(g)=\cosh^{-1}\left(1+\frac{2(g-1)}{k(g)+1} \right).$$
Let $S=\{p_i\}\subset X_g$ be a maximal $r(g)$-net of $X_g$. That is, $\dist(p_i,p_j)\geq r(g)$ for all $i\neq j$ and $\cup_i B(p_i,r(g))=X_g$. Then it follows from the Gauss-Bonnet formula that
\[4\pi(g-1)=\mathrm{Area}(X_g)\leq \sum_i \mathrm{Area}\big(B(p_i,r(g))\big)\leq |S|\cdot \mathrm{Area}\big(B_\H(0,r(g))\big)\]
which implies that
\[|S|\geq k(g)+1.\]
By the choice of $S$ we know that $$B\left(p_i,\frac{r(g)}{2}\right)\cap B\left(p_j,\frac{r(g)}{2}\right)=\emptyset$$ for all $i\neq j$. Then it follows from  the Mini-max principle (see \eg Part (1) of Theorem \ref{thm:mini-max}) and \eqref{ls-e-1} that
\begin{equation}
\begin{aligned}
  \lambda_{k(g)}(X_g)&\leq \max_{i}\left\{\lambda_1\left(B\left(p_i, \frac{r(g)}{2}\right)\right)\right\}\leq \frac{1}{4}+\left(\frac{4\pi}{r(g)}\right)^2\\
&=\frac{1}{4}+\left(\frac{4\pi}{\cosh^{-1}\left(1+\frac{2(g-1)}{k(g)+1} \right)}\right)^2. 
\end{aligned}  \nonumber  
\end{equation}
The proof is complete.
\end{proof}

We finish this section with the following two consequences.

Joint with Zhu, we show \cite[Theorem 4.1]{WZZ2024} that for any $\eta(g) \in [1,2g-2]$, 
    \[ \liminfg \sup_{X_g\in\sM_g}\left( \lambda_{\eta(g)}(X_g) - \lambda_{\eta(g)-1}(X_g) \right) \geq \frac 14. \] 
This together with Proposition \ref{t-up} implies that
\begin{theorem}
    If the indices $\eta(g) \in [1,2g-2]$ satisfy that $$\lim\limits_{g\to \infty}\frac{\eta(g)}{g}=0,$$ then
    \[ \limg \sup_{X_g\in\sM_g}\left( \lambda_{\eta(g)}(X_g) - \lambda_{\eta(g)-1}(X_g) \right) = \frac14. \] 
\end{theorem}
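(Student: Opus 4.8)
The plan is to sandwich the quantity between two bounds that are already available in the excerpt, so that the proof is essentially a one-line combination plus an elementary observation.

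First I would invoke the lower bound. The index sequence $\eta(g)$ satisfies $\eta(g)\in[1,2g-2]$ by hypothesis, which is precisely what \cite[Theorem 4.1]{WZZ2024} requires, so that result applies verbatim and gives
\[ \liminfg \sup_{X_g\in\sM_g}\left( \lambda_{\eta(g)}(X_g) - \lambda_{\eta(g)-1}(X_g) \right) \geq \frac 14. \]
Next, for the matching upper bound I would start from the trivial inequality $\lambda_{\eta(g)-1}(X_g)\geq \lambda_0(X_g)=0$, valid on every $X_g\in\sM_g$, which yields
\[ \lambda_{\eta(g)}(X_g) - \lambda_{\eta(g)-1}(X_g) \leq \lambda_{\eta(g)}(X_g). \]
Taking the supremum over $\sM_g$ and then $\limsup$ as $g\to\infty$ reduces the problem to estimating $\limsupg \sup_{X_g\in\sM_g}\lambda_{\eta(g)}(X_g)$. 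Since by assumption $\lim_{g\to\infty}\eta(g)/g=0$, i.e. $\eta(g)=o(g)$, Proposition \ref{t-up} applied with $k(g)=\eta(g)$ gives exactly
\[ \limsupg \sup_{X_g\in\sM_g}\lambda_{\eta(g)}(X_g) \leq \frac 14, \]
hence
\[ \limsupg \sup_{X_g\in\sM_g}\left( \lambda_{\eta(g)}(X_g) - \lambda_{\eta(g)-1}(X_g) \right) \leq \frac 14. \]
Combining the displayed $\liminf$ and $\limsup$ shows that both equal $\tfrac14$, so the limit exists and equals $\tfrac14$, which is the assertion.

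I do not anticipate a real obstacle: the lower bound is borrowed directly from \cite{WZZ2024} and the upper bound from Proposition \ref{t-up}, with the only genuinely new ingredient being the observation that a spectral gap $\lambda_{\eta(g)}-\lambda_{\eta(g)-1}$ is dominated by the eigenvalue $\lambda_{\eta(g)}$ at its upper end. The single point worth checking carefully is that the \emph{same} sequence $\eta(g)$ simultaneously satisfies the hypotheses of both cited results — $\eta(g)\in[1,2g-2]$ for \cite[Theorem 4.1]{WZZ2024} and $\eta(g)=o(g)$ for Proposition \ref{t-up} — but both conditions are explicitly part of the statement, so this is immediate.
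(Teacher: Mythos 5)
Your proof is correct and follows exactly the paper's own argument: the lower bound is cited from \cite[Theorem 4.1]{WZZ2024}, and the upper bound comes from the trivial estimate $\lambda_{\eta(g)}-\lambda_{\eta(g)-1}\leq\lambda_{\eta(g)}$ combined with Proposition \ref{t-up} applied to $k(g)=\eta(g)=o(g)$. Nothing is missing.
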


Secondly, consider the first two \emph{distinct} positive eigenvalues $\lambda_1^{\mathrm{nm}}(X_g), \lambda_2^{\mathrm{nm}}(X_g)$ of $X_g \in \sM_g$. It is clear that
\[\lambda_1(X_g)=\lambda_1^{\mathrm{nm}}(X_g)<\lambda_2^{\mathrm{nm}}(X_g).\]
The main result of this paper, \ie Theorem \ref{main}, tells that for any fixed $\epsilon>0$,
 \begin{equation}\label{e-spg-m-1}
     \liminfg \max_{X_g \in \sM_g^{\geq\epsilon}} \big(  \lambda_2^{\mathrm{nm}}(X_g)-  \lambda_1^{\mathrm{nm}}(X_g) \big) \geq \frac 14.   
 \end{equation}
On the other hand, it is known from a quite recent result of Letrouit and Machado \cite[Theorem 1.1]{LM-2024-mul} that for sufficiently large $g$,
\[\max_{X_g \in \sM_g^{\geq\epsilon}}\lambda_2^{\mathrm{nm}}(X_g)\leq \max_{X_g \in \sM_g^{\geq\epsilon}}\lambda_{k(g)}(X_g)\]
where
\[k(g)=\left\lfloor\frac{g}{\log \log \log g}\right\rfloor.\]
This together with Proposition \ref{t-up} implies that
\begin{equation}\label{e-spg-m-2}
    \limsupg \max_{X_g \in \sM_g^{\geq\epsilon}}  \lambda_2^{\mathrm{nm}}(X_g)  \leq \frac{1}{4}.
\end{equation}
Combining \eqref{e-spg-m-1} and \eqref{e-spg-m-2}, we have
\begin{theorem}
    The following limit holds:
    \begin{equation}
        \limg \max_{X_g \in \sM_g^{\geq\epsilon}}  \big(\lambda_2^{\mathrm{nm}}(X_g) - \lambda_1^{\mathrm{nm}}(X_g)\big)=\frac{1}{4}.\nonumber
    \end{equation}  
\end{theorem}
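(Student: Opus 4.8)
The plan is to establish the two one-sided estimates \eqref{e-spg-m-1} and \eqref{e-spg-m-2} and then combine them; all the genuinely substantive input has already been assembled, so the remaining work is essentially bookkeeping between the ``with multiplicity'' indexing $\lambda_i$ and the ``distinct positive'' indexing $\lambda_i^{\mathrm{nm}}$. For the lower bound I would first record that for every closed hyperbolic surface $X_g$ one has $\lambda_1^{\mathrm{nm}}(X_g) = \lambda_1(X_g)$ and $\lambda_2^{\mathrm{nm}}(X_g) \geq \lambda_2(X_g)$, hence
\[ \lambda_2^{\mathrm{nm}}(X_g) - \lambda_1^{\mathrm{nm}}(X_g) \;\geq\; \lambda_2(X_g) - \lambda_1(X_g). \]
Taking the maximum over $\sM_g^{\geq\epsilon}$ on both sides and then $\liminfg$, Theorem \ref{main} with $k=2$ gives \eqref{e-spg-m-1}. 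Alternatively one can be fully explicit: for any $\delta>0$ the surface $\sZ_g\in\sM_g^{\geq\epsilon}$ built in the proof of Theorem \ref{main} for $k=2$ satisfies $\lambda_1(\sZ_g)\leq\delta$ and $\lambda_2(\sZ_g)\geq\frac14-\delta$ for all large $g$; in particular $\lambda_1(\sZ_g)<\lambda_2(\sZ_g)$, so $\lambda_i^{\mathrm{nm}}(\sZ_g)=\lambda_i(\sZ_g)$ for $i=1,2$ and the gap along this sequence is $\geq\frac14-2\delta$.

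For the upper bound I would use $\lambda_1^{\mathrm{nm}}(X_g)>0$ to write $\lambda_2^{\mathrm{nm}}(X_g) - \lambda_1^{\mathrm{nm}}(X_g) < \lambda_2^{\mathrm{nm}}(X_g)$, so that
\[ \max_{X_g\in\sM_g^{\geq\epsilon}} \big(\lambda_2^{\mathrm{nm}}(X_g) - \lambda_1^{\mathrm{nm}}(X_g)\big) \;\leq\; \max_{X_g\in\sM_g^{\geq\epsilon}} \lambda_2^{\mathrm{nm}}(X_g). \]
By \cite[Theorem 1.1]{LM-2024-mul}, for all large $g$ the right-hand side is at most $\max_{X_g\in\sM_g^{\geq\epsilon}}\lambda_{k(g)}(X_g) \leq \max_{X_g\in\sM_g}\lambda_{k(g)}(X_g)$ with $k(g)=\lfloor g/\log\log\log g\rfloor$. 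Since $k(g)=o(g)$, Proposition \ref{t-up} yields $\limsupg\max_{X_g\in\sM_g}\lambda_{k(g)}(X_g)\leq\frac14$, which gives \eqref{e-spg-m-2} and in fact
\[ \limsupg \max_{X_g\in\sM_g^{\geq\epsilon}} \big(\lambda_2^{\mathrm{nm}}(X_g) - \lambda_1^{\mathrm{nm}}(X_g)\big) \;\leq\; \frac14. \]

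Combining the displays from the two previous paragraphs, the $\liminfg$ and $\limsupg$ of $\max_{X_g\in\sM_g^{\geq\epsilon}}(\lambda_2^{\mathrm{nm}}(X_g)-\lambda_1^{\mathrm{nm}}(X_g))$ are both equal to $\frac14$, so the limit exists and equals $\frac14$. I do not expect a real obstacle here: the nontrivial ingredients are Theorem \ref{main} for the lower bound, and Proposition \ref{t-up} together with the multiplicity bound of Letrouit--Machado for the upper bound. The only point needing a moment's attention is that passing to distinct eigenvalues must not collapse the gap --- \ie that along the extremal surfaces $\lambda_2^{\mathrm{nm}}$ does not jump far past $\lambda_2$ --- and this is immediate because $\lambda_1(\sZ_g)\to0$ while $\lambda_2(\sZ_g)$ stays close to $\frac14$, forcing $\lambda_1(\sZ_g)<\lambda_2(\sZ_g)$ for $g$ large.
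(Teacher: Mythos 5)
Your proposal is correct and follows the same two-sided strategy as the paper: the lower bound from Theorem \ref{main} (via the observation $\lambda_2^{\mathrm{nm}}-\lambda_1^{\mathrm{nm}}\geq\lambda_2-\lambda_1$, or equivalently via the explicit sequence $\sZ_g$), and the upper bound by dropping the positive $\lambda_1^{\mathrm{nm}}$ and applying the Letrouit--Machado multiplicity bound together with Proposition \ref{t-up}. The only difference is that you spell out the indexing bookkeeping that the paper leaves implicit, which is a helpful addition rather than a divergence in method.
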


\bibliographystyle{amsalpha}
\bibliography{ref}

\end{document}